\documentclass[9pt, BCOR=2mm]{amsart}
\usepackage{amssymb}
\usepackage{amscd} 
\setlength\columnsep{10pt}
\usepackage{empheq}
\usepackage{enumitem}
\usepackage{tikz-cd}
\usepackage{multicol,caption}
\usepackage[toc,page]{appendix}
\usepackage{nomencl}
\makenomenclature
\usepackage[
  margin=3.6cm,
  includefoot,
  footskip=16pt,
]{geometry}
\usepackage{mathscinet}
\usepackage[nobysame,alphabetic,initials,msc-links,lite,backrefs]{amsrefs}
\usepackage[colorinlistoftodos,prependcaption,textsize=tiny]{todonotes}
\usepackage{mathtools, mathrsfs, color} 
\usepackage{graphicx}
\usepackage{tikz,tikz-cd}
\usepackage[all,cmtip]{xy}
\usepackage{cleveref}
\numberwithin{equation}{section}

\def\today{\number\day\space\ifcase\month\or   January\or February\or
   March\or April\or May\or June\or   July\or August\or September\or
   October\or November\or December\fi\   \number\year}

\numberwithin{equation}{section}
\newtheorem{theorem}{Theorem}[section]
\newtheorem{lemma}[theorem]{Lemma}
\newtheorem{proposition}[theorem]{Proposition}
\newtheorem{corollary}[theorem]{Corollary}
\newtheorem{definition}[theorem]{Definition}
\newtheorem{remark}[theorem]{Remark}

\newtheorem{example}[theorem]{Example}
\setlength{\headsep}{0.2in}

\newcommand{\Z}{{\mathbb{Z}}}
\newcommand{\R}{{\mathbb{R}}}

\newcommand{\T}{{\mathbb{T}}}

\newcommand{\pf}{{\operatorname{pf}}}
\newcommand{\deta}{{\operatorname{det}}}
\newcommand{\K}{\mathrm{K}} 


\newcommand{\suchthat}{\middle |}

\newcommand{\M}{\mathrm{M}}

\newcommand{\Ne}{{\mathbb{Z}}_{\ge 0}}

\DeclareMathOperator{\Aut}{Aut}
\DeclareMathOperator{\Tr}{Tr}

\DeclareMathOperator{\ev}{ev}

\DeclareMathOperator{\id}{id}

\newcommand{\bn}{\noindent \begin{nummer} \rm}
\newcommand{\en}{\end{nummer}}
\newcommand{\m}[2]{\ensuremath{{#1}_{#2}}}

\title{Symmetrized non-commutative tori revisited}  


\author[S.~Chakraborty]{Sayan Chakraborty} 
\address{IAI, TCG CREST, 1st floor, Bengal Eco Intelligent Park, EM Block, Sector V, Bidhannagar, Kolkata, West Bengal 700091.}
\email{sayan.chakraborty@tcgcrest.org}

\keywords{noncommutative torus, C$^*$-crossed product, group actions, K-theory}
\subjclass[2010]{46L35, 46L55, 46L80}

\begin{document}

\begin{abstract}
For the flip action of $\Z_2$ on an $n$-dimensional noncommutative torus $A_\theta,$ using an exact sequence by Natsume, we compute the K-theory groups of $A_\theta \rtimes \Z_2$. The novelty of our method is that it also provides an explicit basis of $\K_{0}(A_{\theta}\rtimes \Z_2),$ for any $\theta.$ As an application, for a simple $n$-dimensional torus $A_{\theta},$ using classification techniques, we determine the isomorphism class of $A_{\theta}\rtimes \Z_2$ in terms of the isomorphism class of $A_{\theta}.$

\end{abstract}

\maketitle \pagestyle{myheadings} \markboth{S.~Chakraborty}{Symmetrized noncommutative tori}

\section*{Introduction}\label{sec:intro}

For $n \geq 2$, let $\mathcal{T}_{n}$ denote the space of all $n \times n$ real skew-symmetric  matrices. The $n$-dimensional noncommutative torus $A_{\theta}$ is
the universal C$^*$-algebra  generated by unitaries $U_1, U_2, U_3, \ldots, U_n$
subject to the relations 
\begin{equation}\label{eq:ccr}
	 U_jU_k = e^ {2 \pi i \theta_{jk} }U_k U_j, 
\end{equation}
for $j, k = 1, 2, 3, \ldots, n$, where $\theta:=(\theta_{jk}) \in \mathcal{T}_{n}$. For the two-dimensional noncommutative tori, since $\theta$ is determined by only one real number, $\theta_{12},$ we will often denote  the corresponding two-dimensional noncommutative torus by $A_{\theta_{12}}$. Recall that the action of $\Z_2$ on any $n$-dimensional $A_{\theta}$\,---\,often called the \emph{flip action}\,---\,is defined by sending $U_i$ to $U_i^{-1},$ for all $i.$ The study of the corresponding crossed product C$^*$-algebra $A_\theta \rtimes \Z_2$ for $n=2$ goes back to the work \cite{BEEK91}. Quickly this became one of the accessible examples of a noncommutative space. The algebra $A_\theta \rtimes \Z_2,$ for a general $n,$ also appears in M(atrix) theory and String theory, see \cite{KS00}, \cite{KS02}. The Morita equivalence classes and the isomorphism classes of $A_\theta \rtimes \Z_2$ play an important role in \cite{KS00}, \cite{KS02}. The K-theory of $A_\theta \rtimes \Z_2$ was computed by Kumjian (\cite{Kum90}) for the two-dimensional cases. Kumjian used an exact sequence of Natsume to compute such K-theory groups. Later, using the similar methods, Farsi and Watling, in \cite{FW93}, have computed the K-theory of $A_\theta \rtimes \Z_2$ for general $n$, and for a totally irrational $\theta$ (see Definition~\ref{df totally irrational}).  However, in \cite{CL19}, a major gap was pointed out in the paper \cite{FW93}. Recently this gap was rectified in \cite{CFW22} indirectly  using a result of \cite{Boc97}.  Using the tools developed by the authors in \cite{CL19}, we rectify the gap directly in this paper for general $n.$ Note that this direct method also gives us a basis of $\K_0 (A_{\theta}\rtimes  \Z_2),$ whereas the indirect method in \cite{CFW22} just computes the dimensions of  $\K_0 (A_{\theta}\rtimes  \Z_2).$ Note that $\K_1(A_{\theta}\rtimes \Z_2)$ is trivial.

The authors in \cite{FW93} used the following exact sequence of Natsume (\cite{Nat85}) to compute the K-theory of $A_\theta \rtimes \Z_2,$ for a totally irrational $\theta.$  Since $A_\theta \rtimes \Z_2$ can be written as $A_{\theta'}\rtimes_\phi (\mathbb{Z}_2 * \mathbb{Z}_2 ),$ for some antisymmetric $(n-1)\times (n-1)$ matrix $\theta'$ and some action $\phi$, in this case Natsume's exact sequence looks like 

{\small \[
\begin{CD}
\K_0 (A_{\theta'}) @>{i_{1*}-i_{2*}}>>\K_0 (A_{\theta'}\rtimes  \Z_2)\oplus \K_0 (A_{\theta'}\rtimes \Z_2)@>{j_{1*}+j_{2*}}>> \K_0 (A_{\theta'}\rtimes_\phi \Z_2*\Z_2)   \\
@A{}AA & &  @VV{e_1}V            \\
 \K_1 (A_{\theta'}\rtimes_\phi \Z_2*\Z_2) @<<{j_{1*}+j_{2*}}<  \K_1 (A_{\theta'}\rtimes \Z_2)\oplus \K_1 (A_{\theta'}\rtimes \Z_2) @<<{i_{1*}-i_{2*}}< \K_1 (A_{\theta'}),
\end{CD}
\]} 
\noindent \ignorespacesafterend where $i_1, i_2, j_1,j_2$ are natural inclusions.
In the proof of the main theorem (\cite[Theorem 7]{FW93}), it was stated that the map  $i_{1*}-i_{2*}: \mathbb{Z}^{2^{n-2}} \rightarrow \mathbb{Z}^{3.2^{n-1}},$ in the $\K_0$-level, is given by $\operatorname{Diag}(\id, 0,-\id, 0)$. (Note that $\operatorname{Diag}(\id, 0,-\id, 0)$ should be replaced by $(\id, 0,-\id, 0)^t,$ as, for example, for $n=2$ the map $i_{1*}-i_{2*}$ should be a $6 \times 1$ matrix and clearly the $\operatorname{Diag}(\id, 0,-\id, 0)$ is not.) However the reference \cite[Corollary 7.9]{Rie88}, mentioned therein, does not clearly gives the result.  This was clarified in \cite[Corollary 7.2, (see also Remark 7.5)]{CL19} for $n=3$ using the description of the Chern character map (from \cite{Wal95}), and using an explicit description of the K-theory classes of $A_{\theta}\rtimes \Z_2,$ for two-dimensional $A_{\theta}.$ In \cite[Corollary 7.2]{CL19} the authors in fact show that $i_{1*}-i_{2*}: \mathbb{Z}^{2^{n-2}} \rightarrow \mathbb{Z}^{3.2^{n-1}}$ is given by $(\id, 0,-\id, 0)^t,$ for $n=3$.

Recently in a paper with Hua (\cite{CH21}), the author of this paper has found a basis of $\K_0(A_\theta)$ consisting of projections inside $A_\theta,$ for an $n \times n$ strongly totally irrational $\theta$ (see Definition~\ref{a3}). Using this basis of $\K_0(A_\theta),$ for a strongly totally irrational $\theta,$ in this paper we prove that $i_{1*}-i_{2*}: \mathbb{Z}^{2^{n-2}} \rightarrow \mathbb{Z}^{3.2^{n-1}}$ is given by $(\id, 0,-\id, 0)^t$ for general $n.$ This computes the K-theory of $A_\theta \rtimes \Z_2,$ for all strongly totally irrational $\theta.$ We invoke Morita equivalence bi-modules for higher dimensional noncommutative tori and an explicit description of the boundary map $e_1$ of the above exact sequence to conclude that $i_{1*}-i_{2*}=(\id, 0,-\id, 0)^t.$ Our method gives an explicit basis of $\K_0(A_\theta \rtimes \Z_2)$ for all strongly totally irrational $\theta,$ in terms of projections inside $A_\theta \rtimes \Z_2.$ Let $\mathcal{P}_n$ be the set as in Equation~\ref{eq:def_of_P_n}. Then results discussed above give the following. 

\begin{theorem}(Theorem\label{intro:thm1}\ref{thm:main_K_gen})
  Let $\theta$ be a strongly irrational $n\times n$ matrix. Then
  $\K_0(A_{\theta}\rtimes \Z_2)\cong \Z^{3\cdot 2^{n-1}},$ and a generating set  of $\K_0(A_{\theta}\rtimes \Z_2)$ may be  given by 
 $\{[1], [P] ~|~ P\in \mathcal{P}_n\}.$ 
  	\end{theorem}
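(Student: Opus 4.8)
\emph{Proof strategy.} The plan is to extract both the group and the generating set from Natsume's six--term exact sequence displayed above, the only non--formal ingredient being the precise form of the connecting map $i_{1*}-i_{2*}$ on $\K_0$. Write $A_\theta\rtimes\Z_2\cong A_{\theta'}\rtimes_\phi(\Z_2*\Z_2)$, where $\theta'$ is the $(n-1)\times(n-1)$ block of $\theta$ corresponding to $U_1,\dots,U_{n-1}$; this block inherits strong total irrationality, so the results of \cite{CH21} apply to $A_{\theta'}$ as well. The inputs to the sequence are: $\K_0(A_{\theta'})\cong\K_1(A_{\theta'})\cong\Z^{2^{n-2}}$; a basis of $\K_0(A_{\theta'})$ consisting of projections lying inside $A_{\theta'}$, by \cite{CH21}; and, inductively in $n$, $\K_1(A_{\theta'}\rtimes\Z_2)=0$ together with $\K_0(A_{\theta'}\rtimes\Z_2)\cong\Z^{3\cdot2^{n-2}}$ and an explicit basis of the form $\{[1]\}\cup\{[Q]:Q\in\mathcal{P}_{n-1}\}$ (the base case $n=2$ being covered by \cite{Kum90} and \cite{CL19}; the ranks being alternatively available from \cite{CFW22}). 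Once these are in place, the entire computation is governed by $i_{1*}-i_{2*}$.

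The heart of the argument is the claim that, with respect to a direct--sum decomposition $\K_0(A_{\theta'}\rtimes\Z_2)\cong\K_0(A_{\theta'})\oplus\Z^{2^{n-1}}$ in which the first summand is the image of the inclusion $A_{\theta'}\hookrightarrow A_{\theta'}\rtimes\Z_2$ and the second is spanned by the ``fixed--point'' projections, the map $i_{1*}-i_{2*}\colon\K_0(A_{\theta'})\to\bigl(\K_0(A_{\theta'})\oplus\Z^{2^{n-1}}\bigr)^{\oplus2}$ equals $(\id,0,-\id,0)^t$. I would prove this in three steps. (a) Use the \cite{CH21} projection basis, so that the two constituent maps $i_1,i_2$ are literally the inclusions of honest projections of $A_{\theta'}$ into $A_{\theta'}\rtimes_\phi(\Z_2*\Z_2)$ along the two reflection subgroups of $\Z_2*\Z_2$. (b) Observe that the two reflections act on $A_{\theta'}$ as the flip and as a gauge--twisted flip, the twisting gauge automorphism being homotopic to the identity and therefore acting trivially on $\K_0(A_{\theta'})$; this forces $i_{1*}$ and $i_{2*}$ to agree, each being the inclusion--induced map $\K_0(A_{\theta'})\to\K_0(A_{\theta'}\rtimes\Z_2)$, i.e.\ $(\id,0)$, the overall minus sign on $i_2$ coming from the Mayer--Vietoris convention. (c) Use the explicit description of Natsume's boundary map $e_1$ together with the Morita/Rieffel equivalence bimodules for the higher--dimensional tori to realize the complementary summand $\Z^{2^{n-1}}$ by concrete (Bott--type) projections, which is what certifies that $(\id,0)$ is a \emph{split} injection onto a direct summand and thereby pins down $i_{1*}-i_{2*}$ exactly.

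Granting the claim, the sequence unwinds formally. Since $(\id,0,-\id,0)^t$ is injective, $\K_1(A_\theta\rtimes\Z_2)=\ker(i_{1*}-i_{2*})=0$; its image is a direct summand with free cokernel of rank $3\cdot2^{n-1}-2^{n-2}$, and the rest of the sequence becomes a short exact sequence $0\to\operatorname{coker}(i_{1*}-i_{2*})\xrightarrow{\,j_{1*}+j_{2*}\,}\K_0(A_\theta\rtimes\Z_2)\xrightarrow{\,e_1\,}\K_1(A_{\theta'})\to0$ of finitely generated free abelian groups, which splits; hence $\K_0(A_\theta\rtimes\Z_2)\cong\Z^{3\cdot2^{n-1}-2^{n-2}}\oplus\Z^{2^{n-2}}\cong\Z^{3\cdot2^{n-1}}$. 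For the generating set I would chase generators through this sequence: a generating set of the cokernel is obtained by applying $j_{1*}+j_{2*}$ to the inductive basis $\{[1]\}\cup\{[Q]:Q\in\mathcal{P}_{n-1}\}$ of each copy of $\K_0(A_{\theta'}\rtimes\Z_2)$, while a lift along $e_1$ of each element of a basis of $\K_1(A_{\theta'})$ is, by the explicit description of $e_1$ and the Morita bimodules, the class of a Bott--type projection in $A_\theta\rtimes\Z_2$. Rewriting all of these as projections inside $A_\theta\rtimes\Z_2$ --- the ``torus'' ones being precisely the images of the \cite{CH21} projection basis of $\K_0(A_\theta)$ --- produces exactly $\{[1]\}\cup\{[P]:P\in\mathcal{P}_n\}$ for $\mathcal{P}_n$ as in Equation~\ref{eq:def_of_P_n}.

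The step I expect to be the main obstacle is pinning down $i_{1*}-i_{2*}$ on the nose, i.e.\ steps (b)--(c): one must control the inclusion $A_{\theta'}\hookrightarrow A_{\theta'}\rtimes\Z_2$ not merely up to isomorphism but compatibly with a fixed projection basis on each side, while simultaneously handling the two genuinely different crossed products attached to the two reflection subgroups. This is exactly the ambiguity left open in \cite{FW93} and settled for $n=3$ in \cite{CL19}; carrying it out for all $n$ is what forces the use of the \cite{CH21} projection basis, the explicit form of $e_1$, and the Morita bimodules. Everything else reduces to bookkeeping in the exact sequence, using that an extension of finitely generated free abelian groups is again free.
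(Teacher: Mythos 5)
Your proposal follows the paper's strategy closely: Natsume's sequence, the \cite{CH21} projection basis, the boundary map $e_1$, and the Morita/Rieffel bimodules are exactly the ingredients the paper uses, and your inductive skeleton (base cases $n=2,3$, then induction via the exact sequence) is the same. You also correctly single out the computation of $i_{1*}-i_{2*}$ relative to a \emph{fixed} projection basis as the crux. However, the plan in step~(b) is misleading as stated. It is true that the gauge twist is homotopic to the identity, so after identifying $A_{\theta'}\rtimes_\alpha\Z_2\cong A_{\theta'}\rtimes\Z_2$ via Lemma~\ref{lemma:flip=flip} one has $i_{1*}=i_{2*}$; but saying this map is ``$(\id,0)$'' in a decomposition whose first summand is its image is tautological and does not connect to the projection basis $\{[1]\}\cup\mathcal{P}_{n-1}$. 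In that basis the map is \emph{not} $(\id,0)$: the paper's Proposition~\ref{prop:imageofi_rieffel} shows that $i_{2*}([P_I^{\theta'}])$ is a five-term integer combination
$2[P_{I,\emptyset}]-[P_{I'',\emptyset}]+[P_{I'',(i_{2p-1})}]-[P_{I'',(i_{2p})}]+[P_{I'',(i_{2p-1},i_{2p})}]$,
and only after a nontrivial but explicitly constructed change of basis does $i_{1*}-i_{2*}$ take the normal form $(\id,0,-\id,0)^t$. Establishing that formula is precisely the technical heart of the paper (proved by a second induction using the $\psi$-isomorphism $A_{\sigma(\theta)}\cong eA_\theta e$, its compatibility with the crossed-product inclusions, and explicit homotopies in Appendix~\ref{sec:bottpincrossed}), and your step~(c) somewhat conflates this with the role of $e_1$: in the paper, $e_1$ (via Corollary~\ref{cor:gen_boundary} and Proposition~\ref{prop:main1}) is used \emph{after} $i_{1*}-i_{2*}$ is pinned down, to chase the remaining $\K_1(A_{\theta'})$-generators into $\K_0(A_\theta\rtimes\Z_2)$, not to certify the splitting. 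So the sketch is on the right track and identifies the right obstacle, but filling the gap requires proving the explicit formula of Proposition~\ref{prop:imageofi_rieffel}, which is not a formal consequence of the homotopy-to-identity observation.
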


   For general $\theta,$  using a continuous field argument from \cite{ELPW10}, and ideas from \cite{Cha20}, we construct an explicit basis of $\K_0(A_\theta \rtimes \Z_2)$ in terms of projective modules over $A_\theta \rtimes \Z_2.$ If $\mathrm{Proj}_n$ denotes the set as defined in Theorem~\ref{thm:main_K_gen_proj}, then we have the following result. 
 
   \begin{theorem}\label{intro:thm2}(Theorem~\ref{thm:main_K_gen_proj}, Corollary~\ref{thm:main_kthoeryall}) Let $\theta \in \mathcal{T}_n.$ Then $\K_0(A_{\theta}\rtimes \Z_2)\cong \Z^{3\cdot 2^{n-1}},$ and a generating set  of $\K_0(A_{\theta}\rtimes \Z_2)$ is   given by 
 $\{[1], [\mathcal{E}] ~|~ \mathcal{E}\in \mathrm{Proj}_n\}.$ 
	 \end{theorem}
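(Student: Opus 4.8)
The plan is to reduce the general case to the strongly totally irrational case treated in Theorem~\ref{intro:thm1} by a continuous-field deformation argument, transporting both the rank computation and an explicit basis along the field. First I would set up the continuous field $(A_\theta \rtimes \Z_2)_{\theta \in \mathcal{T}_n}$ over the (connected, in fact contractible) parameter space $\mathcal{T}_n$; this is the $\Z_2$-crossed product of the standard continuous field of noncommutative tori, and the flip action is fibrewise, so we genuinely get a continuous field of $C^*$-algebras in the sense of \cite{ELPW10}. The key input from \cite{ELPW10} (as used in \cite{Cha20}) is that for such a field the $\K_0$-groups of the fibres fit together in a way that lets one compare $\K_0(A_\theta \rtimes \Z_2)$ for an arbitrary $\theta$ with $\K_0(A_{\theta'} \rtimes \Z_2)$ for a nearby strongly totally irrational $\theta'$: the relevant evaluation maps are isomorphisms after suitably organizing the field, so in particular $\K_0(A_\theta \rtimes \Z_2) \cong \Z^{3\cdot 2^{n-1}}$ for every $\theta$ by density of the strongly totally irrational locus and the half-exactness/continuity of $\K$-theory for continuous fields. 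This gives Corollary~\ref{thm:main_kthoeryall}, the rank statement.

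Next I would produce the basis. The idea, following \cite{Cha20}, is that the projective modules $\mathcal{E} \in \mathrm{Proj}_n$ over $A_\theta \rtimes \Z_2$ should be defined so that they are restrictions of a global continuous field of projective modules over the total algebra of the field — i.e.\ each $\mathcal{E}$ comes from a finitely generated projective module over $C(\mathcal{T}_n, A_\theta \rtimes \Z_2)$ (or over a suitable sub-field), built from the Morita-equivalence bimodules for higher-dimensional noncommutative tori together with the unit, exactly the objects whose classes at a strongly totally irrational parameter were shown in Theorem~\ref{intro:thm1} to generate. The point is then a rigidity statement: a set of elements of $\K_0$ of the fibres that is locally constant (being the image of a fixed global class) and that generates at one fibre generates at every fibre, because $\mathcal{T}_n$ is connected and $\K_0$ is a locally constant sheaf along the field in the range of parameters we need. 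Concretely: fix $\theta$, pick a path from $\theta$ to a strongly totally irrational $\theta'$, and use the isomorphisms on $\K_0$ furnished by the continuous field along this path to carry the generating set $\{[1],[P] : P \in \mathcal{P}_n\}$ at $\theta'$ back to $\{[1],[\mathcal{E}] : \mathcal{E} \in \mathrm{Proj}_n\}$ at $\theta$; surjectivity of the evaluation maps on $\K_0$ ensures the transported set still generates.

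The main obstacle, as I see it, is not the rank count — that follows fairly formally from \cite{ELPW10} once the field is set up — but rather making the basis statement genuinely explicit: one must exhibit the modules in $\mathrm{Proj}_n$ concretely (not merely as abstract fibres of some field) and check that their classes are exactly the continuous extensions of the classes $[P]$, $P \in \mathcal{P}_n$. This requires (i) checking that the Morita bimodules for the $n$-dimensional tori and their $\Z_2$-equivariant refinements deform continuously in $\theta$, so that the induced modules over $A_\theta \rtimes \Z_2$ assemble into a field, and (ii) verifying that at a strongly totally irrational $\theta'$ these restrict to (the classes of) the projections $P \in \mathcal{P}_n$, which is where the explicit computations of \cite{CH21} and the description of $e_1$ get reused. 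A secondary subtlety is that $\K_0$ is only guaranteed to behave well for continuous fields under some exactness/nuclearity hypothesis; here all fibres are nuclear, so \cite{ELPW10} applies, but I would state this carefully. Once (i) and (ii) are in place, connectedness of $\mathcal{T}_n$ closes the argument: the transported classes generate $\K_0(A_\theta \rtimes \Z_2) \cong \Z^{3 \cdot 2^{n-1}}$ for all $\theta$.
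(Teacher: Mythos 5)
Your general architecture is the same as the paper's: build a continuous field with the crossed product as a fibre, invoke the Baum--Connes input from \cite{ELPW10} to make the evaluation maps $\K$-isomorphisms, transport a basis from a strongly totally irrational fibre to the given $\theta$. But there is a genuine gap, and it is exactly the gap that the paper identifies and fixes in the earlier attempt \cite{Cha20}.

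The fundamental obstruction is not "checking that the Morita bimodules deform continuously" in an abstract sense, but that the Heisenberg module $\mathcal{E}_I^\theta$ \emph{does not exist at all} unless the pfaffian minor $\pf(\theta_I)$ is nonzero (the block $\theta_I$ must be invertible for the Rieffel--Li construction to make sense). Consequently one cannot set up a global field over the contractible space $\mathcal{T}_n$ carrying the classes $[\mathcal{E}_I]$: the zero loci of the pfaffian minors cut through $\mathcal{T}_n$, and a generic path from $\theta$ to a strongly totally irrational $\theta'$ will cross them. Your proposal never confronts this, and a "fixed global class" is simply not available. The paper's workaround has two moving parts you do not mention: (a) replacing $\theta$ by a translate $\theta^{\mathrm{tr}} = \theta + nZ$ (the matrix $Z$ of Equation~\ref{eq:def_of_Z}) so that all pfaffian minors of the \emph{endpoint} become positive, and (b) choosing a specific piecewise-affine path $\nu(t)$ through the "all pfaffians $=1$" matrix $Z$ and proving (Proposition~\ref{prp:translate} / Proposition~\ref{prp:path_positive_pfaffian}, via the pfaffian summation formula) that all pfaffian minors of $\nu(t)$ stay strictly positive for every $t\in[0,1]$. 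Only then is the field $C([0,1])\rtimes_{\Omega_\nu}\Z^n$ and its $\Z_2$-crossed product well-defined with a coherent family $\mathcal{E}_I^{[0,1]}$ of modules, independent of $I$ (this independence from $I$ is precisely what was missing in \cite{Cha20}). Without (a) and (b) your transport argument does not run, and the basis statement is out of reach even though the rank statement survives (the rank only needs one $\theta$, as the paper notes from the proof of Theorem~6.6 in \cite{ELPW10}).

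A secondary inaccuracy: identifying the evaluated classes at the strongly totally irrational endpoint with the projections $P\in\mathcal{P}_n$ is not immediate; it uses the pfaffian summation formula, injectivity of the trace, and that the translate $\psi^{\mathrm{tr}}$ is still totally irrational, so $[\mathcal{E}_I^{\psi^{\mathrm{tr}}}]$ is a $\Z$-linear combination of the $[P_{I'}^\psi]$ with $|I'|\le|I|$ with leading coefficient $1$. You gesture at this in your item (ii), and it is carried out in the paper, but it is a trace argument at the fibre, not a consequence of "restriction along the field".
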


 It is known that $\theta$ is non-degenerate (see Definition~\ref{def:nondegenerate}) iff $A_\theta$ is simple. When $\theta \in \mathcal{T}_{n}$ is non-degenerate,  $A_\theta\rtimes \Z_2$ is an $A F$ algebra (see \cite[Theorem 6.6]{ELPW10}, and also Corollary~\ref{cor:flip_AF}). For non-degenerate $\theta_1, \theta_2$, our construction of explicit bases of $\K_0\left(A_{\theta_i} \rtimes \mathbb{Z}_2\right), i=1,2$, allows us to construct explicit isomorphisms between the Elliott invariants of $A_{\theta_1} \rtimes \mathbb{Z}_2$ and $A_{\theta_2} \rtimes \mathbb{Z}_2$ out of an isomorphism between the Elliott invariants of $A_{\theta_1}$ and $A_{\theta_2}$, and the converse holds if, in addition, one of the $\theta_i$ is totally irrational. This results in the following theorem.

\begin{theorem}
	\label{intro:thm3}(Theorem~\ref{thm:main_iso})
		Let $\theta_{1}, \theta_{2} \in \mathcal{T}_n $ be non-degenerate.
		Let $\Z_2$ act on $A_{\theta_{1}}$ and $A_{\theta_{2}}$ by the flip actions.  Then $A_{\theta_{1}}\rtimes \Z_2$ is isomorphic to $A_{\theta_{2}}\rtimes \Z_2$ if $A_{\theta_{1}}$ is isomorphic to $A_{\theta_{2}}.$ Moreover, if any one of $\theta_{1}, \theta_{2}$ is totally irrational, the converse is true.
 	 \end{theorem}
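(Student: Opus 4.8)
The plan is to leverage the classification of simple, separable, unital $AF$ algebras by their ordered $\K_0$-groups (together with the unit), which applies here because non-degeneracy of $\theta_i$ forces $A_{\theta_i}\rtimes\Z_2$ to be a unital $AF$ algebra by Corollary~\ref{cor:flip_AF}. So the entire theorem reduces to transporting pointed ordered $\K_0$ data back and forth between $A_{\theta_i}$ and $A_{\theta_i}\rtimes\Z_2$. For the forward direction, suppose $\varphi\colon A_{\theta_1}\xrightarrow{\ \sim\ }A_{\theta_2}$. First I would note that any automorphism (or isomorphism) of a noncommutative torus can be arranged to intertwine the flip actions — indeed the flip is intrinsic: it is (up to conjugation by an inner automorphism, which does not affect the crossed product up to isomorphism) the unique order-two automorphism fixing the relevant $\K$-theoretic/tracial data, or more concretely one checks that $\varphi$ composed with the flip on $A_{\theta_2}$ and the flip on $A_{\theta_1}$ composed with $\varphi$ differ by an inner automorphism, so after correcting $\varphi$ we get a genuinely $\Z_2$-equivariant isomorphism. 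An equivariant isomorphism induces an isomorphism $A_{\theta_1}\rtimes\Z_2\cong A_{\theta_2}\rtimes\Z_2$ directly, which already gives the forward implication; the point of the explicit bases is to make the induced map on invariants transparent, but for mere isomorphism the equivariance argument suffices.

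For the converse — assuming one of the $\theta_i$, say $\theta_1$, is totally irrational and $A_{\theta_1}\rtimes\Z_2\cong A_{\theta_2}\rtimes\Z_2$ — I would argue at the level of Elliott invariants. By Theorems~\ref{thm:main_K_gen} and~\ref{thm:main_K_gen_proj} (and Corollary~\ref{thm:main_kthoeryall}) we have explicit bases of $\K_0(A_{\theta_i}\rtimes\Z_2)\cong\Z^{3\cdot2^{n-1}}$ in terms of $[1]$ and the classes $[P]$ (resp.\ $[\mathcal E]$) coming from projections/modules over $A_{\theta_i}$, together with a Pimsner–Voiculescu / Natsume splitting that expresses the ordered group and the trace pairing of $A_{\theta_i}\rtimes\Z_2$ in terms of those of $A_{\theta_i}$: the unique tracial state on $A_{\theta_i}\rtimes\Z_2$ restricts to the unique trace $\tau_{\theta_i}$ on $A_{\theta_i}$ plus the finite "fixed-point" contributions from the $\Z_2$-cocycle, and the order structure is determined by strict positivity of these traces. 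An isomorphism $A_{\theta_1}\rtimes\Z_2\cong A_{\theta_2}\rtimes\Z_2$ thus yields an order- and unit-preserving isomorphism of these explicitly described groups; reading it through the fixed bases and the range of the trace, one extracts an isomorphism of the image of the trace on $\K_0$, i.e.\ of the subgroups of $\R$ generated by $1$ and the entries of $\theta_1,\theta_2$. Total irrationality of $\theta_1$ guarantees this tracial range is $\Z+\sum\Z\theta_{1,jk}$ with no collapsing, which by the known classification of noncommutative tori (the $\K_0$-group with its order and trace determines $A_\theta$ among simple tori, via e.g.\ the results used in \cite{CFW22}, \cite{ELPW10}) forces $A_{\theta_1}\cong A_{\theta_2}$.

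The main obstacle is the converse, and within it the passage from "the ordered pointed $\K_0$ of the crossed product agrees" to "the ordered pointed $\K_0$ of the torus agrees." Unlike the forward direction, this is not formal: the Natsume six-term sequence involves the connecting map $e_1$ and the map $i_{1*}-i_{2*}=(\id,0,-\id,0)^t$, so $\K_0(A_\theta)$ is recovered only as a specific subquotient of $\K_0(A_\theta\rtimes\Z_2)$, and one must check that an abstract isomorphism of crossed products respects this subquotient structure — this is exactly where the hypothesis that one $\theta_i$ is totally irrational enters, since it pins down the $\Z_2$-action data (e.g.\ that all relevant fixed-point K-theory is the "generic" $\Z^{2^{n-1}}$ and the nine "fixed point" projections have the expected trace values $0$ or $1/2$), ruling out pathological identifications. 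Concretely I would isolate, intrinsically in the invariant of $A_\theta\rtimes\Z_2$, the distinguished finite set of "flip-fixed" classes (those pairing with the trace to give dyadic values, the torsion-like part) and the complementary "bulk" part isomorphic to $\K_0(A_\theta)$; showing any isomorphism of invariants preserves this decomposition, for totally irrational $\theta_1$, is the crux, after which the classification of simple tori closes the argument. Routine equivariance bookkeeping for the forward direction and the verification that $A_{\theta_i}\rtimes\Z_2$ satisfies the UCT/classifiability hypotheses (immediate, being $AF$) I would relegate to short remarks.
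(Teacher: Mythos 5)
Your converse direction is roughly on the right track conceptually (the isomorphism of the crossed products forces equal tracial ranges, and via $\Tr^{\Z_2}_\theta(\K_0(A_\theta\rtimes\Z_2))=\tfrac{1}{2}\Tr_\theta(\K_0(A_\theta))$ this forces equal tracial ranges for the tori, at which point one constructs the required pointed order isomorphism by splitting the torsion-free short exact sequences $0\to\ker\Tr\to\K_0\to R_i\to 0$ and matching ranks), although you overcomplicate the final step: the paper does not try to ``push the given isomorphism through the Natsume sequence'' or isolate a ``flip-fixed'' versus ``bulk'' decomposition that the isomorphism must respect; it simply builds a fresh map $g:\K_0(A_{\theta_1})\to\K_0(A_{\theta_2})$ from the rank data, so there is no subquotient-preservation to verify and the crux you identify there is a non-issue.

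The serious gap is in your forward direction. You assert that an arbitrary isomorphism $\varphi:A_{\theta_1}\to A_{\theta_2}$ can be corrected by an inner automorphism so as to intertwine the flip actions, because ``the flip is intrinsic'' and is ``the unique order-two automorphism fixing the relevant $\K$-theoretic/tracial data.'' This is an unproven and, to the best of current knowledge, unjustifiable claim for general non-degenerate $n$-dimensional $\theta$: many automorphisms of $A_\theta$ preserve the unique trace and act trivially on $\K_0$, and there is no theorem available here saying the flip is unique up to inner conjugacy, nor that $\beta_2\circ\varphi\circ\beta_1^{-1}\circ\varphi^{-1}$ is inner. Indeed, if this were true the forward implication would be immediate and the paper's elaborate construction would be superfluous. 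The paper instead proceeds entirely at the level of the Elliott invariant: starting from the trace-preserving isomorphism $f$ on $\K_0(A_{\theta_i})$, it writes $f$ in the explicit projective-module bases from Theorem~\ref{thm:main_K_gen_proj_nt}, then hand-builds a map $f'$ on $\K_0(A_{\theta_i}\rtimes\Z_2)$ in the bases from Theorem~\ref{thm:main_K_gen_proj} using the coefficients $C^{I'}_{I''}$ and the $[\mathcal{E}_{I,J}]$ classes, verifies $\Tr^{\Z_2}_{\theta_2}\circ f'=\Tr^{\Z_2}_{\theta_1}$ and $f'([1])=[1]$, and only then invokes the AF classification (via Corollary~\ref{cor:flip_AF}). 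The whole point of the explicit bases developed in Sections~\ref{sec:k-theory} and \ref{sec:gen_general} is precisely to make this transport of invariants possible without any equivariance of $\varphi$, so the forward direction of your proposal replaces the paper's central technical device with an unsupported shortcut.
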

 	 The above theorem is a generalisation of \cite[Theorem 6.4]{ELPW10} for general $n.$ It is worth mentioning that the only canonical action (in the sense of \cite{JL15}) of a finite cyclic group on a 3-dimensional torus $A_\theta,$ when $\theta$ is non-degenerate, is the flip action (\cite[Theorem 1.4]{JL15}).

It should be noted that, using a completely different approach, Davis and L\"uck (\cite{DL13}) computed the K-theory of $A_\theta \rtimes \Z_2$ when $\theta$ is the $n \times n$ zero matrix. However, from their methods it is not clear  how to extract a concrete basis for $\K_0(A_\theta \rtimes \Z_2),$ and hence a classification type result like Theorem~\ref{intro:thm2}.

This article is organised as follows. In Section~\ref{sec:definition}, we define  $A_\theta \rtimes \Z_2$ through twisted group C$^*$-algebras and study some basic properties of the crossed product. The K-theory of $A_\theta$ and a generating set of $\K_0(A_\theta),$ for a strongly totally irrational $\theta,$ are described in Section~\ref{sec:rie}. Section~\ref{sec:k-theory} deals with descriptions of the maps that appear in Natsume's exact sequence, and the proof of  Theorem~\ref{intro:thm1}. In Section~\ref{sec:gen_general} we use the continuous field approach of \cite{ELPW10} to describe the explicit generators of $\K_0(A_\theta\rtimes\Z_2)$ for general $\theta,$ and prove Theorem~\ref{intro:thm2}. The classification-type theorem,  Theorem~\ref{intro:thm3}, is proved in Section~\ref{sec:iso_classes}. In Appendix~\ref{sec:riep}, we revisit the construction of the two-dimensional Rieffel projection which is used in the main construction of Section~\ref{sec:rie} and Section~\ref{sec:k-theory}, and in Appendix~\ref{sec:strongly_irr} we give a class of examples of strongly totally irrational matrices. Finally in Appendix~\ref{sec:path}, we explicitly describe the continuous field which is used in Section~\ref{sec:gen_general}.

	\textbf {Notation}: $e(x)$ will always denote the number $e^{2\pi i x}$, and $\id_m$ (or without the `m' decoration if the context is clear) will be the $m\times m$ unit matrix.

	
	\section{$A_\theta\rtimes \Z_2$ - revisited}\label{sec:definition}
	
	Let $G$ be a discrete group.  A map $\omega: G\times G \to \mathbb{T}$ is called a \emph{2-cocycle} if 
$$ \omega(x, y) \omega(xy, z)= \omega(x, yz) \omega(y, z) \quad \text{and}  \quad \omega(x, 1) = 1 = \omega(1, x)\label{units}
\label{cocycle}
$$
for all  $x, y, z \in G .$

The \emph{$\omega$-twisted left regular representation} of the group $G$ is given by the formula:

$$(L_{\omega}(x)f)(y) = \omega(x, x^{-1}y)f(x^{-1}y),$$ for $f \in l^2(G)$. The \emph{reduced twisted group C$^*$-algebra} $C^*(G,\omega)$  is defined as the sub-C$^*$-algebra of $B(l^2(G))$ generated by the $\omega$-twisted left regular representation of the group $G$. Since we do not talk about full group C$^*$-algebras in this paper, we simply call $C^*(G,\omega)$ the twisted group C$^*$-algebra of $G$ with respect to $\omega.$ When $\omega=1,$ $C^*(G,\omega)=:C^*(G)$ is the usual reduced group C$^*$-algebra of $G.$  We refer to \cite[Section 1]{ELPW10} for more on twisted group C$^*$-algebras and the details of the above construction. 

\begin{example}\label{ex:nt}
	\normalfont
Let $G$ be the group $\Z^n$. For each $\theta \in \mathcal{T}_{n}$, construct a 2-cocycle $\omega_\theta$ on $G$ by defining $\omega_\theta(x, y) = e(\langle -\theta x,y \rangle/2 )$. The corresponding twisted group C$^*$-algebra $C^*(G, \omega_\theta)$ is isomorphic to the $n$-dimensional noncommutative torus $A_\theta$, which was defined in the introduction. The isomorphism sends $ \delta_{x_i} \in C^*(\Z^n, \omega_\theta)$ to $U_i$ where $x_i = (0, \ldots, 1, \ldots, 0),$ with 1 at the $i^{th}$ position.   

  \end{example}
\begin{example}\label{ex:orbifold}
\normalfont
 Let $\Z_2$ act on $\Z^n$ by sending $x$ to $-x$. Let us also take a $\theta \in \mathcal{T}_{n}$.  Then we can define a 2-cocycle $\omega_\theta '$ on $G:=\Z^n \rtimes \Z_2$ by $\omega_\theta '((x,s),(y,t)) = \omega_\theta (x,s\cdot y)$. By Lemma 2.1 of \cite{ELPW10} we have $C^*(\Z^n\rtimes \Z_2, \omega_\theta ') = A_{\theta} \rtimes_\beta \Z_2,$ where the action $\beta$ of $\Z_2$ on $A_\theta$ is given by sending $U_i$ to $U_i^{-1}$ which is the \emph{flip action}. For the crossed product with the flip action $A_{\theta} \rtimes_\beta \Z_2,$ we shall often drop the decoration $\beta$ from $A_{\theta} \rtimes_\beta \Z_2,$ and denote it by $A_{\theta} \rtimes \Z_2.$   
 \end{example}
 
 Let  $\theta$ be as before and let $\theta'$ be  the  upper left $(n-1) \times (n-1)$ block of $\theta$. In this case, $A_{\theta}$ can be written as a crossed product $ A_{\theta'}\rtimes_{\gamma} \Z$, where the action $\gamma$  of $\Z$ on $A_{\theta'}$ is determined on the positive generator of $\Z$ by mapping
$ U_i$ to $e(-\theta_{in})U_i,$ for $i =1, \ldots, n-1.$  Now $A_{\theta}\rtimes \Z_2 = A_{\theta'}\rtimes_\phi (\Z \rtimes \Z_2) = A_{\theta'}\rtimes_\phi \Z_2 *  \Z_2$, since $\Z_2 *  \Z_2$ is isomorphic to $\Z \rtimes \Z_2$ as groups (cf. \cite[Proposition 6]{FW93}). Note that one copy of $\Z_2$ acts on $A_{\theta'}$ by the flip action $\beta,$ and the other by $\alpha = \gamma \circ \beta$.

 \begin{lemma}\label{lemma:flip=flip}
 	$A_{\theta'}\rtimes \Z_2 \cong A_{\theta'}\rtimes_\alpha \Z_2.$
 \end{lemma}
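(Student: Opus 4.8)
The plan is to realise $\alpha$ as a \emph{conjugate} of the flip action $\beta$ by a gauge automorphism of $A_{\theta'}$, so that the claimed isomorphism of crossed products comes for free from the general principle ``conjugating an action leaves the crossed product unchanged''. First I would record $\alpha$ on generators: since $\beta(U_i)=U_i^{-1}$ and $\gamma(U_i)=e(-\theta_{in})U_i$ for $i=1,\dots,n-1$, we have
\[
\alpha(U_i)=\gamma(\beta(U_i))=\gamma(U_i^{-1})=e(\theta_{in})\,U_i^{-1},
\]
and in particular $\alpha^2=\id$, confirming that $\alpha$ really is a $\Z_2$-action.

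Next I would build the intertwining automorphism. For each $i=1,\dots,n-1$ fix a square root $e(-\theta_{in}/2)$ of $e(-\theta_{in})$ and define $\sigma$ on the generators of $A_{\theta'}$ by $\sigma(U_i)=e(-\theta_{in}/2)\,U_i$. Replacing the $U_i$ by unit scalar multiples of themselves preserves the relations $U_jU_k=e(\theta'_{jk})U_kU_j$, so by the universal property of $A_{\theta'}$ this extends to a $\ast$-automorphism $\sigma$ of $A_{\theta'}$. A one-line check on generators,
\begin{align*}
\sigma\beta\sigma^{-1}(U_i)&=\sigma\beta\bigl(e(\theta_{in}/2)U_i\bigr)=\sigma\bigl(e(\theta_{in}/2)U_i^{-1}\bigr)\\
&=e(\theta_{in}/2)^2\,U_i^{-1}=e(\theta_{in})\,U_i^{-1}=\alpha(U_i),
\end{align*}
then gives $\sigma\beta\sigma^{-1}=\alpha$ as automorphisms of $A_{\theta'}$.

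Finally I would invoke the standard fact that if $\sigma\in\Aut(B)$ conjugates one action into another, then the crossed products are isomorphic: the pair consisting of $\sigma$ on $A_{\theta'}$ together with the identity on the canonical order-two unitary $u$ is covariant from $(A_{\theta'},\beta)$ to $(A_{\theta'},\alpha)$, because in $A_{\theta'}\rtimes_\alpha\Z_2$ one has $u\,\sigma(a)\,u^{-1}=\alpha(\sigma(a))=\sigma(\beta(a))$ for all $a$; hence it integrates to a $\ast$-isomorphism $A_{\theta'}\rtimes_\beta\Z_2\cong A_{\theta'}\rtimes_\alpha\Z_2$, and $A_{\theta'}\rtimes\Z_2$ denotes $A_{\theta'}\rtimes_\beta\Z_2$ by convention, which is the assertion. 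I do not expect a genuine obstacle here; the only points requiring care are the phase bookkeeping — the choice of square root $e(-\theta_{in}/2)$ is immaterial, and one should double-check the convention $\alpha=\gamma\circ\beta$ (as opposed to $\beta\circ\gamma$), which merely replaces $\sigma$ by $\sigma^{-1}$ — and, if one prefers to remain inside the twisted-group-$C^*$-algebra formalism of Section~\ref{sec:definition} instead of using the universal presentation, rephrasing $\sigma$ as a character-twist on $C^*(\Z^{n-1},\omega_{\theta'})$.
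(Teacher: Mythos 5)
Your proof is correct and is essentially the paper's proof in different clothing: the rescaling $\sigma(U_i)=e(-\theta_{in}/2)\,U_i$ is exactly the paper's change of generators $\tilde U_i = e(-\tfrac12\theta_{in})U_i$, and your covariant-pair argument is just the abstract formulation of the paper's verification that $(\tilde U_i, W')$ satisfy the flip relations $W'\tilde U_iW'=\tilde U_i^{-1}$.
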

 \begin{proof}
 	$A_{\theta'}\rtimes_\alpha \Z_2$ is generated by the unitaries $U_1, U_2, \ldots, U_{n-1}$ and $W'=U_nW$ and  we have the relations $W'^2 = 1,  U_jU_k = e(\theta_{jk})  U_kU_j, W'U_iW' = e(\theta_{in})U_i^{-1}.$ Upon setting $\tilde{U_i} = e(-\frac{1}{2}\theta_{in})U_i$ for $i=1,2,\ldots,n-1$, we have that $$\tilde{U}_j\tilde{U}_k = e(\theta_{jk})\tilde{U}_k\tilde{U}_j,\quad  W'\tilde{U_i}W' = \tilde{U_i}^{-1}.$$ So $A_{\theta'}\rtimes_\alpha \Z_2$ is isomorphic to $A_{\theta'}\rtimes \Z_2.$
 \end{proof}

 \section{The construction of Rieffel-type projections and $\K_0(A_\theta)$}\label{sec:rie}
 
For a formal definition of the pfaffian $\pf(A)$ of an $n \times n$ skew-symmetric matrix $A$ with $n=2m$ even, we refer to \cite[Definition 3.1]{Cha20}. If $n=2m$ for some integer $m\geq1,$ then for
$$A=\left(\begin{matrix}
                  0 & \theta_{12}&\cdots&&\cdots&\theta_{1n} \\
                  -\theta_{12} &\ddots&\ddots&&& \theta_{2n}\\
                  \vdots&\ddots&&&&\\
                  &&&&&\\
                  &&&&\ddots&\vdots\\
                  -\theta_{1(n-1)}&&&\ddots&\ddots& \theta_{(n-1)n}\\
                  -\theta_{1n} &\cdots&&\cdots& -\theta_{(n-1)n}&0
                \end{matrix}
\right),$$
the pfaffian of $A$ is given by $\sum_{\xi}(-1)^{|\xi|}\Pi_{s=1}^m \theta_{\xi(2s-1)\xi(2s)},$ where the sum is taken over all elements $\xi$ of the
permutation group $S_n$ such that
$\xi(2s-1)<\xi(2s)$ for all $1\leq s\leq m$ and $\xi(1)<\xi(3)<\cdots<\xi(2m-1).$

\begin{definition}\label{2p minor}Let $n\geq 2$ be an integer, and let $p$ be an integer such that $1\leq p\leq\frac{n}{2}.$ A {\it $2p$-pfaffian minor} (or just {\it pfaffian minor}) of a skew-symmetric $n\times n$ matrix $A$ is the pfaffian of a sub-matrix $\m{A}{I}$ of $A$ consisting of rows and columns indexed by $i_1,i_2,\dots,i_{2p}$ for some numbers
  $1\leq i_1<i_2<\cdots<i_{2p}\leq n,$ and $I=(i_1,i_2,\dots,i_{2p}).$ Define the length of $I,$ $|I|:=2p.$ We often use ${\pf}_{I}^A$
  as the abbreviation of ${\pf}(\m{A}{I})$ without special emphasis.
  
  For $p=0,$ define $I$ to be the empty sequence $\emptyset,$ and in this case, define  ${\pf}(\m{A}{I})={\pf}(\m{A}{\emptyset}):=1.$ The length of $I=\emptyset$ is defined to be zero. 
  
  The set of all such $I$'s, for a fixed $n$ and varying $p,~ 0\leq p\leq\frac{n}{2},$ is denoted by $\mathrm{Minor}(n).$ Of course, $\mathrm{Minor}(n)\subset \mathrm{Minor}(n+1),$ for all $n.$ Note that the number of elements of $\mathrm{Minor}(n)$ is $2^{n-1}.$
\end{definition}

Let $\Tr$ denote the canonical tracial state on $A_\theta$ satisfying $\Tr(1)=1,$  $\Tr(U_1^{m_1}U_2^{m_2}\cdots U_n^{m_n})=0$ unless $(m_1,m_2,\ldots,m_n)=0\in \Z^n.$ We recall the following fact due to Elliott which will play a key role.
\begin{theorem}[Elliott]\label{elliott_image_of_trace}

Let $\theta$ be a skew-symmetric real $n\times n$ matrix. Then there is an isomorphism $h: \K_0\left(A_{\theta}\right) \rightarrow \Lambda^{\text {even }} \mathbb{Z}^n$ such that $\exp _{\wedge}(\theta) \circ h=\Tr$, where $\operatorname{exp}_{\wedge}(\theta)$ is the exterior exponential map
 $$\operatorname{exp}_{\wedge}(\theta):\Lambda^{\operatorname{even}}\Z^n\rightarrow \R,$$and such that $h([1])$ is the standard generator $1 \in \Lambda^0\left(\mathbb{Z}^n\right)=\mathbb{Z}$. In particular, $\Tr(\K_0(A_\theta))$  is the range of the exterior exponential.
\end{theorem}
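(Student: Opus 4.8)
The plan is to build $A_\theta$ up from $\C$ by successive crossed products with $\Z$ and induct on $n$, proving simultaneously that $\K_0(A_\theta)\cong\Lambda^{\mathrm{even}}\Z^n$ and $\K_1(A_\theta)\cong\Lambda^{\mathrm{odd}}\Z^n$ as groups; the base cases $A_\emptyset=\C$ and $A_{(0)}=C(\T)$ are immediate. Recall from the excerpt that $A_\theta=A_{\theta'}\rtimes_\gamma\Z$ with $\gamma(U_i)=e(-\theta_{in})U_i$. The \emph{decisive} point is that $\gamma$ is homotopic to $\id$ inside $\Aut(A_{\theta'})$ through $\gamma_t(U_i)=e(-t\theta_{in})U_i$, so $\gamma_*=\id$ on $\K_*(A_{\theta'})$. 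The Pimsner--Voiculescu exact sequence then has all its connecting maps $1-\gamma_*=0$, so it degenerates into short exact sequences
\[
0\longrightarrow\K_j(A_{\theta'})\xrightarrow{\iota_*}\K_j(A_\theta)\xrightarrow{\ \partial\ }\K_{j-1}(A_{\theta'})\longrightarrow 0,
\]
which split since the groups are finitely generated free abelian by induction. Matching this with $\Lambda^k\Z^n=\Lambda^k\Z^{n-1}\oplus\bigl(\Lambda^{k-1}\Z^{n-1}\bigr)\wedge e_n$ gives the group isomorphisms; and since $[1_{A_\theta}]=\iota_*[1_{A_{\theta'}}]$, one chooses $h$ to agree with the inductive isomorphism on the $\iota_*$-summand, which forces $h([1])$ to be the generator of $\Lambda^0\Z^n$.

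It remains to match $\Tr$ with $\exp_\wedge(\theta)\circ h$. On the subgroup $\iota_*\K_0(A_{\theta'})$ this is immediate: $\gamma$ preserves the canonical trace $\Tr_{\theta'}$, so $\Tr_\theta$ is its dual trace and $\Tr_\theta\circ\iota_*=\Tr_{\theta'}$; on the other side $\theta$ restricts to $\theta'$ on $\Lambda^2\Z^{n-1}$, hence $\exp_\wedge(\theta)$ restricts to $\exp_\wedge(\theta')$ on $\Lambda^{\mathrm{even}}\Z^{n-1}$, and induction applies. For a class in the complementary summand, coming from $w\in\K_1(A_{\theta'})$, I would compute $\Tr_\theta$ of the associated $\K_0$-class by the de la Harpe--Skandalis (Pimsner--Voiculescu) determinant formula, which presents it as the winding integral $\tfrac{1}{2\pi i}\int_0^1\Tr_{\theta'}\!\bigl(\tfrac{d}{dt}\gamma_t(v)\cdot\gamma_t(v)^{-1}\bigr)\,dt$ for a unitary matrix $v$ over the smooth algebra $A_{\theta'}^\infty$ representing $w$; it suffices to evaluate this on a basis, for which one can take products of the Rieffel-type projections of Section~\ref{sec:rie} with the generators $U_i$. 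The only $t$-dependence is the scalar $e(-t\theta_{in})$ on each occurrence of $U_i$, and carrying out the integral reproduces exactly the contraction identity $\langle e^\theta,\omega\wedge e_n\rangle=\sum_{k<n}\theta_{kn}\,\langle e^{\theta'},\iota_{e_k^*}\omega\rangle$ for the exterior exponential; this identifies $\Tr_\theta$ on this summand with $\exp_\wedge(\theta)$ applied to $h'(w)\wedge e_n$, closing the induction.

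The step I expect to be the genuine obstacle is this last computation: getting the normalizations in the determinant formula right and checking the combinatorial identity against $e^\theta$ on every basis element requires knowing the representing unitaries explicitly enough to run the integral, which is exactly what the explicit Rieffel-type projections and their pfaffian-minor traces supply. An alternative that sidesteps the inductive trace computation is the cyclic-homology route: the periodic cyclic homology $\HP_0(A_\theta^\infty)$ is canonically $\Lambda^{\mathrm{even}}\Z^n\otimes\C$, independently of $\theta$, so the Chern character gives $\ch\colon\K_0(A_\theta)\to\Lambda^{\mathrm{even}}\Z^n\otimes\C$; by the group computation above together with the fact that $\ch(\K_0(A_\theta))$ is the standard integral lattice $\Lambda^{\mathrm{even}}\Z^n$ (seen by deforming $\theta$ to $0$ in a continuous field, where it is the integral cohomology of $\T^n$, and noting the image is discrete hence locally constant), $\ch$ is an isomorphism onto $\Lambda^{\mathrm{even}}\Z^n$, and one takes $h=\ch$; the theorem then amounts to Connes' computation that the class of the canonical trace in $\HP^0(A_\theta^\infty)$ is the exterior exponential $e^\theta$. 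Either way, the non-formal ingredient is an index-theorem-type trace calculation, while the group-theoretic skeleton is routine Pimsner--Voiculescu.
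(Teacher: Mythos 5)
The paper does not actually prove this theorem: it is Elliott's result, and the paper simply cites Elliott's 1984 article for both the definition of $\exp_\wedge$ and the proof. Your sketch is therefore doing strictly more than the paper does, and in outline it is a reasonable reconstruction of the standard argument. The Pimsner--Voiculescu skeleton (degenerate connecting maps, split short exact sequences, matching with $\Lambda^k\Z^n = \Lambda^k\Z^{n-1}\oplus\Lambda^{k-1}\Z^{n-1}\wedge e_n$) is exactly right, as is the observation that on $\iota_*\K_0(A_{\theta'})$ the trace compatibility is immediate.

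The step you yourself flag as the obstacle is where the real content lives, and two points there need more care than a sketch gives. First, the Exel/de la Harpe--Skandalis rotation-number formula for $\Tr_\theta$ on a class $[p]$ with $\partial[p]=[u]\in\K_1(A_{\theta'})$ is, a priori, well-defined only modulo $\Tr_{\theta'}(\K_0(A_{\theta'}))$: the winding integral depends on the chosen homotopy of automorphisms and on a choice of unitary representative, while the PV splitting $s$ of $\partial$ is not canonical either. To obtain the exact identity $\exp_\wedge(\theta)\circ h=\Tr$, and not merely equality of ranges, you must fix a particular splitting and verify that it is compatible with the canonical path $\gamma_t(U_i)=e(-t\theta_{in})U_i$; otherwise the inductive equality closes only up to the lower-order lattice. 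Second, the Rieffel-type projections of Section 2 exist only under strong total irrationality, so evaluating the winding integral on that basis only treats a dense set of $\theta$; extending to arbitrary skew-symmetric $\theta$ requires a continuity argument (e.g.\ a continuous field). Your cyclic-homology alternative handles both points cleanly, since $\ch$ is canonical and the deformation to $\theta=0$ inside the ELPW-type continuous field takes care of arbitrary $\theta$; as a blueprint for a complete proof that route is the more robust one, even though the PV-plus-winding picture is closer to the geometric content Elliott had in mind.
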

 \noindent We refer to (\cite[(1.3, Theorem 2.2, Theorem 3.1)]{Ell84}) for the definition of the exterior exponential and the proof of the above theorem. The range of the exterior exponential is well known and is given below as a corollary of the above theorem:

\begin{corollary} For $\theta\in \mathcal{T}_n,$
$\Tr(\K_0(A_{\theta}))$ is the subgroup of $\mathbb{R}$ generated by $1$ and the numbers
\begin{eqnarray}
\sum_{\xi}(-1)^{|\xi|}\Pi_{s=1}^{p}\theta_{j_{\xi (2s-1)}j_{\xi(2s)}}\nonumber
\end{eqnarray} for
$1\leq j_1<j_2<\cdots<j_{2p}\leq n,$ where the sum is taken over all elements
$\xi$ of the permutation group $S_{2p}$ such that $\xi(2s-1)<\xi(2s)$ for all
$1\leq s\leq p$ and $\xi(1)<\xi(3)<\cdots<\xi(2p-1).$
\end{corollary}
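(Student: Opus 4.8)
The plan is to derive the corollary as a direct translation of Elliott's theorem (Theorem~\ref{elliott_image_of_trace}), whose content is that $\Tr(\K_0(A_\theta))$ equals the range of the exterior exponential $\exp_\wedge(\theta)\colon \Lambda^{\mathrm{even}}\Z^n \to \R$. So the only real task is to unwind the definition of $\exp_\wedge(\theta)$ and identify its image concretely. First I would recall that $\Lambda^{\mathrm{even}}\Z^n = \bigoplus_{p\geq 0}\Lambda^{2p}\Z^n$, with $\Lambda^{2p}\Z^n$ a free abelian group of rank $\binom{n}{2p}$ having the standard basis $e_{j_1}\wedge\cdots\wedge e_{j_{2p}}$ indexed by $1\leq j_1<\cdots<j_{2p}\leq n$. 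Since $\Tr$ is additive and $h$ is a group isomorphism, $\Tr(\K_0(A_\theta)) = \exp_\wedge(\theta)(\Lambda^{\mathrm{even}}\Z^n)$ is the subgroup of $\R$ generated by the images $\exp_\wedge(\theta)(e_{j_1}\wedge\cdots\wedge e_{j_{2p}})$ of these basis vectors, together with $\exp_\wedge(\theta)(1)$ for the $p=0$ summand, which equals $1$ because $h([1])$ is the standard generator of $\Lambda^0$.

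The key step is then to compute $\exp_\wedge(\theta)(e_{j_1}\wedge\cdots\wedge e_{j_{2p}})$. By Elliott's definition of the exterior exponential (from \cite[(1.3), Theorem 2.2, Theorem 3.1]{Ell84}), $\theta$ is regarded as an element of $\Lambda^2(\R^n)^*$ (or equivalently $\Lambda^2\R^n$ via the pairing), and $\exp_\wedge(\theta) = \sum_{k\geq 0}\frac{1}{k!}\theta^{\wedge k}$ acts by contraction against the given exterior monomial. Pairing $\frac{1}{p!}\theta^{\wedge p}$ with $e_{j_1}\wedge\cdots\wedge e_{j_{2p}}$ produces exactly the signed sum $\sum_{\xi}(-1)^{|\xi|}\prod_{s=1}^p \theta_{j_{\xi(2s-1)}j_{\xi(2s)}}$, where $\xi$ runs over the perfect-matching representatives in $S_{2p}$ satisfying $\xi(2s-1)<\xi(2s)$ and $\xi(1)<\xi(3)<\cdots<\xi(2p-1)$; all terms $\theta^{\wedge k}$ with $k\neq p$ contract to zero against a monomial of degree $2p$. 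This is precisely the $2p$-pfaffian minor $\pf^\theta_{(j_1,\dots,j_{2p})}$ in the notation of Definition~\ref{2p minor}. I would verify the combinatorial bookkeeping (the sign $(-1)^{|\xi|}$, the factor $\frac{1}{p!}$ cancelling the $p!$ orderings of the pairs) by comparing with the stated pfaffian formula preceding Definition~\ref{2p minor}, which is the same expression.

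The main obstacle — really the only one — is matching conventions: Elliott's normalization of the exterior exponential, the identification of the skew-symmetric matrix $\theta$ with a 2-form, and the precise sign conventions in the pfaffian expansion must all be lined up so that the output is literally the displayed sum rather than differing by a sign or a scalar. I would handle this by checking the low-rank cases explicitly: for $2p=2$ the formula gives $\theta_{j_1 j_2}$, matching $\Tr$ of the Rieffel projection in the corresponding two-dimensional subtorus; for $2p=4$ it gives $\theta_{j_1 j_2}\theta_{j_3 j_4} - \theta_{j_1 j_3}\theta_{j_2 j_4} + \theta_{j_1 j_4}\theta_{j_2 j_3}$, the standard $4\times 4$ pfaffian. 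Once the conventions agree on these cases, the general statement follows since both sides are defined by the same indexing over $\mathrm{Minor}(n)$. Finally I would note that the generating set has $2^{n-1}$ elements (including $1$), consistent with $|\mathrm{Minor}(n)| = 2^{n-1}$ and $\mathrm{rank}\,\Lambda^{\mathrm{even}}\Z^n = 2^{n-1}$, which serves as a sanity check that no generators have been omitted.
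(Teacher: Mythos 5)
Your proposal is correct and follows the same route as the paper: the corollary is a direct unwinding of Elliott's theorem, identifying the range of the exterior exponential $\exp_\wedge(\theta)$ by evaluating it on the standard basis $e_{j_1}\wedge\cdots\wedge e_{j_{2p}}$ of $\Lambda^{\text{even}}\Z^n$ and recognizing the resulting signed sums as the pfaffian minors $\pf^\theta_{(j_1,\dots,j_{2p})}$. The paper offers no proof beyond calling the range of the exterior exponential ``well known,'' so your explicit computation of the contraction and the rank sanity check are a useful elaboration of exactly what the paper leaves implicit.
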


Noting that $\sum_{\xi}(-1)^{|\xi|}\Pi_{s=1}^{p}\theta_{j_{\xi (2s-1)}j_{\xi(2s)}}$ is exactly the pfaffian of $\m{\theta}{I}$,
where $I=(i_1,i_2,\dots,i_{2p})$, we have
\begin{eqnarray}\label{range of trace}
  \Tr(\K_0(A_{\theta}))=\sum\limits_{I\in \mathrm{Minor}(n)}{\pf}(\m{\theta}{I})\mathbb{Z}.
\end{eqnarray}

Except for the case $I=\emptyset,$ it is not clear whether the numbers ${\pf}(\m{\theta}{I})$ can be realized as traces of projections (which we call \emph{Rieffel-type projections}, if exist) inside $A_{\theta}$ or not. For the case $I=\emptyset,$ of course we can take the projection $1\in A_{\theta}.$  In this section we will construct Rieffel-type projections for a large class of $A_{\theta}.$


\begin{definition}[Definition 1 of \cite{FW93}]\label{df totally irrational}
  We say that $\theta\in\mathcal{T}_n$ is {\it totally irrational} if $\operatorname{exp}_{\wedge}(\theta)$ is an injective map from $\Lambda^{even}\mathbb{Z}^n$ to $\mathbb{R}$ (cf. Section 6 and 7 of \cite{Rie88}).
\end{definition}

It is clear from Elliott's work (Theorem~\ref{elliott_image_of_trace}) that $\Tr$ is injective if and only if $\theta$ is totally irrational.  Now the range of $\operatorname{exp}_{\wedge}(\theta)$ is given by
$$\sum\limits_{I\in \mathrm{Minor}(n)}{\pf}(\m{\theta}{I})\mathbb{Z}.$$ 
Thus $\theta$ is totally irrational if and only if the numbers $\mathrm{pf}(\m{\theta}{I}),$ $I\in \mathrm{Minor}(n),$ are rationally independent. Note that if $\theta$ is totally irrational, $\theta$ is also nondegenerate in the sense of \cite{Phi06} (see Definition~\ref{def:nondegenerate}), and $A_{\theta}$ is a simple $C^*$-algebra by Theorem 1.9 of \cite{Phi06}.

For $$\theta=\left(\begin{matrix}
                 \theta_{1,1} &  \theta_{1,2} \\
               \theta_{2,1}& \theta_{2,2}
                 \end{matrix}\right)= \left(\begin{matrix}
               \theta_{1,1} & \theta_{1,2} \\
                -\theta_{1,2}^t &\theta_{2,2}
                 \end{matrix}\right)\in\mathcal{T}_n,\quad n=2l ~\text{for}~l>1, $$
                such that
                \begin{eqnarray}
  \theta_{1,1}=\left(\begin{matrix}
              0 &  \theta_{12} \\
               \theta_{21}& 0
                 \end{matrix}\right)=\left(\begin{matrix}
                 0 &  \theta_{12} \\
              - \theta_{12}& 0
                 \end{matrix}\right)\in\mathcal{T}_2\nonumber
\end{eqnarray} is an invertible $2\times2$ matrix,
                 $$\theta_{2,2}=\left(\begin{matrix}
                 0 &  \theta_{34}&\cdots&\theta_{3n} \\
              - \theta_{34}& 0&\cdots&\theta_{4n}\\
              \vdots&\vdots&\ddots&\vdots\\
              -\theta_{3n}&-\theta_{4n}&\cdots&0
                 \end{matrix}\right)\in\mathcal{T}_{n-2},\quad \theta_{1,2}=\left(\begin{matrix}
                \theta_{13} &  \theta_{14}&\cdots&\theta_{1n} \\
               \theta_{23}& \theta_{24}&\cdots&\theta_{2n}
                 \end{matrix}\right),$$
     and $$\theta_{2,1}=\left(\begin{matrix}
                -\theta_{13} & - \theta_{23} \\
                -\theta_{14} & - \theta_{24} \\
                \vdots&\vdots\\
             -\theta_{1n}& -\theta_{2n}
                 \end{matrix}\right);$$
we have  $ \theta_{1,1}^{-1}=\left(\begin{matrix}
                 0 &  -\frac{1}{\theta_{12}} \\
              \frac{1}{\theta_{12}}& 0
                 \end{matrix}\right)\in\mathcal{T}_2,$ and
$$\theta_{2,2}-\theta_{2,1}\theta_{1,1}^{-1}\theta_{1,2}=\left(\begin{matrix}
                 0 &              \theta_{34}-\frac{-\theta_{23}\theta_{14}+\theta_{13}\theta_{24}}{\theta_{12}}&\cdots&\theta_{3n}-\frac{-\theta_{23}\theta_{1n}+\theta_{13}\theta_{2n}}{\theta_{12}} \\
              - \theta_{34}+\frac{-\theta_{23}\theta_{14}+\theta_{13}\theta_{24}}{\theta_{12}}& 0&\cdots&\theta_{4n}-\frac{-\theta_{24}\theta_{1n}+\theta_{14}\theta_{2n}}{\theta_{12}}\\
              \vdots&\vdots&\ddots&\vdots\\
              -\theta_{3n}+\frac{-\theta_{23}\theta_{1n}+\theta_{13}\theta_{2n}}{\theta_{12}}&-\theta_{4n}+\frac{-\theta_{24}\theta_{1n}+\theta_{14}\theta_{2n}}{\theta_{12}}&\cdots&0
                 \end{matrix}\right).$$
                 Hence we have
                \begin{eqnarray}\label{F(theta)}
             \theta_{2,2}-\theta_{2,1}\theta_{1,1}^{-1}\theta_{1,2}=\left(\begin{matrix}
                0 &  \frac{{\pf}^{\theta}_{(1,2,3,4)}}{\theta_{12}}&\cdots&\frac{{\pf}^{\theta}_{(1,2,3,n)}}{\theta_{12}} \\
              - \frac{{\pf}^{\theta}_{(1,2,3,4)}}{\theta_{12}}& 0&\cdots&\frac{{\pf}^{\theta}_{(1,2,4,n)}}{\theta_{12}}\\
              \vdots&\vdots&\ddots&\vdots\\
              -\frac{{\pf}^{\theta}_{(1,2,3,n)}}{\theta_{12}}&-\frac{{\pf}^{\theta}_{(1,2,4,n)}}{\theta_{12}}&\cdots&0
                 \end{matrix}\right).
                 \end{eqnarray}

We have the following lemma.
\begin{lemma}(\cite[Lemma 3.6]{CH21})\label{submatrix,x}
For any integer $n\geq 2,$
let
  $$\theta=\left(\begin{matrix}
                  \theta_{1,1} &  \theta_{1,2} \\
                 \theta_{2,1}& \theta_{2,2}
                 \end{matrix}
\right)=\left(\begin{matrix}
                  \theta_{1,1} &  \theta_{1,2} \\
                 -\theta_{1,2}^t& \theta_{2,2}
                 \end{matrix}
\right)\in \mathcal{T}_n,$$
where $\theta_{1,1}$ is invertible $2\times 2$ matrix, one has
\begin{eqnarray}\label{=2}
{\pf}(\theta_{1,1}){\pf}(\m{(\theta_{2,2}-\theta_{2,1}\theta_{1,1}^{-1}\theta_{1,2})}{I'})
={\pf}(\m{\theta}{I}),
\end{eqnarray}
where $I'\in \mathrm{Minor}(n-2)\setminus \{\emptyset\}$ and $I=(1,2,i_1+2,i_2+2,\dots,i_{2l}+2)$ for $I'=(i_1,i_2,\dots,i_{2l})$.
In particular, when $n$ is an even number, taking $I'=(1,2,\dots,n-2),$  we have
\begin{eqnarray}\label{pf1}
  {\pf}(\theta)
  ={\pf}(\theta_{1,1}){\pf}(\theta_{2,2}-\theta_{2,1}\theta_{1,1}^{-1}\theta_{1,2}).
\end{eqnarray}
\end{lemma}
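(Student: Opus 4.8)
The statement is an identity relating a pfaffian minor of $\theta$ to the product of $\pf(\theta_{1,1})$ with a pfaffian minor of the Schur complement $S:=\theta_{2,2}-\theta_{2,1}\theta_{1,1}^{-1}\theta_{1,2}$. The natural approach is to reduce the general minor identity \eqref{=2} to the known ``top'' identity \eqref{pf1}, i.e. the classical pfaffian Schur-complement formula $\pf(\theta)=\pf(\theta_{1,1})\,\pf(\theta_{2,2}-\theta_{2,1}\theta_{1,1}^{-1}\theta_{1,2})$, and then observe that \eqref{pf1} itself is the special case of \eqref{=2} obtained by taking $I'=(1,2,\dots,n-2)$ when $n$ is even. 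So the real content is \eqref{=2}, and the plan is as follows.

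First I would fix $I'=(i_1,\dots,i_{2l})\in\mathrm{Minor}(n-2)\setminus\{\emptyset\}$ and set $I=(1,2,i_1+2,\dots,i_{2l}+2)$. The key observation is that the submatrix $\theta_I$ has exactly the block form to which the case $n=2l+2$ of the already-established ``even'' identity \eqref{pf1} applies: its upper-left $2\times 2$ block is precisely $\theta_{1,1}$ (rows/columns $1,2$), and its lower-right $(2l)\times(2l)$ block is $(\theta_{2,2})_{I'}$, while the off-diagonal blocks are the corresponding rows/columns of $\theta_{1,2}$ and $\theta_{2,1}=-\theta_{1,2}^t$. Applying \eqref{pf1} to $\theta_I$ gives
\[
\pf(\theta_I)=\pf(\theta_{1,1})\,\pf\!\bigl((\theta_{2,2})_{I'}-(\theta_{2,1})_{I'}\,\theta_{1,1}^{-1}\,(\theta_{1,2})_{I'}\bigr),
\]
where $(\theta_{1,2})_{I'}$ denotes the submatrix of $\theta_{1,2}$ with the two rows and the columns indexed by $i_1,\dots,i_{2l}$, and similarly for $(\theta_{2,1})_{I'}$. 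The remaining point is purely bookkeeping: the matrix $(\theta_{2,2})_{I'}-(\theta_{2,1})_{I'}\theta_{1,1}^{-1}(\theta_{1,2})_{I'}$ is exactly the principal submatrix $S_{I'}$ of the Schur complement $S=\theta_{2,2}-\theta_{2,1}\theta_{1,1}^{-1}\theta_{1,2}$ indexed by $I'$. This is because forming a principal submatrix commutes with the Schur-complement operation in the second block variable: deleting rows and columns of $\theta_{2,2}$, $\theta_{1,2}$, $\theta_{2,1}$ outside the index set $I'$ (while keeping the $\theta_{1,1}$ block intact) produces precisely $S_{I'}$. Hence $\pf((S)_{I'})=\pf(\theta_{2,2}-\theta_{2,1}\theta_{1,1}^{-1}\theta_{1,2})_{I'}$, which is the right-hand side of \eqref{=2}, and combined with the display above this yields \eqref{=2}.

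For the ``in particular'' clause, when $n$ is even take $I'=(1,2,\dots,n-2)$, so that $I=(1,2,\dots,n)$, $\theta_I=\theta$, and $(S)_{I'}=S$; then \eqref{=2} reads $\pf(\theta)=\pf(\theta_{1,1})\pf(S)$, which is \eqref{pf1}. (Alternatively one simply cites the classical pfaffian Schur-complement identity directly here.) I expect the only mildly delicate step to be the verification that ``restrict to $I'$'' and ``take the Schur complement'' commute, i.e. $\bigl(\theta_{2,2}-\theta_{2,1}\theta_{1,1}^{-1}\theta_{1,2}\bigr)_{I'}=(\theta_{2,2})_{I'}-(\theta_{2,1})_{I'}\theta_{1,1}^{-1}(\theta_{1,2})_{I'}$; this is immediate entrywise, since the $(j,k)$ entry of the Schur complement is $\theta_{2,2,jk}-\sum_{a,b}\theta_{2,1,ja}(\theta_{1,1}^{-1})_{ab}\theta_{1,2,bk}$ and depends on the second-block indices $j,k$ only through that pair, so passing to $j,k\in\{i_1,\dots,i_{2l}\}$ is exactly restriction. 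Everything else is a direct appeal to \eqref{pf1} (the classical pfaffian Schur-complement formula), so there is no genuine obstacle; the proof is essentially a reduction of the minor identity to the full-matrix identity applied to the principal submatrix $\theta_I$.
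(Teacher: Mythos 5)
The paper itself does not give a proof of this lemma; it simply refers to Lemma~3.6 of \cite{CH21}, so there is no in-paper argument to compare yours against. That said, your proposal is correct and is the natural argument. The key steps are sound: (i) $\theta_I$ is a skew-symmetric $(2l+2)\times(2l+2)$ matrix whose upper-left $2\times2$ block is still $\theta_{1,1}$, so the classical Pfaffian Schur-complement formula (your~\eqref{pf1}) applies to it; (ii) the resulting Schur complement of $\theta_{1,1}$ inside $\theta_I$ coincides with the principal submatrix $(\theta_{2,2}-\theta_{2,1}\theta_{1,1}^{-1}\theta_{1,2})_{I'}$ of the full Schur complement, which you correctly verify entrywise from the fact that the $(j,k)$ entry of the Schur complement depends on the second-block indices only through $j,k$ themselves. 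One small presentational remark: in your opening paragraph you describe \eqref{pf1} as the ``known'' identity to which you reduce, and at the end you recover \eqref{pf1} as a special case of \eqref{=2}; this is not circular, but it reads more cleanly if you state up front that the external input is the classical Pfaffian Schur-complement formula for a skew-symmetric matrix with invertible top-left $2\times2$ block, that \eqref{=2} follows by applying it to $\theta_I$, and that \eqref{pf1} is then the special case $I'=(1,2,\dots,n-2)$. With that rephrasing, the argument is complete.
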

\begin{proof}
See the proof of Lemma 3.6 in \cite{CH21}.
\end{proof}

In order to explain our symbols, we give the following definition:
\begin{definition}
  For $$\theta=\left(\begin{matrix}
                 \theta_{1,1} &  \theta_{1,2} \\
               \theta_{2,1}& \theta_{2,2}
                 \end{matrix}\right)\in\mathcal{T}_n,$$
                 where $\theta_{11}$ is an invertible $2\times2$ matrix,
                 and $n=2l,$ $l>1,$ define
                 \begin{eqnarray}\label{de F}
                 F(\theta)=
                 \theta_{2,2}-\theta_{2,1}\theta_{1,1}^{-1}\theta_{1,2}\in \mathcal{T}_{n-2}.
                 \end{eqnarray}
\end{definition}
Hence from (\ref{pf1}), we have
\begin{eqnarray}\label{q1}
{\pf}(\theta)={\pf}(\theta_{1,1}){\pf}(F(\theta)).
\end{eqnarray}

If $m$ is an integer less than $l$, we denote by $F^m$ the composition of $F$ taken $m$ times (when this makes sense) and $F^0(\theta):=\theta.$
Note that $F$ is defined for a $\theta$ such that $\theta_{11}$ is invertible, but still $F^m$ may not make sense. The following lemma tells us  when $F^m(\theta)$ is well-defined.
\begin{lemma}(\cite[Lemma 3.8, Lemma 3.10]{CH21})\label{h1}
   Let $\theta\in \mathcal{T}_n$ with $n=2l$ for $l>1.$ If ${\pf}^{\theta}_{(1,2,\dots,2s)}\neq 0$ for all $s=1,2,\dots,l-1,$ then
   $F^m(\theta)$ is well-defined for $m=1,2,\dots,l-1,$ and is given by
 
  \begin{eqnarray}\label{F^ii}
  F^m(\theta)=\left(
  \begin{array}{cccc}
  0&\frac{{\pf}^{\theta}_{(1,2,\dots,n-p-1,n-p)}}{{\pf}^{\theta}_{(1,2,\dots,n-p-2)}}&\cdots&\frac{{\pf}^{\theta}_{(1,2,\dots,n-p-1,n)}}{{\pf}^{\theta}_{(1,2,\dots,n-p-2)}}\\
  -\frac{{\pf}^{\theta}_{(1,2,\dots,n-p-1,n-p)}}{{\pf}^{\theta}_{(1,2,\dots,n-p-2)}}&0&\cdots& \frac{{\pf}^{\theta}_{(1,2,\dots,n-p,n)}}{{\pf}^{\theta}_{(1,2,\dots,n-p-2)}}\\
  \vdots&\vdots&\ddots&\vdots\\
  -\frac{{\pf}^{\theta}_{(1,2,\dots,n-p-1,n)}}{{\pf}^{\theta}_{(1,2,\dots,n-p-2)}}&
  -\frac{{\pf}^{\theta}_{(1,2,\dots,n-p,n)}}{{\pf}^{\theta}_{(1,2,\dots,n-p-2)}}&
  \ldots&0
  \end{array}\right).
  \end{eqnarray}
  In particular,
  \begin{eqnarray}\label{g1}
  F^m(\theta)_{jk}=\frac{{\pf}^{\theta}_{(1,2,\dots,s',s'+j,s'+k)}}{{\pf}^{\theta}_{(1,2,\dots,s')}},\quad p=n-2m-2,\quad s'=n-p-2=2m.
  \end{eqnarray}
  
\end{lemma}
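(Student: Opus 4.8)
\textbf{Proof proposal for Lemma~\ref{h1}.}

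The plan is to prove, by a single induction on $m$, a statement strictly stronger than (\ref{F^ii}): namely that $F^m(\theta)$ is well-defined and that \emph{all} of its pfaffian minors are controlled by pfaffian minors of $\theta$. For a sequence $J=(j_1,\dots,j_k)\in\mathrm{Minor}(n-2m)$ write $\sigma_m(J):=(1,2,\dots,2m,\,2m+j_1,\dots,2m+j_k)$ for the minor of $\theta$ obtained by prepending $(1,2,\dots,2m)$ to the shift of $J$ by $2m$. The claim is that for each $m$ with $0\le m\le l-1$ the matrix $F^m(\theta)\in\mathcal{T}_{n-2m}$ is well-defined and
\[
{\pf}\bigl(\m{(F^m(\theta))}{J}\bigr)=\frac{{\pf}^{\theta}_{\sigma_m(J)}}{{\pf}^{\theta}_{(1,2,\dots,2m)}}\qquad\text{for all }J\in\mathrm{Minor}(n-2m),
\]
with the convention ${\pf}^{\theta}_{\emptyset}=1$ (so for $m=0$ the denominator is $1$ and the assertion is a tautology). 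Specializing to $|J|=2$, $J=(j,k)$, this is exactly (\ref{g1}), and rewriting with $p=n-2m-2$ and $s'=2m$ recovers the displayed matrix (\ref{F^ii}); hence the stronger claim implies the lemma.

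The base case $m=0$ is the tautology just noted; the case $m=1$ is nothing but (\ref{F(theta)}), $\theta_{1,1}$ being invertible because ${\pf}^{\theta}_{(1,2)}=\theta_{12}\ne 0$. For the inductive step, assume the claim for some $m$ with $m\le l-2$ and put $\psi:=F^m(\theta)$. Applying the claim to the length-$2$ minor $J=(1,2)$ gives ${\pf}(\psi_{1,1})={\pf}^{\theta}_{(1,\dots,2m+2)}\,/\,{\pf}^{\theta}_{(1,\dots,2m)}$; since $2\le 2m+2\le 2(l-1)$, both pfaffians here are nonzero by hypothesis, so $\psi_{1,1}$ is an invertible $2\times 2$ matrix and $F^{m+1}(\theta)=F(\psi)\in\mathcal{T}_{n-2m-2}$ is well-defined (skew-symmetry propagates automatically, the inverse of an even-sized invertible skew-symmetric matrix being again skew-symmetric). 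Now fix a nonempty $J\in\mathrm{Minor}(n-2m-2)$, write $(1,2)*J:=(1,2,j_1+2,\dots,j_k+2)$, and apply Lemma~\ref{submatrix,x} to $\psi$ in place of $\theta$: this yields ${\pf}(\psi_{1,1})\,{\pf}\bigl(\m{(F(\psi))}{J}\bigr)={\pf}\bigl(\m{\psi}{(1,2)*J}\bigr)$.

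It remains to evaluate the right-hand side by the inductive hypothesis applied to the minor $(1,2)*J$ of $\psi$. Since a direct check of the index shifts gives $\sigma_m\bigl((1,2)*J\bigr)=\sigma_{m+1}(J)$, this produces ${\pf}\bigl(\m{\psi}{(1,2)*J}\bigr)={\pf}^{\theta}_{\sigma_{m+1}(J)}\,/\,{\pf}^{\theta}_{(1,\dots,2m)}$. Dividing by the value of ${\pf}(\psi_{1,1})$ computed above, the common factor ${\pf}^{\theta}_{(1,\dots,2m)}$ cancels and we obtain
\[
{\pf}\bigl(\m{(F^{m+1}(\theta))}{J}\bigr)=\frac{{\pf}^{\theta}_{\sigma_{m+1}(J)}}{{\pf}^{\theta}_{(1,\dots,2m+2)}},
\]
which is the claim for $m+1$ (the case $J=\emptyset$ being again a tautology). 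This closes the induction.

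I do not expect a genuine obstacle here: all the real content is packaged in Lemma~\ref{submatrix,x}, invoked once per step rather than all at once. The two points needing care are (i) strengthening the induction hypothesis from ``the entries of $F^m(\theta)$'' to ``all pfaffian minors of $F^m(\theta)$'' — without this one cannot iterate, since $F(\psi)$ depends on more of $\psi$ than just $\psi_{1,1}$ — and (ii) the boundary bookkeeping: the convention ${\pf}^{\theta}_{\emptyset}=1$ at $m=0$, and the observation that the induction can be pushed exactly as far as $m=l-1$ because the hypothesis ${\pf}^{\theta}_{(1,\dots,2s)}\ne 0$ is only assumed for $s\le l-1$.
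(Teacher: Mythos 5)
Your proof is correct, and the argument is sound as written. Since the paper itself only cites the proofs of Lemma 3.8 and Lemma 3.10 of \cite{CH21} rather than reproducing them, one cannot compare line by line, but your argument is built entirely out of the one tool the paper does make available in full, Lemma~\ref{submatrix,x}, and the index bookkeeping is right: $\sigma_m\bigl((1,2)*J\bigr)=\sigma_{m+1}(J)$ holds, the case $J=(1,2)$ of the inductive hypothesis gives ${\pf}(\psi_{1,1})={\pf}^{\theta}_{(1,\dots,2m+2)}/{\pf}^{\theta}_{(1,\dots,2m)}\ne0$ (using $m\le l-2$ so $s=m+1\le l-1$), and the cancellation of ${\pf}^{\theta}_{(1,\dots,2m)}$ at the end is legitimate.

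The genuinely useful observation you make explicit is the need to strengthen the inductive hypothesis from ``the entries of $F^m(\theta)$'' to ``all pfaffian minors of $F^m(\theta)$.'' Your reason as stated (``$F(\psi)$ depends on more of $\psi$ than just $\psi_{1,1}$'') is a little off, though: the real issue is not which entries of $\psi$ the Schur complement uses, but that Lemma~\ref{submatrix,x} computes pfaffian minors of $F(\psi)$ from pfaffian minors of $\psi$ — in particular the $4\times4$ minors ${\pf}^{\psi}_{(1,2,j+2,k+2)}$ are needed to get the entries of $F(\psi)$ — and an induction that only controls entries of $F^m(\theta)$ would force you to verify the pfaffian identity ${\pf}^{\theta}_{(1,\dots,2m)}\,{\pf}^{\theta}_{(1,\dots,2m+2,a,b)}={\pf}^{\theta}_{(1,\dots,2m+2)}\,{\pf}^{\theta}_{(1,\dots,2m,a,b)}-{\pf}^{\theta}_{(1,\dots,2m,2m+1,a)}\,{\pf}^{\theta}_{(1,\dots,2m,2m+2,b)}+{\pf}^{\theta}_{(1,\dots,2m,2m+1,b)}\,{\pf}^{\theta}_{(1,\dots,2m,2m+2,a)}$ by hand at each step. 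Carrying all minors along avoids that entirely, at no extra cost: Lemma~\ref{submatrix,x} is already stated at the full level of generality you need.
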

\begin{proof}
The lemma is exactly the content of Lemma 3.8 and Lemma 3.10 of \cite{CH21}. See the proof of those.
\end{proof}

Let us use the above lemma to say more about $A_{F^{m}(\theta)}$ when $\theta\in\mathcal{T}_n$
is totally irrational with $n=2l\geq2$. When
$\theta$ is totally irrational,  the entries (above the diagonal) of $F^m(\theta)$ are all irrational and independent over $\mathbb{Q}$ for $m=0,1,\dots,l-1.$  
This is because we have ${\pf}^{\theta}_{(1,2,\dots,2s)}\neq 0$ for $s=1,2,\dots,l-1,$ by total irrationality of $\theta$. By the above lemma, we have that $ F^m(\theta)$ is well-defined and

  \begin{eqnarray}\label{g1}
  F^m(\theta)_{jk}=\frac{{\pf}^{\theta}_{(1,2,\dots,s',s'+j,s'+k)}}{{\pf}^{\theta}_{(1,2,\dots,s')}},\quad p=n-2m-2,\quad s'=n-p-2=2m,
  \end{eqnarray}
  for $m=1,\dots,l-1.$
Now since $\theta$ is totally irrational, the numbers ${\pf}^{\theta}_{(1,2,\dots,n-p-2,j,k)}$, $n-p-1\leq j<k\leq n$  along with ${\pf}^{\theta}_{(1,2,\dots,n-p-2)},$ are irrational and independent over $\mathbb{Q}$. This means that the numbers $\frac{{\pf}^{\theta}_{(1,2,\dots,n-p-2,j,k)}}{{\pf}^{\theta}_{(1,2,\dots,n-p-2)}}$ are irrational numbers. Next, to show that these numbers are independent over $\mathbb{Q}$, let us write
$$\sum\limits_{n-p-1\leq j<k\leq n}c_{j,k} \frac{{\pf}^{\theta}_{(1,2,\dots,n-p-2,j,k)}}{{\pf}^{\theta}_{(1,2,\dots,n-p-2)}}=0,\quad c_{j,k}\in \mathbb{Q}.$$
This implies
$$\sum\limits_{n-p-1\leq j<k\leq n}c_{j,k}{\pf}^{\theta}_{(1,2,\dots,n-p-2,j,k)}=0,\quad c_{j,k}\in \mathbb{Q}.$$
But since the numbers ${\pf}^{\theta}_{(1,2,\dots,n-p-2,j,k)},$ $n-p-1\leq j<k\leq n$ are independent over $\mathbb{Q},$ $c_{j,k}$'s are all zero. Hence the numbers $\frac{{\pf}^{\theta}_{(1,2,\dots,n-p-2,j,k)}}{{\pf}^{\theta}_{(1,2,\dots,n-p-2)}}$, $n-p-1\leq j<k\leq n$ are rationally
independent. So we have shown that the entries (above the diagonal) of $F^m(\theta)$ are
irrational and rationally independent. Now, using \cite[Lemma 1.7]{Phi06}, it is easy to see that
$F^m(\theta)$  is non-degenerate. Hence $A_{F^m(\theta)}$ is simple and has a unique tracial state for $m=0,1,\dots,l-1$.

In the following we shall construct Rieffel-type projections for the
higher dimensional noncommutative tori.

 In \cite{RS99}, Rieffel and Schwarz defined (densely) an action of the group $\mathrm{SO}(n, n|\Z)$ on $\mathcal{T}_{n}$. Recall that $\mathrm{SO}(n, n|\Z)$ is the subgroup of $\mathrm{SL}(2n, \Z),$ which consists all matrices $g$ with the following block form: $$
g=\left(\begin{array}{ll}
A & B \\
C & D
\end{array}\right),
$$
where $A, B, C$ and $D$ are arbitrary $n \times n$ matrices over $\Z$ satisfying
$$
A^{t} C+C^{t} A=0, \quad B^{t} D+D^{t} B=0 \quad \text { and } \quad A^{t} D+C^{t} B=\id_n.
$$
The action of $\mathrm{SO}(n, n|\Z)$ on $\mathcal{T}_{n}$ is defined as
$$
g \theta:=(A \theta+B)(C \theta+D)^{-1}
$$
whenever $C \theta+D$ is invertible. The subset of $\mathcal{T}_{n}$ on which the action of every $g \in \mathrm{SO}(n, n|\Z)$ is defined, is dense in $\mathcal{T}_{n}$ (see \cite[page 291]{RS99}). We have the following theorem due to Hanfeng Li. 
\begin{theorem}\label{thm:li_Morita}(\cite[Theorem 1.1]{Li04})
	  For any $\theta \in \mathcal{T}_{n}$ and $g \in \mathrm{SO}(n, n|\Z),$ if $g \theta$ is defined then $A_{\theta}$ and $A_{g \theta}$ are strongly Morita equivalent.
\end{theorem}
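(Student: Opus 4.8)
Recall that this is Li's theorem, so I only indicate the strategy I would follow. The plan is to reduce to a set of generators of $\mathrm{SO}(n,n|\Z)$, establish the Morita equivalence for each generator, and then handle the composition of the resulting equivalence bimodules. Following Rieffel and Schwarz, $\mathrm{SO}(n,n|\Z)$ is generated by the matrices $\rho(A)=\left(\begin{smallmatrix}A&0\\0&(A^t)^{-1}\end{smallmatrix}\right)$ for $A\in\GL(n,\Z)$, acting by $\theta\mapsto A\theta A^t$; the matrices $\rho_N=\left(\begin{smallmatrix}\id_n&N\\0&\id_n\end{smallmatrix}\right)$ for $N$ an integral skew-symmetric matrix, acting by $\theta\mapsto\theta+N$; and a partial flip $\sigma=\left(\begin{smallmatrix}\id_n-P&P\\P&\id_n-P\end{smallmatrix}\right)$, with $P$ the projection onto a distinguished pair of coordinates, which on its domain sends $\theta$ to the matrix whose $2\times2$ corner is $\theta_{1,1}^{-1}$ and whose complementary block is the Schur complement $F(\theta)$ of \eqref{de F}; in particular $\sigma\theta$ is defined exactly when $\theta_{1,1}$ is invertible.

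First I would dispose of $\rho(A)$ and $\rho_N$. Relabelling the generators $U_1,\dots,U_n$ of $A_\theta$ by $A\in\GL(n,\Z)$ yields a genuine $*$-isomorphism $A_\theta\cong A_{A\theta A^t}$, and since $e(N_{jk})=1$ for integer $N_{jk}$ the relations \eqref{eq:ccr} are unchanged under $\theta\mapsto\theta+N$, so $A_{\theta+N}=A_\theta$; in both cases the algebras are $*$-isomorphic, hence strongly Morita equivalent.

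The substantive generator is $\sigma$. Here I would invoke Rieffel's construction of Heisenberg equivalence bimodules for higher-dimensional noncommutative tori: realise $A_\theta$ and $A_{\sigma\theta}$ as commuting left and right actions, by Weyl-type operators, on a Schwartz space $\mathcal{S}(\R^p\times\Z^q\times F)$ for a suitable finite group $F$ (the parameters being read off from the block data of $\theta$), and verify the imprimitivity-bimodule axioms --- in particular the fullness of both $A$-valued inner products --- exactly as for Rieffel's two-dimensional equivalence between $A_{\theta_{12}}$ and $A_{-1/\theta_{12}}$. The invertibility of $\theta_{1,1}$ is precisely what makes the lattice embedding $\Z^n\hookrightarrow\R^p\times\Z^q\times F$ non-degenerate and hence the construction go through, which is why the hypothesis ``$\sigma\theta$ is defined'' is needed.

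The step I expect to be the main obstacle --- and the one where earlier treatments were incomplete --- is gluing the pieces together. Writing $g=g_1\cdots g_k$ as a word in these generators, one wants the chain $A_\theta\sim A_{g_k\theta}\sim A_{g_{k-1}g_k\theta}\sim\cdots\sim A_{g\theta}$; but the Rieffel--Schwarz action is only a partial action, and there is no a priori reason that each intermediate parameter $g_i\cdots g_k\theta$ lies in the domain of the next generator, even though $g\theta$ itself is assumed defined. To handle this I would show that the set of $\theta$ for which the whole chain is defined is dense in $\mathcal{T}_n$ (its complement being a countable union of proper real-algebraic subsets), transport the composed bimodule along a path $\theta_t\to\theta$ inside this dense set by means of the continuous-field structure of the noncommutative tori, and pass to the limit; alternatively, one argues by induction on the length of the word that the factorization can be chosen so that every intermediate Schur complement is invertible whenever $g\theta$ is. Making this partial-action bookkeeping rigorous is exactly the point that Li's paper settles, and it is what upgrades the statement from a restricted class of $g$ to every $g$ with $g\theta$ defined.
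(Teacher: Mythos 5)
The paper does not prove this theorem: it is a direct citation of \cite[Theorem 1.1]{Li04}, and the paper subsequently borrows the embedding-map data $(T,S,J)$ from Li's construction only to manufacture the specific fundamental modules $\mathcal{E}_I^\theta$ used later. So there is no in-paper proof against which to measure your sketch.

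As an outline of Li's argument, your skeleton captures the right ingredients: the Rieffel--Schwarz generators $\rho(R)$, $\mu(N)$ and $\sigma_{2p}$; the observation that the first two give literal $*$-isomorphisms; and, for $\sigma_{2p}$, the Heisenberg-module construction on $\mathscr{S}(\R^p\times\Z^q)$ exactly as recorded in \S\ref{sec:rie}. You also correctly isolate the genuine difficulty: the $\mathrm{SO}(n,n|\Z)$ action on $\mathcal{T}_n$ is partial, so decomposing $g$ as a word in generators does not automatically keep the intermediate parameters $g_i\cdots g_k\theta$ in the domains of the subsequent generators, and this is precisely where Rieffel--Schwarz stopped short of the full statement.

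Your first proposed resolution — transporting a composed bimodule along a path $\theta_t\to\theta$ inside a dense good set and passing to the limit — is not how Li closes the gap, and as stated it has a real hole. A continuous field of modules over such a path does produce a projective module, hence a $\K_0$-class, at the endpoint (that is exactly what \S\ref{sec:gen_general} of this paper exploits), but an imprimitivity bimodule is strictly more: one must verify fullness of both the $A_\theta$- and the $A_{g\theta}$-valued inner products at $t=0$, and fullness is a density condition in a C$^*$-algebra that does not pass to limits without further argument. Your second alternative is much closer in spirit to what Li actually does: the embedding-map formalism, of which the matrices $T$ and $S$ in \S\ref{sec:rie} are instances, produces a one-step equivalence bimodule between $A_\theta$ and $A_{g\theta}$ whenever $g\theta$ is defined, so the chain of intermediate parameters never needs to be controlled. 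If you want to flesh this out, drop the density/limit route and develop the direct one-step construction instead.
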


For any $R \in \mathrm{GL}(n,\Z),$ let us denote by $\rho(R)$ the matrix $\left(\begin{array}{cc}R & 0 \\ 0 & \left(R^{-1}\right)^{\mathrm{t}}\end{array}\right) \in \mathrm{SO}(n, n|\Z) ,$ and for any $N \in \mathcal{T}_{n} \cap \mathrm{M}_{n}(\Z),$ we denote by $\mu(N)$ the matrix $\left(\begin{array}{cc}\id_n & N \\ 0 & \id_n\end{array}\right) \in \mathrm{SO}(n, n|\Z).$
Notice that the noncommutative tori corresponding to the matrices $\rho(R) \theta=R \theta R^{t}$ and $\mu(N) \theta=\theta+N$ are both isomorphic to $A_{\theta}$. Also define

$$\mathrm{SO}(n, n|\Z) \ni \sigma_{2 p}:=\left(\begin{array}{cccc}0 & 0 & \mathrm{id}_{2 p} & 0 \\ 0 & \mathrm{id}_{n-2 p} & 0 & 0 \\ \mathrm{id}_{2 p} & 0 & 0 & 0 \\ 0 & 0 & 0 & \mathrm{id}_{n-2 p}\end{array}\right), 1 \leqslant p \leqslant n / 2.$$
 
 We recall the approach of Rieffel \cite{Rie88} to find the $A_{\sigma_{2 p}\theta}-A_\theta$ bimodule and follow the presentation in \cite{Li04}. Let us fix a number $p$ with $1\leq p\leq n\slash 2$ and let $q\geq0$ be an integer such that $n=2p+q.$ Let us write $\theta\in \mathcal{T}_n$ as
$$\left(\begin{matrix}
                  \theta_{1,1} & \theta_{1,2} \\
                  \theta_{2,1} & \theta_{2,2}
                \end{matrix}
\right),$$
partitioned into four sub-matrices $\theta_{1,1},\theta_{1,2},\theta_{2,1},\theta_{2,2},$ and assume $\theta_{1,1}$ to be an invertible $2p\times 2p$ matrix. Write $\sigma_{2 p}$ as $\sigma$. Then
\begin{equation}\label{eq:defoftheta'}
	\sigma(\theta)=\left(\begin{matrix}
                  \theta_{1,1}^{-1} & -\theta_{1,1}^{-1}\theta_{1,2} \\
                  \theta_{2,1}\theta_{1,1}^{-1} & \theta_{2,2}-\theta_{2,1}\theta_{1,1}^{-1}\theta_{1,2}
                \end{matrix}
\right). \end{equation} 

Set $A=A_{\theta}$ and $B=A_{\sigma(\theta)}.$
Let $\mathcal{M}$ be the group $\mathbb{R}^{p}\times \mathbb{Z}^q,$ $G=\mathcal{M}\times\widehat{\mathcal{M}}$ and
$\langle\cdot,\cdot\rangle$ be the natural pairing between $\mathcal{M}$ and its dual group $\widehat{\mathcal{M}}$ (our notation does not distinguish between the pairing of a group and its dual group, and the standard inner product on a linear space). Also, denote the linear dual of $\mathbb{R}^{k}$ by $(\mathbb{R}^k)^*.$ Consider the Schwartz space $\mathcal{E}^{\infty}=\mathscr{S}(\mathcal{M})$ consisting of smooth and rapidly decreasing complex-valued functions on $\mathcal{M}.$

Denote by $\mathcal{A}^{\infty}=A_{\theta}^{\infty}$ and $\mathcal{B}^{\infty}=A_{\sigma(\theta)}^{\infty}$ the dense sub-algebras of $A$ and $B$, respectively, consisting of formal series with rapidly decaying coefficients. Let us consider the following $(2p+2q)\times(2p+q)$
real valued matrix:
\begin{eqnarray}T=\left(\begin{matrix}
                  T_{11} & 0 \\
                 0 & {\rm id}_q\\
                 T_{31}&T_{32}
                \end{matrix}
\right),\nonumber
\end{eqnarray}
where $T_{11}$ is an invertible matrix such that $T^{t}_{11}J_0 T_{11}=\theta_{1,1},$ $J_0=\left(\begin{matrix}
                  0 & {\rm id}_{p} \\
                 -{\rm id}_{p}& 0
                 \end{matrix}
\right),$
$T_{31}=\theta_{2,1}$ and $T_{32}$ is any $q\times q$ matrix such that
$\theta_{2,2}=T_{32}-T_{32}^t.$ For our purposes, we take $T_{32}=\theta_{2,2}\slash 2.$

We also define the following $(2p+2q)\times (2p+q)$ real valued matrix:
$$S=\left(\begin{matrix}
                  J_0(T_{11}^t)^{-1} &  -J_0(T_{11}^t)^{-1} T_{31}^t \\
                 0& {\rm id}_q\\
                 0& T_{32}^t
                 \end{matrix}
\right).$$
Let
$$J=\left(\begin{matrix}
                  J_0 &  0&0 \\
                 0& 0&{\rm id}_q\\
                 0& -{\rm id}_q&0
                 \end{matrix}
\right)$$
and $J'$ be the matrix obtained from $J$ by replacing the negative entries of it by zeroes. 
Note that $T$ and $S$ can be thought as maps from $(\mathbb{R}^n)^*$
to $\mathbb{R}^{p}\times (\mathbb{R}^p)^*\times \mathbb{R}^{q}\times (\mathbb{R}^q)^*$ (see the definition of an embedding map in Definition 2.1 of \cite{Li04}), and $S(\mathbb{Z}^n),T(\mathbb{Z}^n)\subset \mathbb{R}^{p}\times (\mathbb{R}^p)^*\times \mathbb{Z}^{q}\times (\mathbb{R}^q)^*$.
Then we can think of $S(\mathbb{Z}^n),T(\mathbb{Z}^n)$ as in $G$ via composing
$S|_{\mathbb{Z}^n},T|_{\mathbb{Z}^n}$ with the natural covering map
$\mathbb{R}^{p}\times (\mathbb{R}^p)^*\times \mathbb{Z}^{q}\times (\mathbb{R}^q)^*\rightarrow G.$ Let $P'$ and $P''$ be the canonical projections of
$G$ to $\mathcal{M}$ and $\widehat{\mathcal{M}},$ respectively, and let
$$T'=P'\circ T,\quad T''=P''\circ T,\quad S'=P'\circ S,\quad S''=P''\circ S.$$
Then the following set of formulas define a $\mathcal{B}^{\infty}$-$\mathcal{A}^{\infty}$
bimodule structure on $\mathcal{E}^{\infty}$:
\begin{empheq}[left=\empheqlbrace]{align}
  &(fU_{l}^{\theta})(x)=e^{2\pi i\langle -T(l),J'T(l)\slash2\rangle}\langle x,T''(l)\rangle f(x-T'(l)), \label{eq:module1}\\
&\langle f,g\rangle_{\mathcal{A}^{\infty}}(l)=e^{2\pi i\langle -T(l),J'T(l)\slash2\rangle}\int_{G}\langle x,-T''(l)\rangle g(x+T'(l))\overline{f(x)}dx,\label{eq:module2}\\
&(U_{l}^{\sigma(\theta)}f)(x)=e^{2\pi i\langle -S(l),J'S(l)\slash2\rangle}\langle x,-S''(l)\rangle f(x+S'(l)), \label{eq:module3}\\
& _{\mathcal{B}^{\infty}}\langle f,g\rangle(l)=Ke^{2\pi i\langle S(l),J'S(l)\slash2\rangle}\int_{G}\langle x,S''(l)\rangle \overline{g(x+S'(l))}f(x)dx, \label{eq:module4}
\end{empheq}
where $U_l^{\theta},U_{l}^{\sigma(\theta)}$ denote the canonical unitaries with respect to the group element $l\in \mathbb{Z}^n$ in $\mathcal{A}^{\infty}$ and $\mathcal{B}^{\infty}$, respectively, and $K$ is a positive constant. See Proposition 2.2 in \cite{Li04} for the following well-known result.

\begin{theorem}
  The smooth module $\mathcal{E}^{\infty}$, with above structures, is an $\mathcal{B}^{\infty}-\mathcal{A}^{\infty}$ Morita equivalence bi-module which can be completed to a strong $B-A$ Morita equivalence bi-module $\mathcal{E}.$
\end{theorem}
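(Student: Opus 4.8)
The plan is to recognise $\mathcal{E}^{\infty}=\mathscr{S}(\mathcal{M})$ as an instance of Rieffel's construction of imprimitivity bimodules from a cocompact closed subgroup (a ``lattice'') of a self-dual group $G=\mathcal{M}\times\widehat{\mathcal{M}}$, as carried out in \cite{Rie88} and reorganised through the embedding-map formalism of \cite{Li04}; once the concrete data is matched up, the statement becomes a special case of Rieffel's general theorem. First I would recall the Heisenberg machinery: $G$ carries a canonical $2$-cocycle $\eta$ (built from the pairing $\langle\cdot,\cdot\rangle$), and for any lattice $D\subseteq G$ the restriction $\eta|_D$ turns $\mathscr{S}(\mathcal{M})$ into a right pre-Hilbert $C^{*}(D,\eta|_D)$-module via formulas of the shape \eqref{eq:module1}--\eqref{eq:module2}, while the adjoint lattice $D^{\perp}$ — the set of $g\in G$ whose Heisenberg commutator with every element of $D$ is trivial — acts on the left through $C^{*}(D^{\perp},\bar\eta|_{D^{\perp}})$ by formulas of the shape \eqref{eq:module3}--\eqref{eq:module4}; Rieffel's theorem then says these two actions exhibit $\mathscr{S}(\mathcal{M})$ as a pre-imprimitivity bimodule whose completion is a strong $C^{*}(D^{\perp},\bar\eta)$--$C^{*}(D,\eta)$ equivalence bimodule.

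Next I would match the data, taking $D=T(\mathbb{Z}^n)$. Using $T_{11}^{t}J_0T_{11}=\theta_{1,1}$, $T_{31}=\theta_{2,1}$ and $T_{32}-T_{32}^{t}=\theta_{2,2}$, a direct computation of $(l,m)\mapsto\eta(T(l),T(m))$ shows that the phase factors $e^{2\pi i\langle-T(l),J'T(l)/2\rangle}$ in \eqref{eq:module1} are arranged precisely so that the pulled-back cocycle is the antisymmetric noncommutative-torus cocycle $\omega_{\theta}$, whence $\delta_l\mapsto U_l^{\theta}$ gives $C^{*}(D,\eta|_D)\cong A_{\theta}=A$. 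I would then show that $S(\mathbb{Z}^n)$ realises $D^{\perp}$: the block form of $S$ (with rows $J_0(T_{11}^{t})^{-1}$, $-J_0(T_{11}^{t})^{-1}T_{31}^{t}$; $0$, $\mathrm{id}_q$; $0$, $T_{32}^{t}$) is chosen so that the $\eta$-commutator of $T(l)$ and $S(m)$ is trivial for all $l,m\in\mathbb{Z}^n$, giving $S(\mathbb{Z}^n)\subseteq D^{\perp}$, and a covolume count (comparing the covolume of $T(\mathbb{Z}^n)$ in $G$ with that of its adjoint lattice) forces equality; a parallel computation of $(l,m)\mapsto\bar\eta(S(l),S(m))$, again made antisymmetric by the factors in \eqref{eq:module3}, identifies $C^{*}(D^{\perp},\bar\eta)\cong A_{\sigma(\theta)}=B$ with $\sigma(\theta)$ as in \eqref{eq:defoftheta'}. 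This last identification is exactly where $\mathrm{SO}(n,n|\Z)$ enters, since \eqref{eq:defoftheta'} is precisely $\sigma_{2p}\theta=(A\theta+B)(C\theta+D)^{-1}$ for the block entries $(A,B;C,D)$ of $\sigma_{2p}$.

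Granting these two identifications, Rieffel's theorem applies verbatim: the $\mathcal{A}^{\infty}$- and $\mathcal{B}^{\infty}$-valued inner products are positive (by the standard Poisson-summation/positivity estimate for Heisenberg modules, which is carried out in the cited references), they are full because $D$ and $D^{\perp}$ are cocompact, and they satisfy the associativity identity $_{\mathcal{B}^{\infty}}\langle f,g\rangle\, h = f\,\langle g,h\rangle_{\mathcal{A}^{\infty}}$; the positive constant $K$ in \eqref{eq:module4} is the one that makes the two induced norms on $\mathscr{S}(\mathcal{M})$ coincide, i.e.\ it is fixed by the covolume of $D$. Completing $\mathcal{E}^{\infty}$ in that norm then yields $\mathcal{E}$, a strong $B$--$A$ equivalence bimodule. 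The main obstacle is the middle step — verifying that $S(\mathbb{Z}^n)$ is \emph{exactly} $T(\mathbb{Z}^n)^{\perp}$ (integrality and non-degeneracy of the cross-pairing, together with the covolume equality that rules out a strictly larger adjoint lattice) and that the induced cocycle on it is of noncommutative-torus type with matrix $\sigma(\theta)$; everything else is either a formal consequence of Rieffel's theorem or bookkeeping with the block matrices $T,S,J,J'$, and the positivity of the inner products, while genuinely substantive, can be quoted from \cite{Rie88,Li04}.
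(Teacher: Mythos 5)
Your proposal is correct and is essentially an unpacking of the paper's own ``proof,'' which consists solely of the citation ``See Proposition 2.2 in \cite{Li04}''; your sketch correctly identifies the ingredients that Li's (and Rieffel's) proof supplies — realizing $T(\mathbb{Z}^n)$ and $S(\mathbb{Z}^n)$ as a mutually adjoint pair of lattices in $G=\mathcal{M}\times\widehat{\mathcal{M}}$, using the phase factors in \eqref{eq:module1}--\eqref{eq:module4} to symmetrize the Heisenberg cocycle into $\omega_\theta$ and $\omega_{\sigma(\theta)}$, and quoting positivity and fullness from Rieffel's Heisenberg bimodule theorem. Since the paper offers no argument of its own, there is no divergence in approach to report, only that you have filled in the content the citation points to.
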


The completion $\mathcal{E}$ of $\mathcal{E}^{\infty}$ of the above theorem becomes a finitely generated projective module over $A$ (see the argument before Proposition 4.6 in \cite{ELPW10}). The resulting class $[\mathcal{E}] \in$ $\K_0(A)$ is called the Bott class. We will soon see that it will contribute to a generating set of $\K_0(A)$.
\begin{remark}\label{rmk:trace_of_heisenberg}
\normalfont
	The trace of the module $\mathcal{E}$, which was computed by Rieffel \cite{Rie88}, is exactly the absolute value of the pfaffian of the upper left $2p \times 2p$ corner of the matrix $\theta,$ which is $\theta_{1,1}$. Indeed, \cite[Proposition 4.3, page 289]{Rie88} says that trace of $\mathcal{E}$ is $|\deta \widetilde{T}|$, where 
$$
\widetilde{T}= \left( \begin{array}{cc}
T_{11} & 0\\
0  & \id_q\\
\end{array} \right).
$$ Thus the relation $T_{11}^tJ_0T_{11} = \theta_{1,1}$ and the fact $\deta (J_0) =1$ give the claim.
\end{remark}

Let $\theta \in \mathcal{T}_n.$ We will now see that for each non-zero pfaffian minor of $\theta,$ we can construct a projective module over $A_\theta$ such that the trace of this module is exactly the pfaffian minor.  Fix $1 \leq p \leq \frac{n}{2}$. Choose $I:=\left(i_{1}, i_{2}, \ldots, i_{2 p}\right)$ for $i_1 < i_2 < ... < i_{2p}$, and assume the pfaffian minor $\pf(\theta_I)$ is non-zero (so that $\theta_I$ is invertible). Choose a permutation $\Sigma \in \mathcal{S}_{n}$ such that $\Sigma(1) = i_1,  \Sigma(2) = i_2,\cdots, \Sigma(2p) = i_{2p}.$ If $U_1, U_2, \cdots, U_n$ are generators of $A_\theta$, there exists an $n \times n$ skew-symmetric matrix, denoted by  $\Sigma(\theta)$, such that $U_{\Sigma(1)},  U_{\Sigma(2)}, \cdots, U_{\Sigma(n)}$ are generators of $A_{\Sigma(\theta)}$ and $A_{\Sigma(\theta)} \cong A_\theta.$ Note that the upper left $2p \times 2p$ block $\Sigma(\theta)$ is exactly $\theta_{I},$ which is invertible. Now consider the projective module constructed as completion of $\mathscr{S}(\R^p \times \Z^{n-2p})$ over $A_{\Sigma(\theta)}$ as in the previous subsection and denote it by  $\mathcal{E}_{I}^{\theta}$. The trace of this module is the pfaffian of $\theta_{I}$ by the remark above, which is $\sum_{\xi \in \Pi}(-1)^{|\xi|}\prod^{p}_{s=1}\theta_{i_{\xi(2s-1)}i_{\xi(2s)}}.$  Varying $p$, and assuming that all the pfaffian minors are non-zero, we get $2^{n-1}-1$ projective modules. We call these $2^{n-1}-1$ elements the \emph{fundamental projective modules}. 

So for a non-zero $\pf(\theta_{I})$, $I=\left(i_{1}, i_{2}, \ldots, i_{2 p}\right),$ we have constructed a projective module $\mathcal{E}_{I}^{\theta}$ over $A_\theta,$ whose trace is $\pf(\theta_{I}).$ A quick thought shows that $\mathcal{E}_{I}^{\theta}$ is an equivalence bimodule between $A_\theta$ and $A_{g_{I, \Sigma}\theta}$ for some $g_{I, \Sigma} \in \mathrm{SO}(n, n|\Z).$ Indeed, let $R_{I}^{\Sigma}$ be the permutation matrix corresponding to the permutation $\Sigma$. Note that $\Sigma(\theta) = \rho\left(R_{I}^{\Sigma}\right)\theta.$    Then clearly $g_{I, \Sigma}=\sigma_{2 p}  \rho\left(R_{I}^{\Sigma}\right).$ In Section~\ref{sec:gen_general}, we will write down a basis of $\K_0(A_\theta)$ using these fundamental modules.

Next, we will construct specific (Rieffel-type) projections which represent the fundamental projective modules. The following theorem is a modification (according to our needs) of \cite[Theorem 3.13]{CH21}. From now on we shall often denote the canonical trace of $A_\theta$ by $\Tr_\theta.$ 

\begin{theorem}\label{even n projection}
  For any even number $n=2l\geq2,$ let $\theta\in \mathcal{T}_n$ be totally irrational satisfying ${\pf}(F^j(\theta)_{1,1})\in (\frac{1}{2},1)$ for $j=0,1,\dots,l-1.$ Then
  there exists a (Rieffel-type) projection $p_m$ inside $A_{F^m(\theta)}$ such that
  \begin{eqnarray}\Tr_{F^m(\theta)}(p_m)={\pf}(F^m(\theta))\nonumber
   \end{eqnarray}
   for $m=0,1,2,\dots,l-1.$ 
\end{theorem}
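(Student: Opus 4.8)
The plan is to realize each $p_m$ as a genuine projection inside $A_{F^m(\theta)}$ by combining the iterated pfaffian factorization of Lemma~\ref{submatrix,x}, Rieffel's Heisenberg-type bimodule recalled above, and the fact that a simple noncommutative torus has stable rank one with the order on its $\K_0$ determined by the trace. Fix $m\in\{0,1,\dots,l-1\}$. Total irrationality of $\theta$ makes $F^m(\theta)\in\mathcal{T}_{2(l-m)}$ well defined and again totally irrational (as in the discussion following Lemma~\ref{h1}), so $A_{F^m(\theta)}$ is simple with a unique tracial state $\Tr_{F^m(\theta)}$. Applying \eqref{q1} to $F^j(\theta)$ for $j=m,\dots,l-1$, using $F(F^j(\theta))=F^{j+1}(\theta)$ and the trivial identity ${\pf}(F^{l-1}(\theta))={\pf}(F^{l-1}(\theta)_{1,1})$ for the $2\times2$ matrix $F^{l-1}(\theta)$, I would telescope to
\[
{\pf}\bigl(F^m(\theta)\bigr)=\prod_{j=m}^{l-1}{\pf}\bigl(F^j(\theta)_{1,1}\bigr)\in\Bigl(\bigl(\tfrac12\bigr)^{\,l-m},\,1\Bigr)\subset(0,1),
\]
each factor lying in $(\tfrac12,1)$ by hypothesis; in particular ${\pf}(F^m(\theta))\neq0$, so the skew-symmetric matrix $F^m(\theta)$ is invertible.

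Next I would apply Rieffel's construction of the preceding subsection to $F^m(\theta)$ with $p=l-m$ and $q=0$, so that the upper-left $2p\times2p$ block is $F^m(\theta)$ itself; this produces the fundamental projective module $\mathcal{E}:=\mathcal{E}^{F^m(\theta)}_{(1,2,\dots,2(l-m))}$, i.e.\ the completion of $\mathscr{S}(\R^{\,l-m})$, which is finitely generated projective over $A_{F^m(\theta)}$. By Remark~\ref{rmk:trace_of_heisenberg} its trace is the absolute value of the pfaffian of that block, so
\[
\Tr_{F^m(\theta)}\bigl([\mathcal{E}]\bigr)=\bigl|{\pf}(F^m(\theta))\bigr|={\pf}(F^m(\theta))\in(0,1).
\]

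To conclude I would upgrade $[\mathcal{E}]$ to a projection in the algebra. Since $A_{F^m(\theta)}$ is a simple noncommutative torus it has stable rank one, hence cancellation of projections and an embedding of the Murray--von Neumann semigroup into $\K_0$; moreover the order on $\K_0(A_{F^m(\theta)})$ is determined by its unique trace. From $0<\Tr_{F^m(\theta)}([\mathcal{E}])<1=\Tr_{F^m(\theta)}([1])$ it follows that $[1]-[\mathcal{E}]\in\K_0^+(A_{F^m(\theta)})$, so $[\mathcal{E}]\leq[1]$; cancellation then yields $\mathcal{E}\cong p_m A_{F^m(\theta)}$ for some projection $p_m=p_m^*=p_m^2\in A_{F^m(\theta)}$, and
\[
\Tr_{F^m(\theta)}(p_m)=\Tr_{F^m(\theta)}\bigl([\mathcal{E}]\bigr)={\pf}\bigl(F^m(\theta)\bigr),
\]
as required. (When $l-m=1$ this $p_m$ is just a Powers--Rieffel projection of a two-dimensional torus, cf.\ Appendix~\ref{sec:riep}; alternatively, since the hypotheses pass from $\theta$ to $F(\theta)$, one could induct downward on $l-m$, but the direct argument treats all $m$ at once.)

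I expect the only delicate point to be this last step: passing from a $\K_0$-class with the right trace to a bona fide projection living in $A_{F^m(\theta)}$ itself rather than in a matrix amplification. It rests on stable rank one for noncommutative tori and on the trace-determined order on $\K_0$ of simple noncommutative tori\,---\,both standard, but precisely what makes the conclusion ``Rieffel-type projection inside $A_{F^m(\theta)}$'' legitimate. Everything else is bookkeeping with the pfaffian identities already established in the excerpt.
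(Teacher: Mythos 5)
Your proposal is correct, but it takes a genuinely different route from the paper. The paper proves the existence by a downward recursion on $m$ following Boca: at the base case $m=l-1$ one has the classical $2\times 2$ Rieffel projection, and for the inductive step one takes the $2\times 2$ Rieffel projection $e$ in the subalgebra generated by the first two unitaries of $A_{F^{m-1}(\theta)}$, invokes the isomorphism $\psi\colon A_{\sigma(F^{m-1}(\theta))}\to eA_{F^{m-1}(\theta)}e$, observes $A_{F^m(\theta)}\subset A_{\sigma(F^{m-1}(\theta))}$, and sets $p_{m-1}:=\psi(p_m)$; the trace identity then comes from uniqueness of the trace and the pfaffian factorisation \eqref{q1}. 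Your argument instead realises the full pfaffian $\pf(F^m(\theta))$ in one stroke as the trace of the fundamental Heisenberg module $\mathcal{E}^{F^m(\theta)}_{(1,\dots,2(l-m))}$ with $q=0$, and then converts the $\K_0$ class $[\mathcal{E}]$ into a projection inside the algebra by using stable rank one (cancellation) plus the trace-determined order on $\K_0$ of a simple noncommutative torus, together with the numerical fact $0<\pf(F^m(\theta))<1$.

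Both arguments are sound, and it is worth spelling out what each buys. Your route is shorter and more conceptual: it avoids the recursion, it only uses the hypothesis $\pf(F^j(\theta)_{1,1})\in(\tfrac12,1)$ through the weaker conclusion $\pf(F^m(\theta))\in(0,1)$, and all the heavy lifting is delegated to standard structure theorems for simple noncommutative tori (Phillips: tracial rank zero, hence stable rank one and strict comparison). What it does \emph{not} deliver is the specific, recursively defined projection $p_m=\psi(p_{m+1})$ that the paper constructs. That extra structure is precisely what the rest of the paper leans on: Appendix~\ref{sec:bottpincrossed} uses the explicit $\psi$ and the explicit $2\times2$ Rieffel projection $e$ (which requires the full strength of the $(\tfrac12,1)$ hypothesis) to show the $P^\theta_I$ are $\mathbb{Z}_2$-invariant and that $\psi$ is $\mathbb{Z}_2$-equivariant, and Proposition~\ref{prop:imageofi_rieffel} needs the commuting diagrams built from that very $\psi$. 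Your abstract projection $p_m$ coming from cancellation is only well-defined up to unitary equivalence and carries no visible $\mathbb{Z}_2$-symmetry, so it cannot be fed into those later computations without further work. As a proof of the bare statement of Theorem~\ref{even n projection}, however, it is complete and correct.

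One small clarification: you assert that $F^m(\theta)$ is again totally irrational. The paper's discussion after Lemma~\ref{h1} only verifies non-degeneracy (rational independence of $1$ and the entries), which is all you actually need for simplicity and uniqueness of the trace. Total irrationality of $F^m(\theta)$ is in fact true (any rational relation among its pfaffian minors lifts, via Lemma~\ref{submatrix,x}, to one among distinct pfaffian minors of $\theta$), but since you never use anything beyond simplicity of $A_{F^m(\theta)}$, you could simply cite non-degeneracy and sidestep the stronger claim.
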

\begin{proof}
Since $\theta$ is totally irrational, it follows from the discussion after Lemma~\ref{h1} that ${\pf}(F^m(\theta)_{1,1})$ is irrational and
$A_{F^{m}(\theta)}$ is a simple C$^*$-algebra for $m=0,1,\dots,l-1$.
Now we do the proof using recursion on $m.$ For $m=l-1$, $F^{l-1}(\theta)$ is a $2\times2$ matrix and  ${\pf}(F^{l-1}(\theta)_{1,1})={\pf}(F^{l-1}(\theta))\in (\frac{1}{2},1)$ is irrational, construction of such projection is well-known
  (the projection is known as Rieffel projection), and the trace of this projection is ${\pf}(F^{l-1}(\theta)),$ (see Appendix~\ref{sec:bottp}). 
  Now suppose that for some $m\in \{1,2,\dots,l-1\}$, there is such a projection in
$A_{F^m(\theta)}$ such that \begin{eqnarray}\label{u1}
\Tr_{F^m(\theta)}(p_m)&=&{\pf}(F^m(\theta)).
\end{eqnarray} 
Then we want to prove that there is a projection $p_{m-1}$ in $A_{F^{m-1}(\theta)}$ with $\Tr_{F^{m-1}(\theta)}(p_{m-1})={\pf}(F^{m-1}(\theta)).$ We follow the method of page 198-199 in \cite{Boc97} to construct such a projection. Write
   $$F^{m-1}(\theta)=\left(\begin{matrix}
                  F^{m-1}(\theta)_{1,1} &  F^{m-1}(\theta)_{1,2} \\
                 F^{m-1}(\theta)_{2,1}& F^{m-1}(\theta)_{2,2}
                 \end{matrix}
\right)
\in \mathcal{T}_{n-2(m-1)},$$
where $F^{m-1}(\theta)_{1,1}$ is a $2\times 2$ block.
 From the previous discussion of this section, $A_{F^{m-1}(\theta)}$ is strong Morita equivalent to $A_{\sigma(F^{m-1}(\theta))},$
where
\begin{eqnarray}\sigma(F^{m-1}(\theta))&=&\left(\begin{matrix}
                  F^{m-1}(\theta)_{1,1}^{-1} & -F^{m-1}(\theta)_{1,1}^{-1}F^{m-1}(\theta)_{1,2} \\
                 F^{m-1}(\theta)_{2,1}F^{m-1}(\theta)_{1,1}^{-1} & F^{m-1}(\theta)_{2,2}-F^{m-1}(\theta)_{2,1}F^{m-1}(\theta)_{1,1}^{-1}F^{m-1}(\theta)_{1,2}
                \end{matrix}
\right)\nonumber\\
&=&\left(\begin{matrix}
                  F^{m-1}(\theta)_{1,1}^{-1} & -F^{m-1}(\theta)_{1,1}^{-1}F^{m-1}(\theta)_{1,2} \\
                 F^{m-1}(\theta)_{2,1}F^{m-1}(\theta)_{1,1}^{-1} & F^{m}(\theta)
                \end{matrix}
\right).\nonumber
\end{eqnarray}
Denote the Rieffel projection in $A_{F^{m-1}(\theta)_{11}}$, by $e$ and  $\Tr_{F^{m-1}(\theta)}(e)={\pf}(F^{m-1}(\theta)_{1,1})$ (see Appendix~\ref{sec:bottp} for the construction of such $e$, here we use the assumption that ${\pf}(F^{m-1}(\theta)_{1,1})$ is in $(\frac{1}{2},1)$). It follows that $A_{\sigma(F^{m-1}(\theta))}\cong eA_{F^{m-1}(\theta)}e,$ and we denote this isomorphism by $\psi$ (see Appendix~\ref{sec:bottp} for the description of $\psi$). By the induction hypothesis and Equation~\ref{u1}, there exists a projection $e'\in A_{F^{m}(\theta)}\subset A_{\sigma(F^{m-1}(\theta))}$ with
$\Tr_{F^{m}(\theta)}(e')={\pf}(F^m(\theta)).
$
 
Now for the tracial state $\Tr_{F^{m-1}(\theta)}$ on $A_{F^{m-1}(\theta)}$, since $\psi(1_{A_{\sigma(F^{m-1}(\theta))}})=e$ and $\Tr_{F^{m-1}(\theta)}(e)={\pf}(F^{m-1}(\theta)_{1,1})$, we have $$\frac{1}{{\pf}(F^{m-1}(\theta)_{1,1})}\Tr_{F^{m-1}(\theta)}\circ \psi$$ is a tracial state on $A_{\sigma(F^{m-1}(\theta))}.$ But this tracial state is $\Tr_{\sigma(F^{m-1}(\theta))}$ as $A_{\sigma(F^{m-1}(\theta))}$ has a unique tracial state (being Morita equivalent to $A_{F^{m-1}(\theta)},$ $F^{m-1}(\theta)$ is nondegenerate). Note $e'\in A_{F^{m}(\theta)}\subset A_{\sigma(F^{m-1}(\theta))},$ so
$\Tr_{\sigma(F^{m-1}(\theta))}(e')=\Tr_{F^{m}(\theta)}(e').$
Then we get
\begin{eqnarray}
&&\frac{1}{{\pf}(F^{m-1}(\theta)_{1,1})}\Tr_{F^{m-1}(\theta)}\circ \psi (e')\nonumber\\
&=&\Tr_{\sigma(F^{m-1}(\theta))}(e')=\Tr_{F^{m}(\theta)}(e')\nonumber\\
&=&{\pf}(F^m(\theta)).\label{m11}
\end{eqnarray}
 Let $p_{m-1}:=\psi(e').$
From (\ref{m11}) we get
\begin{eqnarray}
\Tr_{F^{m-1}(\theta)}(p_{m-1})&=&\Tr_{F^{m-1}(\theta)}
(\psi(e'))\nonumber\\
&=&{\pf}(F^{m-1}(\theta)_{1,1})(\frac{1}{{\pf}(F^{m-1}(\theta)_{1,1})}\Tr_{F^{m-1}(\theta)}\circ \psi (e'))\nonumber\\
&=&{\pf}(F^{m-1}(\theta)_{1,1})({\pf}(F^m(\theta))\nonumber\\
&=&{\pf}(F^{m-1}(\theta)),
\end{eqnarray}
using the equations (\ref{=2}) and (\ref{pf1}). 
\end{proof}

The above theorem tells us how to construct a higher dimensional (Rieffel-type) projection with corresponding trace values from a low dimensional projection under certain conditions. By Lemma \ref{h1}, we know that
\begin{eqnarray}{\pf}(F^j(\theta)_{1,1})=\frac{{\pf}^{\theta}_{(1,2,\dots,n-p-1,n-p)}}{{\pf}^{\theta}_{(1,2,\dots,n-p-2)}},\quad  p=n-2j-2,\quad j=1,\dots,l-1.
\end{eqnarray}
Therefore, the conditions of the above theorem can also be described as:
\begin{eqnarray}\frac{{\pf}^{\theta}_{(1,2,\dots,n-p-1,n-p)}}{{\pf}^{\theta}_{(1,2,\dots,n-p-2)}}\in (\frac{1}{2},1)\quad {\rm for}\,\, p=n-2j-2,\quad j=1,\dots,l-1
\end{eqnarray}
and $\theta_{ij} \in (\frac{1}{2},1)$ for $i<j.$ (The last condition is stronger though.)
We record this fact as a corollary below.

 \begin{corollary}\label{even n projection explicit}
  Let $\theta\in \mathcal{T}_n$ be totally irrational for $n=2l\geq2$. If $\theta$ satisfies 
  $\theta_{ij} \in (\frac{1}{2},1)$ for $i<j,$ and  \begin{eqnarray}\frac{{\pf}^{\theta}_{(1,2,\dots,n-p-1,n-p)}}{{\pf}^{\theta}_{(1,2,\dots,n-p-2)}}\in (\frac{1}{2},1)\quad {\rm for}\,\, p=n-2j-2,\quad j=1,\dots,l-1,
\end{eqnarray}  then
  there exists a (Rieffel-type) projection $p_m$ inside $A_{F^m(\theta)}$ such that
  \begin{eqnarray}\Tr_{F^m(\theta)}(p_m)={\pf}(F^m(\theta))\nonumber
   \end{eqnarray}
   for $m=0,1,2,\dots,l-1.$
\end{corollary}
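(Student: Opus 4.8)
The plan is to read the corollary off directly from Theorem~\ref{even n projection}: the only thing to do is check that the explicit inequalities in the statement force the hypothesis ${\pf}(F^j(\theta)_{1,1})\in(\frac{1}{2},1)$, $j=0,1,\dots,l-1$, of that theorem, after which its conclusion applies verbatim.

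First I would record the closed form of ${\pf}(F^j(\theta)_{1,1})$. Because $\theta$ is totally irrational, the discussion following Lemma~\ref{h1} gives ${\pf}^{\theta}_{(1,2,\dots,2s)}\neq 0$ for $s=1,\dots,l-1$, so every $F^j(\theta)$ with $0\leq j\leq l-1$ is well-defined and Lemma~\ref{h1} applies to it. Reading off the $(1,2)$-entry in that lemma gives, for $j=1,\dots,l-1$,
\[
{\pf}\bigl(F^j(\theta)_{1,1}\bigr)=F^j(\theta)_{12}=\frac{{\pf}^{\theta}_{(1,2,\dots,n-p-1,n-p)}}{{\pf}^{\theta}_{(1,2,\dots,n-p-2)}},\qquad p=n-2j-2,
\]
while for $j=0$ one has trivially ${\pf}(F^0(\theta)_{1,1})={\pf}(\theta_{1,1})=\theta_{12}$, which is the same expression under the convention ${\pf}^{\theta}_{\emptyset}=1$. (When $j=l-1$ one has $p=0$ and the numerator is ${\pf}(\theta)$, consistent with the base step in the proof of Theorem~\ref{even n projection}.)

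Next I would observe that the two families of hypotheses in the corollary are precisely tailored to these quantities. The assumption $\frac{{\pf}^{\theta}_{(1,2,\dots,n-p-1,n-p)}}{{\pf}^{\theta}_{(1,2,\dots,n-p-2)}}\in(\frac{1}{2},1)$ for $p=n-2j-2$, $j=1,\dots,l-1$, places ${\pf}(F^j(\theta)_{1,1})$ in $(\frac{1}{2},1)$ for those $j$, and the assumption $\theta_{ij}\in(\frac{1}{2},1)$ for $i<j$ handles $j=0$ through the single inequality $\theta_{12}\in(\frac{1}{2},1)$ — so, as already remarked, this clause is stronger than what is strictly needed. Together with total irrationality of $\theta$, every hypothesis of Theorem~\ref{even n projection} is then in force, and the desired projections $p_m\in A_{F^m(\theta)}$ with $\Tr_{F^m(\theta)}(p_m)={\pf}(F^m(\theta))$, $m=0,\dots,l-1$, are produced by that theorem.

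Since the substantive work — the recursive construction of the $p_m$ through the Morita bimodules and the corner isomorphism $\psi\colon A_{\sigma(F^{m-1}(\theta))}\cong eA_{F^{m-1}(\theta)}e$ — has already been carried out in Theorem~\ref{even n projection}, I do not expect a genuine obstacle here. The only point demanding care is the index bookkeeping coming from Lemma~\ref{h1}: matching the iteration index $j$ with $p=n-2j-2$ (equivalently $s'=2j$), and checking that the $j=0$ edge case is covered by the $\theta_{ij}$-clause rather than by the pfaffian-ratio clause.
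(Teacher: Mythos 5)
Your proposal is correct and follows the paper's own argument exactly: the paper, too, records the identity ${\pf}(F^j(\theta)_{1,1})=\dfrac{{\pf}^{\theta}_{(1,2,\dots,n-p-1,n-p)}}{{\pf}^{\theta}_{(1,2,\dots,n-p-2)}}$ (with $p=n-2j-2$) from Lemma~\ref{h1}, notes that the clause $\theta_{ij}\in(\tfrac12,1)$ covers $j=0$ (and is in fact stronger than needed), and then applies Theorem~\ref{even n projection} verbatim. Your index bookkeeping and the $j=0$ and $j=l-1$ edge-case checks are all accurate.
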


In particular, when $m=0,$ we have the following:

 \begin{corollary}\label{even n projection explicit_m=0}
Let $\theta\in \mathcal{T}_n$ be totally irrational for $n=2l\geq2$. If $\theta$ satisfies $\theta_{ij} \in (\frac{1}{2},1)$ for $i<j,$ and \begin{eqnarray}\frac{{\pf}^{\theta}_{(1,2,\dots,n-p-1,n-p)}}{{\pf}^{\theta}_{(1,2,\dots,n-p-2)}}\in (\frac{1}{2},1)\quad {\rm for}\,\, p=n-2j-2,\quad j=1,\dots,l-1,
\end{eqnarray}  then
  there exists a (Rieffel-type) projection $p=p_0$ inside $A_{\theta}$ such that
$\Tr(p)={\pf}(\theta).$
  
\end{corollary}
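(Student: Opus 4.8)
The plan is to derive Corollary~\ref{even n projection explicit_m=0} as the special case $m=0$ of Corollary~\ref{even n projection explicit}, which in turn is just the explicit reformulation of Theorem~\ref{even n projection}. So the work is entirely in unwinding the hypotheses. First I would note that the condition ``$\theta_{ij}\in(\tfrac12,1)$ for $i<j$'' forces, in particular, $\pf(\theta_{1,1})=\theta_{12}\in(\tfrac12,1)$, which is exactly the $j=0$ instance of the requirement $\pf(F^j(\theta)_{1,1})\in(\tfrac12,1)$ in Theorem~\ref{even n projection}. (The parenthetical remark ``The last condition is stronger though'' in the text before the corollary is the signpost that this implication is not an equivalence, but we only need one direction.) Second, for $j=1,\dots,l-1$ I would invoke the identity from Lemma~\ref{h1},
\begin{eqnarray}
\pf(F^j(\theta)_{1,1})=\frac{\pf^{\theta}_{(1,2,\dots,n-p-1,n-p)}}{\pf^{\theta}_{(1,2,\dots,n-p-2)}},\qquad p=n-2j-2,\nonumber
\end{eqnarray}
so that the displayed hypothesis of the corollary is literally the statement ``$\pf(F^j(\theta)_{1,1})\in(\tfrac12,1)$ for $j=1,\dots,l-1$''. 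Combining the two observations shows that all hypotheses of Theorem~\ref{even n projection} are met.

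Having checked the hypotheses, I would simply apply Theorem~\ref{even n projection} to obtain, for each $m=0,1,\dots,l-1$, a Rieffel-type projection $p_m\in A_{F^m(\theta)}$ with $\Tr_{F^m(\theta)}(p_m)=\pf(F^m(\theta))$; this is precisely the conclusion of Corollary~\ref{even n projection explicit}. Specializing to $m=0$ and using $F^0(\theta)=\theta$ gives a projection $p=p_0\in A_\theta$ with $\Tr(p)=\Tr_\theta(p_0)=\pf(F^0(\theta))=\pf(\theta)$, which is the assertion of Corollary~\ref{even n projection explicit_m=0}.

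There is essentially no obstacle here: the only thing to be careful about is the bookkeeping of indices — matching the parameter $p=n-2j-2$ in Lemma~\ref{h1} with the range $j=1,\dots,l-1$ and confirming that the $2\times2$ corner $F^j(\theta)_{1,1}$ really has pfaffian equal to its single upper-right entry, so that the positivity/irrationality needed to run the Rieffel projection construction in each recursive step (and the invertibility of $\theta_{1,1}$, hence well-definedness of $F$, $F^2,\dots$) follows from total irrationality as explained in the discussion after Lemma~\ref{h1}. Since Theorem~\ref{even n projection} already absorbs all of that, the proof of the corollary is a one-line deduction.

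\begin{proof}
The hypotheses $\theta_{ij}\in(\tfrac12,1)$ for $i<j$ and $\frac{\pf^{\theta}_{(1,2,\dots,n-p-1,n-p)}}{\pf^{\theta}_{(1,2,\dots,n-p-2)}}\in(\tfrac12,1)$ for $p=n-2j-2$, $j=1,\dots,l-1$, are exactly the hypotheses of Corollary~\ref{even n projection explicit}, which applies to give a Rieffel-type projection $p_m\in A_{F^m(\theta)}$ with $\Tr_{F^m(\theta)}(p_m)=\pf(F^m(\theta))$ for $m=0,1,\dots,l-1$. Taking $m=0$ and recalling $F^0(\theta)=\theta$, the projection $p:=p_0\in A_\theta$ satisfies $\Tr(p)=\pf(\theta)$.
\end{proof}
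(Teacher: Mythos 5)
Your proposal is correct and matches the paper exactly: the paper introduces Corollary~\ref{even n projection explicit_m=0} with the phrase ``In particular, when $m=0$'' immediately after Corollary~\ref{even n projection explicit}, so it is indeed nothing more than the $m=0$ specialization, with the identity from Lemma~\ref{h1} translating the pfaffian-ratio hypothesis into the condition $\pf(F^j(\theta)_{1,1})\in(\tfrac12,1)$ of Theorem~\ref{even n projection}.
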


 Let us now show that under suitable conditions on $\theta,$ the numbers coming in the right hand side of Equation~\ref{range of trace} may be realized as traces of the Rieffel-type projections in $A_\theta.$
\begin{theorem}\label{PI1}
Let $\theta\in \mathcal{T}_n$ be totally irrational. If for some $I \in \mathrm{Minor}(n)$ with $|I|=2m\geq 2$, 
$ {\pf}(F^{j}(\theta_I)_{1,1})\in (\frac{1}{2},1),
$
 for all $j=0,1,\dots,m-1,$
  then there exist  (Rieffel-type) projections $P_{I},$ such that
  \begin{eqnarray}\label{pI1}\Tr(P_{I})={\pf}(\m{\theta}{I})
  \end{eqnarray}
where $\Tr$ is the canonical tracial state on $A_{\theta}.$
\end{theorem}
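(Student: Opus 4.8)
The plan is to deduce Theorem~\ref{PI1} from Theorem~\ref{even n projection} by reducing the statement about the $2m\times 2m$ submatrix $\theta_I$ sitting inside the big matrix $\theta$ to a statement about an honest totally irrational $2m$-dimensional noncommutative torus, and then transporting the resulting Rieffel-type projection back into $A_\theta$. First I would fix $I=(i_1,\dots,i_{2m})\in\mathrm{Minor}(n)$ with the hypothesis ${\pf}(F^j(\theta_I)_{1,1})\in(\tfrac12,1)$ for $j=0,\dots,m-1$, and choose a permutation $\Sigma\in\mathcal S_n$ with $\Sigma(s)=i_s$ for $1\le s\le 2m$, exactly as in the discussion of the fundamental projective modules. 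This gives an isomorphism $A_{\Sigma(\theta)}\cong A_\theta$ under which the upper-left $2m\times2m$ block of $\Sigma(\theta)$ is $\theta_I$. The point is that $A_{\theta_I}$ itself is a $2m$-dimensional noncommutative torus which is totally irrational: since $\theta$ is totally irrational, the pfaffian minors ${\pf}(\m{\theta}{J})$ for $J\subseteq I$ form a subset of the rationally independent family indexed by $\mathrm{Minor}(n)$, hence are themselves rationally independent, which is precisely total irrationality of $\theta_I$ (by Elliott's theorem, Theorem~\ref{elliott_image_of_trace}, and Equation~\ref{range of trace}).

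Next I would apply Theorem~\ref{even n projection} directly to the $2m\times2m$ matrix $\theta_I$: the hypothesis ${\pf}(F^j(\theta_I)_{1,1})\in(\tfrac12,1)$ for $j=0,\dots,m-1$ is exactly the hypothesis of that theorem with $n$ replaced by $2m$ and $l$ by $m$. This produces a Rieffel-type projection $p_0\in A_{\theta_I}$ with $\Tr_{\theta_I}(p_0)={\pf}(\theta_I)$ (the $m=0$ case of the conclusion). Now I need to promote $p_0$ from $A_{\theta_I}$ to a projection in $A_\theta$. The natural map is the inclusion $A_{\theta_I}\hookrightarrow A_{\Sigma(\theta)}$ sending the generators $U_1,\dots,U_{2m}$ of $A_{\theta_I}$ to the first $2m$ generators $U_{i_1},\dots,U_{i_{2m}}$ of $A_{\Sigma(\theta)}$; this is a unital $*$-homomorphism because the commutation relations among $U_{i_1},\dots,U_{i_{2m}}$ in $A_{\Sigma(\theta)}$ are governed precisely by $\theta_I$. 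Define $P_I$ to be the image of $p_0$ under $A_{\theta_I}\hookrightarrow A_{\Sigma(\theta)}\cong A_\theta$. Since the canonical trace $\Tr$ on $A_\theta$ restricts, via this inclusion, to the canonical trace on $A_{\theta_I}$ (both are given by the vanishing-of-off-diagonal-Fourier-coefficients formula), we get $\Tr(P_I)=\Tr_{\theta_I}(p_0)={\pf}(\theta_I)={\pf}(\m{\theta}{I})$, which is Equation~\ref{pI1}.

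The main obstacle I anticipate is purely bookkeeping: making sure that ``Rieffel-type projection'' is preserved under the inclusion $A_{\theta_I}\hookrightarrow A_\theta$ in whatever precise sense the paper wants (the projections are built inductively via the Morita bimodules of Section~\ref{sec:rie}, so one should check that the image under the coordinate inclusion is still of that form, or simply define the class of Rieffel-type projections in $A_\theta$ to be closed under such inclusions). A secondary point to verify carefully is that $\theta_I$ being totally irrational really does follow from total irrationality of $\theta$ — this needs the observation that $\mathrm{Minor}(2m)$, viewed through the index set $I$, embeds into $\mathrm{Minor}(n)$, together with the fact that a subset of a rationally independent set is rationally independent; this also guarantees $A_{\theta_I}$ is simple with a unique trace, which is what licenses the trace-matching step. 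Everything else is a direct citation of Theorem~\ref{even n projection} and the trace computation in Remark~\ref{rmk:trace_of_heisenberg}.
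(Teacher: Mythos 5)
Your proposal is correct and takes essentially the same route as the paper: both arguments apply the $m=0$ case of Theorem~\ref{even n projection} (which the paper cites via Corollary~\ref{even n projection explicit_m=0}) to the $2m\times 2m$ matrix $\theta_I$, observing that $\theta_I$ inherits total irrationality from $\theta$, and then transport the resulting Rieffel-type projection into $A_\theta$ via the natural inclusion $A_{\theta_I}\hookrightarrow A_{\Sigma(\theta)}\cong A_\theta$, under which the canonical trace restricts. If anything you are slightly more careful than the paper, which cites the corollary whose stated hypothesis ($\theta_{ij}\in(\tfrac12,1)$ for all $i<j$) is strictly stronger than the hypothesis of the present theorem; invoking Theorem~\ref{even n projection} directly, as you do, matches the assumptions exactly.
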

\begin{proof}
  We use $\m{\theta}{I}$ instead of $\theta$ in Corollary~\ref{even n projection explicit_m=0}, noting that $\m{\theta}{I}$ is totally irrational as well, to get $\Tr(P_{I})={\pf}(\m{\theta}{I}).$
\end{proof}

We will now see that the above projections form a generating set of $\K_0(A_\theta),$ when $\theta$ is totally irrational, under the assumptions of the above theorem.

\begin{definition}\label{a3}
 We say that $\theta$ is {\it strongly totally irrational} if $\theta=(\theta_{jk})\in \mathcal{T}_n$ is a totally irrational matrix such that
 \begin{eqnarray}\label{a2}
 {\pf}(F^{j}(\m{\theta}{I})_{1,1})\in (\frac{1}{2},1) 
 \end{eqnarray}
for all $I \in \mathrm{Minor}(n)$ with $|I|=2m\geq 2$,  and for all $j=0,1,\dots,m-1.$ 
\end{definition}
We refer to Appendix~\ref{sec:strongly_irr} for more on strongly totally irrational matrices and for the construction of examples of such matrices.
\begin{theorem}\label{PI}
  For any integer $n\geq 2,$ let $\theta\in \mathcal{T}_n$ be strongly totally irrational. Then there exist  (Rieffel-type) projections $P_{I},$ for every $I \in \mathrm{Minor}(n)$ inside $A_{\theta},$ such that
 $\label{pI1}\Tr(P_{I})={\pf}(\m{\theta}{I}),$
  where $\Tr$ is the canonical tracial state on $A_{\theta}.$  Moreover, a generating set of $\K_0(A_{\theta})$ is given by  $\{[P_{I}]~|~I \in \mathrm{Minor}(n)\}.$ 
\end{theorem}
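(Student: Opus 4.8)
The plan is to read off the projections $P_I$ from Theorem~\ref{PI1} and then use Elliott's computation of the image of the trace on $\K_0$ (Theorem~\ref{elliott_image_of_trace}), together with Equation~\ref{range of trace}, to conclude that the classes $[P_I]$ generate.

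First I would establish existence of the $P_I$. For $I=\emptyset$ one simply takes $P_\emptyset:=1\in A_\theta$, so that $\Tr(P_\emptyset)=1=\pf(\theta_\emptyset)$ by the convention fixed in Definition~\ref{2p minor}. For $I\in\mathrm{Minor}(n)$ with $|I|=2m\geq2$, observe that $\theta_I$ is again totally irrational: its pfaffian minors form a subcollection of the pfaffian minors of $\theta$, hence are rationally independent. Moreover, the hypothesis that $\theta$ is strongly totally irrational is, by Definition~\ref{a3}, exactly the statement that $\pf(F^j(\theta_I)_{1,1})\in(\frac{1}{2},1)$ for $j=0,1,\dots,m-1$. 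These are precisely the hypotheses of Theorem~\ref{PI1} with $\theta_I$ in place of $\theta$, so Theorem~\ref{PI1} supplies a Rieffel-type projection $P_I\in A_\theta$ with $\Tr(P_I)=\pf(\theta_I)$.

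Next I would prove the generating statement. Since $\theta$ is totally irrational, $\Tr\colon\K_0(A_\theta)\to\R$ is injective (this is exactly what total irrationality says, via Theorem~\ref{elliott_image_of_trace}), and by Equation~\ref{range of trace} its image is the subgroup $\sum_{I\in\mathrm{Minor}(n)}\pf(\theta_I)\Z$ of $\R$. Let $H\leq\K_0(A_\theta)$ be the subgroup generated by $\{[P_I]\mid I\in\mathrm{Minor}(n)\}$. Then $\Tr(H)$ is the subgroup of $\R$ generated by $\{\pf(\theta_I)\mid I\in\mathrm{Minor}(n)\}$, which equals $\Tr(\K_0(A_\theta))$. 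Now for any $x\in\K_0(A_\theta)$ choose $h\in H$ with $\Tr(h)=\Tr(x)$; then $\Tr(x-h)=0$, so $x=h\in H$ by injectivity. Hence $H=\K_0(A_\theta)$, which is the claim. As a by-product, since $|\mathrm{Minor}(n)|=2^{n-1}$ and the numbers $\pf(\theta_I)$ are rationally independent, this generating set is in fact a $\Z$-basis and $\K_0(A_\theta)\cong\Z^{2^{n-1}}$, though only the generating property is asserted here.

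The argument has no substantial obstacle once the two inputs are in place---Theorem~\ref{PI1}, which packages the recursive Rieffel-type construction of Theorem~\ref{even n projection}, and the injectivity of the trace from Elliott's theorem. The only points needing care are the trivial separate handling of $I=\emptyset$ and the remark that passing to a principal pfaffian submatrix preserves total irrationality, so that Theorem~\ref{PI1} genuinely applies for each $I\in\mathrm{Minor}(n)$. All the real work lies upstream, in constructing the projections themselves.
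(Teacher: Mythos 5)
Your proposal is correct and follows essentially the same route as the paper: existence via Theorem~\ref{PI1} (with the $I=\emptyset$ case handled separately and $\theta_I$ noted to remain totally irrational), and the generating statement via injectivity of $\Tr$ combined with Equation~\ref{range of trace}. The added explanation that one passes through the subgroup $H$ generated by the $[P_I]$ is a slightly more spelled-out version of the paper's one-line conclusion, but it is the same argument.
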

\begin{proof} For $I\neq \emptyset,$ by Theorem \ref{PI1} and from the definition of strong total irrationality,  we know that those projections $\{P_{I}\}$ exist. For $I= \emptyset,$  $P_{I}=1$ by definition.
  Now since $\theta$ is totally irrational, $\Tr$ is injective. So $\{[P_{I}]~|~I \in \mathrm{Minor}(n)\}$ are the generators of $\K_0(A_{\theta})$ by Equation~\ref{range of trace}.
\end{proof}

\subsection{Pimsner--Voiculescu exact sequence and the Rieffel-type projections}\label{sec:pv_generators}	
Recall that for crossed products like $A \rtimes_\gamma \Z$, the Pimsner--Voiculescu sequence looks like 
{\[
\begin{CD}
\K_0 (A) @>{\id -\gamma_{*}^{-1}}>>\K_0 (A)@>{i_*}>> \K_0 (A\rtimes \Z)   \\
@A{}AA & &  @VV{e_2}V            \\
 \K_1 (A\rtimes \Z) @<<{i*}<  \K_1 (A) @<<{\id -\gamma_{*}^{-1}}< \K_1 (A)
\end{CD}
\]} 
where $i$ is the inclusion.

Let $\theta\in \mathcal{T}_n$ and $\theta'$ be  the  upper left $(n-1) \times (n-1)$ block of $\theta.$
Let $\gamma$ be the automorphism on $A_{\theta'}$ given by
$\gamma (U_i) = e(-\theta_{in}) U_i$ for $ i=1,\dots,(n-1),$ as in Section~\ref{sec:definition}. For the crossed product algebra $A_{\theta'}\rtimes_\gamma \Z\cong A_{\theta},$
the Pimsner--Voiculescu sequence becomes

\[
\begin{CD}
\K_0(A_{\theta'}) @>{\id -\gamma_{*}^{-1}}>>\K_0(A_{\theta'})@>{i_*}>> \K_0 (A_\theta)   \\
@A{}AA & &  @VV{e_2}V            \\
 \K_1(A_\theta)@<<{}<  \K_1(A_{\theta'}) @<<{\id -\gamma_{*}^{-1}}< \K_1(A_{\theta'})
\end{CD}
\]
Since $\gamma$ is homotopic to the identity map(\cite[Lemma 1.5]{Phi06}), $\id -\gamma_{*}^{-1}$
is the zero map. Hence we get the following exact sequences

\begin{center}
\begin{equation}\label{eq:pimsner_k_0}
	\begin{tikzcd}
  0 \arrow{r} & \K_0(A_{\theta'}) \arrow[r, "i_*"] & \K_0(A_{\theta}) \arrow[r, "e_2"] & \K_1(A_{\theta'})          \arrow{r} & 0 
    \end{tikzcd}
    \end{equation}
    	\end{center}
    	
\begin{center}
\begin{equation}\label{eq:pimsner_k_1}
	\begin{tikzcd}
  0 \arrow{r} & \K_1(A_{\theta'}) \arrow{r} & \K_1(A_{\theta}) \arrow{r}& \K_0(A_{\theta'})          \arrow{r} & 0 
    \end{tikzcd}
    \end{equation}
    	\end{center}
From these two exact sequences (along with the fact $\K_0(C(\T)) = \K_1(C(\T)) = \Z$,) by induction on $n$ we get
\begin{equation}
\K_0(A_\theta)\cong \Z^{2^{n-1}} \cong \K_1(A_\theta).
\end{equation}
Now we want to fit the Rieffel-type projections in Equation~\ref{eq:pimsner_k_0}. In order to achieve that, we again assume $\theta$ to be strongly totally irrational. Now we know from Theorem~\ref{PI} that for different $I:=\left(i_{1}, i_{2}, \ldots, i_{2 p}\right)$ such that $ I\in \mathrm{Minor}(n),$ the $\K$-theory classes of the Rieffel-type projections $P_{I}^{\theta}:=P_{I}$  generate $\K_0(A_\theta).$ Now we claim that for $I\in \mathrm{Minor}(n-1),$ 

\begin{equation}\label{eq:i_*_image}
	i_*([P_{I}^{\theta'}]) = [P_{I}^{\theta}].
\end{equation}
  This follows from the fact that they have the same trace in $A_\theta,$ which is $\pf(\m{\theta}{I}).$ Now if $ \widetilde{\mathrm{Minor}}(n)$ denotes the set of all  $I\in \mathrm{Minor}(n)$ such that $i_{2p}=n,$ i.e. $ \widetilde{\mathrm{Minor}}(n)= \mathrm{Minor}(n)\setminus \mathrm{Minor}(n-1),$  the collection $\{[P_{I}^{\theta}]\}_{I\in\widetilde{\mathrm{Minor}}(n)}$ maps via $e_2$ to a generating set of $\K_1(A_{\theta'})$ which follows from the fact that Equation~\ref{eq:pimsner_k_0} is exact. We record these observations in the following proposition.
  
 \begin{proposition}\label{prp:pv_image}
 
 For a strongly totally irrational $\theta,$ $i_*([P_{I}^{\theta'}]) = [P_{I}^{\theta}],$ for $ I\in \mathrm{Minor}(n-1),$  and $\{e_2([P_{I}^{\theta}])\}_{I\in \widetilde{\mathrm{Minor}}(n)}$ form a generating set of $\K_1(A_{\theta'}).$ 
 	
 \end{proposition}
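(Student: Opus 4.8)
The plan is to verify the two assertions separately, both relying on the exactness of Equation~\ref{eq:pimsner_k_0} together with Theorem~\ref{PI} and the trace computation in Theorem~\ref{PI1}. First I would establish $i_*([P_I^{\theta'}]) = [P_I^{\theta}]$ for $I \in \mathrm{Minor}(n-1)$. Since $\theta$ is strongly totally irrational, so is $\theta'$ (the upper-left $(n-1)\times(n-1)$ block of a strongly totally irrational matrix is again strongly totally irrational, as its pfaffian minors form a subcollection of those of $\theta$, and $\mathrm{Minor}(n-1) \subset \mathrm{Minor}(n)$), hence by Theorem~\ref{PI} the projection $P_I^{\theta'}$ exists inside $A_{\theta'}$ with $\Tr_{\theta'}(P_I^{\theta'}) = \pf(\theta'_I) = \pf(\theta_I)$, the last equality because for $I \in \mathrm{Minor}(n-1)$ the submatrix $\theta_I$ equals $\theta'_I$. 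Under the inclusion $i: A_{\theta'} \hookrightarrow A_\theta$, the canonical trace $\Tr_\theta$ restricts to $\Tr_{\theta'}$, so $\Tr_\theta(i_*[P_I^{\theta'}]) = \pf(\theta_I) = \Tr_\theta([P_I^{\theta}])$. Because $\theta$ is totally irrational, $\Tr_\theta$ is injective on $\K_0(A_\theta)$ by Theorem~\ref{elliott_image_of_trace}, and therefore $i_*([P_I^{\theta'}]) = [P_I^{\theta}]$.

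Next I would treat the image under $e_2$. From the exact sequence~\ref{eq:pimsner_k_0}, $e_2$ is surjective onto $\K_1(A_{\theta'})$ with kernel exactly the image of $i_*$, which by the first part is the subgroup generated by $\{[P_I^\theta] : I \in \mathrm{Minor}(n-1)\}$. By Theorem~\ref{PI}, $\{[P_I^\theta] : I \in \mathrm{Minor}(n)\}$ is a generating set of $\K_0(A_\theta) \cong \Z^{2^{n-1}}$; since $\mathrm{Minor}(n)$ has $2^{n-1}$ elements and the group is free of that rank, this is in fact a basis. Splitting $\mathrm{Minor}(n) = \mathrm{Minor}(n-1) \sqcup \widetilde{\mathrm{Minor}}(n)$ with $|\mathrm{Minor}(n-1)| = 2^{n-2} = |\widetilde{\mathrm{Minor}}(n)|$, the quotient $\K_0(A_\theta)/\operatorname{im}(i_*) \cong \K_1(A_{\theta'})$ is generated by the images of $\{[P_I^\theta] : I \in \widetilde{\mathrm{Minor}}(n)\}$. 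Applying the isomorphism $e_2$ induced on the quotient, $\{e_2([P_I^\theta]) : I \in \widetilde{\mathrm{Minor}}(n)\}$ generates $\K_1(A_{\theta'})$, as claimed.

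The main obstacle I anticipate is justifying carefully that $\theta'$ inherits strong total irrationality (and more basically, total irrationality) from $\theta$, since the definitions involve the iterated map $F^j$ applied to all submatrices $(\theta')_I$ for $I \in \mathrm{Minor}(n-1)$; but since each such $(\theta')_I$ is literally equal to $\theta_I$ for $I \in \mathrm{Minor}(n-1) \subset \mathrm{Minor}(n)$, the conditions in Definition~\ref{a3} for $\theta'$ are a subset of those already assumed for $\theta$, so this is really a bookkeeping point rather than a genuine difficulty. A secondary point worth a sentence is that $P_I^{\theta}$ and $i_*(P_I^{\theta'})$ need not be literally the same projection in $A_\theta$ — they are only equal in $\K_0$ — which is exactly why the injectivity of the trace (total irrationality) is essential to the argument; without it one would only get equality of traces, not of $\K$-theory classes.
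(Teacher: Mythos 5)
Your proposal is correct and follows essentially the same approach as the paper: equality of traces together with injectivity of the canonical trace (from total irrationality) gives $i_*([P_I^{\theta'}]) = [P_I^\theta]$, and then exactness of the Pimsner--Voiculescu sequence identifies $\K_1(A_{\theta'})$ with $\K_0(A_\theta)/\operatorname{im}(i_*)$, whose generators are the images of $\{[P_I^\theta]\}_{I\in\widetilde{\mathrm{Minor}}(n)}$. The extra bookkeeping you supply (that $\theta'$ inherits strong total irrationality from $\theta$ because $\theta'_I = \theta_I$ for $I\in\mathrm{Minor}(n-1)$, and the aside that $\{[P_I^\theta]\}$ is actually a basis) is correct and makes explicit points the paper leaves implicit.
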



	\section{K-theory of $A_\theta\rtimes \Z_2$}\label{sec:k-theory}
		
		Before computing the K-theory groups of 	$A_\theta\rtimes \Z_2$ we will see how the Rieffel-type projections give rise to $\K_0$-classes of $A_\theta\rtimes \Z_2.$ We start with the following well known facts.

	 \begin{proposition}\label{prop:mainprop}
	Suppose $F$ is a finite group acting on a C$^*$-algebra $A$ by the action $\alpha$. Also suppose that $\mathcal{E}$  is a finitely generated projective (right) $A$-module with a right action $T : F \rightarrow \Aut(\mathcal{E})$, written $(\xi, g) \rightarrowtail \xi T_g$, such that $\xi (T_g)a = (\xi\alpha_g(a)) T_g$ for all $\xi \in  \mathcal{E}, a \in A,$ and $g \in F$. Then $\mathcal{E}$ becomes a finitely generated projective $A\rtimes F$ module with action defined by
	
$$ \xi \cdot (\sum_{g \in F} a_g\delta_g) =  \sum_{g \in F} (\xi a_g)T_g.$$ 

Also, if we restrict the new module to $A$, we get the original $A$-module $\mathcal{E}$, with the action of $F$ forgotten. 
\end{proposition}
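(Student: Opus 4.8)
The plan is to verify directly that the formula $\xi \cdot (\sum_{g \in F} a_g\delta_g) = \sum_{g \in F} (\xi a_g)T_g$ defines a right module structure over the algebraic crossed product $A \rtimes_{\mathrm{alg}} F$ (equivalently $A \rtimes F$, since $F$ is finite so the algebra is spanned by the finitely many $a\delta_g$), and then to observe that this module is finitely generated projective as a byproduct of it already being so over $A$. First I would fix notation: recall that multiplication in $A \rtimes F$ is given on generators by $(a\delta_g)(b\delta_h) = a\alpha_g(b)\delta_{gh}$. I would then check the one nontrivial module axiom, namely associativity/compatibility
\[
\bigl(\xi \cdot (a\delta_g)\bigr)\cdot (b\delta_h) = \xi \cdot \bigl((a\delta_g)(b\delta_h)\bigr),
\]
by computing the left side as $((\xi a)T_g)\cdot(b\delta_h) = (((\xi a)T_g) b) T_h$, using the hypothesis $(\eta T_g) b = (\eta \alpha_g(b)) T_g$ with $\eta = \xi a$ to rewrite this as $((\xi a \, \alpha_g(b)) T_g) T_h = (\xi a\,\alpha_g(b))(T_g T_h) = (\xi a\,\alpha_g(b)) T_{gh}$ since $T$ is a homomorphism, and recognizing this as exactly $\xi \cdot (a\alpha_g(b)\delta_{gh}) = \xi \cdot ((a\delta_g)(b\delta_h))$. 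Additivity in the algebra variable and $\C$-linearity are immediate from the defining formula, so the module axioms all hold; I would state this briefly rather than belabor it.

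Next I would address finite generation and projectivity. Since $\mathcal{E}$ is finitely generated projective over $A$, there is an idempotent $e \in M_k(A)$ and an isomorphism $\mathcal{E} \cong e A^k$ of $A$-modules. One approach is to note that $A \rtimes F$ is a finitely generated (in fact free of rank $|F|$) right $A$-module, hence $\mathcal{E}$, being finitely generated over $A$, is a fortiori finitely generated over the larger ring $A \rtimes F$ — generators over $A$ are generators over $A \rtimes F$. For projectivity, the cleanest route is: $\mathcal{E}$ is a direct summand of a free $A$-module $A^k$, and one checks that the $F$-equivariant structure makes this a direct sum decomposition of $A\rtimes F$-modules (or, alternatively, that $A\rtimes F \otimes_A \mathcal{E} \cong \mathcal{E}$ as $A \rtimes F$-modules via $\eta \otimes a\delta_g \mapsto (\eta a) T_g$, exhibiting $\mathcal{E}$ as a summand of $A\rtimes F \otimes_A A^k = (A\rtimes F)^k$). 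Either way the point is that induction $A\rtimes F \otimes_A (-)$ is an additive functor sending $A$ to a summand-free module, so it preserves finitely generated projectives. The final sentence of the statement — that restricting the $A\rtimes F$-module back to $A$ recovers $\mathcal{E}$ — is immediate since $\xi \cdot (a\delta_1) = \xi a$, so the restricted action of $A$ via the inclusion $a \mapsto a\delta_1$ is the original one.

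I do not expect any genuine obstacle here; this is a standard bookkeeping verification and the proposition is flagged as "well known." The only mild subtlety worth a careful sentence is making precise that the given $A$-module structure together with the semilinear $F$-action assemble into an honest module over $A\rtimes F$ rather than some twisted or partial structure — i.e., checking that the hypothesis $\xi(T_g)a = (\xi\alpha_g(a))T_g$ is exactly the compatibility needed for the crossed-product relation $\delta_g a \delta_g^{-1} = \alpha_g(a)$ to be respected. Once that compatibility is unwound as above, the rest is formal, and I would keep the exposition correspondingly short, emphasizing the associativity computation and then citing the induction-functor argument for finite generation and projectivity.
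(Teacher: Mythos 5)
Your verification of the module axioms and the observation about finite generation and the restriction to $A$ are all correct, and this direct route is reasonable (the paper itself just cites the Green--Julg construction from \cite[Proposition~4.5]{ELPW10} rather than verifying anything). However, the projectivity step has a genuine gap. The claimed isomorphism $\mathcal{E}\otimes_A (A\rtimes F)\cong\mathcal{E}$ (via $\eta\otimes a\delta_g\mapsto(\eta a)T_g$) is false: restricting both sides to $A$, the left-hand side is $\mathcal{E}^{|F|}$ because $A\rtimes F$ is free of rank $|F|$ as a left $A$-module, whereas the right-hand side is just $\mathcal{E}$. The map you wrote is only a surjection of $A\rtimes F$-modules, not an isomorphism, and the obvious set-theoretic section $\eta\mapsto\eta\otimes\delta_e$ is not $A\rtimes F$-linear, so nothing about $\mathcal{E}$ is yet exhibited as a summand. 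Similarly, your alternative route -- that the decomposition $\mathcal{E}\oplus\mathcal{E}'=A^k$ over $A$ ``one checks'' is a decomposition over $A\rtimes F$ -- does not go through as stated, since there is no reason the $F$-action on $\mathcal{E}$ extends across a given complement.

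The missing idea, and the place where finiteness of $F$ is actually used, is an averaging splitting of the surjection $\pi:\mathcal{E}\otimes_A(A\rtimes F)\to\mathcal{E}$, $\eta\otimes a\delta_g\mapsto(\eta a)T_g$. Define $s:\mathcal{E}\to\mathcal{E}\otimes_A(A\rtimes F)$ by
\[
s(\eta)\;=\;\frac{1}{|F|}\sum_{g\in F}\;\eta T_{g^{-1}}\otimes\delta_g .
\]
Then $\pi\circ s=\mathrm{id}_{\mathcal{E}}$, and a short computation using the compatibility $\xi T_g\,a=(\xi\,\alpha_g(a))T_g$ together with $(\delta_g)(b\delta_h)=\alpha_g(b)\delta_{gh}$ shows that $s$ is $A\rtimes F$-linear. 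Hence $\mathcal{E}$ is a direct summand, as an $A\rtimes F$-module, of $\mathcal{E}\otimes_A(A\rtimes F)$, which is finitely generated projective over $A\rtimes F$ because induction is additive and sends $A$ to the free module $A\rtimes F$; this completes the projectivity argument. Without this splitting -- which is exactly the Green--Julg mechanism and is where $\frac{1}{|F|}$ enters -- the induction-functor remark alone does not prove that $\mathcal{E}$ itself, with the module structure you defined, is projective.
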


\begin{proof}
	This is exactly the construction of the Green--Julg map, which is a map from $\K_0^F(A)$, the $F$-equivariant K-theory of $A,$ to $\K_0(A\rtimes F)$ . See \cite[Proposition 4.5]{ELPW10}.
\end{proof} 

For a general crossed product $A \rtimes \Z_2$, for an action $\beta$ of $\Z_2$ on $A$, the above Green--Julg map is easy to describe for $\Z_2$-invariant projections in $A$. If $P$ is a $\Z_2$-invariant projection in $A$, the corresponding projection in $A \rtimes \Z_2$ is $\frac{P}{2}(1+W),$ where $W$ denotes the canonical non-trivial unitary of $\Z_2$ in $A \rtimes \Z_2.$
Also define the natural map (regular representation) $p$ which goes from $A \rtimes \Z_2$ to $\M_2(A)$  such that 
\begin{equation}\label{eq:def_of_p}
	 p(a + bW) = \left( \begin{array}{ccc}
a & b\\
\beta_g(b) & \beta_g(a)\\
  \end{array} \right),\quad a,b\in A\end{equation} where $g$ is the non-trivial element of $\Z_2.$ This induces a map $p_*: \K_0(A \rtimes \Z_2) \rightarrow \K_0(A),$ which is known to be the inverse of the Green--Julg map (see \cite[page 191]{HG04}).
  
  With the above facts in hand, we start with the projective modules over $A_\theta$ which were described in the previous section. Recall that the projective module $\mathcal{E}$ is a completion of the space  $\mathscr{S}(\R^p \times \Z^q)$, for some  $p, q \in \Ne$ such that  $n=2p + q$. For the flip action of $\Z_2$ on $A_\theta$, we define the following action, again called the flip action, of $\Z_2$ on the dense subspace $\mathscr{S}(\R^p \times \Z^q)$ of $\mathcal{E}$ by

  \begin{equation}\label{eq:flip_action_module}
	T_g (f)(x,t):= f(-x,-t),
	\end{equation}
	where $g$ is the non-trivial element of $\Z_2.$ 
	
	Using Equation~\ref{eq:module1} and  Equation~\ref{eq:module2} it is quickly checked that $\Z_2$ defines an action on $\mathcal{E}$ which is compatible with the flip action of $\Z_2$ on $A_\theta$ in the sense of Proposition~\ref{prop:mainprop} (cf. \cite[Section 7]{CL19}, \cite[Section 3]{Cha21}). In particular, we have, 
	\begin{equation}\label{eq:proj_mod_inner_holds}
  \langle fT_g,f'T_g \rangle_{\mathcal{A}^\infty}=\beta_g(\langle f,f' \rangle_{\mathcal{A}^\infty}),
\end{equation} and 
\begin{equation}\label{eq:proj_mod_T_holds}
f (T_g)a = (f\beta_g(a)) T_g,
\end{equation} 
for $f$ and $f'\in \mathscr{S}(\R^p \times \Z^q),$ $a\in A_\theta$ (see \cite[Equation 3.9, Equation 3.11]{Cha21}). Similarly, the same set of equations holds true for the left  $A_{\sigma(\theta)}$-module $\mathcal{E},$ where $\sigma(\theta)$ is as in Equation~\ref{eq:defoftheta'}. Hence $\mathcal{E}$ becomes a projective module over the crossed product $A_\theta\rtimes \Z_2.$ We call this module $\tilde{\mathcal{E}}.$ Let $\Tr^{\Z_2}_\theta$ denote the canonical trace on $A_\theta\rtimes \Z_2$ defined by  $\Tr^{\Z_2}_\theta(a+bW)=\Tr_\theta(a)$ for $a,b\in A_\theta.$ From \cite[Lemma 4.1]{Cha21} we can compute the trace of the K-theory class of $\tilde{\mathcal{E}}$ as

\begin{equation}\label{eq:trace_of_extended_module}
	\Tr^{\Z_2}_\theta([\tilde{\mathcal{E}}])=\frac{\Tr_\theta(([\mathcal{E}])}{2}.
\end{equation}

	 Our next step is to understand how the Rieffel-type projections $P_{I}^{\theta}$ give various projections in $A_\theta\rtimes \Z_2.$ First let us introduce some notations. For $I=(i_1,i_2,\dots,i_{2p}) \in \mathrm{Minor}(n)\setminus \{\emptyset\},$ define $I^c:=(i_{2p}+1,i_{2p}+2,\dots,n),$ and for $I=\emptyset,$ define  $I^c:=(1,2,\dots,n).$  Also regarding $I^c$ as a finite sequence, by $J\subseteq I^c$ we mean a finite subsequence of $I^c,$ with the understanding that $J$ can be the empty sequence too. Finally for $J=(j_1,j_2,\dots,j_q)\subseteq I^c,$ define $U_J:=U_{j_1}U_{j_2}\cdots U_{j_q}\in A_\theta$, and for $J=\emptyset,$ $U_J:=1.$ The length $|I^c|$ of $I^c$ (or any sub-sequence of $I^c$) is defined as the number of elements in $I^c.$

	  From Appendix~\ref{sec:bottpincrossed}, we know that all the $P_{I}^{\theta}$'s are $\Z_2$-invariant. (This follows from the fact that in Theorem~\ref{even n projection}, $\psi$ is $\Z_2$-equivariant and $e$ is $\Z_2$-invariant.) Now let us see how these give rise to different projections in $A_\theta\rtimes \Z_2.$ For each $I\in \mathrm{Minor}(n),$  fix a $J\subseteq I^c$ as before. let $\bar{J}$ denote $(j_q,j_{q-1},\dots,j_1)$ for $J=(j_1,j_2,\dots,j_q).$ let $r_J$ be the number such that $U_J=e(-2r_J)U_{\bar{J}}.$ Of course, $r_J$ is a number involving $\theta_{j_{l}j_{m}}, 1\le l< m\le q.$  Now one quickly checks that $e(r_J)U_JW=:W_J$ is a self-adjoint unitary in $A_\theta\rtimes \Z_2.$ If $I=(i_1,i_2,\dots,i_{2p}),$ then $W_JU_{i_k}W_J=e(r_{J,i_k})U_{i_k}^{-1},$ for some number $r_{J,i_k}$ involving $\theta_{i_kj_{l}}, 1\le l\le q.$ As in Lemma~\ref{lemma:flip=flip}, set $\widetilde{U_{i_k}}=e(-\frac{r_{J,i_k}}{2})U_{i_k},$ and we have  $W_J\widetilde{U_{i_k}}W_J=\widetilde{U_{i_k}}^{-1}$ such that $A_{\theta_I}$ is generated by $\{\widetilde{U_{i_k}}\}_k,$ and $A_{\theta_I}\rtimes \Z_2$ sits canonically inside $A_\theta\rtimes \Z_2.$ Here in the crossed product $A_{\theta_I}\rtimes \Z_2,$ $\Z_2$ acts by the flip action and the generator of $\Z_2$ is identified with $W_J$ inside $A_\theta\rtimes \Z_2.$ Now we can construct $P_{I,J}^{\theta}:=\frac{P_I^{\theta}}{2}(1+W_J)\in A_{\theta_I}\rtimes \Z_2 \subseteq A_\theta\rtimes \Z_2,$ since $P_I^{\theta}$ is flip invariant projection inside $A_{\theta_I}$. For $I=\emptyset,$ $P_{I,J}^{\theta}:=\frac{1}{2}(1+W_J).$ 
	  Hence for each $I\in \mathrm{Minor}(n),$ and each $J\subseteq I^c,$ we have constructed a projections inside $A_\theta\rtimes \Z_2.$ Varying $I,$ and varying $J,$ we get a family of projections $\{P_{I,J}^{\theta}\}$ in $A_\theta\rtimes \Z_2.$ 
	  
	  Now we claim that we have exactly $3\cdot2^{n-1}-1$ projections $\{P_{I,J}^{\theta}\}$ if we restrict to $|J|\le 2,$ i.e the set 
\begin{equation}\label{eq:def_of_P_n}
	\mathcal{P}_n:= \underset{I\in \mathrm{Minor}(n)}{\cup}\left\{P_{I,J}^{\theta}~\suchthat ~J\subseteq I^c,|J|\le 2 \right\}\end{equation} has $3\cdot2^{n-1}-1$ many elements. This can be shown using a simple induction argument. The statement holds for $n=2, 3$ simply by counting. Now assume $|\mathcal{P}_n|= 3\cdot2^{n-1}-1,$ and we need to show that $|\mathcal{P}_{n+1}|= 3\cdot2^{n}-1.$ If $I\in \mathrm{Minor}(n),$ of course, $I\in \mathrm{Minor}(n+1),$ and if $J \subseteq I^c$, if we view $I$ as an element of $\operatorname{Minor}(n)$, then we also have $J \subseteq I^c$ if we view $I$ as an element of $\operatorname{Minor}(n+1)$. Hence we may regard $\mathcal{P}_n$ as a subset of $\mathcal{P}_{n+1}$. Thus we only need to show that there are $3 \cdot 2^n-3 \cdot 2^{n-1}=3 \cdot 2^{n-1}=2^n+2^{n-1}$ extra elements in $\mathcal{P}_{n+1} \backslash \mathcal{P}_n$. These extra elements are described below.

\begin{itemize}
	\item for $I\in \mathrm{Minor}(n),$ if we view $I$ inside  $\mathrm{Minor}(n+1),$ we have elements $P_{I,J}^{\theta},$ where $J=(n+1);$ and for $I\in \widetilde{\mathrm{Minor}}(n+1),$ we have elements $P_{I,J}^{\theta},$ where $J=\emptyset.$ This way we get $2^n$ number of elements;
	\item for any $I\in \widetilde{\mathrm{Minor}}(j)$ with $2\le j \le n-1,$ we have $(n-j)$ number of elements $P_{I,J}^{\theta},$ where $J=(j+1,n+1), (j+2,n+1), \ldots ,(n,n+1).$ This way we get 
	$2^{n-1}-n$ number of elements;
	\item Finally for $I=\emptyset,$ we have $n$ number of elements $P_{I,J}^{\theta},$ where $J=(1,n+1), (2,n+1), \ldots ,(n,n+1).$ 
	\end{itemize}    
Combining the above extra elements, we get our claim. Shortly we will see that the K-theory classes of the projections in $\mathcal{P}_n$ (along with the element $[1]$)  generate $\K_0(A_\theta\rtimes \Z_2).$

	Let us recall the exact sequence due to Lance and Natsume. For $A_\theta\rtimes \Z_2 \cong A_{\theta'}\rtimes_\phi \Z_2 *  \Z_2$ (see Section~\ref{sec:definition}), we have the following exact sequence (\cite{Nat85}).
	
	{\small \[
\begin{CD}
\K_0 (A_{\theta'}) @>{i_{1*}-i_{2*}}>>\K_0 (A_{\theta'}\rtimes_\alpha  \Z_2)\oplus \K_0 (A_{\theta'}\rtimes \Z_2)@>{j_{1*}+j_{2*}}>> \K_0 (A_{\theta'}\rtimes_\phi \Z_2*\Z_2)   \\
@A{}AA & &  @VV{e_1}V            \\
 \K_1 (A_{\theta'}\rtimes_\phi \Z_2*\Z_2) @<<{j_{1*}+j_{2*}}<  \K_1 (A_{\theta'}\rtimes \Z_2)\oplus \K_1 (A_{\theta'}\rtimes \Z_2) @<<{i_{1*}-i_{2*}}< \K_1 (A_{\theta'})
\end{CD}
\]}where $i_1, i_2, j_1,j_2$ are natural inclusions.          
	Also $A_{\theta'}\rtimes_\phi \Z_2 *  \Z_2$ is isomorphic to 	$(A_{\theta'}\rtimes_\gamma \Z)\rtimes\Z_2,$ where $\Z_2$ acts on $A_{\theta'}$ by the flip action and on the group $\Z$ by $x\rightarrow -x.$ Before computing $\K_0 (A_{\theta'}\rtimes_\phi \Z_2*\Z_2),$ we will first explicitly describe the maps in the above exact sequence.

	\subsection*{The map $e_1$}
	As an immediate corollary of \cite[Theorem 7.1]{CL19}, we have the following.
	
	\begin{proposition}\label{prop:main1}
	The diagram 
\begin{equation*}
\xymatrixrowsep{5pc}
\xymatrixcolsep{5pc}
\xymatrix
{
\K_0 (A_{\theta'}\rtimes_\phi \Z_2*\Z_2) \ar[r]^{e_1} \ar[rd]^{p_*} &
\K_1 (A_{\theta'}) \\
&\K_0(A_{\theta'}\rtimes_\gamma \Z),\ar[u]^{e_2}}
\end{equation*}
is commutative, where $p_*$ is the map induced by the natural map (see Equation~\ref{eq:def_of_p}) $p:A_{\theta'}\rtimes_\phi \Z_2 *  \Z_2 \cong (A_{\theta'}\rtimes_\gamma \Z)\rtimes\Z_2 \rightarrow \M_2(A_{\theta'}\rtimes_\gamma \Z).$ 
\end{proposition}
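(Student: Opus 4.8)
The plan is to deduce this commutativity from \cite[Theorem 7.1]{CL19} by identifying the three groups and two maps in the triangle with the corresponding objects in that reference. First I would recall the statement of \cite[Theorem 7.1]{CL19}: it asserts (for a crossed product of the form $(B\rtimes_\gamma\Z)\rtimes\Z_2$, equivalently $B\rtimes_\phi(\Z_2*\Z_2)$) that the boundary map $e_1$ in Natsume's six-term sequence factors through the regular representation $p\colon (B\rtimes_\gamma\Z)\rtimes\Z_2\to\M_2(B\rtimes_\gamma\Z)$ followed by the Pimsner--Voiculescu boundary map $e_2\colon \K_0(B\rtimes_\gamma\Z)\to\K_1(B)$. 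The content of the proposition is simply the specialization $B=A_{\theta'}$, with $\gamma$ the automorphism of $A_{\theta'}$ from Section~\ref{sec:definition} (sending $U_i$ to $e(-\theta_{in})U_i$) so that $A_{\theta'}\rtimes_\gamma\Z\cong A_\theta$, and with the flip action of $\Z_2$ on $\Z$ and on $A_{\theta'}$.

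The key steps, in order: (1) verify that the isomorphism $A_{\theta'}\rtimes_\phi\Z_2*\Z_2\cong(A_{\theta'}\rtimes_\gamma\Z)\rtimes\Z_2$ recalled at the end of Section~\ref{sec:definition} is compatible with the hypotheses of \cite[Theorem 7.1]{CL19}, i.e. that the action $\phi$ of $\Z_2*\Z_2$ matches the decomposition into $\Z\rtimes\Z_2$ used there and that the two copies of $\Z_2$ act by the flip $\beta$ and by $\alpha=\gamma\circ\beta$ as stated; (2) identify $\K_1(A_{\theta'})$ as the target of both $e_1$ (directly, from Natsume's sequence) and $e_2$ (the Pimsner--Voiculescu boundary for $A_{\theta'}\rtimes_\gamma\Z$); (3) identify $p_*$ here with the map $p_*$ appearing in \cite[Theorem 7.1]{CL19}, using the explicit formula for $p$ in Equation~\ref{eq:def_of_p}; (4) quote \cite[Theorem 7.1]{CL19} to conclude $e_1=e_2\circ p_*$, which is exactly the asserted commutativity. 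Since everything is a direct translation of notation, the proof is essentially a citation plus a bookkeeping check, and I would keep it short.

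The main obstacle — really the only nontrivial point — is step (1): making sure the precise form of the $\Z_2*\Z_2$-action $\phi$ in the present paper (built from the two flips $\beta$ and $\alpha=\gamma\circ\beta$ on $A_{\theta'}$) is literally the setup in \cite[Theorem 7.1]{CL19}, so that the cited theorem applies verbatim rather than merely up to an isomorphism one still has to track through $e_1$. If \cite[Theorem 7.1]{CL19} is stated at the level of generality needed (a crossed product of any $C^*$-algebra by $\Z$ followed by $\Z_2$ inverting the $\Z$), this is immediate; otherwise one records the short compatibility verification. I would phrase the proof as: ``This is immediate from \cite[Theorem 7.1]{CL19} applied to $B=A_{\theta'}$, $\gamma$ as above, once one observes that the isomorphism $A_{\theta'}\rtimes_\phi\Z_2*\Z_2\cong(A_{\theta'}\rtimes_\gamma\Z)\rtimes\Z_2$ of Section~\ref{sec:definition} intertwines Natsume's $e_1$ with the boundary map considered there,'' and then leave the diagram as the statement of what was proved.
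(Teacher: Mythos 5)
Your proposal matches the paper's proof exactly: the paper also proves this by a direct citation of \cite[Theorem 7.1]{CL19}, stating simply ``Immediate from \cite[Theorem 7.1]{CL19}.'' Your additional bookkeeping (steps (1)--(4)) is a reasonable elaboration of what ``immediate'' entails, but the underlying argument is the same.
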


\begin{proof}
	Immediate from \cite[Theorem 7.1]{CL19}.
\end{proof}

Now from Proposition~\ref{prp:pv_image}, $\{e_2([P_{I}^{\theta}])\}_{I\in \widetilde{\mathrm{Minor}}(n)}$ form a generating set of $\K_1(A_{\theta'}),$ for a strongly totally irrational $\theta.$  But for $I\in \widetilde{\mathrm{Minor}}(n)$, $p_*([P_{I,\emptyset}^{\theta}]) = p_*([\frac{P_{I}^{\theta}}{2}(1+W)]) = [P_{I}^{\theta}],$ since $p_*$ acts as the inverse of the Green--Julg map in Proposition~\ref{prop:mainprop}. So we have the following.

\begin{corollary}\label{cor:gen_boundary}
	For a strongly totally irrational $\theta,$ $\{e_1([P_{I,\emptyset}^{\theta}])\}_{I\in \widetilde{\mathrm{Minor}}(n)}$ form a generating set of $\K_1(A_{\theta'}).$
\end{corollary}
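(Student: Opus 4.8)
The plan is to chain together the two facts that have just been assembled: Proposition~\ref{prp:pv_image}, which says that for a strongly totally irrational $\theta$ the classes $\{e_2([P_I^\theta])\}_{I\in\widetilde{\mathrm{Minor}}(n)}$ generate $\K_1(A_{\theta'})$, and Proposition~\ref{prop:main1}, which identifies $e_1$ with $e_2\circ p_*$ on $\K_0(A_{\theta'}\rtimes_\phi\Z_2*\Z_2)$. So the only thing left to verify is that $p_*$ sends each $[P_{I,\emptyset}^\theta]$ to $[P_I^\theta]$, after which $e_1([P_{I,\emptyset}^\theta]) = e_2(p_*([P_{I,\emptyset}^\theta])) = e_2([P_I^\theta])$, and the right-hand side runs over a generating set of $\K_1(A_{\theta'})$ by Proposition~\ref{prp:pv_image}.

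First I would recall that for $I\in\widetilde{\mathrm{Minor}}(n)$ the element $I^c$ is the empty sequence, so the only admissible $J$ is $J=\emptyset$, $W_J=W$, and $P_{I,\emptyset}^\theta = \tfrac{P_I^\theta}{2}(1+W)$ is precisely the image of the $\Z_2$-invariant projection $P_I^\theta\in A_\theta$ under the Green--Julg construction of Proposition~\ref{prop:mainprop}. Then I would invoke the fact, recalled just before Equation~\ref{eq:def_of_p}, that the map $p_*:\K_0(A_\theta\rtimes\Z_2)\to\K_0(A_\theta)$ induced by the regular representation $p$ is the inverse of the Green--Julg map. Applying this with the crossed product written in the form $(A_{\theta'}\rtimes_\gamma\Z)\rtimes\Z_2 \cong A_{\theta'}\rtimes_\phi\Z_2*\Z_2 \cong A_\theta\rtimes\Z_2$, we get $p_*([P_{I,\emptyset}^\theta]) = [P_I^\theta]$ in $\K_0(A_\theta)\cong\K_0(A_{\theta'}\rtimes_\gamma\Z)$, which is exactly the computation quoted in the paragraph preceding the statement.

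Combining: for each $I\in\widetilde{\mathrm{Minor}}(n)$ we obtain $e_1([P_{I,\emptyset}^\theta]) = e_2(p_*([P_{I,\emptyset}^\theta])) = e_2([P_I^\theta])$, using the commutative triangle of Proposition~\ref{prop:main1}; and since $\{e_2([P_I^\theta])\}_{I\in\widetilde{\mathrm{Minor}}(n)}$ is a generating set of $\K_1(A_{\theta'})$ by Proposition~\ref{prp:pv_image}, so is $\{e_1([P_{I,\emptyset}^\theta])\}_{I\in\widetilde{\mathrm{Minor}}(n)}$. This is a short bookkeeping argument rather than a substantive one; the only place where one must be slightly careful is the identification of $P_{I,\emptyset}^\theta$ as a genuine element of $A_\theta\rtimes\Z_2$ sitting in the right subalgebra and of the correct form for the Green--Julg/regular-representation inversion to apply verbatim — i.e. checking that when $I^c=\emptyset$ the construction degenerates exactly to $\tfrac{P_I^\theta}{2}(1+W)$ with $W$ the canonical order-two unitary, so that $p$ of it is the diagonal matrix whose K-theory class is $[P_I^\theta]$. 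That is the only potential obstacle, and it is already handled by the discussion of the family $\{P_{I,J}^\theta\}$ above, so the proof of the corollary itself is immediate from Proposition~\ref{prop:main1} and Proposition~\ref{prp:pv_image}.
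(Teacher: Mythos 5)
Your argument is correct and is precisely the one the paper uses: Proposition~\ref{prop:main1} gives $e_1 = e_2\circ p_*$, the paragraph preceding the corollary records $p_*([P_{I,\emptyset}^\theta]) = [P_I^\theta]$ via the Green--Julg inverse, and Proposition~\ref{prp:pv_image} supplies that $\{e_2([P_I^\theta])\}_{I\in\widetilde{\mathrm{Minor}}(n)}$ generates $\K_1(A_{\theta'})$. Your extra observation that $I^c=\emptyset$ forces $J=\emptyset$ and $W_J=W$ for $I\in\widetilde{\mathrm{Minor}}(n)$ is accurate but not strictly needed, since the corollary already restricts attention to $J=\emptyset$.
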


\begin{proof}
	Use Proposition~\ref{prop:main1}.
\end{proof}

 \subsection*{The map $j_{1*}+j_{2*}$}  
 Next we want to understand the map $$j_{1*}+j_{2*} : \K_0 (A_{\theta'}\rtimes_\alpha  \Z_2)\oplus \K_0 (A_{\theta'}\rtimes \Z_2)\longrightarrow \K_0 (A_{\theta'}\rtimes_\phi \Z_2*\Z_2),$$ for the elements in $\mathcal{P}_{n-1}.$
Since both maps $j_{1},j_{2}$ are inclusion maps, and $A_{\theta'}\rtimes_\alpha  \Z_2$ is isomorphic to $A_{\theta'}\rtimes \Z_2$ by Lemma~\ref{lemma:flip=flip}, it is enough to work with $j_{2*}.$ Now $j_{2*}$ is induced from the natural inclusion map 
$$j_{2} : A_{\theta'}\rtimes \Z_2\longrightarrow A_{\theta}\rtimes \Z_2.$$
 We now have the following proposition.

\begin{proposition}\label{prop:gen_inclusion}

For a strongly totally irrational $\theta,$ $j_{2*}([P_{I,J}^{\theta'}]) = [P_{I,J}^{\theta}],$ for $I\in \mathrm{Minor}(n-1), J\subseteq I^c.$
\end{proposition}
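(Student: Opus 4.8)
The plan is to track the Rieffel-type projections $P_{I,J}^{\theta'}$ through the natural inclusion $j_2\colon A_{\theta'}\rtimes\Z_2\hookrightarrow A_\theta\rtimes\Z_2$ and identify the image class with $P_{I,J}^{\theta}$. The crucial structural observation is the one already set up in Section~\ref{sec:k-theory}: for $I\in\mathrm{Minor}(n-1)$ and $J\subseteq I^c$, both $P_{I,J}^{\theta'}$ and $P_{I,J}^{\theta}$ live inside the \emph{same} sub-$C^*$-algebra $A_{\theta_I}\rtimes\Z_2$ (where $\Z_2$ acts by the flip and its generator is identified with $W_J$), since $\theta_I$ only involves the indices $i_1,\dots,i_{2p}$, all of which are $\le n-1$, and these indices behave identically whether we build the twisted algebra over $A_{\theta'}$ or over $A_\theta$. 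Both projections are manufactured by the same recipe $\tfrac{P_I}{2}(1+W_J)$ from the \emph{same} flip-invariant Rieffel projection $P_I\in A_{\theta_I}$ (here I would invoke Equation~\ref{eq:i_*_image} from Proposition~\ref{prp:pv_image}, which says $i_*([P_I^{\theta'}])=[P_I^{\theta}]$ because they have equal trace $\pf(\theta_I)$). Hence at the level of actual elements of $A_\theta\rtimes\Z_2$, $j_2(P_{I,J}^{\theta'})$ and $P_{I,J}^{\theta}$ are both represented inside $A_{\theta_I}\rtimes\Z_2$.

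First I would reduce to $J\subseteq I^c$ with $|J|\le 2$, i.e.\ to the generators of $\mathcal{P}_{n-1}$, since those are the only classes relevant for applying the proposition in the exact-sequence computation (though the argument works for any $J$). Next I would make precise that the inclusion $A_{\theta_I}\rtimes\Z_2\hookrightarrow A_{\theta'}\rtimes\Z_2$, followed by $j_2$, equals the inclusion $A_{\theta_I}\rtimes\Z_2\hookrightarrow A_\theta\rtimes\Z_2$: this is just functoriality of the twisted-group-$C^*$-algebra construction for the chain of subgroups $\Z^{|I|}\rtimes\Z_2\subseteq\Z^{n-1}\rtimes\Z_2\subseteq\Z^{n}\rtimes\Z_2$ together with the identification of generators $\widetilde{U_{i_k}}$ and $W_J$, which are defined by the \emph{same} formulas in $A_{\theta'}$ and in $A_\theta$ (the rescaling constants $r_{J,i_k}$ only involve $\theta_{i_k j_\ell}$ with all indices $\le n-1$). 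Therefore, as a literal element, $j_2(P_{I,J}^{\theta'}) = \tfrac{j_2(P_I^{\theta'})}{2}(1+W_J)$, and $j_2(P_I^{\theta'})$ is the Rieffel projection in $A_{\theta_I}\subseteq A_\theta$ with trace $\pf(\theta_I)$ — which is exactly $P_I^{\theta}$ by uniqueness of such a projection up to the identification used to define $P_I^\theta$ (or at worst, by Equation~\ref{eq:i_*_image}, has the same $\K_0$-class). Either way $[j_2(P_{I,J}^{\theta'})] = [P_{I,J}^{\theta}]$ in $\K_0(A_\theta\rtimes\Z_2)$.

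The main obstacle, and the place requiring the most care, is the bookkeeping of the cocycle-correction constants: one must check that the self-adjoint unitary $W_J=e(r_J)U_JW$ and the rescaled generators $\widetilde{U_{i_k}}=e(-r_{J,i_k}/2)U_{i_k}$ are literally the \emph{same} elements in $A_{\theta'}\rtimes\Z_2$ and in $A_\theta\rtimes\Z_2$ under the inclusion $j_2$, so that the sub-$C^*$-algebra $A_{\theta_I}\rtimes\Z_2$ is identified compatibly on both sides. This is where one uses that $I\in\mathrm{Minor}(n-1)$ (so $i_{2p}\le n-1$) and $J\subseteq I^c$ with $I^c$ now taken inside $\{1,\dots,n-1\}$ versus $\{1,\dots,n\}$; the point is that none of $\theta_{i_k j_\ell}$, $\theta_{j_\ell j_m}$ involves the index $n$, so the constants $r_J, r_{J,i_k}$ computed in $A_{\theta'}$ agree with those computed in $A_\theta$. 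Once this identification is in place, the conclusion follows immediately from the two facts quoted above: $\Z_2$-invariance of $P_I$ (Appendix~\ref{sec:bottpincrossed}) and the trace equality $\Tr_\theta(P_I^{\theta'})=\pf(\theta_I)=\Tr_\theta(P_I^{\theta})$ which by injectivity of $\Tr$ on $\K_0$ for totally irrational $\theta$ forces $[j_2(P_I^{\theta'})]=[P_I^{\theta}]$, and hence, applying the Green--Julg recipe, $[j_2(P_{I,J}^{\theta'})]=[P_{I,J}^{\theta}]$.
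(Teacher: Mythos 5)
Your proposal is correct and takes essentially the same approach as the paper's one-line proof, which simply observes that $j_2$ restricts to the inclusion $A_{\theta'}\hookrightarrow A_\theta$ and sends $W$ to $W$, then cites Equation~\ref{eq:i_*_image}; you spell out the (correct and necessary) cocycle bookkeeping that the constants $r_J$, $r_{J,i_k}$ only involve $\theta_{jk}$ with $j,k\le n-1$ and hence agree in $A_{\theta'}$ and $A_\theta$, giving the element-level identity $j_2(P_{I,J}^{\theta'})=P_{I,J}^{\theta}$. The only minor caveat is your parenthetical ``at worst'' alternative: equality of ordinary $\K_0$-classes of the flip-invariant projections would not by itself propagate through the Green--Julg construction (one needs equivariant equivalence, not just Murray--von Neumann equivalence), but this is moot since your primary element-level argument is what actually carries the day.
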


 \begin{proof}
    This is clear from Equation~\ref{eq:i_*_image} as the map $j_2$ respects the inclusion map of $A_{\theta'}$ to $A_{\theta},$ and respects the unitaries  coming from $\Z_2.$ 	
	 \end{proof}
	
	 \subsection*{The map $i_{1*}-i_{2*}$} 
 We start with the following lemma.
	 
	 \begin{lemma}\label{lemma:i_is_injective}
$$i_{2*} : \K_0(A_{\theta})\longrightarrow \K_0(A_{\theta}\rtimes \Z_2)$$	 \end{lemma}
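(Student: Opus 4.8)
The plan is to realize $i_{2*}$, up to the isomorphism $\K_0(\M_2(A_\theta))\cong\K_0(A_\theta)$, as (a factor of) multiplication by $2$, using the regular representation together with the fact that the flip acts trivially on $\K_0(A_\theta)$. Concretely, recall the $*$-homomorphism $p : A_\theta\rtimes\Z_2 \to \M_2(A_\theta)$ of Equation~\ref{eq:def_of_p}, sending $a+bW$ to $\left(\begin{smallmatrix} a & b\\ \beta_g(b) & \beta_g(a)\end{smallmatrix}\right)$. Composing the inclusion $i_2:A_\theta\hookrightarrow A_\theta\rtimes\Z_2$ with $p$ gives the homomorphism $A_\theta\to\M_2(A_\theta)$, $a\mapsto\operatorname{diag}\bigl(a,\beta_g(a)\bigr)$. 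Hence, after the standard top-left-corner identification $\K_0(\M_2(A_\theta))\cong\K_0(A_\theta)$, one gets
\begin{equation*}
 p_*\circ i_{2*} \;=\; \id_{\K_0(A_\theta)}+\beta_{g*},
\end{equation*}
where $\beta_{g*}$ denotes the automorphism of $\K_0(A_\theta)$ induced by the flip.

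The crux is then the claim that $\beta_{g*}=\id$ on $\K_0(A_\theta)$. For totally irrational $\theta$ (the case in which this lemma is applied, with $A_\theta$ in the role of $A_{\theta'}$) this is immediate from Theorem~\ref{elliott_image_of_trace}: the canonical trace is flip invariant, $\Tr_\theta\circ\beta_g=\Tr_\theta$, so $\Tr_\theta\circ\beta_{g*}=\Tr_\theta$ on $\K_0(A_\theta)$, and $\Tr_\theta$ is injective on $\K_0(A_\theta)$ exactly when $\theta$ is totally irrational; hence $\beta_{g*}=\id$. For an arbitrary $\theta\in\mathcal{T}_n$ one would instead route through the Chern character isomorphism $\K_0(A_\theta)\otimes\C\xrightarrow{\ \cong\ }\HP_0(A_\theta^\infty)\cong\Lambda^{\mathrm{even}}\C^n$: the flip is induced by $-\id$ on $\Z^n$, which acts on every even exterior power as the identity, so $\beta_{g*}\otimes\id_\C=\id$, and since $\K_0(A_\theta)\cong\Z^{2^{n-1}}$ is torsion free it injects into $\K_0(A_\theta)\otimes\C$, forcing $\beta_{g*}=\id$.

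Combining the two facts, $p_*\circ i_{2*}=2\cdot\id$ on $\K_0(A_\theta)\cong\Z^{2^{n-1}}$. Since this group is torsion free, multiplication by $2$ is injective, so $i_{2*}(x)=0$ gives $2x=p_*(i_{2*}(x))=0$ and hence $x=0$. Therefore $i_{2*}$ is injective, which is the assertion.

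I do not expect a real obstacle here: the argument is essentially formal once one knows the flip acts as the identity on $\K_0$, and in the totally irrational setting of this section that is a one-line consequence of Elliott's theorem via injectivity of the trace. The only point demanding a little care is precisely that vanishing statement, but the Chern-character route handles the general case just as cleanly, so either way the proof reduces to this single input followed by the $2\cdot\id$ bookkeeping above.
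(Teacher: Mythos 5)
Your proposal is correct, and it takes a genuinely different route from the paper's own argument. The paper proves injectivity by tracing through the explicit basis: it writes $[X]=\sum r_I[P_I^\theta]$, applies $\Tr^{\Z_2}_\theta$ to $i_{2*}[X]$, and uses rational independence of the pfaffian minors to force $r_I=0$; this is, in effect, the observation that $\Tr^{\Z_2}_\theta\circ i_{2*}=\Tr_\theta$ is injective for (strongly) totally irrational $\theta$. You instead factor $i_{2*}$ through the regular representation $p_*$, compute $p_*\circ i_{2*}=\id+\beta_{g*}$, and reduce to the statement that the flip acts as the identity on $\K_0(A_\theta)$, whence $p_*\circ i_{2*}=2\cdot\id$ is injective by torsion-freeness. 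In the totally irrational case your proof of $\beta_{g*}=\id$ uses the same key input (injectivity of $\Tr_\theta$) as the paper, so the two arguments are comparable in depth there; but your Chern-character variant establishes $\beta_{g*}=\id$, and hence injectivity of $i_{2*}$, for \emph{all} $\theta\in\mathcal T_n$, which is strictly more than the lemma claims or the paper's method yields. The trade-off is that your route relies on the abstract fact that the flip is $\K_0$-trivial rather than on the concrete Rieffel-type generators that the rest of the section is built around, so it is less "self-contained" in context but more robust and conceptually cleaner. Both proofs are valid.
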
 \noindent is injective when $\theta$ is strongly totally irrational. 

\begin{proof}
If $\Tr_\theta^{\Z_2}$ denotes the canonical tracial state on $A_{\theta}\rtimes \Z_2,$ we have  $\Tr_\theta^{\Z_2}(i_{2*}([P_{I}^{\theta}])= \pf(\m{\theta}{I}).$ Recall that $\{[P_I^\theta]|~ I \in \mathrm{Minor}(n)\},$ generate $\K_0(A_{\theta}).$ Now if 	$i_{2*}([X])= 0,$ writing  $[X]= \sum r_I [P_{I}^{\theta}]$ for  $r_I \in \Z,$ we have $i_{2*}(\sum r_I [P_I^{\theta}])=0.$ Since $\pf(\m{\theta}{I})$ are rationally independent, taking $\Tr_\theta^{\Z_2}$ of the expression $i_{2*}(\sum r_I [P_{I}^{\theta}])$ gives $r_I=0.$
\end{proof}

\begin{corollary}\label{cor:K_1_zero}
	$\K_1(A_{\theta}\rtimes \Z_2)=0,$ when $\theta$ is strongly totally irrational. 
\end{corollary}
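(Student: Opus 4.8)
The plan is to deduce the vanishing of $\K_1(A_\theta\rtimes\Z_2)$ by induction on $n$, feeding the injectivity statement of Lemma~\ref{lemma:i_is_injective} into Natsume's six-term exact sequence for $A_\theta\rtimes\Z_2\cong A_{\theta'}\rtimes_\phi(\Z_2*\Z_2)$. For the base case I would take $n=2$: there $\theta'$ is the $1\times 1$ zero matrix, $A_{\theta'}=C(\T)$, and $\K_1(C(\T)\rtimes\Z_2)=0$ (equivalently $\K_1(C^*(D_\infty))=0$) is classical and is contained in Kumjian's computation \cite{Kum90}. (One could equally bottom the induction out at $n=1$ with $A_0=C(\T)$ directly, since $0\in\mathcal{T}_1$ is vacuously strongly totally irrational.)

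For the inductive step, let $n\ge 3$, let $\theta\in\mathcal{T}_n$ be strongly totally irrational, and let $\theta'$ be its upper-left $(n-1)\times(n-1)$ block. The first point to check is that $\theta'$ is again strongly totally irrational: since $\mathrm{Minor}(n-1)\subseteq\mathrm{Minor}(n)$ and $(\theta')_I=\theta_I$ for every $I\in\mathrm{Minor}(n-1)$, both the rational independence of the pfaffian minors (total irrationality) and the interval conditions of Definition~\ref{a3} for $\theta'$ are special cases of those for $\theta$. Hence the inductive hypothesis applies and gives $\K_1(A_{\theta'}\rtimes\Z_2)=0$, and therefore $\K_1(A_{\theta'}\rtimes_\alpha\Z_2)=0$ too by Lemma~\ref{lemma:flip=flip}.

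Next I would invoke Lemma~\ref{lemma:i_is_injective} for $\theta'$: the map $i_{2*}\colon\K_0(A_{\theta'})\to\K_0(A_{\theta'}\rtimes\Z_2)$ is injective, and since the second component of $i_{1*}-i_{2*}\colon\K_0(A_{\theta'})\to\K_0(A_{\theta'}\rtimes_\alpha\Z_2)\oplus\K_0(A_{\theta'}\rtimes\Z_2)$ is $\pm i_{2*}$, the map $i_{1*}-i_{2*}$ is injective on $\K_0$. Now chase the relevant stretch of Natsume's sequence
$$\K_1(A_{\theta'}\rtimes_\alpha\Z_2)\oplus\K_1(A_{\theta'}\rtimes\Z_2)\xrightarrow{\;j_{1*}+j_{2*}\;}\K_1(A_\theta\rtimes\Z_2)\xrightarrow{\;\partial\;}\K_0(A_{\theta'})\xrightarrow{\;i_{1*}-i_{2*}\;}\K_0(A_{\theta'}\rtimes_\alpha\Z_2)\oplus\K_0(A_{\theta'}\rtimes\Z_2).$$
The left-hand group is $0$, so by exactness $\ker\partial=\operatorname{im}(j_{1*}+j_{2*})=0$; and $\operatorname{im}\partial=\ker(i_{1*}-i_{2*})=0$ by the injectivity just established. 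A homomorphism with trivial kernel and trivial image has trivial domain, so $\K_1(A_\theta\rtimes\Z_2)=0$.

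I do not expect a genuine obstacle here: once Lemma~\ref{lemma:i_is_injective} is in hand, the corollary is pure bookkeeping with the exact sequence. The one step that needs a little care is the hereditary property of \emph{strongly totally irrational} under passage to the block $\theta'$, since this is exactly what allows both the inductive hypothesis and Lemma~\ref{lemma:i_is_injective} to be applied at each stage; equivalently, one may phrase the induction non-inductively by noting that $\partial=0$ forces $\K_1(A_\theta\rtimes\Z_2)$ to be a quotient of $\K_1(A_{\theta'}\rtimes\Z_2)^{\oplus 2}$, reducing the claim in dimension $n$ to dimension $n-1$.
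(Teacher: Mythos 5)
Your proof is correct and takes essentially the same route as the paper: both deduce the vanishing by induction on $n$ via Natsume's sequence, using the injectivity of $i_{1*}-i_{2*}$ on $\K_0$ (from Lemma~\ref{lemma:i_is_injective} applied to $\theta'$) together with vanishing of $\K_1$ in low dimension. You fill in some detail the paper leaves implicit — notably that $\theta'$ inherits strong total irrationality from $\theta$, which is exactly what licenses both the inductive hypothesis and the application of Lemma~\ref{lemma:i_is_injective} at each stage — but the underlying argument is the same.
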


\begin{proof}
	
	From the left side of the diagram
	
		{\small \[
\begin{CD}
\K_0 (A_{\theta'}) @>{i_{1*}-i_{2*}}>>\K_0 (A_{\theta'}\rtimes_\alpha  \Z_2)\oplus \K_0 (A_{\theta'}\rtimes \Z_2)@>{j_{1*}+j_{2*}}>> \K_0 (A_{\theta'}\rtimes_\phi \Z_2*\Z_2)   \\
@A{}AA & &  @VV{e_1}V            \\
 \K_1 (A_{\theta'}\rtimes_\phi \Z_2*\Z_2) @<<{j_{1*}+j_{2*}}<  \K_1 (A_{\theta'}\rtimes \Z_2)\oplus \K_1 (A_{\theta'}\rtimes \Z_2) @<<{i_{1*}-i_{2*}}< \K_1 (A_{\theta'})
\end{CD}
\]}  we compute $\K_1(A_{\theta}\rtimes \Z_2)=  \K_1 (A_{\theta'}\rtimes_\phi \Z_2*\Z_2)$ by induction on $n$. Since $i_{1*}-i_{2*}$ is injective from Lemma~\ref{lemma:i_is_injective}, and we know $\K_1(A_{\theta}\rtimes \Z_2)=0$ for the low-dimensional cases, the result follows. 
\end{proof}

	Next we want to understand the map $$i_{1*}-i_{2*} :\K_0 (A_{\theta'}) \longrightarrow  \K_0 (A_{\theta'}\rtimes_\alpha  \Z_2)\oplus \K_0 (A_{\theta'}\rtimes \Z_2) ,$$ for a Rieffel-type projection $P_I,$ $I\in \mathrm{Minor}(n-1).$
Since both maps $i_{1*},i_{2*}$ are inclusion maps, and $A_{\theta'}\rtimes_\alpha  \Z_2$ is isomorphic to $A_{\theta'}\rtimes \Z_2$ by Lemma~\ref{lemma:flip=flip}, it is enough to work with $i_{2*}.$ Note that $i_{2*}$ is induced from the natural inclusion map 
$$i_{2} : A_{\theta'}\longrightarrow  A_{\theta'}\rtimes \Z_2.$$
 We now have the following proposition.

\begin{proposition}\label{prop:imageofi_rieffel} Let $\theta$ be an $n \times n$ strongly totally irrational matrix and $i_{2*} : \K_0(A_{\theta})\longrightarrow \K_0(A_{\theta}\rtimes \Z_2)$ be induced by the canonical inclusion map $i_2.$ Then for $I\in \mathrm{Minor}(n)\setminus \{\emptyset\},$
$$i_{2*}([P_I^{\theta}])=	2[P_{I,\emptyset} ^{\theta}]-[P_{I'',\emptyset} ^{\theta}]+[P_{I'',(i_{2p-1})} ^{\theta}]-[P_{I'', (i_{2p})} ^{\theta}]+[P_{I'', (i_{2p-1},i_{2p})} ^{\theta}],$$ where $I=(i_1,i_2,\dots,i_{2p})$ and $I''$ is obtained from $I$ by deleting the last two numbers.
\end{proposition}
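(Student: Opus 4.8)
The plan is to compute $i_{2*}([P_I^\theta])$ by applying the regular-representation map $p$ of Equation~\ref{eq:def_of_p} to reduce everything to a computation in $\M_2(A_{\theta'}\rtimes\Z_2)$, where $p_*$ inverts the Green--Julg map. However, a cleaner route is to identify $i_{2*}([P_I^\theta])$ by pairing with traces, exactly as in the proof of Lemma~\ref{lemma:i_is_injective}: since $\theta$ is strongly totally irrational and $i_{2*}$ is injective, it suffices to verify the claimed identity after applying $\Tr_\theta^{\Z_2}$ \emph{and} after applying the map $p_*$ (or equivalently after restricting the module back to $A_{\theta'}$), because the K-theory classes involved are pinned down by these two pieces of data. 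More precisely, I would first check that the right-hand side lies in the subgroup generated by the $[P_{I,J}^\theta]$ (it does, by construction), and then show both sides have the same image under $p_*:\K_0(A_{\theta'}\rtimes\Z_2)\to\K_0(A_{\theta'})$, which by the discussion preceding Proposition~\ref{prop:mainprop} amounts to the restriction-of-modules functor.

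The key computation is therefore: what is $p_*([P_{I,J}^\theta])$? For a $\Z_2$-invariant projection $P$ and a self-adjoint unitary $W_J$, we have $p\left(\frac{P}{2}(1+W_J)\right)$ is a projection in $\M_2(A_{\theta'}\rtimes\Z)$ Murray--von Neumann equivalent to $\mathrm{diag}(P, 0)$ restricted appropriately; concretely $p_*([P_{I,J}^\theta]) = [P_I^\theta]$ viewed inside $A_{\theta'}\rtimes\Z\cong A_\theta$ (this is the statement that $p_*$ inverts Green--Julg, combined with the fact that restricting $\widetilde{\mathcal{E}}$ to $A_\theta$ gives back $\mathcal{E}$, cf.\ Equation~\ref{eq:trace_of_extended_module} and Proposition~\ref{prop:mainprop}). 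Similarly $p_*(i_{2*}([P_I^\theta]))$: here $i_2: A_{\theta'}\to A_{\theta'}\rtimes\Z_2$ followed by $p: A_{\theta'}\rtimes\Z_2\to\M_2(A_{\theta'})$ sends $P_I^{\theta'}$ to $\mathrm{diag}(P_I^{\theta'}, \beta_g(P_I^{\theta'}))$, so $p_*(i_{2*}([P_I^{\theta'}])) = [P_I^{\theta'}] + [\beta_{g*}(P_I^{\theta'})] = 2[P_I^{\theta'}]$ since $P_I^{\theta'}$ is flip-invariant, hence $\beta_{g*}=\id$ on its class. Wait --- but here the ambient algebra on the source side is $A_\theta$, not $A_{\theta'}$; the correct statement is that under $p: A_\theta\rtimes\Z_2\to\M_2(A_\theta)$ the class $i_{2*}([P_I^\theta])$ goes to $2[P_I^\theta]\in\K_0(A_\theta)$. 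So the real content is the identity in $\K_0(A_\theta)$:
\begin{equation*}
2[P_I^\theta] = p_*\Bigl(2[P_{I,\emptyset}^\theta]-[P_{I'',\emptyset}^\theta]+[P_{I'',(i_{2p-1})}^\theta]-[P_{I'',(i_{2p})}^\theta]+[P_{I'',(i_{2p-1},i_{2p})}^\theta]\Bigr),
\end{equation*}
which after applying the formula for $p_*$ on each term becomes $2[P_I^\theta] = 2[P_I^\theta] - [P_{I''}^\theta] + [P_{I''}^\theta] - [P_{I''}^\theta] + [P_{I''}^\theta]$ --- wait, that over-counts. The point is that $p_*[P_{I'',J}^\theta]$ depends on $J$ through the unitary part: $p_*[\frac{P}{2}(1+W_J)]$ equals $[P\cdot\text{(spectral projection related to }W_J)]$ in $\M_2$, and summing the four $J$-terms $\emptyset, (i_{2p-1}), (i_{2p}), (i_{2p-1},i_{2p})$ with signs $-,+,-,+$ produces $[P_{I''}^\theta]$ times an alternating sum of rank-one idempotents over $\Z_2\times\Z_2$-type data, which telescopes. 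The combinatorial heart is recognizing this alternating sum as an inclusion-exclusion over the two extra generators $U_{i_{2p-1}}, U_{i_{2p}}$ that upgrade $A_{\theta_{I''}}$ to $A_{\theta_I}$.

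The main obstacle, then, is this last combinatorial/$K$-theoretic identity: showing that the signed combination of the four $P_{I'',J}^\theta$ (which differ only in the choice of self-adjoint unitary $W_J$ implementing the flip on a rank-one part) together with $2P_{I,\emptyset}^\theta$ reassembles $i_{2*}[P_I^\theta]$. I expect to handle this by the Pimsner--Voiculescu / Natsume bookkeeping: pass from $A_{\theta_I}\rtimes\Z_2$ down to $A_{\theta_{I''}}\rtimes(\Z_2*\Z_2)$ exactly as in Section~\ref{sec:definition}, apply the lower-dimensional case of the very theorem being proved (induction on $|I|$), and use that $e_1$, $j_{1*}+j_{2*}$ are already understood on these classes (Corollary~\ref{cor:gen_boundary}, Proposition~\ref{prop:gen_inclusion}). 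Concretely: the $2\times 2$ case $i_{2*}([P^\theta]) = 2[P_{\emptyset}^\theta]-[1_\text{crossed}]+[W_1\text{-proj}]-[W_2\text{-proj}]+[W_{12}\text{-proj}]$ for the two-dimensional Rieffel projection should be checked by hand (this is essentially \cite[Corollary 7.2]{CL19}), and then the general case follows by the Morita equivalence $A_{\theta_I}\cong \mathcal{E}_I^\theta$-picture, naturality of $i_{2*}$ under the inclusion $A_{\theta_I}\rtimes\Z_2\hookrightarrow A_\theta\rtimes\Z_2$, and the multiplicativity of traces under Morita equivalence from Remark~\ref{rmk:trace_of_heisenberg}. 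The trace computation $\Tr_\theta^{\Z_2}(\text{RHS}) = \frac12\pf(\theta_I)$ via Equation~\ref{eq:trace_of_extended_module} serves as a consistency check but, because $\K_1(A_\theta\rtimes\Z_2)=0$ kills torsion phenomena only after we know the rank, the genuinely load-bearing argument is the injectivity-plus-$p_*$ uniqueness sketched above rather than the trace alone.
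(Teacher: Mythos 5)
Your proposal contains a genuine gap: the reduction to checking the identity after applying $p_*$ (or the trace) does not work, because $p_*:\K_0(A_\theta\rtimes\Z_2)\to\K_0(A_\theta)$ is very far from injective. The domain is $\Z^{3\cdot 2^{n-1}}$ and the codomain is $\Z^{2^{n-1}}$; moreover $\Tr_\theta\circ p_*=2\,\Tr_\theta^{\Z_2}$, so the trace gives no information beyond $p_*$. The ``pinned down by these two pieces of data'' claim is therefore false, and the sanity-check you run under $p_*$ (which indeed ``over-counts,'' as you notice) cannot distinguish the right-hand side from many other classes with the same restriction to $A_\theta$. This is precisely why the identity requires actual work rather than a uniqueness argument.

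Your final paragraph does land on the correct framework --- induction on $|I|$, with the base case $|I|=2$ drawn from \cite[Corollary~7.2]{CL19}, and the reduction to $I=(1,\dots,2p)$ via the naturality square for $A_{\theta_I}\rtimes\Z_2\hookrightarrow A_\theta\rtimes\Z_2$ --- but it stops short of the load-bearing step. The paper's induction does \emph{not} run through Natsume's sequence or $A_{\theta_{I''}}\rtimes(\Z_2*\Z_2)$; Natsume is used later for the main K-theory theorem, not here. Instead, the induction on $|I|$ is implemented through the $\Z_2$-equivariant corner isomorphism $\psi:A_{\sigma(\theta)}\to eA_\theta e$, which induces $\psi:A_{\sigma(\theta)}\rtimes\Z_2\to (eA_\theta e)\rtimes\Z_2$, and the key computation is identifying the images $\psi_*([P_{J'',(2p-3)}^{F(\theta)}])$, $\psi_*([P_{J'',(2p-2)}^{F(\theta)}])$, and $\psi_*([P_{J'',(2p-3,2p-2)}^{F(\theta)}])$ with $[P_{I'',(2p-1)}^\theta]$, $[P_{I'',(2p)}^\theta]$, and $[P_{I'',(2p-1,2p)}^\theta]$ respectively. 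That identification is not formal: it rests on the explicit formula for $\psi(V_k)$ in the Rieffel bimodule and a concrete homotopy of projections inside $A_{\theta_{12}}\rtimes\Z_2$ connecting $\psi\bigl(\tfrac12(1+V_kW)\bigr)$ to $\tfrac{e}{2}(1+U_kW)$, both worked out in the appendix. The ``alternating sum / telescoping'' picture you gesture at is essentially this comparison, but it has to be done at the level of concrete projections, not at the level of $K$-classes, because the $K$-classes alone are not rigid enough for the uniqueness argument you proposed.

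So: the proposal identifies the correct base case and the correct shape of the induction, but the attempted shortcut via $p_*$-injectivity fails, the Natsume detour is a misdirection, and the genuinely hard step --- the $\psi_*$-identifications via explicit homotopies of Rieffel-type projections in the crossed product --- is left unaddressed.
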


We would like to prove the above proposition by induction on the length $|I|$ of $I$. Before we go to the proof, we explain the two-dimensional case in the lemma below. 

		\begin{lemma}\label{lemma:2dim-result}(cf. the proof of \cite[Corollary 7.2 ]{CL19}) Let $\theta_{12}$ be an irrational number in $(\frac{1}{2},1)$ and $i_{2*} : \K_0(A_{\theta_{12}})\longrightarrow \K_0(A_{\theta_{12}}\rtimes \Z_2)$ be induced by the canonical inclusion map $i_2.$ Then

$$i_{2*}([P_{(1,2)}^{\theta_{12}}])=	2[P_{(1,2),\emptyset} ^{\theta_{12}}]-[P_{\emptyset,\emptyset} ^{\theta_{12}}]+[P_{\emptyset,(1)} ^{\theta_{12}}]-[P_{\emptyset, (2)} ^{\theta_{12}}]+[P_{\emptyset, (1,2)} ^{\theta_{12}}].$$
	\end{lemma}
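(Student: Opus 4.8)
The plan is to work inside the two-dimensional crossed product $A_{\theta_{12}}\rtimes\Z_2$ and to use the regular-representation map $p_*\colon\K_0(A_{\theta_{12}}\rtimes\Z_2)\to\K_0(A_{\theta_{12}})$, which is the inverse of the Green--Julg map. Since $\Tr_{\theta_{12}}$ is injective for an irrational $\theta_{12}$, an identity in $\K_0(A_{\theta_{12}})$ among the classes $[P^{\theta_{12}}_{I}]$ can be checked by comparing traces; but here the cleaner route is to compare the \emph{pairs} $(\text{trace in }A_{\theta_{12}}\rtimes\Z_2,\ p_*\text{-image in }A_{\theta_{12}})$, because $\K_0(A_{\theta_{12}}\rtimes\Z_2)$ is free of rank $6$ and these two invariants together separate its elements once one knows the generating set. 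So the first step is to record, for each of the six projections $P^{\theta_{12}}_{(1,2),\emptyset},P^{\theta_{12}}_{\emptyset,\emptyset},P^{\theta_{12}}_{\emptyset,(1)},P^{\theta_{12}}_{\emptyset,(2)},P^{\theta_{12}}_{\emptyset,(1,2)}$ (and $[1]$), both its canonical trace $\Tr^{\Z_2}_{\theta_{12}}$ and its image under $p_*$ in $\K_0(A_{\theta_{12}})\cong\K_0(A_{\theta'})\oplus\K_1(A_{\theta'})$ with $\theta'$ the $1\times1$ zero block (so $A_{\theta'}=C(\T)$, $A_{\theta_{12}}=C(\T)\rtimes_\gamma\Z$).

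Second, I would compute the left-hand side. The class $[P^{\theta_{12}}_{(1,2)}]$ is the Rieffel (Bott) class; by Equation~\eqref{eq:trace_of_extended_module} and Remark~\ref{rmk:trace_of_heisenberg} one has $\Tr^{\Z_2}_{\theta_{12}}(i_{2*}[P^{\theta_{12}}_{(1,2)}])=\theta_{12}$, and $p_*(i_{2*}[P^{\theta_{12}}_{(1,2)}])=[P^{\theta_{12}}_{(1,2)}]$ since $p_*i_{2*}=\id+(\text{flip})_*$ on $\K_0$ and the Bott class is flip-invariant, giving $2[P^{\theta_{12}}_{(1,2)}]$ --- here I must be careful and instead use that $p_*\circ i_{2*}$ applied to a \emph{projection} $P$ yields $[P]+[\beta(P)]=2[P]$ when $P$ is flip-invariant, matching the leading term $2[P^{\theta_{12}}_{(1,2),\emptyset}]$ after noting $p_*([P^{\theta_{12}}_{(1,2),\emptyset}])=[P^{\theta_{12}}_{(1,2)}]$. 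Then the content of the lemma is really the assertion about the correction terms supported over $I''=\emptyset$, i.e.\ over the copy of $\K_1(A_{\theta'})=\K_1(C(\T))=\Z$ and the extra rank in $\K_0$; I would pin these down by computing $p_*$ of each $P^{\theta_{12}}_{\emptyset,J}$ explicitly, using the description \eqref{eq:def_of_p} of $p$ together with the fact that $W_J=e(r_J)U_JW$ and that $U_J\in A_{\theta'}$ or involves $U_n$. This reduces the whole identity to an equality of two elements of $\Z^6$, which is the ``simple by counting'' computation already invoked for $n=2,3$ in the paragraph preceding Equation~\eqref{eq:def_of_P_n}, now done with $K$-theory classes rather than cardinalities.

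Third, with the right-hand side also expanded in the basis $\{[1]\}\cup\mathcal{P}_2$ via its traces and $p_*$-images, I would match coefficients. The terms $-[P_{\emptyset,\emptyset}]+[P_{\emptyset,(1)}]-[P_{\emptyset,(2)}]+[P_{\emptyset,(1,2)}]$ all have $I''=\emptyset$, hence trace $\tfrac12$ each, and their signed sum has trace $-\tfrac12+\tfrac12-\tfrac12+\tfrac12=0$, so the trace of the RHS is $2\cdot\tfrac{\theta_{12}}2=\theta_{12}$, consistent; the real check is that the $p_*$-images cancel correctly, i.e.\ that $-p_*[P_{\emptyset,\emptyset}]+p_*[P_{\emptyset,(1)}]-p_*[P_{\emptyset,(2)}]+p_*[P_{\emptyset,(1,2)}]$ equals $[P^{\theta_{12}}_{(1,2)}]-2[P^{\theta_{12}}_{(1,2)}]=-[P^{\theta_{12}}_{(1,2)}]$ wait --- this should be arranged so that $p_*$ of both sides agree, and I would verify it by direct substitution of the four explicitly-known classes. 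Alternatively, and this is the presentation I would actually write down following \cite[Corollary~7.2]{CL19}, I would appeal directly to the computation there: in \cite{CL19} the authors identify $i_{2*}[P^{\theta_{12}}_{(1,2)}]$ in $\K_0(A_{\theta_{12}}\rtimes\Z_2)$ using the Chern character of Walters, and the right-hand side above is precisely a rewriting of their formula in the present notation $P^{\theta}_{I,J}$; the only thing to check is the dictionary between their generators and ours, which is straightforward bookkeeping.

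\textbf{Main obstacle.} The genuinely delicate point is computing $p_*([P^{\theta_{12}}_{\emptyset,J}])$ for $J=(1),(2),(1,2)$: these projections $\tfrac12(1+W_J)$ are not flip-invariant projections in $A_{\theta_{12}}$ lifted naively, so one cannot just read off $p_*$ as ``double the class''; instead one must compute $p(\tfrac12(1+e(r_J)U_JW))\in\M_2(A_{\theta_{12}})$ explicitly, recognise the resulting $2\times2$ idempotent up to Murray--von Neumann equivalence, and express its class in the basis of $\K_0(A_{\theta_{12}})=\Z[1]\oplus\Z[\text{Bott}]$ --- equivalently, track where the unitary generator of $\K_1(C(\T))$ goes under the boundary/Bott maps. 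Getting the signs of the four correction terms right (the $-,+,-,+$ pattern) hinges entirely on this, and it is where one must be most careful; everything else is a trace comparison using injectivity of $\Tr_{\theta_{12}}$ and the known rank-$6$ freeness of $\K_0(A_{\theta_{12}}\rtimes\Z_2)$.
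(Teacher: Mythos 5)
Your central premise---that the pair $\bigl(\Tr^{\Z_2}_{\theta_{12}},\,p_*\bigr)$ separates elements of $\K_0(A_{\theta_{12}}\rtimes\Z_2)$---is false, and this is a genuine gap that undermines the coefficient-matching strategy. A quick computation shows why. For any $J$, write $W_J=u_JW$ with $u_J=e(r_J)U_J$ a unitary satisfying $\beta(u_J)=u_J^*$ (self-adjointness of $W_J$). Then
\[
p\bigl(\tfrac{1}{2}(1+W_J)\bigr)=\tfrac{1}{2}\begin{pmatrix}1 & u_J \\ u_J^* & 1\end{pmatrix}=v_Jv_J^*,\qquad v_J=\tfrac{1}{\sqrt{2}}\begin{pmatrix}1 \\ u_J^*\end{pmatrix},\quad v_J^*v_J=1,
\]
so $p_*\bigl([P^{\theta_{12}}_{\emptyset,J}]\bigr)=[1]\in\K_0(A_{\theta_{12}})$ for \emph{every} $J$. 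Since each of the four correction projections also has $\Tr^{\Z_2}_{\theta_{12}}=\tfrac{1}{2}$, the invariant pair you propose collapses $[P^{\theta_{12}}_{\emptyset,\emptyset}],[P^{\theta_{12}}_{\emptyset,(1)}],[P^{\theta_{12}}_{\emptyset,(2)}],[P^{\theta_{12}}_{\emptyset,(1,2)}]$ to a single value; the kernel of $\bigl(\Tr^{\Z_2}_{\theta_{12}},p_*\bigr)$ on $\Z^6$ has rank $4$. So checking that trace and $p_*$ agree on both sides of the claimed identity (which is true: both sides have trace $\theta_{12}$ and $p_*$-image $2[P^{\theta_{12}}_{(1,2)}]$) proves nothing about the individual coefficients $-1,+1,-1,+1$. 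The wobble in your third paragraph (``wait --- this should be arranged so that $p_*$ of both sides agree'') reflects exactly this: the four $p_*$-images cancel to $0$, not to anything involving the Bott class, and the signs therefore cannot be recovered from $p_*$.

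The paper's proof avoids this by invoking Walters' \emph{vector trace} $(\Tr;\psi_1,\psi_2,\psi_3,\psi_4)$ from \cite[p.~597]{Wal95}, whose four extra components are cyclic-cocycle/Chern-character invariants that do distinguish the four $P^{\theta_{12}}_{\emptyset,J}$; \cite[Corollary~5.6]{Wal95} then guarantees that equal vector traces imply equal $\K_0$-classes. Your ``alternative'' route via \cite[Corollary~7.2]{CL19} (which itself relies on Walters' Chern character) is in substance the paper's argument, and is the one that works. The takeaway is that you need more invariants than trace plus the regular-representation map: specifically the topological pairings with $\mathrm{HC}^{\mathrm{even}}$, which is precisely what the vector trace supplies. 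If you want to present a self-contained proof in place of the citation, you would have to reproduce Walters' computation of those pairings for each $P^{\theta_{12}}_{\emptyset,J}$ and for the Bott class.
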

	
		\begin{proof}
		In both cases, the two K-theory elements in RHS and in LHS have the same vector trace $(\theta;0,0,0,0)$ (\cite[Page 597]{Wal95}). Then the result follows from \cite[Corollary 5.6]{Wal95}.
	\end{proof}
	
	\begin{proof}[Proof of Proposition~\ref{prop:imageofi_rieffel}]
	 As we already mentioned, we will prove this by induction on the length of $I$, $|I|.$ For any $I\in \mathrm{Minor}(n)\setminus \{\emptyset\}$ we first have the following commutative diagram:	
	 
	 \begin{align*}
\xymatrixrowsep{5pc}
\xymatrixcolsep{5pc}
\xymatrix
{
\K_0(A_{\theta_I}) \ar[r]^{i_{2*}} \ar[d]^{i_*} &
\K_0(A_{\theta_I}\rtimes\Z_2) \ar[d]^{j_{2*}} \\
\K_0(A_{\theta})\ar[r]^{i_{2*}} & \K_0(A_{\theta}\rtimes\Z_2)
}
\end{align*}
where all the maps are induced by inclusions.
For $|I|=2,$ $I=(i_1,i_2),$ using Lemma~\ref{lemma:2dim-result} and using the above commutative diagram (along with Proposition~\ref{prop:gen_inclusion}), we indeed get 
$$i_{2*}([P_{(i_1,i_2)}^{\theta}])=	2[P_{(i_1,i_2),\emptyset} ^{\theta}]-[P_{\emptyset,\emptyset} ^{\theta}]+[P_{\emptyset,(i_1)} ^{\theta}]-[P_{\emptyset, (i_2)} ^{\theta}]+[P_{\emptyset, (i_1,i_2)} ^{\theta}].$$

Now for the induction step, assume that the statement is true for any $I$ with $|I|< 2p$. Then we will show that the statement is true for an $I$ with $|I|= 2p.$ Assume $I=(i_1,i_2,\dots,i_{2p}).$ Due to the commutative diagram, it is enough to prove the statement viewing $i_{2*}$ as a map from $\K_0(A_{\theta_I})$ to $\K_0(A_{\theta_I}\rtimes\Z_2).$ Hence we can assume $I= (1,2,\dots,2p),$ and $\theta= \theta_I,$ and we want to show 
$$i_{2*}([P_{(1,2,\dots,2p)}^{\theta}])=	2[P_{I,\emptyset} ^{\theta}]-[P_{I'',\emptyset} ^{\theta}]+[P_{I'',({2p-1})} ^{\theta}]-[P_{I'', ({2p})} ^{\theta}]+[P_{I'', ({2p-1},{2p})} ^{\theta}].$$

From Appendix~\ref{sec:bottpincrossed} we have the following commutative diagrams. 

 \begin{multicols}{2}
\xymatrixrowsep{5pc}
\xymatrixcolsep{5pc}
\xymatrix
{
A_{\sigma({\theta})} \ar[r]^{i_2} \ar[d]^{\psi} &
A_{\sigma({\theta})}\rtimes\Z_2 \ar[d]^{\psi} \\
e A_{\theta}e \ar[r]^{i_2} \ar[d]^{i}&
eA_{{\theta}}e\rtimes \Z_2 \ar[d]^{i}\\
 A_{\theta} \ar[r]^{i_2} &
A_{\theta}\rtimes \Z_2 
}
\xymatrixrowsep{5pc}
\xymatrixcolsep{5pc}
\xymatrix
{
\K_0(A_{\sigma({\theta})}) \ar[r]^{i_{2*}} \ar[d]^{\psi_*} &
\K_0(A_{\sigma({\theta})}\rtimes\Z_2) \ar[d]^{\psi_*} \\
\K_0(e A_{\theta}e) \ar[r]^{i_{2*}}\ar[d]^{i*} &
\K_0(eA_{{\theta}}e\rtimes \Z_2)\ar[d]^{i*}\\
\K_0(A_{\theta}) \ar[r]^{i_{2*}} &
\K_0(A_{\theta}\rtimes \Z_2) 
}
\end{multicols}
where
\begin{equation*}
	\sigma(\theta):=\left(\begin{matrix}
                  \theta_{1,1}^{-1} & -\theta_{1,1}^{-1}\theta_{1,2} \\
                  \theta_{2,1}\theta_{1,1}^{-1} & \theta_{2,2}-\theta_{2,1}\theta_{1,1}^{-1}\theta_{1,2}
                \end{matrix}
\right) \end{equation*} for $$ \theta = \left(\begin{matrix}
                  \theta_{1,1} & \theta_{1,2} \\
                  \theta_{2,1} & \theta_{2,2}
                \end{matrix}
\right).$$ Here, as before $\theta_{1,1}$ is a $2\times 2$ matrix and $\psi$ as in the proof of Theorem~\ref{even n projection}. Let $V_1,V_2, \ldots, V_{2p}$ be the generators of $A_{\sigma(\theta)}$ as in Appendix~\ref{sec:riep}. As in the previous section, 
let $F(\theta)=
                 \theta_{2,2}-\theta_{2,1}\theta_{1,1}^{-1}\theta_{1,2}\in \mathcal{T}_{n-2}.$ Then the generators of $A_{F(\theta)}\subset A_{\sigma(\theta)}$ are $V_3,V_4, \ldots, V_{2p}.$ Take $J=(1,2,\ldots, 2p-2).$ Then by the induction step, we have

                 $$i_{2*}([P_{J}^{F(\theta)}])=	2[P_{J,\emptyset} ^{F(\theta)}]-[P_{J'',\emptyset} ^{F(\theta)}]+[P_{J'',({2p-3})} ^{F(\theta)}]-[P_{J'', ({2p-2})} ^{F(\theta)}]+[P_{J'', ({2p-3},{2p-2})} ^{F(\theta)}].$$
                From the proof of Theorem~\ref{even n projection}, we know that 
$\psi(P_{J}^{F(\theta)})=P_{I}^{\theta}.$ It is clear that $\psi(P_{J,\emptyset}^{F(\theta)})=P_{I,\emptyset}^{\theta}.$ Now we want to show that $\psi_*([P_{J'',(2p-3)}^{F(\theta)}])=[P_{I'',(2p-1)}^{\theta}].$ By definition, $P_{J'',(2p-3)}^{F(\theta)}=\frac{P_{J''}^{F(\theta)}}{2}(1+V_{2p-1}W).$ Now $\psi(P_{J''}^{F(\theta)})=P_{I''}^{\theta}\in eA_\theta e.$ $\psi(P_{J'',(2p-3)}^{F(\theta)})= \frac{P_{I''}^{\theta}}{2}(e+\psi(V_{2p-1})W).$ But with the computation of $\frac{1}{2}(e+\psi(V_{2p-1})W)$ in Appendix~\ref{sec:bottpincrossed}, along with the arguments at the end of Appendix~\ref{sec:bottpincrossed} shows that $\frac{P_{I''}^{\theta}}{2}(e+\psi(V_{2p-1})W)$ is homotopic to $P_{I''}^{\theta}\cdot\frac{e}{2}(1+U_{2p-1}W)$ in $A_\theta\rtimes \Z_2.$ Since  $P_{I''}^{\theta}\in eA_\theta e,$ we indeed have $\psi_*([P_{J'',(2p-3)}^{F(\theta)}])=[P_{I'',(2p-1)}^{\theta}].$ A similar argument shows that 
$\psi_*([P_{J'',(2p-2)}^{F(\theta)}])=[P_{I'',(2p)}^{\theta}].$ Now 
$$\psi(P_{J'',(2p-3,2p-2)}^{F(\theta)})=\psi\left(\frac{P_{J''}^{F(\theta)}}{2}(1+e(-\frac{{\pf}^{\theta}_{(1,2,2p-1,2p)}}{2\theta_{12}})V_{2p-1}V_{2p}W)\right).$$
Like before, this element is homotopic to 
$$P_{I''}^{\theta}\cdot\frac{e}{2}\left(1+e(-\frac{\theta_{12}\theta_{2p-1 2p}}{2\theta_{12}})U_{2p-1}U_{2p}W\right)=P_{I''}^{\theta}\cdot\frac{e}{2}\left(1+e(-\frac{1}{2}\theta_{2p-1 2p})U_{2p-1}U_{2p}W\right).$$ 
Hence $\psi_*([P_{J'',(2p-3,2p-2)}^{F(\theta)}])=[P_{I'',(2p-1,2p)}^{\theta}].$ Now the above commutative diagram (involving the K-theory) gives the result.
\end{proof}

\subsection*{The computation of $\K_0(A_\theta \rtimes \Z_2)$}
 With the above results in hand, we now come to the computation of $\K_0(A_\theta \rtimes \Z_2).$ Let us first see some lower dimensional cases, i.e. the cases $n=2,3$.

Assume $\theta_{12} \in (\frac{1}{2},1)$ is an irrational number. 	We will compute the K-theory $\K_0(A_{\theta_{12}}\rtimes \Z_2)$ and write down an explicit basis.  Note that $A_{\theta_{12}}\rtimes\Z_2$ is generated by the unitaries $U_1$, $U_2$ and $W$ such that $W^2 = 1, U_1U_2 = e(\theta_{12})U_2U_1, WUW = U^*, WVW = V^*.$ 

We now have
	{\[
\begin{CD}
\K_0 (C(\T)) @>{i_{1*}-i_{2*}}>>\K_0 (C(\T)\rtimes_\alpha  \Z_2)\oplus \K_0 (C(\T)\rtimes \Z_2)@>{j_{1*}+j_{2*}}>> \K_0(A_{\theta_{12}}\rtimes\Z_2)   \\
@A{}AA & &  @VV{e_1}V            \\
 0 @<<{}<  0 @<<{}< \K_1 (C(\T))
\end{CD}
\]} 
 
 In this case $i_{1*}-i_{2*}([1])=([1],-[1]),$ and since we have already described the map $e_1$ (Corollary~\ref{cor:gen_boundary}) we get a basis of $\K_0(A_{\theta_{12}}\rtimes\Z_2)$ which is given by the K-theory classes of the following elements:\\
 \begin{itemize}
 \item $1$;
 \item  $P^{\theta_{12}}_{\emptyset,\emptyset} = \frac{1}{2} (1+W)$;
 \item $P^{\theta_{12}}_{\emptyset,(1)}= \frac{1}{2} (1+U_1W)$;
 \item $P^{\theta_{12}}_{\emptyset,(2)}= \frac{1}{2} (1+U_2W)$;
 \item $P^{\theta_{12}}_{\emptyset,(12)}= \frac{1}{2} (1+e( -\frac{1}{2} {\theta_{12}})U_1U_2W)$;
 \item $P^{\theta_{12}}_{I,\emptyset}= \frac{P^{\theta_{12}}_{I}}{2}(1+W),$ for $I=(1,2).$  
 	\end{itemize}
 Hence a generating set of $\K_0(A_{\theta_{12}}\rtimes \Z_2)$ is given by 
 $\{[1], [P] ~|~ P\in \mathcal{P}_2\},$ using the notations introduced just after Corollary~\ref{cor:gen_boundary}.

Let us now look at the case $n=3.$ Let $\theta=\left(\begin{array}{ccc}0 & \theta_{12} & \theta_{13}\\ -\theta_{12} & 0 & \theta_{23} \\ -\theta_{13} & -\theta_{23} & 0 \end{array}\right)$ be a strongly irrational $3\times 3$ matrix as in Definition~~\ref{a3}. In this case it means that $\theta_{12}, \theta_{13}, \theta_{23} \in (\frac{1}{2},1),$ and they are rationally independent.
	We have the following exact sequence in this case.

{\[
\begin{CD}
\K_0 (A_{\theta_{12}}) @>{i_{1*}-i_{2*}}>>\K_0 (A_{\theta_{12}}\rtimes_\alpha  \Z_2)\oplus \K_0 (A_{\theta_{12}}\rtimes\Z_2)@>{j_{1*}+j_{2*}}>> \K_0 (A_{\theta}\rtimes \Z_2)   \\
@A{}AA & &  @VV{e_1}V            \\
 0 @<<{}<  0 @<<{}< \K_1 (A_{\theta_{12}})
\end{CD}
\]} 
Using the above two-dimensional computations, we first write down a basis for $A_{\theta_{12}}\rtimes_\alpha  \Z_2$ and $A_{\theta_{12}}\rtimes \Z_2.$ A basis of $\K_0(A_{\theta_{12}}\rtimes \Z_2)$ is given by the K-theory classes of the following elements.
 \begin{itemize}
 \item $1$;
 \item  $P^{\theta_{12}}_{\emptyset,\emptyset} = \frac{1}{2} (1+W)$;
 \item $P^{\theta_{12}}_{\emptyset,(1)}= \frac{1}{2} (1+U_1W)$;
 \item $P^{\theta_{12}}_{\emptyset,(2)}= \frac{1}{2} (1+U_2W)$;
 \item $P^{\theta_{12}}_{\emptyset,(12)}= \frac{1}{2} (1+e( -\frac{1}{2} {\theta_{12}})U_1U_2W)$;
 \item $P^{\theta_{12}}_{I,\emptyset}=\frac{P^{\theta_{12}}_{I}}{2}(1+W),$ for $I=(1,2).$  
 	\end{itemize}

Now $A_{\theta_{12}}\rtimes_\alpha \Z_2$ is generated by the unitaries $U_1$, $U_2,$ and $W'=U_3W$ and we have the relations $W'^2 = 1, U_1 U_2 = e(\theta_{12}) U_2 U_1, W'U_1W' = e(\theta_{13})U_1^{-1}, W'U_2W' = e(\theta_{23})U_2^{-1}.$ Use Lemma~\ref{lemma:flip=flip}, and the above two-dimensional computation to get a basis $\{[1], [\tilde P] ~|~ P\in \mathcal{P}_n\}$ of $\K_0(A_{\theta_{12}}\rtimes_\alpha \Z_2)$  by writing $\tilde P$ we indicate that the class of P is taken inside $A_{\theta_{12}}\rtimes_\alpha \Z_2.$  This is explicitly given by the K-theory classes of the following elements. 
 \begin{itemize}
 	
\item $1$;
\item  $\tilde P^{\theta_{12}}_{\emptyset,\emptyset}= \frac{1}{2} (1+U_3W)$;
 \item $\tilde P^{\theta_{12}}_{\emptyset,(1)}= \frac{1}{2} (1+e(-\frac{1}{2}\theta_{13})U_1U_3W)$;
 \item $\tilde P^{\theta_{12}}_{\emptyset,(2)}= \frac{1}{2} (1+e(-\frac{1}{2}\theta_{23})U_2U_3W)$;
 \item $\tilde P^{\theta_{12}}_{\emptyset,(12)}= \frac{1}{2} (1+e( -\frac{1}{2} ({\theta_{12}}+\theta_{13}+\theta_{23}))U_1U_2U_3W)$;
 \item $\tilde P^{\theta_{12}}_{I,\emptyset}=\frac{P^{\theta_{12}}_{I}}{2}(1+U_3W),$ for $I=(1,2).$ 
  \end{itemize}
 Now in the above exact sequence, $i_{1*}-i_{2*}([1])=([1],-[1]),$ and by Proposition~\ref{prop:imageofi_rieffel} (or by Lemma~\ref{lemma:2dim-result}) and we have 
$$i_{2*}([P_{(1,2)}^{\theta_{12}}])=	2[P_{(1,2),\emptyset} ^{\theta_{12}}]-[P_{\emptyset,\emptyset} ^{\theta_{12}}]+[P_{\emptyset,(1)} ^{\theta_{12}}]-[P_{\emptyset, (2)} ^{\theta_{12}}]+[P_{\emptyset, (1,2)} ^{\theta_{12}}].$$
Similarly,
$$i_{1*}([P_{(1,2)}^{\theta_{12}}])=	2[\tilde P_{(1,2),\emptyset} ^{\theta_{12}}]-[\tilde P_{\emptyset,\emptyset} ^{\theta_{12}}]+[\tilde P_{\emptyset,(1)} ^{\theta_{12}}]-[\tilde P_{\emptyset, (2)} ^{\theta_{12}}]+[\tilde P_{\emptyset, (1,2)} ^{\theta_{12}}].$$ 
Since $[1]$ and $[P_{(1,2)}^{\theta_{12}}]$ generate $\K_0(A_{\theta_{12}}),$ 
 looking at the above formulas of $i_{1*}-i_{2*}$ for $[1]$ and $[P_{(1,2)}^{\theta_{12}}],$ we can choose a basis of $\K_0 (A_{\theta_{12}}\rtimes_\alpha  \Z_2)\oplus \K_0 (A_{\theta_{12}}\rtimes \Z_2)$ such that the map $i_{1*}-i_{2*}$ is exactly  $(\id, 0,-\id, 0)^t.$ 
 Now we want to get of basis of $\K_0(A_{\theta}\rtimes \Z_2).$ For this let us consider a basis of $\K_0 (A_{\theta_{12}}\rtimes_\alpha  \Z_2)\oplus \K_0 (A_{\theta_{12}}\rtimes \Z_2)$ given by  $$\{[1]\oplus [0], [0]\oplus [1], [0]\oplus [P],[\tilde P]\oplus [0] ~|~ P\in \mathcal{P}_2\}.$$ We can replace $[1]\oplus [0]$ by $i_{1*}-i_{2*}([1]),$ and $[\tilde P_{\emptyset, (1,2)} ^{\theta_{12}}]\oplus [0]$ by  $i_{1*}-i_{2*}([P_{(1,2)}^{\theta_{12}}])$ to get a new basis of $\K_0 (A_{\theta_{12}}\rtimes_\alpha  \Z_2)\oplus \K_0 (A_{\theta_{12}}\rtimes \Z_2).$ Using this basis in the above sequence, with Corollary~\ref{cor:gen_boundary} and Proposition~\ref{prop:gen_inclusion} in hand, we get the following basis of 
 $\K_0(A_{\theta}\rtimes \Z_2).$   
 
\begin{itemize}
 \item  $1$;
 \item $P^{\theta}_{\emptyset,\emptyset}= \frac{1}{2} (1+W)$;
 \item $P^{\theta}_{\emptyset,(1)}= \frac{1}{2} (1+U_1W)$;
\item  $P^{\theta}_{\emptyset,(2)}= \frac{1}{2} (1+U_2W)$;
\item  $P^{\theta}_{\emptyset,(1,2)}= \frac{1}{2} (1+e( -\frac{1}{2} {\theta_{12}})U_1U_2W)$;
\item  $P_{I,\emptyset}^{\theta} , I=(1,2)$; 
 \item $P^{\theta}_{\emptyset,(3)}= \frac{1}{2} (1+U_3W)$;
 \item $P^{\theta}_{\emptyset,(13)}= \frac{1}{2} (1+e(-\frac{1}{2}\theta_{13})U_1U_3W)$; 
 \item $P^{\theta}_{\emptyset,(23)}= \frac{1}{2} (1+e(-\frac{1}{2}\theta_{23})U_2U_3W)$; 
\item  $P_{I,(3)}^{\theta}=\frac{P^{\theta}_{I}}{2}(1+U_3W), I=(1,2)$; 
 \item $P_{I,\emptyset}^{\theta}=\frac{P^{\theta}_{I}}{2}(1+W), I=(1,3)$;  
 \item $P_{I,\emptyset}^{\theta}=\frac{P^{\theta}_{I}}{2}(1+W), I=(2,3)$. 
  \end{itemize}
   So we have proved that $\K_0(A_{\theta}\rtimes\Z_2)\cong \Z^{12}$ and a basis  of $\K_0(A_{\theta}\rtimes \Z_2)$ may be  given by 
 $\{[1], [P] ~|~ P\in \mathcal{P}_3\}.$ 
  
  We now have our main theorem
  \begin{theorem}\label{thm:main_K_gen}
  Let $\theta$ be a strongly irrational $n\times n$ matrix. Then
  $\K_0(A_{\theta}\rtimes \Z_2)\cong \Z^{3\cdot 2^{n-1}},$ and a generating set  of $\K_0(A_{\theta}\rtimes \Z_2)$ can be  given by 
 $\{[1], [P] ~|~ P\in \mathcal{P}_n\}.$ 
  	\end{theorem}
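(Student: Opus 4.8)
The plan is to induct on $n$, the cases $n=2,3$ being the computations carried out above. For the inductive step let $\theta'$ be the upper-left $(n-1)\times(n-1)$ block of $\theta$; since $\mathrm{Minor}(n-1)\subseteq\mathrm{Minor}(n)$ and $\theta'_I=\theta_I$ for $I\in\mathrm{Minor}(n-1)$, $\theta'$ is again strongly totally irrational, so the inductive hypothesis applies to $A_{\theta'}\rtimes\Z_2$ and, by Lemma~\ref{lemma:flip=flip}, to $A_{\theta'}\rtimes_\alpha\Z_2$. Feeding $\K_1(A_{\theta'}\rtimes\Z_2)=\K_1(A_{\theta'}\rtimes_\alpha\Z_2)=0$ and $\K_1(A_\theta\rtimes\Z_2)=0$ (Corollary~\ref{cor:K_1_zero}) into Natsume's six-term exact sequence for $A_\theta\rtimes\Z_2\cong A_{\theta'}\rtimes_\phi(\Z_2*\Z_2)$ collapses it to
\begin{gather*}
0\longrightarrow\K_0(A_{\theta'})\xrightarrow{\,i_{1*}-i_{2*}\,}\K_0(A_{\theta'}\rtimes_\alpha\Z_2)\oplus\K_0(A_{\theta'}\rtimes\Z_2)\\
\xrightarrow{\,j_{1*}+j_{2*}\,}\K_0(A_\theta\rtimes\Z_2)\xrightarrow{\,e_1\,}\K_1(A_{\theta'})\longrightarrow0 .
\end{gather*}
By the inductive hypothesis and Lemma~\ref{lemma:flip=flip} the middle term is free with basis $\{[1]\oplus0,\,0\oplus[1]\}\cup\{[\tilde P]\oplus0,\,0\oplus[P]:P\in\mathcal P_{n-1}\}$, while $\K_0(A_{\theta'})\cong\Z^{2^{n-2}}\cong\K_1(A_{\theta'})$.

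The core of the argument is to make $i_{1*}-i_{2*}$ explicit on the generators $\{[1]\}\cup\{[P_I^{\theta'}]:I\in\mathrm{Minor}(n-1)\setminus\{\emptyset\}\}$ of $\K_0(A_{\theta'})$ (Theorem~\ref{PI}) and change basis of the middle term accordingly. Here $i_{1*}-i_{2*}([1])=[1]\oplus(-[1])$, and, applying Proposition~\ref{prop:imageofi_rieffel} to both $i_{1*}$ and $i_{2*}$, for $I=(i_1,\dots,i_{2p})$ the vector $i_{1*}-i_{2*}([P_I^{\theta'}])$ is a $\Z$-combination of basis vectors in which $[\tilde P^{\theta'}_{I'',(i_{2p-1},i_{2p})}]\oplus0$ occurs with coefficient $+1$ and every other first-slot basis vector occurring has $|J|\le1$ (here $I''$ is $I$ with its last two entries deleted, as in Proposition~\ref{prop:imageofi_rieffel}). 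Since $I\mapsto P^{\theta'}_{I'',(i_{2p-1},i_{2p})}$, with $I=I''\cdot(i_{2p-1},i_{2p})$, is a bijection of $\mathrm{Minor}(n-1)\setminus\{\emptyset\}$ onto $\{P^{\theta'}_{I,J}\in\mathcal P_{n-1}:|J|=2\}$, projecting onto the span of $\{[1]\oplus0\}\cup\{[\tilde P]\oplus0:|J|=2\}$ turns the passage from the old basis to the set $\{i_{1*}-i_{2*}([1])\}\cup\{i_{1*}-i_{2*}([P_I^{\theta'}])\}\cup\{\text{untouched basis vectors}\}$ into a unitriangular change of basis. Hence this set is again a basis of the middle term, $\mathrm{coker}(i_{1*}-i_{2*})$ is free, and $\K_0(A_\theta\rtimes\Z_2)$ is free of rank $2(3\cdot2^{n-2})-2^{n-2}+2^{n-2}=3\cdot2^{n-1}$. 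Exactness then shows $\mathrm{im}(j_{1*}+j_{2*})=\ker e_1$ is freely generated by the images under $j_{1*}+j_{2*}$ of the untouched basis vectors, namely $0\oplus[1]$, the $0\oplus[P^{\theta'}_{I,J}]$ with $P^{\theta'}_{I,J}\in\mathcal P_{n-1}$, and the $[\tilde P^{\theta'}_{I,J}]\oplus0$ with $P^{\theta'}_{I,J}\in\mathcal P_{n-1}$, $|J|\le1$.

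It remains to recognize all these generators inside $\{[1]\}\cup\{[P]:P\in\mathcal P_n\}$. Proposition~\ref{prop:gen_inclusion} gives $j_{2*}(0\oplus[1])=[1]$ and $j_{2*}(0\oplus[P^{\theta'}_{I,J}])=[P^\theta_{I,J}]$, realizing $\mathcal P_{n-1}\subseteq\mathcal P_n$. For the first-slot vectors, unwinding the isomorphism $A_{\theta'}\rtimes_\alpha\Z_2\cong A_{\theta'}\rtimes\Z_2$ of Lemma~\ref{lemma:flip=flip} together with the inclusion $j_1$ (whose $\Z_2$-generator is $W'=U_nW$) one finds that $e(r_J)\tilde U_JW'$ equals the self-adjoint unitary $W_{J'}$, where $J'$ is $J$ followed by $n$; hence $j_{1*}([\tilde P^{\theta'}_{I,J}])=[P^\theta_{I,J'}]$ (arguing as in Appendix~\ref{sec:bottpincrossed} that the Rieffel-type projection is carried to a Rieffel-type projection). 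Since $|J|\le1$, $J'$ has length $\le2$ and $P^\theta_{I,J'}\in\mathcal P_n$, the potentially problematic $|J|=2$ vectors having been absorbed into $\mathrm{im}(i_{1*}-i_{2*})$. Finally, by Corollary~\ref{cor:gen_boundary} the classes $[P^\theta_{I,\emptyset}]$ with $I\in\widetilde{\mathrm{Minor}}(n)$ (which lie in $\mathcal P_n$, since then $I^c=\emptyset$) are mapped by $e_1$ onto a generating set of $\K_1(A_{\theta'})$. Thus $\K_0(A_\theta\rtimes\Z_2)$ is generated by $[1]$ together with the classes of the $P^\theta_{I,J}$ with $I\in\mathrm{Minor}(n-1)$, $n\notin J$, $|J|\le2$; with $I\in\mathrm{Minor}(n-1)$, $n\in J$, $|J|\le2$; and with $I\in\widetilde{\mathrm{Minor}}(n)$, $J=\emptyset$. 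Partitioning $\mathcal P_n$ according to whether $n\in I$, whether $n\notin I$ but $n\in J$, or $n\notin I$ and $n\notin J$ shows this collection is exactly $\{[1]\}\cup\{[P]:P\in\mathcal P_n\}$; it has $3\cdot2^{n-1}$ elements, so, being a generating set of a free group of that rank, it is a basis.

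The step I expect to be the main obstacle is the bookkeeping underlying the last two paragraphs: checking that the pairs $(I'',(i_{2p-1},i_{2p}))$ index precisely the $|J|=2$ part of $\mathcal P_{n-1}$, that the change of basis produced from Proposition~\ref{prop:imageofi_rieffel} is genuinely invertible over $\Z$, that $j_{1*}$ really carries $[\tilde P^{\theta'}_{I,J}]$ to $[P^\theta_{I,J'}]$ (which needs the phase identity $e(r_J)\tilde U_JW'=W_{J'}$ and the homotopies recorded in Appendix~\ref{sec:bottpincrossed}), and that the three families of generators reassemble into $\mathcal P_n$ without overlap or omission. Once Propositions~\ref{prop:imageofi_rieffel} and~\ref{prop:gen_inclusion} and Corollary~\ref{cor:gen_boundary} are in hand, the K-theoretic skeleton — the collapse of Natsume's sequence and the reduction of $i_{1*}-i_{2*}$ to the form $(\id,0,-\id,0)^t$ — is forced, exactly as in the cases $n=2,3$ treated above.
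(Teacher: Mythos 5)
Your proof is correct and follows essentially the same route as the paper's: collapsing Natsume's exact sequence using $\K_1=0$, making $i_{1*}-i_{2*}$ explicit on the Rieffel-type generators via Proposition~\ref{prop:imageofi_rieffel}, changing basis in the middle term, and identifying the resulting generators through $j_{1*}$, $j_{2*}$, and $e_1$. You supply rather more detail than the paper does on a few points the paper leaves implicit (that $\theta'$ inherits strong total irrationality, the bijection onto the $|J|=2$ part of $\mathcal{P}_{n-1}$ making the change of basis unitriangular, and the three-way partition of $\mathcal{P}_n$), but these are elaborations of the same argument, not a different one.
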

  
  \begin{proof}
  	We prove the theorem by induction on $n.$  For $n=2,3$ we have already shown the result to be true. Assume that it holds for a number $n.$ Then we must show that the result is true for $n+1.$ The proof is similar to the proof in the case for $n=3$. Let $\theta$ be a strongly irrational $(n+1)\times (n+1)$ matrix. Let $\theta'$ be its upper left $n \times n$ corner. Then, by Natsume's theorem (\cite{Nat85}) we get the following six-term exact sequence.
 
 {\[
\begin{CD}
\K_0 (A_{\theta'}) @>{i_{1*}-i_{2*}}>>\K_0 (A_{\theta'}\rtimes_\alpha  \Z_2)\oplus \K_0 (A_{\theta'}\rtimes\Z_2)@>{j_{1*}+j_{2*}}>> \K_0 (A_{\theta'}\rtimes_\phi \Z_2*\Z_2)   \\
@A{}AA & &  @VV{e_1}V            \\
 0 @<<{}<  0 @<<{}< \K_1 (A_{\theta'})
\end{CD}
\]} 
 By induction hypothesis assume that a generating set  of $\K_0(A_{\theta'}\rtimes\Z_2)$ is  given by 
 $\{[1], [P] ~|~ P\in \mathcal{P}_n\}$ and  for $\K_0(A_{\theta'}\rtimes_\alpha \Z_2),$ the set is  given by 
 $\{[1], [\tilde P] ~|~ P\in \mathcal{P}_n\}$ (as in the 3-dimensional case.)
 
 Now $i_{1*}-i_{2*}([1])=([1],-[1]).$ For each $I\in \mathrm{Minor}(n)\setminus \{\emptyset\},$ we have from Proposition~\ref{prop:imageofi_rieffel}
 
 $$i_{2*}([P_I^{\theta'}])=	2[P_{I,\emptyset} ^{\theta'}]-[P_{I'',\emptyset} ^{\theta'}]+[P_{I'',(i_{2p-1})} ^{\theta'}]-[P_{I'', (i_{2p})} ^{\theta'}]+[P_{I'', (i_{2p-1},i_{2p})} ^{\theta'}],$$ where $I=(i_1,i_2,\dots,i_{2p})$ and $I''$ is obtained from $I$ by deleting the last two numbers.
 Similarly 
  $$i_{2*}([P_I^{\theta'}])=	2[\tilde P_{I,\emptyset} ^{\theta'}]-[\tilde P_{I'',\emptyset} ^{\theta'}]+[\tilde P_{I'',(i_{2p-1})} ^{\theta'}]-[\tilde P_{I'', (i_{2p})} ^{\theta'}]+[\tilde P_{I'', (i_{2p-1},i_{2p})} ^{\theta'}].$$ 
  
  Since $[1]$ and $\{[P_{I}^{\theta'}]$ for  $I\in \mathrm{Minor}(n)\setminus \{\emptyset\}\},$ generate $\K_0(A_{\theta'}),$ 
 looking at the above formulas of $i_{1*}-i_{2*}$ for $[1]$ and $[P_{I}^{\theta'}],$ we can choose a basis of $\K_0 (A_{\theta'}\rtimes_\alpha  \Z_2)\oplus \K_0 (A_{\theta'}\rtimes \Z_2)$ such that the map $i_{1*}-i_{2*}$ is exactly  $(\id, 0,-\id, 0)^t.$ This immediately gives that $\K_0(A_{\theta}\rtimes \Z_2)\cong \Z^{3\cdot 2^{n}}.$  
To find a basis of $\K_0(A_{\theta}\rtimes \Z_2),$ note that a basis of $\K_0 (A_{\theta'}\rtimes_\alpha  \Z_2)\oplus \K_0 (A_{\theta'}\rtimes\Z_2)$ is given by  $$\{[1]\oplus [0], [0]\oplus [1], [0]\oplus [P],[\tilde P]\oplus [0] ~|~ P\in \mathcal{P}_n\}.$$ But as in the 3-dimensional case, we can replace $[1]\oplus [0]$ by $i_{1*}-i_{2*}([1]),$ and $[\tilde P_{I'', (i_{2p-1},i_{2p-2})} ^{\theta'}]\oplus [0]$ by  $i_{1*}-i_{2*}([P_{I}^{\theta'}]),$ for each $I\in \mathrm{Minor}(n)\setminus \{\emptyset\},$ to get a new basis of $\K_0 (A_{\theta'}\rtimes_\alpha  \Z_2)\oplus \K_0 (A_{\theta'}\rtimes \Z_2).$ Using this basis in the above exact sequence along with Corollary~\ref{cor:gen_boundary} and Proposition~\ref{prop:gen_inclusion}, we get our desired basis of 
 $\K_0(A_{\theta}\rtimes \Z_2).$ 
 \end{proof}
  
  We immediately get the following result, which was also obtained in \cite{ELPW10} (see the proof of Theorem 6.6 there).   
  
  \begin{corollary}\label{thm:main_kthoeryall} For any $\theta \in \mathcal{T}_n,$ 
	$$
\K_{0}(A_{\theta}\rtimes \Z_2) \cong \mathbb{Z}^{3 \cdot 2^{n-1}}, \quad \K_{1}(A_{\theta}\rtimes \Z_2) = 0 .
$$
 \end{corollary}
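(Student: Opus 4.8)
The plan is to deduce the statement for an arbitrary $\theta\in\mathcal{T}_n$ from Theorem~\ref{thm:main_K_gen}, which already handles the strongly totally irrational case, by a deformation argument across the parameter space. The point is that $\mathcal{T}_n$ is a real vector space of dimension $\binom{n}{2}$, hence convex and in particular connected, and the strongly totally irrational matrices are dense in it (Appendix~\ref{sec:strongly_irr}); so it suffices to show that the K-theory of $A_\theta\rtimes\Z_2$ is rigid as $\theta$ varies, and then read off the value at a strongly totally irrational point.

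Concretely, I would first recall from Example~\ref{ex:orbifold} that $A_\theta\rtimes\Z_2\cong C^*(\Z^n\rtimes\Z_2,\omega'_\theta)$, where the cocycle $\omega'_\theta$ depends continuously on $\theta$; hence the family $\{A_\theta\rtimes\Z_2\}_{\theta\in\mathcal{T}_n}$ organises into a continuous field of nuclear, separable, UCT C$^*$-algebras over $\mathcal{T}_n$, which is exactly the framework of \cite{ELPW10}. Second, I would invoke the continuous-field K-theory analysis of \cite{ELPW10}: the fibrewise groups $\theta\mapsto\K_*(A_\theta\rtimes\Z_2)$ are locally constant, hence constant on the connected base $\mathcal{T}_n$. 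Third, choosing any strongly totally irrational $\theta_0\in\mathcal{T}_n$ (which exists by Appendix~\ref{sec:strongly_irr}), Theorem~\ref{thm:main_K_gen} gives $\K_0(A_{\theta_0}\rtimes\Z_2)\cong\Z^{3\cdot 2^{n-1}}$ and Corollary~\ref{cor:K_1_zero} gives $\K_1(A_{\theta_0}\rtimes\Z_2)=0$, so transporting along the continuous field yields the same for every $\theta$. An alternative to the abstract local-constancy input --- and the route actually taken in Section~\ref{sec:gen_general} --- is to transport the explicit generators: for each $\theta$ one builds finitely generated projective modules $\mathrm{Proj}_n$ over $A_\theta\rtimes\Z_2$ along a continuous field of projections, matching the strongly totally irrational generators one-to-one, and then checks that their classes are linearly independent and span (Theorem~\ref{thm:main_K_gen_proj}); together with $\K_1=0$ this again gives the corollary.

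The hard part is step two: local constancy of fibrewise K-theory is \emph{not} a formal property of continuous fields (there exist continuous fields over $[0,1]$ whose fibres have non-isomorphic K-theory), so one genuinely needs the special structure of noncommutative tori --- either through the Baum--Connes/Chern-character description of $\K_*(C^*(\Z^n\rtimes\Z_2,\omega'_\theta))$ used in \cite{ELPW10}, or through the explicit continuous fields of Morita bimodules and projections developed in Section~\ref{sec:gen_general}. I note that the bare rank identity $\rk\K_0(A_\theta\rtimes\Z_2)-\rk\K_1(A_\theta\rtimes\Z_2)=3\cdot 2^{n-1}$ can in fact be obtained with no rigidity at all, just from the Euler-characteristic relation in Natsume's six-term sequence: the cyclic exactness forces $\rk\K_0(A_{\theta'})+\rk\K_0(A_\theta\rtimes\Z_2)+2\rk\K_1(A_{\theta'}\rtimes\Z_2)=2\rk\K_0(A_{\theta'}\rtimes\Z_2)+\rk\K_1(A_{\theta'})+\rk\K_1(A_\theta\rtimes\Z_2)$, and since $\rk\K_i(A_{\theta'})=2^{n-2}$ (using Lemma~\ref{lemma:flip=flip} for the two copies of $A_{\theta'}\rtimes\Z_2$), this collapses to the recursion $\rk\K_0-\rk\K_1$ doubling under $n\mapsto n+1$ with base value $3$ for $C(\T)\rtimes\Z_2$. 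What the continuous-field (or explicit-module) argument adds, and what the Euler characteristic alone cannot give, is that $\K_1$ vanishes and that $\K_0$ is torsion free, upgrading the rank count to the asserted isomorphism with a free abelian group.
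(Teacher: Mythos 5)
Your proposal is correct and follows the same route as the paper: the paper's proof of this corollary simply cites the proof of \cite[Theorem 6.6]{ELPW10} (the continuous-field/Baum--Connes reduction to a single $\theta$, which is exactly the mechanism spelled out in Theorem~\ref{thm:ev} and the surrounding discussion in Section~\ref{sec:gen_general}) and then applies Theorem~\ref{thm:main_K_gen} to a strongly totally irrational matrix from Corollary~\ref{cor:class_of_strongly}. Your remarks that the Euler characteristic of Natsume's sequence alone pins down only $\rk\K_0-\rk\K_1$, and that the nontrivial input is that the evaluation maps $\ev_t$ are $\K$-isomorphisms because $\Z^n\rtimes\Z_2$ satisfies Baum--Connes with coefficients (rather than any automatic local constancy of fibrewise K-theory), are both accurate and identify the genuine content of the argument.
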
 
\begin{proof} 

	From the proof of Theorem 6.6 in \cite{ELPW10}, it is enough  prove the statement for one  $\theta \in \mathcal{T}_n.$ Since Corollary~\ref{cor:class_of_strongly} of Appendix~\ref{sec:strongly_irr} gives a large class of examples of strongly totally irrational matrices, use Theorem~\ref{thm:main_K_gen}.
\end{proof}

  \section{Generators of  $\K_0(A_\theta\rtimes \Z_2)$ for a general $\theta$}\label{sec:gen_general}

 Using ideas from \cite{ELPW10}, in this section we construct a continuous field of projective modules,  which will play the major role to compute the generators of $\K_0(A_\theta\rtimes \Z_2)$ for a general $\theta.$ A similar idea was used in \cite{Cha20} for $A_\theta$ to compute the generators of $\K_0(A_\theta)$ for a general $\theta,$ however there was a gap in the arguments therein. We fix the arguments and show that indeed such a construction of a field is possible for $A_\theta.$ We then extend these ideas to $A_\theta\rtimes \Z_2.$

 Let $G$ be a discrete group and let $\Omega$ be a $C([0,1],\T)$-valued 2-cocycle on $G$ (as in \cite[Section 1]{ELPW10}). One can then define the reduced twisted crossed product $C^*$-algebra $C([0,1])\rtimes_{ \Omega} G$ just like the twisted group $C^*$-algebras, where $G$ acts trivially on $C([0,1]).$ Here the underlying convolution algebra is the algebra of $\ell^1$ functions on $G$ with values in $C([0,1])$. Given any $t\in [0,1]$, the function 
$$
\omega_t := \Omega(\cdot, \cdot)(t)
$$
is a $\T$-valued 2-cocycle on $G$. There is a canonical map (called the \emph{evaluation map})
$$
\mathrm{ev}_t: C([0,1])\rtimes_{ \Omega} G\to C^*(G, \omega_t)
$$
such that for each function $f\in \ell^1(G, C([0,1]))$ and each $x\in G$ we have $(\mathrm{ev}_t(f))(x) = (f(x))(t)$.

\begin{theorem}
\cite[Corollary 1.11]{ELPW10} \label{thm:ev}
	If $G$ satisfies the Baum--Connes conjecture with coefficients, then the evaluation map $\ev_t$ induces an isomorphism on K-theory.
	\end{theorem}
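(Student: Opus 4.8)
The plan is to reduce the assertion to a $K$-theory vanishing statement and then feed that into the Baum--Connes machinery. It is enough to treat the endpoints $t\in\{0,1\}$ and an interior point $t\in(0,1)$ separately; in each case put $X_t:=[0,1]\setminus\{t\}$, a disjoint union of at most two half-open intervals. Since $G$ acts trivially on $C([0,1])$, the kernel of $\ev_t$ is the restricted twisted crossed product $C_0(X_t)\rtimes_{\Omega}G$, so we have the extension
\begin{equation*}
0\longrightarrow C_0(X_t)\rtimes_{\Omega}G\longrightarrow C([0,1])\rtimes_{\Omega}G\xrightarrow{\ \ev_t\ }C^{*}(G,\omega_t)\longrightarrow 0 .
\end{equation*}
By the six-term exact sequence in $K$-theory, $\ev_t$ induces an isomorphism on $K$-theory as soon as $K_{*}\bigl(C_0(X_t)\rtimes_{\Omega}G\bigr)=0$. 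Since $C_0(X_t)$ is a finite direct sum of copies of the cone $C_0([0,1))$ and twisted crossed products distribute over such direct sums, it all comes down to showing
\begin{equation*}
K_{*}\bigl(C_0([0,1))\rtimes_{\Omega}G\bigr)=0 .
\end{equation*}

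For this I would untwist and pass to the Baum--Connes side. By the Packer--Raeburn stabilisation trick the separable twisted crossed product $C_0([0,1))\rtimes_{\Omega}G$ is stably isomorphic to an ordinary crossed product $D\rtimes_{\beta}G$, where $D=C_0([0,1))\otimes\mathcal K(\ell^{2}G)$ is a separable $G$-$C^{*}$-algebra whose $G$-action is $C_0([0,1))$-linear (fibrewise over $[0,1)$, the fibre over $x$ being $\mathcal K$ with the action determined by the $\T$-valued cocycle $\omega_x$). Since $G$ satisfies the Baum--Connes conjecture with coefficients, the assembly map $\mu_{D}\colon K^{\mathrm{top}}_{*}(G;D)\to K_{*}(D\rtimes_{\beta}G)$ is an isomorphism, so it suffices to show $K^{\mathrm{top}}_{*}(G;D)=0$. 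The group $K^{\mathrm{top}}_{*}(G;D)$ is modelled on $\underline EG$ and depends on $D$ only through its class in equivariant $KK$-theory; and $D$, being a field of $G$-$C^{*}$-algebras over the (``open cone'') contractible parameter space $[0,1)$, is $KK^{G}$-trivial. Hence $K^{\mathrm{top}}_{*}(G;D)=0$, so $K_{*}\bigl(C_0([0,1))\rtimes_{\Omega}G\bigr)=0$, and $\ev_t$ is a $K$-theory isomorphism.

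The step I expect to be the genuine obstacle is the last one: establishing that the field $D$ over $[0,1)$ is $KK^{G}$-trivial, i.e.\ that contractibility of the parameter interval survives the twist $\Omega$. The naive reasoning ``$C_0([0,1))$ is a cone, so its crossed product is contractible'' is \emph{not} available here: the unitaries implementing $\beta$ over different points of $[0,1)$ need not coincide, the translation homotopy of the cone fails to be $G$-equivariant, and indeed $C([0,1])\rtimes_{\Omega}G$ can be a genuinely non-trivial field (already for $G=\Z^{2}$ and a non-constant path of rotation algebras its defining $2$-cocycle need not be a coboundary). What rescues the argument is that the invariant controlling the Baum--Connes side is not the algebraic $2$-cocycle but a coarser (topological twist) datum, which \emph{is} homotopy-invariant in the interval variable, so that removing the contractible parameter is legitimate. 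Making this precise is exactly the content of the general continuous-field statement in \cite{ELPW10} of which the present theorem is Corollary~1.11, and in a complete write-up I would either quote that statement or reproduce its proof along the lines above.
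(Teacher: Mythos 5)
Your overall reduction is reasonable and very likely tracks the shape of the argument behind the cited result: pass to the ideal $C_0(X_t)\rtimes_\Omega G$ (exactness of the reduced crossed-product sequence is available here since the relevant groups are amenable, hence exact), reduce to vanishing of $\K_*$ for a twisted cone, untwist by Packer--Raeburn, and feed the result into the assembly map. But, as you yourself flag in your last paragraph, the step that carries all the weight — that $D=C_0([0,1))\otimes\mathcal K(\ell^2 G)$ with its fibrewise twisted $G$-action is trivial in equivariant $KK$ — is exactly the step you do not prove, and the reason you offer (contractibility of the parameter space $[0,1)$) does not suffice. The scaling homotopy $f(\cdot)\mapsto f(s\,\cdot)$ is not $G$-equivariant because the fibrewise actions $\alpha_t$ vary with $t$, there is in general no continuous family of unitaries trivialising $\{\alpha_t\}$, and ``contractible as a non-equivariant $C^*$-algebra'' does not imply $KK^G$-trivial. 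Your remark about a ``coarser topological twist datum'' controlling the Baum--Connes left-hand side is a plausible heuristic but is not identified, not proved to be homotopy-invariant, and not shown to compute the topological K-theory, so it is a placeholder rather than an argument. What actually closes the gap in \cite{ELPW10} is a substantive going-down/continuous-field result (computing the left-hand side of assembly for $G$-$C_0(X)$-algebras with trivial $G$-action on $X$ in terms of the fibres), and your write-up would need either to invoke that result explicitly or to reproduce its proof.

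It is also worth noting that the paper itself does not give a proof of this statement: it is recorded purely as a citation of \cite[Corollary 1.11]{ELPW10}. Your proposal is best read as a reconstruction of the ideas behind the cited proof; it identifies the correct reduction and the correct locus of difficulty, but as written it is incomplete precisely at the step where the Baum--Connes hypothesis has to be converted into a fibrewise K-theory statement.
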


	Let $\Omega$ be a $C([0,1],\T)$-valued 2-cocycle on $\Z^n$ such that
	$$
	\Omega( \cdot, \cdot   )(t) = \omega_{\theta_t},~ \theta_t\in \mathcal{T}_n
	$$
	is a 2-cocycle on $\Z^n$ as in Example~\ref{ex:nt} for all $t.$ Also let $\widetilde{\Omega}$ be the $C([0,1],\T)$-valued 2-cocycle on $\Z^n\rtimes \Z_2$ so that $\widetilde{\Omega}( \cdot, \cdot   )(t)=\omega'_{\theta_t}$ (as in Example~\ref{ex:orbifold}). Since the groups $\Z^n$ and $\Z^n\rtimes \Z_2$ satisfy the Baum--Connes conjecture with coefficients (see \cite{HK01}), by Theorem~\ref{thm:ev} both evaluation maps
	$$
	\mathrm{ev}_t: C([0,1])\rtimes_{ \Omega} \Z^n\to C^*(\Z^n, \omega_{\theta_t}),
	$$
	$$
	\mathrm{ev}_t: C([0,1])\rtimes_{ \widetilde{\Omega}} (\Z^n \rtimes \Z_2) \to C^*(\Z^n \rtimes \Z_2, \omega'_{\theta_t})
	$$
	induce isomorphisms at the level of $\K_0$ and $\K_1$. As in the case of twisted group $C^*$-algebras, there is an identification
	$$
	C([0,1])\rtimes_{ \widetilde{\Omega}} (\Z^n\rtimes \Z_2) \xrightarrow{\cong} (C([0,1])\rtimes_{ \Omega} \Z^n)\rtimes \Z_2
	$$
	that respects the evaluation maps (see \cite[Remark 2.3]{ELPW10}).

As a result of the above discussions, using Theorem~\ref{thm:ev} we have (see also \cite[Remark 2.3]{ELPW10}) the following theorems.   
\begin{theorem}\label{thm:elpw_gen_nt}
Let $[p_1],[p_2],\cdots , [p_m] \in \K_0(C([0,1])\rtimes_{ \Omega} \Z^n)$. 
	Then the following are equivalent:
	
	\begin{enumerate}
		\item $[p_1],[p_2],\cdots , [p_m]$ form a basis of $\K_0(C([0,1])\rtimes_{ \Omega} \Z^n )$.  
		\item For some $t \in [0,1]$, the evaluated classes $[\ev_t(p_1)],[\ev_t(p_2)], \cdots ,[\ev_t(p_m)]$ form a basis of $\K_0(C^*(\Z^n,\omega_{\theta_t}))$. 
	  \item For every $t \in [0,1]$, the evaluated classes $[\ev_t(p_1)],[\ev_t(p_2)], \cdots , [\ev_t(p_m)]$ form a basis of $\K_0(C^*(\Z^n,\omega_{\theta_t}))$. 
\end{enumerate}
\end{theorem}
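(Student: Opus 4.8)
The plan is to deduce Theorem~\ref{thm:elpw_gen_nt} directly from Theorem~\ref{thm:ev}, exploiting the fact that the K-theory group involved is free abelian of a known finite rank. First I would recall that $\Z^n$ satisfies the Baum--Connes conjecture with coefficients (by \cite{HK01}), so Theorem~\ref{thm:ev} applies: for every $t\in[0,1]$ the evaluation map $\ev_t\colon C([0,1])\rtimes_\Omega\Z^n\to C^*(\Z^n,\omega_{\theta_t})$ induces an isomorphism $(\ev_t)_*\colon \K_0(C([0,1])\rtimes_\Omega\Z^n)\xrightarrow{\ \cong\ }\K_0(C^*(\Z^n,\omega_{\theta_t}))=\K_0(A_{\theta_t})$. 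By the Pimsner--Voiculescu computation recalled in Section~\ref{sec:pv_generators}, $\K_0(A_{\theta_t})\cong\Z^{2^{n-1}}$ for every $t$; hence $\K_0(C([0,1])\rtimes_\Omega\Z^n)\cong\Z^{2^{n-1}}$ as well, and in particular it is a free abelian group of finite rank $2^{n-1}$. Note also that $[\ev_t(p_i)]=(\ev_t)_*([p_i])$ by naturality.

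The equivalences are then pure linear algebra over $\Z$. For the implication $(1)\Rightarrow(3)$: if $[p_1],\dots,[p_m]$ is a basis of $\K_0(C([0,1])\rtimes_\Omega\Z^n)$, then applying the isomorphism $(\ev_t)_*$ carries a basis to a basis, so $[\ev_t(p_1)],\dots,[\ev_t(p_m)]$ is a basis of $\K_0(A_{\theta_t})$ for every $t$; in particular $(3)\Rightarrow(2)$ is trivial (the interval $[0,1]$ is nonempty). For $(2)\Rightarrow(1)$: suppose that for some fixed $t_0$ the classes $[\ev_{t_0}(p_i)]$ form a basis of $\K_0(A_{\theta_{t_0}})$. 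Since $(\ev_{t_0})_*$ is an isomorphism, its inverse carries this basis back to a basis of $\K_0(C([0,1])\rtimes_\Omega\Z^n)$, and that basis is exactly $[p_1],\dots,[p_m]$. This closes the cycle $(1)\Rightarrow(3)\Rightarrow(2)\Rightarrow(1)$, which proves the three statements are equivalent.

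I do not anticipate a genuine obstacle here; the only point requiring minor care is making sure the word ``basis'' is used consistently, namely a free generating set of the free abelian group $\K_0(C([0,1])\rtimes_\Omega\Z^n)\cong\Z^{2^{n-1}}$ (equivalently a $\Z$-linearly independent generating set), and observing that a group isomorphism between finitely generated free abelian groups sends bases bijectively to bases — this is immediate since isomorphisms preserve both $\Z$-linear independence and generation. Implicitly one should also note $m=2^{n-1}$ is forced, but this need not even be stated: if any one of the three conditions holds then the relevant tuple is a basis of a rank-$2^{n-1}$ free abelian group, so $m=2^{n-1}$ automatically, and the argument above never uses the value of $m$. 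Thus the proof is a one-paragraph application of Theorem~\ref{thm:ev} together with the elementary fact that isomorphisms transport bases.

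\begin{proof}
Since $\Z^n$ satisfies the Baum--Connes conjecture with coefficients (\cite{HK01}), Theorem~\ref{thm:ev} shows that for every $t\in[0,1]$ the evaluation map induces an isomorphism
$$(\ev_t)_*\colon \K_0(C([0,1])\rtimes_{\Omega}\Z^n)\xrightarrow{\ \cong\ }\K_0(C^*(\Z^n,\omega_{\theta_t}))=\K_0(A_{\theta_t}),$$
and by naturality $[\ev_t(p_i)]=(\ev_t)_*([p_i])$. An isomorphism of abelian groups carries a basis (a free generating set) to a basis and, via its inverse, a basis to a basis. Hence if $(1)$ holds, then for every $t$ the classes $(\ev_t)_*([p_1]),\dots,(\ev_t)_*([p_m])$ form a basis of $\K_0(A_{\theta_t})$, giving $(3)$; clearly $(3)\Rightarrow(2)$ since $[0,1]\neq\emptyset$. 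Conversely, if $(2)$ holds for some $t_0$, applying the inverse isomorphism $((\ev_{t_0})_*)^{-1}$ to the basis $[\ev_{t_0}(p_1)],\dots,[\ev_{t_0}(p_m)]$ of $\K_0(A_{\theta_{t_0}})$ yields the basis $[p_1],\dots,[p_m]$ of $\K_0(C([0,1])\rtimes_{\Omega}\Z^n)$, which is $(1)$. Thus $(1)$, $(2)$, $(3)$ are equivalent.
\end{proof}
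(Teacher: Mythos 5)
Your proof is correct and takes essentially the same approach as the paper, which states Theorem~\ref{thm:elpw_gen_nt} as an immediate consequence of Theorem~\ref{thm:ev} (the Baum--Connes/evaluation-map isomorphism) without further elaboration. The elementary observation that an isomorphism of abelian groups transports free generating sets to free generating sets, and back via its inverse, is exactly what is implicitly invoked; your extra remarks about the rank $2^{n-1}$ and the value of $m$ are harmless but, as you note, not actually used.
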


\begin{theorem}\label{thm:elpw_gen}
\label{elpwmain}Let $[p_1],[p_2],\cdots , [p_m] \in \K_0((C([0,1])\rtimes_{ \Omega} \Z^n    )\rtimes \Z_2)$. 
	Then the following are equivalent:
	
	\begin{enumerate}
		\item $[p_1],[p_2],\cdots , [p_m]$ form a basis of $\K_0((C([0,1])\rtimes_{ \Omega} \Z^n    )\rtimes \Z_2)$.  
		\item For some $t \in [0,1]$, the evaluated classes $[\ev_t(p_1)],[\ev_t(p_2)], \cdots ,[\ev_t(p_m)]$ form a basis of $\K_0(C^*(\Z^n,\omega_{\theta_t})\rtimes \Z_2)$. 
	  \item For every $t \in [0,1]$, the evaluated classes $[\ev_t(p_1)],[\ev_t(p_2)], \cdots , [\ev_t(p_m)]$ form a basis of $\K_0(C^*(\Z^n,\omega_{\theta_t})\rtimes \Z_2)$. 
\end{enumerate}
\end{theorem}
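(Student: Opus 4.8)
The plan is to deduce Theorem~\ref{thm:elpw_gen} from Theorem~\ref{thm:ev}, exactly as Theorem~\ref{thm:elpw_gen_nt} is deduced in the $A_\theta$ case, using the identification $C([0,1])\rtimes_{\widetilde{\Omega}}(\Z^n\rtimes\Z_2)\cong (C([0,1])\rtimes_\Omega\Z^n)\rtimes\Z_2$ that respects evaluation maps. First I would observe that the group $\Z^n\rtimes\Z_2$ satisfies the Baum--Connes conjecture with coefficients (it is amenable, or cite \cite{HK01} as already done in the excerpt), so Theorem~\ref{thm:ev} applies and each $\ev_t\colon C([0,1])\rtimes_{\widetilde{\Omega}}(\Z^n\rtimes\Z_2)\to C^*(\Z^n\rtimes\Z_2,\omega'_{\theta_t})=C^*(\Z^n,\omega_{\theta_t})\rtimes\Z_2$ is a $\K_0$-isomorphism; the middle identification is Example~\ref{ex:orbifold} applied fibrewise.

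The implication $(1)\Rightarrow(3)$ is then immediate: if the classes $[p_i]$ form a basis of $\K_0\big((C([0,1])\rtimes_\Omega\Z^n)\rtimes\Z_2\big)$, then for every $t$ the isomorphism $(\ev_t)_*$ carries them to a basis of $\K_0(C^*(\Z^n,\omega_{\theta_t})\rtimes\Z_2)$, and $(\ev_t)_*[p_i]=[\ev_t(p_i)]$ since $\ev_t$ is a $*$-homomorphism. The implication $(3)\Rightarrow(2)$ is trivial (take any single $t$, e.g.\ $t=0$; note $[0,1]$ is nonempty). For $(2)\Rightarrow(1)$, fix the $t$ for which $[\ev_t(p_i)]$ is a basis; since $(\ev_t)_*$ is an isomorphism, $(\ev_t)_*^{-1}$ carries this basis back to a basis of $\K_0\big((C([0,1])\rtimes_\Omega\Z^n)\rtimes\Z_2\big)$, and that basis is exactly $\{[p_i]\}$. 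This closes the cycle $(1)\Rightarrow(3)\Rightarrow(2)\Rightarrow(1)$.

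There is essentially no obstacle here beyond bookkeeping: the only point requiring a word of care is that a $*$-homomorphism which is an \emph{isomorphism} on $\K_0$ sends a basis to a basis and, being invertible, reflects bases as well --- this is pure linear algebra once $\K_0$ is known to be free of finite rank (which follows from Corollary~\ref{thm:main_kthoeryall}, or simply from the fact that all groups in sight have finitely generated free $\K_0$). I would therefore keep the proof to two or three sentences, citing Theorem~\ref{thm:ev}, the fibrewise form of Example~\ref{ex:orbifold}, and the evaluation-respecting identification from \cite[Remark 2.3]{ELPW10}, and noting that the argument is identical to that of Theorem~\ref{thm:elpw_gen_nt}.

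\begin{proof}
As noted above, the group $\Z^n\rtimes\Z_2$ satisfies the Baum--Connes conjecture with coefficients, so by Theorem~\ref{thm:ev} each evaluation map
$$\ev_t\colon C([0,1])\rtimes_{\widetilde{\Omega}}(\Z^n\rtimes\Z_2)\longrightarrow C^*(\Z^n\rtimes\Z_2,\omega'_{\theta_t})$$
induces an isomorphism on $\K_0$; under the identifications $C([0,1])\rtimes_{\widetilde{\Omega}}(\Z^n\rtimes\Z_2)\cong (C([0,1])\rtimes_\Omega\Z^n)\rtimes\Z_2$ and $C^*(\Z^n\rtimes\Z_2,\omega'_{\theta_t})\cong C^*(\Z^n,\omega_{\theta_t})\rtimes\Z_2$ (Example~\ref{ex:orbifold} applied fibrewise, together with \cite[Remark 2.3]{ELPW10}), this reads as an isomorphism
$$(\ev_t)_*\colon \K_0\big((C([0,1])\rtimes_\Omega\Z^n)\rtimes\Z_2\big)\xrightarrow{\ \cong\ } \K_0\big(C^*(\Z^n,\omega_{\theta_t})\rtimes\Z_2\big)$$
for every $t\in[0,1]$, with $(\ev_t)_*[p_i]=[\ev_t(p_i)]$.

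$(1)\Rightarrow(3)$: if $[p_1],\dots,[p_m]$ is a basis of the source, then $[\ev_t(p_1)],\dots,[\ev_t(p_m)]$ is a basis of the target for every $t$, since $(\ev_t)_*$ is an isomorphism.

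$(3)\Rightarrow(2)$: clear, as $[0,1]\neq\emptyset$.

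$(2)\Rightarrow(1)$: fix $t$ with $[\ev_t(p_1)],\dots,[\ev_t(p_m)]$ a basis of $\K_0(C^*(\Z^n,\omega_{\theta_t})\rtimes\Z_2)$. Applying the isomorphism $(\ev_t)_*^{-1}$ sends this basis to the basis $[p_1],\dots,[p_m]$ of $\K_0\big((C([0,1])\rtimes_\Omega\Z^n)\rtimes\Z_2\big)$.
\end{proof}
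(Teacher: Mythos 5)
Your proof is correct and takes essentially the same route as the paper: the paper simply notes (in the discussion preceding the theorem) that both evaluation maps induce $\K$-theory isomorphisms via Theorem~\ref{thm:ev} because $\Z^n$ and $\Z^n\rtimes\Z_2$ satisfy Baum--Connes with coefficients, invokes the evaluation-compatible identification from \cite[Remark 2.3]{ELPW10}, and then states the theorem as an immediate consequence. You have filled in the same observations plus the straightforward linear-algebra point that a group isomorphism sends bases to bases and reflects them.
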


 Since $\mathbb{Z}^{n} \rtimes \Z_2$ is a discrete group, there is a canonical map from $\mathbb{Z}^{n} \rtimes \Z_2$ into the group of unitaries of $C([0,1])\rtimes_{ \widetilde{\Omega}} (\Z^n \rtimes \Z_2)$. Let $\mathcal{U}_{i} \in C([0,1])\rtimes_{ \widetilde{\Omega}} (\Z^n \rtimes \Z_2)$ be the images of the generators $x_i \in \mathbb{Z}^{n}\subset \mathbb{Z}^{n} \rtimes F$ (here $x_i$ is as in Example~\ref{ex:nt}) and $\mathcal{W}$ be the image of the generator of $\Z_2 \subset \mathbb{Z}^{n} \rtimes \Z_2$ under the canonical map. Also, we denote by $\mathcal{U}_{i}$ the image of $x_i \in \mathbb{Z}^{n}$ in $C([0,1])\rtimes_{ \Omega} \Z^n.$

 Next we will construct our required $\Omega.$ Consider the matrix $Z \in \mathcal{T}_n$ whose entries above the diagonal are all 1: 
\begin{equation}\label{eq:def_of_Z}
	Z = 
   \left( \begin{array}{cccccccc}
0 & 1 & \cdots  &  &  & \cdots &1  \\
-1 & \ddots &\ddots  & &   &  &\vdots \\
\vdots & \ddots  &   &  & &  &\\
 &  & &  &  &  &\\
&  &   & &  & \ddots  & \vdots\\
  \vdots &  &  & &   \ddots  & \ddots & 1\\
-1 & \cdots &  &  &  \cdots &  -1 & 0\\
\end{array} \right) .
\end{equation}
If $\theta \in \mathcal{T}_n$, then by a translate of $\theta$ we understand any element $\theta^{\text {tr }} \in \mathcal{T}_n$ such that $\theta-\theta^{\text {tr }} \in M_n(\mathbb{Z})$. We then have $A_\theta=A_{\theta^{\text {tr }}}$ since both matrices induce the same commutation relation on the generating unitaries (or, alternatively, since the corresponding circle-valued 2-cocycles $\omega_\theta$ and $\omega_{\theta^{\text {tr }}}$ coincide). In particular, this holds for $\theta^{\text {tr }}=\theta+n Z$ for every $n \in \mathbb{Z}$. Now, for any $\theta \in \mathcal{T}_n$ there exists some $n \in \mathbb{Z}$ such that all pfaffian minors of $\theta^{\text {tr }}=\theta+n Z$ are positive (see \cite[Proposition 4.6]{Cha20}). Thus, replacing $\theta$ by $\theta^{\text {tr }}$ if necessary, we may assume without loss of generality that all the pfaffian minors of $\theta$ are positive.

Let us also fix a strongly totally irrational matrix $\psi \in \mathcal{T}_n$ as in Appendix~~\ref{sec:strongly_irr}. Then, by definition (see Definition~\ref{a3}), all pfaffian minors of $\psi$ are positive as well. Passing, again, to suitable translates $\theta^{\text {tr }}$ and $\psi^{\text {tr }}$ of $\theta$ and $\psi$, if necessary, we may apply Proposition~\ref{prp:translate} to obtain continuous paths $\left[0, \frac{1}{2}\right] \ni t \mapsto \theta(t):=(1-2 t) \theta+2 t Z \in \mathcal{T}_n$ and $\left[\frac{1}{2}, 1\right] \ni t \mapsto \psi(t):=(2 t-$ 1) $\psi+(2-2 t) Z \in \mathcal{T}_n$ such that all pfaffian minors of $\theta(t)$ and $\psi(t), t \in[0,1]$, are positive. Gluing both paths at $\theta\left(\frac{1}{2}\right)=Z=\psi\left(\frac{1}{2}\right)$ we obtain a continuous path $[0,1] \ni t \rightarrow \nu(t) \in \mathcal{T}_n$ connecting $\theta$ with $\psi$ such that all pfaffian minors of $\nu(t), t \in[0,1]$, are positive. As a consequence, we can now formulate

\begin{proposition}\label{prp:path_positive_pfaffian}
	For each matrix $\theta \in \mathcal{T}_n$ there exists a strongly totally irrational matrix $\psi \in \mathcal{T}_n$ and a continuous path $[0,1] \ni t \mapsto \nu(t) \in \mathcal{T}_n$ with the following properties:
	\begin{enumerate}[label=(\alph*)]
		\item All pfaffian minors of $\nu(t), t \in[0,1]$, are positive.
		\item The endpoints $\nu(0)=\theta^{\text {tr }}$ and $\nu(1)=\psi^{\text {tr }}$ are translates of $\theta$ and $\psi$, respectively.
	\end{enumerate}
\end{proposition}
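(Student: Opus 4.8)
The plan is to assemble the statement from the straight-line interpolation sketched in the paragraph preceding it, using \cite[Proposition 4.6]{Cha20} to normalise the endpoints and Proposition~\ref{prp:translate} to control the interpolating paths.

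First I would fix the two endpoints. By \cite[Proposition 4.6]{Cha20} there is $N\in\mathbb{Z}$ for which $\theta^{\mathrm{tr}}:=\theta+NZ$ has all pfaffian minors positive; this will be $\nu(0)$. For the other endpoint, Corollary~\ref{cor:class_of_strongly} supplies a strongly totally irrational matrix $\psi\in\mathcal{T}_n$, and, as recorded in the text, all pfaffian minors of such a matrix are positive (this follows from Definition~\ref{a3} via the factorisation $\pf(\m{\psi}{I})=\prod_{j}\pf\bigl(F^{j}(\m{\psi}{I})_{1,1}\bigr)$ obtained by iterating \eqref{q1}, each factor lying in $(\tfrac12,1)$). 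Note also that any $2p\times 2p$ submatrix of $Z$ indexed by $i_1<\cdots<i_{2p}$ is again of the form $\bigl(\operatorname{sgn}(b-a)\bigr)_{a,b}$, whose pfaffian is $1$; in particular $Z$ itself has all pfaffian minors positive.

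Next I would run the interpolation. With $\theta^{\mathrm{tr}}$ (respectively $\psi$) and $Z$ both having positive pfaffian minors, Proposition~\ref{prp:translate} applies — after replacing $\theta^{\mathrm{tr}}$ and $\psi$ by suitable further translates if necessary, still written $\theta^{\mathrm{tr}}$ and $\psi^{\mathrm{tr}}$ — and yields the affine $\mathcal{T}_n$-valued paths
\[
\theta(t):=(1-2t)\,\theta^{\mathrm{tr}}+2t\,Z\ \ (t\in[0,\tfrac12]),\qquad \psi(t):=(2t-1)\,\psi^{\mathrm{tr}}+(2-2t)\,Z\ \ (t\in[\tfrac12,1]),
\]
with all pfaffian minors positive along each path. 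Since $\theta(\tfrac12)=Z=\psi(\tfrac12)$, gluing gives a well-defined map $\nu\colon[0,1]\to\mathcal{T}_n$ with $\nu(t)=\theta(t)$ for $t\le\tfrac12$ and $\nu(t)=\psi(t)$ for $t\ge\tfrac12$; it is continuous because each piece is affine and the two agree at $\tfrac12$, and it takes values in $\mathcal{T}_n$ because $\mathcal{T}_n$ is a real vector space. By construction property (a) holds, and $\nu(0)=\theta^{\mathrm{tr}}$, $\nu(1)=\psi^{\mathrm{tr}}$ are translates of $\theta$ and $\psi$, which is (b).

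The only step carrying genuine content is the application of Proposition~\ref{prp:translate}: a convex combination of two matrices with positive pfaffian minors need not itself have positive pfaffian minors, because $\pf$ is a nonlinear (degree-$p$) polynomial in the entries, so a priori the function $t\mapsto\pf\bigl(\nu(t)_{I}\bigr)$ could vanish somewhere on $[0,1]$. Passing to a large enough translate makes the $Z$-direction dominant, which is precisely what keeps this polynomial from changing sign; that estimate is the content of Proposition~\ref{prp:translate}, so within the present section the remaining work is bookkeeping — choosing the translates, checking the two affine pieces land in $\mathcal{T}_n$, and verifying continuity at the splice point $t=\tfrac12$.
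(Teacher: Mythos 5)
Your proof is correct and follows essentially the same route as the paper: normalise $\theta$ via \cite[Proposition 4.6]{Cha20}, take $\psi$ strongly totally irrational from Appendix~\ref{sec:strongly_irr} (observing that the iterated factorisation $\pf(\psi_I)=\prod_j \pf(F^j(\psi_I)_{1,1})$ of Equation~\eqref{q1} makes its minors positive), then apply Proposition~\ref{prp:translate} to each endpoint and glue the two affine paths at $Z$. The only difference is that you spell out the positivity of $\pf(\psi_I)$ and of the minors of $Z$ a little more explicitly than the paper, which is a harmless elaboration.
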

\noindent The path in the above proposition now determines a $C([0,1], \mathbb{T})$-valued 2-cocycles $\Omega_\nu$ on $\mathbb{Z}^n,$ and $\widetilde{\Omega}_\nu$ on $\mathbb{Z}^n\rtimes \Z_2$ by defining
$$
\Omega_\nu(\cdot, \cdot)(t)=\omega_{\nu(t)}(\cdot, \cdot)\quad \text{and}\quad \widetilde{\Omega}_{\nu}(\cdot, \cdot)(t)=\omega'_{\nu(t)}(\cdot, \cdot), \quad t \in[0,1].
$$


	 In the following we will construct a class of projective modules over $(C([0,1])\rtimes_{ \Omega_{\nu}} \Z^n)\rtimes \Z_2$ such that the class restricted to each point $t$ of $[0,1]$ gives a basis of $\K_0(C^*(\Z^n,\omega_{{\nu}(t)})\rtimes \Z_2).$

  Now using \cite[Theorem 3.3]{Cha20}, for each $I\in \mathrm{Minor}(n)\setminus\{\emptyset\},$ we can construct a projective module $\mathcal{E}_{I}^{[0,1]}$ over $C([0,1])\rtimes_{\Omega_{\nu}} \Z^n$ such that $\mathcal{E}_{I}^{[0,1]}$ restricted to  $t\in [0,1]$ is the module $\mathcal{E}_{I}^{{\nu}(t)}.$ The idea of such a construction is as follows: using the notations of Section~\ref{sec:rie}, for $I\in \mathrm{Minor}(n)\setminus\{\emptyset\}, |I|=2p,$ consider the algebra $C([0,1])\rtimes_{\rho(R^{\Sigma}_I)\Omega_{\nu}} \Z^n,$ $\rho(R^{\Sigma}_I)\Omega_{\nu}(\cdot ,\cdot)(t):=\omega_{\rho(R^{\Sigma}_I){\nu}(t)},$ where $\rho(R^{\Sigma}_I)$ is as in the discussion after Remark~\ref{rmk:trace_of_heisenberg}. Note that as in before, $C([0,1])\rtimes_{\rho(R^{\Sigma}_I)\Omega_{\nu}} \Z^n$ is also canonically isomorphic to $C([0,1])\rtimes_{\Omega_{\nu}} \Z^n.$ Also define  $C([0,1])\rtimes_{g_{I,\Sigma}(\Omega_{\nu})} \Z^n,$ $g_{I,\Sigma}(\Omega_{\nu})(\cdot,\cdot)(t):=\omega_{g_{I,\Sigma}{\nu}(t)}.$ For $M= \R^p \times \Z^{n-2p},$ define the space $\mathscr{S}(M, [0,1])$ consisting of all complex functions on $M \times [0,1]$ which are smooth and rapidly decreasing in the first  variable and continuous in the second variable in each derivative of the first variable. Then using the equations \ref{eq:module1},~\ref{eq:module2},~\ref{eq:module3},~\ref{eq:module4} fibre-wise, a suitable of completion of $\mathscr{S}(M, [0,1]),$ $\mathcal{E}_{I}^{[0,1]},$ becomes a $C([0,1])\rtimes_{g_{I,\Sigma}(\Omega_{\nu})} \Z^n-C([0,1])\rtimes_{\Omega_{\nu}} \Z^n$ strong Morita equivalence bi-module. Here in this construction we use the fact that $\pf(({\nu}(t))_I)$ is positive. Since $C([0,1])\rtimes_{\Omega_{\nu}} \Z^n$ is unital, $\mathcal{E}_{I}^{[0,1]}$ is also a projective module over $C([0,1])\rtimes_{\Omega_{\nu}} \Z^n.$ The detailed proof of this construction can easily be deduced from the proof \cite[Theorem 3.3]{Cha20} (see also the remark below).
  
  \begin{remark}
  	In \cite{Cha20}, in the construction of $C([0,1])\rtimes_{\Omega_{\nu}} \Z^n$ ($C^{*}\left(\mathbb{Z}^{n} \times I, \Omega\right)$ in the notations of \cite{Cha20}) $\Omega_{\nu}$ was dependent on $I,$ therefore the proof of \cite[Theorem 4.7]{Cha20} was incomplete. But we fix the proof in this paper by making $\Omega_{\nu}$ independent of $I$ and this results in Theorem~\ref{thm:main_K_gen_proj_nt}.   
   \end{remark}

  Using Proposition~\ref{prop:mainprop}, with the fibre-wise flip actions as in Equation~\ref{eq:flip_action_module}, it is easily checked that $\mathcal{E}_{I}^{[0,1]}$ becomes a projective module over the crossed product $(C([0,1])\rtimes_{ \Omega_{\nu}} \Z^n)\rtimes \Z_2.$      
  
We now construct a generating set of $\K_0((C([0,1])\rtimes_{ \Omega_{\nu}} \Z^n)\rtimes \Z_2)$ using the projective modules over $(C([0,1])\rtimes_{ \Omega_{\nu}} \Z^n)\rtimes \Z_2$ constructed above. Using the notations of Section~\ref{sec:k-theory}, for each $I\in \mathrm{Minor}(n),$ fix a $J\subseteq I^c.$ For $J=(j_1,j_2,\dots,j_q)$, define $\mathcal{U}_J:=\mathcal{U}_{j_1}\mathcal{U}_{j_2}\cdots \mathcal{U}_{j_q}\in C([0,1])\rtimes_{\Omega_{\nu}} \Z^n$, and for $J=\emptyset,$ $\mathcal{U}_J:=1\in C([0,1])\rtimes_{\Omega_{\nu}} \Z^n.$ For a real function $f \in C([0,1],\R),$ define $\mathrm{exp}(f)\in C([0,1],\T)$ by $\mathrm{exp}(f)(t)=e(f(t)).$ Let $r_J^{[0,1]}\in C([0,1],\R)$ be the real function such that $\mathcal{U}_J=\mathrm{exp}(-2r^{[0,1]}_J)\mathcal{U}_{\bar{J}}.$  Now $\mathrm{exp}(r_J)\mathcal{U}_J\mathcal{W}=:\mathcal{W}_J$ is a self-adjoint unitary in $(C([0,1])\rtimes_{\Omega_{\nu}} \Z^n)\rtimes \Z_2.$ One quickly checks that  $\mathcal{W}_J\mathcal{U}_{i}\mathcal{W}_J=\mathrm{exp}(r_{J,i})\mathcal{U}_{i}^{-1},$ for some function $r_{J,i}\in C([0,1],\R).$ A modified version of Lemma~\ref{lemma:flip=flip} shows that if we set $\widetilde{\mathcal{U}_{i}}=\mathrm{exp}(-\frac{r_{J,i}}{2})\mathcal{U}_{i}$ (so that we have  $\mathcal{W}_J\widetilde{\mathcal{U}_{i}}\mathcal{W}_J=\widetilde{\mathcal{U}_{i}}^{-1}$), we have an isomorphism from $(C([0,1])\rtimes_{\Omega_{\nu}} \Z^n)\rtimes \Z_2$ to $(C([0,1])\rtimes_{\Omega_{\nu}} \Z^n)\rtimes \Z_2$ sending $\mathcal{U}_i$ to $\widetilde{\mathcal{U}_{i}}$ and $\mathcal{W}$ to $\mathcal{W}_J.$ Here in the last crossed product $(C([0,1])\rtimes_{\Omega_{\nu}}) \Z^n\rtimes \Z_2,$ the canonical unitary of $\Z_2$ is identified with $\mathcal{W}_J.$ Using this identification, for each $I\in \mathrm{Minor}(n)\setminus\{\emptyset\},$ and $J\subseteq I^c.$  $\mathcal{E}_{I}^{[0,1]}$ becomes a module over $(C([0,1])\rtimes_{\Omega_{\nu}}) \Z^n\rtimes \Z_2.$ We call this module $\mathcal{E}_{I,J}^{[0,1]}.$ For $I=\emptyset,$ and $J\subseteq I^c,$  $\mathcal{E}_{I,J}^{[0,1]}:=\frac{1}{2}(1+\mathcal{W}_J).$ 
	  We consider the following family of elements in $\K_0((C([0,1])\rtimes_{\Omega_{\nu}}) \Z^n\rtimes \Z_2)$ 
	  
	 $$ \underset{I\in \mathrm{Minor}(n)}{\cup}\left\{\mathcal{E}_{I,J}^{[0,1]}~\suchthat ~J\subseteq I^c,|J|\le 2 \right\}.$$
	  
	  Our next step is to show that if we consider the K-theory classes $[\mathcal{E}_{I,J}^{[0,1]}]$ in the above family inside $(C([0,1])\rtimes_{\Omega_{\nu}}) \Z^n\rtimes \Z_2,$ then the K-theory classes $[\ev_1(\mathcal{E}_{I,J}^{[0,1]})],$ along with $[1]$ provide a basis of $\K_0(A_{\nu(1)}\rtimes\Z_2)=\K_0(A_{\psi^\mathrm{tr}}\rtimes\Z_2)=\K_0(A_{\psi}\rtimes\Z_2).$  Let us write $\mathcal{E}_{I,J}^{\psi^\mathrm{tr}}$ for $\ev_1(\mathcal{E}_{I,J}^{[0,1]}).$ Since $\psi$ is totally irrational, the translate of $\psi,$ $\psi^\mathrm{tr},$ is also totally irrational. From the pfaffian summation formula (as in Equation~\ref{eq:sumofpaff}) if we compute the pfaffian of $\psi^\mathrm{tr}_I,$ for a fixed $I\in \mathrm{Min}(n),$ we see that this is exactly $\Tr(P_I^\psi)+\underset{|I'|<|I|}{\sum} c_{I'}\Tr(P_{I'}^\psi),$ $c_{I'}\in \Z.$ This also coincides with the trace of $[\mathcal{E}_{I}^{\psi^\mathrm{tr}}]=[\ev_1(\mathcal{E}_{I}^{[0,1]})]$ considering $\mathcal{E}_{I}^{[0,1]}$ as a projective module over $C([0,1])\rtimes_{\Omega_{\nu}} \Z^n.$  Hence $[P_I^\psi]+\underset{|I'|<|I|}{\sum} c_{I'}[P_{I'}^\psi]= [\mathcal{E}_{I}^{\psi^\mathrm{tr}}]$ as the trace map is injective for $A_{\psi^\mathrm{tr}}.$ Now for $J\subseteq I^c,$ $[\mathcal{E}_{I,J}^{\psi^\mathrm{tr}}]$ and $[P_{I,J}^\psi]+\underset{|I'|<|I|}{\sum} c_{I'}[P_{I',J}^\psi]$ coincide inside $\K_0(A_{\psi^\mathrm{tr}}\rtimes\Z_2)$ as they are extended from the same element in $\K_0(A_{\psi^\mathrm{tr}})$ using the same method i.e. the Green--Julg map. Now using Theorem~\ref{thm:main_K_gen}, we know that the elements of the set $\underset{I\in \mathrm{Minor}(n)}{\cup}\left\{[P_{I,J}^\psi]+\sum_{|I'|<|I|}c_{I'}[P_{I',J}^\psi]~\suchthat ~J\subseteq I^c,|J|\le 2 \right\}$  along with $[1]$ form a basis of $\K_0(A_{\psi}\rtimes\Z_2)=\K_0(A_{\psi^\mathrm{tr}}\rtimes\Z_2).$  Considering $[\mathcal{E}_{I,J}^{[0,1]}]$ inside $(C([0,1])\rtimes_{\Omega_{\nu}}) \Z^n\rtimes \Z_2,$ let us denote $\ev_0(\mathcal{E}_{I,J}^{[0,1]})$ by $\mathcal{E}_{I,J}^{\theta^\mathrm{tr}}$. Let 
	  \begin{equation}\label{eq:description_of_general}
	  	\operatorname{Proj}_n=\left\{\mathcal{E}_{I, J}^{\theta^{\text {tr }}}: I \in \operatorname{Minor}(n), J \subseteq I^c,|J| \leq 2\right\}.
\end{equation}

	    Now if we apply Theorem~\ref{thm:elpw_gen} to the cocycle ${\Omega}_\nu$ constructed just after Proposition~\ref{prp:path_positive_pfaffian}, as a result of the above discussion we immediately get the following theorem. 
	  \begin{theorem}\label{thm:main_K_gen_proj} Let $\theta \in \mathcal{T}_n$. Then there exists a suitable translate $\theta^{\text {tr }} \in \mathcal{T}_n$ for $\theta$ such that the set of classes $\left\{[1],[\mathcal{E}] \mid \mathcal{E} \in \operatorname{Proj}_n\right\},$ where
$\operatorname{Proj}_n$ is as in Equation~\ref{eq:description_of_general}, generate $\K_0(A_{\theta}\rtimes \Z_2).$
	 \end{theorem}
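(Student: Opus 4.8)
The plan is to deduce Theorem~\ref{thm:main_K_gen_proj} from the equivalence in Theorem~\ref{thm:elpw_gen} applied to the cocycle $\widetilde{\Omega}_\nu$ attached to the path $\nu$ of Proposition~\ref{prp:path_positive_pfaffian}. First I would fix $\theta\in\mathcal{T}_n$, and use Proposition~\ref{prp:path_positive_pfaffian} to get the strongly totally irrational $\psi$, the translates $\theta^{\mathrm{tr}}=\nu(0)$ and $\psi^{\mathrm{tr}}=\nu(1)$, and the continuous path $\nu$ all of whose pfaffian minors are positive; this positivity is exactly what is needed to run the fibrewise Rieffel--Li construction and produce, for each $I\in\mathrm{Minor}(n)\setminus\{\emptyset\}$, the projective module $\mathcal{E}_I^{[0,1]}$ over $C([0,1])\rtimes_{\Omega_\nu}\Z^n$ whose fibre at $t$ is $\mathcal{E}_I^{\nu(t)}$. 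Then, using Proposition~\ref{prop:mainprop} with the fibrewise flip actions of Equation~\ref{eq:flip_action_module}, each $\mathcal{E}_I^{[0,1]}$ and each self-adjoint unitary $\mathcal{W}_J$ give the modules $\mathcal{E}_{I,J}^{[0,1]}$ over $(C([0,1])\rtimes_{\Omega_\nu}\Z^n)\rtimes\Z_2$, hence the set $\operatorname{Proj}_n$ of their evaluations at $0$.

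The second step is to verify condition (2) of Theorem~\ref{thm:elpw_gen} at the endpoint $t=1$, i.e.\ that $\{[1]\}\cup\{[\ev_1(\mathcal{E}_{I,J}^{[0,1]})]\}$ is a basis of $\K_0(A_{\psi^{\mathrm{tr}}}\rtimes\Z_2)$. Since $\psi$, and therefore its translate $\psi^{\mathrm{tr}}$, is totally irrational, the trace $\Tr_{\psi^{\mathrm{tr}}}$ is injective on $\K_0(A_{\psi^{\mathrm{tr}}})$; so I would compute the trace of $\mathcal{E}_I^{\psi^{\mathrm{tr}}}=\ev_1(\mathcal{E}_I^{[0,1]})$, which by Remark~\ref{rmk:trace_of_heisenberg} is the pfaffian minor $\pf(\psi^{\mathrm{tr}}_I)$, and expand this via the pfaffian summation formula as $\Tr(P_I^\psi)+\sum_{|I'|<|I|}c_{I'}\Tr(P_{I'}^\psi)$ with $c_{I'}\in\Z$. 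Injectivity of the trace then forces $[\mathcal{E}_I^{\psi^{\mathrm{tr}}}]=[P_I^\psi]+\sum_{|I'|<|I|}c_{I'}[P_{I'}^\psi]$ in $\K_0(A_{\psi^{\mathrm{tr}}})=\K_0(A_\psi)$. Since both $[\mathcal{E}_{I,J}^{\psi^{\mathrm{tr}}}]$ and $[P_{I,J}^\psi]+\sum_{|I'|<|I|}c_{I'}[P_{I',J}^\psi]$ are obtained from these equal $\K_0(A_\psi)$-classes by applying the same Green--Julg map (Proposition~\ref{prop:mainprop}) with the same unitary $\mathcal{W}_J$, they agree in $\K_0(A_\psi\rtimes\Z_2)$. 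The passage from $\{[1]\}\cup\{[P_{I,J}^\psi]\}$ — which is a basis by Theorem~\ref{thm:main_K_gen} since $\psi$ is strongly totally irrational — to $\{[1]\}\cup\{[P_{I,J}^\psi]+\sum_{|I'|<|I|}c_{I'}[P_{I',J}^\psi]\}$ is a unitriangular change of basis (ordering the index sets $I$ by length $|I|$, then $J$), hence the latter is also a basis; therefore so is $\{[1]\}\cup\{[\mathcal{E}_{I,J}^{\psi^{\mathrm{tr}}}]\}$.

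Finally I would invoke Theorem~\ref{thm:elpw_gen}: having checked (2) at $t=1$, the classes $[1],[\mathcal{E}_{I,J}^{[0,1]}]$ form a basis of $\K_0((C([0,1])\rtimes_{\Omega_\nu}\Z^n)\rtimes\Z_2)$, and then (3) gives a basis at every $t$, in particular at $t=0$; evaluating at $0$ and using the identification $C^*(\Z^n\rtimes\Z_2,\omega'_{\theta^{\mathrm{tr}}})\cong A_{\theta^{\mathrm{tr}}}\rtimes\Z_2=A_\theta\rtimes\Z_2$ (Example~\ref{ex:orbifold}, and $A_{\theta^{\mathrm{tr}}}=A_\theta$ for a translate) yields that $\{[1]\}\cup\{[\mathcal{E}]\mid\mathcal{E}\in\operatorname{Proj}_n\}$ generates $\K_0(A_\theta\rtimes\Z_2)$, which is the assertion.

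I expect the main obstacle to be the second step: making rigorous the identification of $[\mathcal{E}_{I,J}^{\psi^{\mathrm{tr}}}]$ with the Green--Julg image of $[P_I^\psi]+\sum c_{I'}[P_{I'}^\psi]$, i.e.\ checking that the flip action on the Heisenberg module $\mathcal{E}_I^{\psi^{\mathrm{tr}}}$ really is compatible (in the sense of Proposition~\ref{prop:mainprop}) with the one used to build $P_{I,J}^\psi$, so that the two constructions feed the same equivariant class into Green--Julg; this is the kind of fibrewise-versus-pointwise bookkeeping that Equations~\ref{eq:proj_mod_inner_holds}--\ref{eq:proj_mod_T_holds} are designed to handle, but it needs to be traced carefully through the translate $\psi^{\mathrm{tr}}$. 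The positivity of all pfaffian minors of $\nu(t)$, guaranteed by Proposition~\ref{prp:path_positive_pfaffian}, is what keeps the fibrewise construction of $\mathcal{E}_I^{[0,1]}$ well-defined throughout, so no separate argument is needed there beyond citing \cite[Theorem 3.3]{Cha20}.
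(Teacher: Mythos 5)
Your proof follows the paper's route exactly: build the continuous field $\mathcal{E}_{I,J}^{[0,1]}$ over $(C([0,1])\rtimes_{\Omega_\nu}\Z^n)\rtimes\Z_2$ along the path from Proposition~\ref{prp:path_positive_pfaffian}, use the pfaffian summation formula and trace-injectivity at the strongly totally irrational endpoint $t=1$ to identify $[\mathcal{E}_{I,J}^{\psi^{\mathrm{tr}}}]$ with a triangular integer combination of the $[P_{I,J}^\psi]$ coming from Theorem~\ref{thm:main_K_gen}, and transfer the resulting basis to $t=0$ via Theorem~\ref{thm:elpw_gen}. The two points where you add value are exactly the two places the paper is terse: you spell out that the change of basis is unitriangular when ordered by $|I|$ (which the paper asserts without comment), and the Green--Julg compatibility issue you flag at the end is genuinely left informal in the paper as well (``extended from the same element using the same method''), so you are correctly identifying the step that would need the most care in a fully detailed write-up, not inventing a problem the paper avoids.
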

	 
Using a similar idea as in the proof of the above theorem along with Theorem~\ref{thm:elpw_gen_nt} we also get the following theorem.
	 
	  \begin{theorem}\label{thm:main_K_gen_proj_nt}(cf. \cite[Theorem 4.7]{Cha20}) Let $\theta \in \mathcal{T}_n.$ Then there exists a suitable translate $\theta^{\text {tr }} \in \mathcal{T}_n$ for $\theta$ such that the set of classes 
$$\underset{I\in \mathrm{Minor}(n)\setminus\{\emptyset\}}{\cup}\left\{[1], ~[\mathcal{E}_{I}^{\theta^\mathrm{tr}}] \right\}$$ generate $\K_0(A_{\theta}).$
	 \end{theorem}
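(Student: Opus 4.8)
The plan is to mirror the proof of Theorem~\ref{thm:main_K_gen_proj}, replacing the crossed-product algebra $(C([0,1])\rtimes_{\Omega_\nu}\Z^n)\rtimes\Z_2$ by $C([0,1])\rtimes_{\Omega_\nu}\Z^n$ itself, and invoking Theorem~\ref{thm:elpw_gen_nt} in place of Theorem~\ref{thm:elpw_gen}. First I would fix $\theta\in\mathcal{T}_n$ and apply Proposition~\ref{prp:path_positive_pfaffian} to obtain a strongly totally irrational $\psi\in\mathcal{T}_n$ and a continuous path $[0,1]\ni t\mapsto\nu(t)\in\mathcal{T}_n$ all of whose pfaffian minors are positive, with $\nu(0)=\theta^{\mathrm{tr}}$ and $\nu(1)=\psi^{\mathrm{tr}}$, and I would form the associated $C([0,1],\T)$-valued 2-cocycle $\Omega_\nu$ on $\Z^n$. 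For each $I\in\mathrm{Minor}(n)\setminus\{\emptyset\}$ the construction recalled just before this theorem (using \cite[Theorem 3.3]{Cha20}, and relying on positivity of $\pf((\nu(t))_I)$ for all $t$) produces a projective module $\mathcal{E}_I^{[0,1]}$ over $C([0,1])\rtimes_{\Omega_\nu}\Z^n$ whose fibre at $t$ is $\mathcal{E}_I^{\nu(t)}$. Set $\mathcal{E}_I^{\theta^{\mathrm{tr}}}:=\ev_0(\mathcal{E}_I^{[0,1]})$ and $\mathcal{E}_I^{\psi^{\mathrm{tr}}}:=\ev_1(\mathcal{E}_I^{[0,1]})$.

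The key step is to show that $\{[1]\}\cup\{[\mathcal{E}_I^{\psi^{\mathrm{tr}}}]\mid I\in\mathrm{Minor}(n)\setminus\{\emptyset\}\}$ is a basis of $\K_0(A_\psi)$. Since $\psi$ is totally irrational, so is its translate $\psi^{\mathrm{tr}}$, hence the canonical trace $\Tr$ on $A_{\psi^{\mathrm{tr}}}$ is injective on $\K_0$. By Remark~\ref{rmk:trace_of_heisenberg} the trace of $\mathcal{E}_I^{\psi^{\mathrm{tr}}}$ equals $\pf(\psi^{\mathrm{tr}}_I)$, and expanding the pfaffian summation formula (as in Equation~\ref{eq:sumofpaff}), $\pf(\psi^{\mathrm{tr}}_I)=\pf(\psi_I)+\sum_{|I'|<|I|}c_{I'}\pf(\psi_{I'})=\Tr(P_I^\psi)+\sum_{|I'|<|I|}c_{I'}\Tr(P_{I'}^\psi)$ with $c_{I'}\in\Z$. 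By injectivity of the trace this forces $[\mathcal{E}_I^{\psi^{\mathrm{tr}}}]=[P_I^\psi]+\sum_{|I'|<|I|}c_{I'}[P_{I'}^\psi]$ in $\K_0(A_{\psi^{\mathrm{tr}}})=\K_0(A_\psi)$. Since by Theorem~\ref{PI} the classes $\{[P_I^\psi]\mid I\in\mathrm{Minor}(n)\}$ form a basis of $\K_0(A_\psi)\cong\Z^{2^{n-1}}$ and the transition matrix expressing $[\mathcal{E}_I^{\psi^{\mathrm{tr}}}]$ in terms of the $[P_{I'}^\psi]$ is (after ordering $\mathrm{Minor}(n)$ by length) unipotent upper-triangular with the class $[1]=[P_\emptyset^\psi]$ in position $\emptyset$, it has determinant $\pm1$; hence $\{[1]\}\cup\{[\mathcal{E}_I^{\psi^{\mathrm{tr}}}]\}$ is again a basis of $\K_0(A_\psi)$.

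Finally I would assemble the conclusion. The elements $[1],[\mathcal{E}_I^{[0,1]}]\in\K_0(C([0,1])\rtimes_{\Omega_\nu}\Z^n)$ have $\ev_1$-images forming a basis of $\K_0(C^*(\Z^n,\omega_{\nu(1)}))=\K_0(A_{\psi^{\mathrm{tr}}})$ by the previous paragraph, so Theorem~\ref{thm:elpw_gen_nt} (applicable because $\Z^n$ satisfies the Baum--Connes conjecture with coefficients) gives that $[1],[\mathcal{E}_I^{[0,1]}]$ form a basis of $\K_0(C([0,1])\rtimes_{\Omega_\nu}\Z^n)$, and consequently their $\ev_0$-images form a basis of $\K_0(C^*(\Z^n,\omega_{\nu(0)}))=\K_0(A_{\theta^{\mathrm{tr}}})=\K_0(A_\theta)$; that is, $\bigcup_{I\in\mathrm{Minor}(n)\setminus\{\emptyset\}}\{[1],[\mathcal{E}_I^{\theta^{\mathrm{tr}}}]\}$ generates $\K_0(A_\theta)$. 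The main obstacle is the trace-matching argument of the middle paragraph: one must be careful that the translate $\psi^{\mathrm{tr}}$ (not $\psi$ itself) enters the pfaffian expansion, so the lower-order correction terms $c_{I'}$ genuinely appear, and one must check that the resulting change-of-basis is invertible over $\Z$ — this is exactly the unipotent-triangularity observation once $\mathrm{Minor}(n)$ is ordered by $|I|$; verifying that the $\ev_t$-compatibility of the module construction in \cite{Cha20} survives the fact that $\Omega_\nu$ is now $I$-independent (the point of the Remark preceding the theorem) is the other place requiring care.
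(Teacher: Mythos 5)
Your proof is correct and follows essentially the same route the paper intends: the paper's own "proof" is the one-sentence remark that the argument is the $\Z_2$-free analogue of Theorem~\ref{thm:main_K_gen_proj} using Theorem~\ref{thm:elpw_gen_nt}, and you have correctly reconstructed that argument, in particular making explicit the unipotent change-of-basis point (ordering $\mathrm{Minor}(n)$ by $|I|$) that the paper leaves implicit when it asserts that the shifted classes $[P_I^\psi]+\sum_{|I'|<|I|}c_{I'}[P_{I'}^\psi]$ still form a basis.
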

	 
	 From Equation~\ref{eq:trace_of_extended_module} it is clear that 
$\Tr^{\Z_2}_{\theta^\mathrm{tr}}([\mathcal{E}_{I,J}^{\theta^\mathrm{tr}}])=\frac{\pf(\theta^\mathrm{tr}_I)}{2}.$ It follows that \begin{equation}\label{eq:trace_of_flipped}
\Tr^{\Z_2}_{\theta^\mathrm{tr}}(\K_0(A_{\theta^\mathrm{tr}}\rtimes \Z_2))=\frac{\Tr_{\theta^\mathrm{tr}}(\K_0(A_{\theta^\mathrm{tr}}))}{2},	
\end{equation} which in turn gives that 
$\Tr^{\Z_2}_{\theta}(\K_0(A_{\theta}\rtimes \Z_2))=\frac{\Tr_{\theta}(\K_0(A_{\theta}))}{2},$ from the computation of $\pf(\theta^\mathrm{tr}_I)$ using the pfaffian summation formula (Equation~\ref{eq:sumofpaff}). This result was already obtained in \cite[Example 4.5]{Cha21}.

 \section{isomorphism classes of $A_\theta\rtimes \Z_2$}\label{sec:iso_classes}
 
 In this section we give an application of the explicit K-theory computations from the previous section. We start with the following definition.

\begin{definition}\label{def:nondegenerate}
\normalfont
	A skew symmetric real $n \times n$ matrix $\theta$ is called \emph{non-degenerate} if whenever $x \in \mathbb{Z}^{n}$ satisfies $e(\langle x, \theta y\rangle)=1$ for all $y \in \Z^n,$ then $x=0 .$ 
\end{definition}

The theorem which we want to prove in this section is the following. 

\begin{theorem}\label{thm:main_iso}
		Let $\theta_{1}, \theta_{2} \in \mathcal{T}_n $ be non-degenerate.
		Let $\Z_2$ act on $A_{\theta_{1}}$ and $A_{\theta_{2}}$ by the flip actions.  Then $A_{\theta_{1}}\rtimes \Z_2$ is isomorphic to $A_{\theta_{2}}\rtimes \Z_2$ if $A_{\theta_{1}}$ is isomorphic to $A_{\theta_{2}}.$ Moreover, if any one of $\theta_{1}, \theta_{2}$ is totally irrational, the converse is true.
 	 \end{theorem}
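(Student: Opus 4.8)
The plan is to prove both directions of the equivalence by exploiting the explicit bases of $\K_0(A_{\theta_i}\rtimes\Z_2)$ constructed in Theorem~\ref{thm:main_K_gen_proj} together with the classification of the AF-algebras $A_{\theta_i}\rtimes\Z_2$ via their Elliott invariants. First I would record that when $\theta_i$ is non-degenerate, $A_{\theta_i}\rtimes\Z_2$ is a unital AF algebra (by \cite[Theorem 6.6]{ELPW10}, cited in the excerpt as Corollary~\ref{cor:flip_AF}), so it is classified by the pointed ordered group $(\K_0(A_{\theta_i}\rtimes\Z_2),\K_0^+,[1])$, and the order is determined by the unique tracial state through $\Tr^{\Z_2}_{\theta_i}$. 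By Corollary~\ref{thm:main_kthoeryall} the group is $\Z^{3\cdot 2^{n-1}}$ in all cases, so the whole content is in matching the trace values, i.e.\ the image $\Tr^{\Z_2}_{\theta_i}(\K_0(A_{\theta_i}\rtimes\Z_2))$ together with the position of $[1]$.

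For the forward direction, suppose $\Phi\colon A_{\theta_1}\xrightarrow{\cong} A_{\theta_2}$. The key point is that an isomorphism of the noncommutative tori induces an isomorphism of Elliott invariants of $A_{\theta_1}$ and $A_{\theta_2}$, and by Elliott's Theorem~\ref{elliott_image_of_trace} this is the same as an isomorphism $\Lambda^{\mathrm{even}}\Z^n\to\Lambda^{\mathrm{even}}\Z^n$ intertwining $\exp_\wedge(\theta_1)$ and $\exp_\wedge(\theta_2)$ and fixing $1\in\Lambda^0$. I would then use Equation~\ref{eq:trace_of_flipped}, $\Tr^{\Z_2}_{\theta_i}(\K_0(A_{\theta_i}\rtimes\Z_2))=\tfrac12\Tr_{\theta_i}(\K_0(A_{\theta_i}))$, to transport this: the map doubling/halving traces carries the ordered-group data of $A_{\theta_1}\rtimes\Z_2$ to that of $A_{\theta_2}\rtimes\Z_2$. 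Concretely, the explicit generators $\{[1],[\mathcal{E}_{I,J}^{\theta_1^{\mathrm{tr}}}]\}$ of $\K_0(A_{\theta_1}\rtimes\Z_2)$ have traces $\tfrac12\pf((\theta_1^{\mathrm{tr}})_I)$, which are $\Z$-linear combinations of the traces $\tfrac12\pf((\theta_2^{\mathrm{tr}})_{I'})$ of the generators on the other side via the pfaffian summation formula (Equation~\ref{eq:sumofpaff}) and the combinatorics of $\exp_\wedge$; I would spell out that the induced group isomorphism is order- and unit-preserving, then invoke Elliott's classification of AF algebras to get $A_{\theta_1}\rtimes\Z_2\cong A_{\theta_2}\rtimes\Z_2$. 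The role of the \emph{explicit} basis is exactly that it makes this transported isomorphism concrete rather than merely existential.

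For the converse, assume (without loss of generality, by the symmetry of the statement) that $\theta_1$ is totally irrational and $A_{\theta_1}\rtimes\Z_2\cong A_{\theta_2}\rtimes\Z_2$. Then their Elliott invariants agree, so in particular $\Tr^{\Z_2}_{\theta_1}(\K_0(A_{\theta_1}\rtimes\Z_2))=\Tr^{\Z_2}_{\theta_2}(\K_0(A_{\theta_2}\rtimes\Z_2))$ as subgroups of $\R$ with matching distinguished element $[1]$. Multiplying by $2$ and using Equation~\ref{eq:trace_of_flipped} again gives $\Tr_{\theta_1}(\K_0(A_{\theta_1}))=\Tr_{\theta_2}(\K_0(A_{\theta_2}))$, i.e.\ the ranges of $\exp_\wedge(\theta_1)$ and $\exp_\wedge(\theta_2)$ coincide, with $1$ corresponding to $1$. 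Since $\theta_1$ is totally irrational, $\exp_\wedge(\theta_1)$ is injective, so its range is a free abelian group of rank $2^{n-1}$ with the pfaffian minors of $\theta_1$ as a $\Z$-basis; matching this against the range for $\theta_2$ and keeping track of the filtration by minor-length produces a $\GL$-type change of basis on $\Lambda^{\mathrm{even}}\Z^n$ intertwining $\exp_\wedge(\theta_1)$ and $\exp_\wedge(\theta_2)$. This is an isomorphism of the Elliott invariants of $A_{\theta_1}$ and $A_{\theta_2}$, and since non-degenerate noncommutative tori are classified by their Elliott invariants (they are simple A$\T$ algebras of real rank zero, or one may cite the relevant rigidity result for noncommutative tori used in \cite{ELPW10}), we conclude $A_{\theta_1}\cong A_{\theta_2}$.

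The main obstacle I anticipate is the converse direction, specifically justifying that an isomorphism of the \emph{pointed} abelian groups $\Tr^{\Z_2}_{\theta_i}(\K_0)$ can be upgraded to an isomorphism of the full Elliott invariants of the tori $A_{\theta_i}$ themselves (order, $\K_1$, tracial simplex) — for AF-algebra targets the trace data is everything, but transferring it back to the A$\T$-algebra level requires that total irrationality of $\theta_1$ rigidifies enough of the structure; this is exactly where the hypothesis ``one of $\theta_i$ is totally irrational'' is used, and matching the minor-length filtration carefully (so that $\pf(\theta_1)$ itself, not just the group it generates, is pinned down up to the allowed $\SO(n,n|\Z)$/$\GL(n,\Z)$ ambiguity) is the delicate combinatorial heart of the argument. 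The forward direction is comparatively routine given Equation~\ref{eq:trace_of_flipped} and the explicit generators, modulo bookkeeping with the pfaffian summation formula.
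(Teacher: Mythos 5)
Your high-level plan matches the paper's: use the AF property (Corollary~\ref{cor:flip_AF}), the trace relation $\Tr^{\Z_2}_{\theta}(\K_0(A_\theta\rtimes\Z_2))=\tfrac12\Tr_\theta(\K_0(A_\theta))$, and the explicit generating sets from Theorem~\ref{thm:main_K_gen_proj} and Theorem~\ref{thm:main_K_gen_proj_nt}, then invoke classification. But you have the relative difficulty of the two directions backward, and the forward direction is where your proposal has a genuine gap. For the converse, the paper's argument is short and homological: equality of trace ranges plus the split exact sequences $0\to\ker\Tr_{\theta_i}\to\K_0(A_{\theta_i})\to R_i\to 0$ (all groups free abelian of matching ranks) produce a trace-compatible isomorphism $g$, and total irrationality of $\theta_2$ (equivalently, injectivity of $\Tr_{\theta_2}$) forces $g([1])=[1]$ since $[1]$ is the unique $\K_0$-element of trace $1$. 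No minor-length filtration or change of basis on $\Lambda^{\mathrm{even}}\Z^n$ is needed; your proposed combinatorics overcomplicates this step.

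In the forward direction, by contrast, the injectivity trick is unavailable: $\Tr^{\Z_2}_{\theta_2}$ on $\K_0(A_{\theta_2}\rtimes\Z_2)\cong\Z^{3\cdot 2^{n-1}}$ is never injective (its range has rank at most $2^{n-1}$), so ``matching trace ranges'' cannot pin down the image of $[1]$, and no total irrationality is assumed here anyway. Your proposal says the explicit basis makes the ``transported isomorphism concrete'' and the rest is ``routine bookkeeping with the pfaffian summation formula,'' but this is precisely what is missing: you have to exhibit a group isomorphism $f'$ and arrange $f'([1])=[1]$ by hand. The paper does this by writing $f_*([\mathcal{E}^{\theta_1}_{I'}])=\sum_{I''}C^{I'}_{I''}[\mathcal{E}^{\theta_2}_{I''}]$ in the torus-level bases and then \emph{defining} $f'$ on each generator $[\mathcal{E}^{\theta_1}_{I',J}]$ by a specific formula built from the coefficients $C^{I'}_{I''}$ (the $J$-index is carried along via the $J=\emptyset$ terms), with $f'([1])=[1]$ by fiat; the tracial compatibility is then verified directly from $\Tr_{\theta_2}\circ f_*=\Tr_{\theta_1}$. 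The pfaffian summation formula plays no role in this step — it enters only when relating $\theta$ to its translate $\theta^{\mathrm{tr}}$. Your proposal neither produces this formula nor addresses why $[1]$ goes to $[1]$, which is the actual content of the forward direction.
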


 We need some preparation before proving the above theorem. 

 	  \begin{proposition}\label{prop:phi06}(\cite[Proposition 3.7]{Phi06})
	 Let $A$ be a simple infinite dimensional separable unital nuclear C*-algebra with tracial rank zero and which satisfies the Universal Coefficient Theorem. Then $A$ is a simple $A H$ algebra with real rank zero and no dimension growth. If $\K_{*}(A)$ is torsion free, $A$ is an AT algebra. If, in addition, $\K_{1}(A)=0$, then $A$ is an $A F$ algebra.
\end{proposition}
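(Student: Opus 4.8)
The plan is to deduce the proposition from Lin's classification theorem for simple \cstars{} of tracial rank zero, reducing each of the three assertions to a range-of-invariant computation.

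The classification input I would use is the following: if $A$ is a simple separable unital nuclear \cstar{} with tracial rank zero satisfying the UCT, then $A$ is determined up to isomorphism by its Elliott invariant $(\K_0(A),\K_0(A)^+,[1_A],\K_1(A),T(A),\rho_A)$, and moreover for such $A$ the ordered group $\K_0(A)$ is a countable weakly unperforated group with the Riesz interpolation property, $T(A)$ is recovered as the state space of $(\K_0(A),[1_A])$, and $\K_1(A)$ may be any countable abelian group. Thus, to place $A$ in one of the classes in question (AH of real rank zero with no dimension growth, AT, or AF), it is enough to exhibit a single member $B$ of that class which is simple, unital, nuclear, satisfies the UCT, has tracial rank zero, and has $\mathrm{Ell}(B)\cong\mathrm{Ell}(A)$; Lin's theorem then yields $A\cong B$.

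I would then treat the three assertions in turn. For the first, invoke the existence (range-of-invariant) half of the Elliott--Gong classification of simple unital AH algebras of real rank zero with no dimension growth: it produces such a $B$ with $\mathrm{Ell}(B)\cong\mathrm{Ell}(A)$, and since algebras of this type automatically have tracial rank zero (and are nuclear and satisfy the UCT), Lin's theorem gives $A\cong B$. For the second, if $\K_*(A)$ is torsion free then $\K_0(A)$ is a countable torsion-free weakly unperforated Riesz group and $\K_1(A)$ a countable torsion-free abelian group; this invariant lies in the range realised by simple unital AT algebras of real rank zero (Elliott's classification of real-rank-zero $A\T$-algebras), so there is such a $C$ with $\mathrm{Ell}(C)\cong\mathrm{Ell}(A)$ and $A\cong C$. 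For the third, if moreover $\K_1(A)=0$, then $\K_0(A)$ is a countable torsion-free weakly unperforated group with Riesz interpolation, hence unperforated, hence a dimension group by the Effros--Handelman--Shen theorem; so $(\K_0(A),\K_0(A)^+,[1_A])$ is the pointed ordered $\K_0$-group of a unital AF algebra $D$ (Elliott's classification of AF algebras). As $D$ has tracial rank zero, is nuclear, satisfies the UCT, and $\K_1(D)=0=\K_1(A)$, Lin's theorem gives $A\cong D$.

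The substantive content is carried entirely by the cited black boxes\,---\,Lin's classification theorem for tracial-rank-zero \cstars{} and the Elliott--Gong(--Li) range-of-invariant results\,---\,so the main obstacle in a self-contained write-up would be to quote and apply these in precisely the strength needed, in particular the sharp ``no dimension growth'' form of the AH range-of-invariant theorem rather than the weaker ``slow dimension growth'' version. Everything else amounts to checking that the comparison algebras $B$, $C$, $D$ satisfy the hypotheses of Lin's theorem and carry the prescribed invariant, which is routine.
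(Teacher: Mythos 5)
The paper states this proposition with a citation to \cite{Phi06} and offers no proof of its own, so there is nothing in the paper to compare against line by line. Your sketch is a correct account of the standard argument (Lin's classification of simple separable nuclear unital UCT algebras of tracial rank zero together with the Elliott--Gong, Elliott AT, and Effros--Handelman--Shen/Elliott AF range-of-invariant results), which is in fact how Phillips establishes it; the one point you rightly flag as requiring care, the exact ``no dimension growth'' form of the AH range theorem, is the only non-routine issue, and your reduction of the AF case via torsion-free $+$ simple $+$ weakly unperforated $\Rightarrow$ unperforated is correct.
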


Let $\theta \in \mathcal{T}_{n}$ be non-degenerate. Then the following are known.

\begin{itemize}
	\item $A_{\theta}$ is a simple C*-algebra (even the converse is true: simplicity of $A_\theta$ implies $\theta$ must be non-degenerate) with a unique tracial state (\cite[Theorem 1.9]{Phi06});
	\item $A_{\theta}$ is tracially AF (\cite[Theorem 3.6]{Phi06});
	\item If $\beta$ is an action of a finite group on $A_{\theta}$  which has the tracial Rokhlin property (see \cite[Section 5]{ELPW10}), $A_{\theta} \rtimes_{\beta} F$ is a simple C*-algebra with tracial rank zero (\cite[Corollary 1.6, Theorem 2.6]{Phi11}). Also, $A_{\theta} \rtimes_{\beta} F$ has a unique tracial state (\cite[Proposition 5.7]{ELPW10});
	\item The flip action of $\Z_2$ on $A_\theta$ has the tracial Rokhlin property (\cite[Lemma 5.10 and Theorem 5.5]{ELPW10});
	\item $A_{\theta} \rtimes \Z_2$ satisfies the Universal Coefficient Theorem (see the proof of Theorem 6.6 in \cite{ELPW10}).
	\end{itemize}

With the above list of results along with Proposition~\ref{prop:phi06} and Theorem~\ref{thm:main_kthoeryall}, we readily have the following corollary. 

\begin{corollary}\label{cor:flip_AF}
	Let $\theta \in \mathcal{T}_{n}$ be non-degenerate. Then  $A_\theta\rtimes \Z_2$ is an $A F$ algebra.
\end{corollary}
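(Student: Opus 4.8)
The plan is to apply Phillips' structure theorem, Proposition~\ref{prop:phi06}, directly to the algebra $A := A_\theta\rtimes\Z_2$, so that the work reduces to checking each hypothesis in turn and then feeding in the $\K$-theory computation of the previous sections.

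First I would verify the standing hypotheses of Proposition~\ref{prop:phi06} for $A_\theta\rtimes\Z_2$. Separability and unitality are immediate; nuclearity follows because $A_\theta$ is nuclear and nuclearity is preserved under crossed products by finite (indeed amenable) groups; and $A_\theta\rtimes\Z_2$ is infinite dimensional since $A_\theta$ already is and embeds unitally. Simplicity and tracial rank zero are precisely the content of the bulleted facts recalled just before the corollary: as $\theta$ is non-degenerate, $A_\theta$ is simple with a unique tracial state and is tracially AF, the flip action of $\Z_2$ has the tracial Rokhlin property, and hence by \cite[Corollary 1.6, Theorem 2.6]{Phi11} the crossed product $A_\theta\rtimes\Z_2$ is simple with tracial rank zero. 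Finally, $A_\theta\rtimes\Z_2$ satisfies the Universal Coefficient Theorem, as recorded in the same list (this is established in the proof of \cite[Theorem 6.6]{ELPW10}).

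With every hypothesis of Proposition~\ref{prop:phi06} in place, $A_\theta\rtimes\Z_2$ is a simple AH algebra of real rank zero with no dimension growth. To upgrade this conclusion to AF it now suffices to invoke the $\K$-theory: by Corollary~\ref{thm:main_kthoeryall} (equivalently, Theorem~\ref{thm:main_K_gen} together with the translate/continuous-field argument for general $\theta$) we have $\K_0(A_\theta\rtimes\Z_2)\cong\Z^{3\cdot 2^{n-1}}$, which is torsion free, and $\K_1(A_\theta\rtimes\Z_2)=0$. Torsion-freeness of $\K_*$ yields, via Proposition~\ref{prop:phi06}, that $A_\theta\rtimes\Z_2$ is an AT algebra, and the additional vanishing of $\K_1$ then forces it to be an AF algebra, which is the assertion.

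I do not expect any real obstacle here: all the substantive ingredients---the tracial Rokhlin property of the flip, Phillips' classification theorem, the UCT, and the $\K$-theory computation---have already been assembled, so the argument is essentially bookkeeping. The only step needing a line of justification rather than a bare citation is the nuclearity of the finite-group crossed product, and that is entirely standard.
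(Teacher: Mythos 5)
Your argument is exactly the paper's: it feeds the bulleted facts preceding the corollary (simplicity and tracial rank zero via the tracial Rokhlin property, UCT from \cite{ELPW10}) into Proposition~\ref{prop:phi06}, and then uses the $\K$-theory computation of Corollary~\ref{thm:main_kthoeryall} ($\K_0$ torsion free, $\K_1=0$) to upgrade from AT to AF. No discrepancies.
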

\noindent The above corollary was first obtained in \cite[Theorem 6.6]{ELPW10}. We are now ready to prove Theorem~\ref{thm:main_iso}. 

\begin{proof}[Proof of Theorem~\ref{thm:main_iso}]

Since taking a translate of $\theta_i$, $i=1,2$ does not change the isomorphism classes of $A_{\theta_i}$ or $A_{\theta_i}\rtimes \Z_2,$ for this proof we may replace the $\theta_i$ by any of their translates. 

To prove the last part, WLOG assume that $\theta_2$ is totally irrational. Let there be a $*$-isomorphism $f$ from $A_{\theta_{1}}\rtimes \Z_2$ to $A_{\theta_{2}}\rtimes \Z_2.$ Then  $\Tr^{\Z_2}_{\theta_{1}}$ and $\Tr^{\Z_2}_{\theta_{2}}$ have the same range, and hence using Equation~~\ref{eq:trace_of_flipped} $\Tr_{\theta_{1}}$ and $\Tr_{\theta_{2}}$ also have the same range. Now to show that $A_{\theta_{1}}$ and $A_{\theta_{2}}$ are $*$-isomorphic, it is enough to find an isomorphism $g: \K_{0}\left(A_{\theta_{1}}\right) \rightarrow \K_{0}\left(A_{\theta_{2}}\right)$ such that $\Tr_{\theta_{2}} \circ g=\Tr_{\theta_{1}}$ and $g([1])=[1].$ Indeed, $g$ is then an order isomorphism by \cite[Proposition 3.7]{BCHL18}, and using classification of tracially AF algebras by Lin (\cite[Theorem 5.2]{Lin04}), we conclude that $A_{\theta_{1}}$ and $A_{\theta_{2}}$ are $*$-isomorphic.
	Let us now see the existence of the isomorphism $g$. Denote the ranges of $\Tr_{\theta_{1}}$ and $\Tr_{\theta_{2}}$ by $R_1$ and $R_2,$ respectively. Since $R_1$ and  $R_2$ are finitely generated subgroups of $\R,$ they are free. Also $R_1 =  R_2$ implies that they have the same rank. Now we have the following exact sequences :

\begin{center}

	\begin{tikzcd}
  0 \arrow[r] & \operatorname{ker}\left(\Tr_{\theta_{1}}\right) \arrow{r} & \K_{0}(A_{\theta_{1}})  \arrow[r, "\Tr_{\theta_{1}}"]& R_1           \arrow{r} & 0 
    \end{tikzcd}
    
    	\end{center}
    	\begin{center}

	\begin{tikzcd}
  0 \arrow[r] & \operatorname{ker}\left(\Tr_{\theta_{2}}\right) \arrow{r} & \K_{0}(A_{\theta_{2}})  \arrow[r, "\Tr_{\theta_{2}}"']& R_2           \arrow{r} & 0 
    \end{tikzcd}
    
    	\end{center}
Note that the above sequences split since the K-groups are torsion free. Now $\operatorname{ker}\left(\Tr_{\theta_{1}}\right)$ and $\operatorname{ker}\left(\Tr_{\theta_{2}}\right)$ are finitely generated abelian groups of the same rank. So there exists   an isomorphism $\psi$ between them. Now $g$ is defined as $\psi \oplus \phi,$ where $\phi$ is the map between $R_1$ and $R_2$ given by multiplication with $1.$ Clearly $\Tr_{\theta_{2}} \circ g=\Tr_{\theta_{1}},$ since the following diagram commutes. 

	\begin{center}
\begin{tikzcd}
\K_{0}(A_{\theta_{1}}) \arrow[r, "\Tr_{\theta_{1}}"] \arrow[d, "g"]
& R_1 \arrow[d, "\phi"] \\
\K_{0}(A_{\theta_{2}}) \arrow[r, "\Tr_{\theta_{2}}"' ]
& R_2
\end{tikzcd}
	\end{center}
Now $g([1])=[1]$ follows from $\Tr_{\theta_{2}} \circ g=\Tr_{\theta_{1}},$ and total irrationality of $\theta_2.$ Indeed, using Theorem~\ref{thm:main_K_gen_proj_nt}, let us write $g([1])$ as a linear combination of the basis elements. As we have $\Tr_{\theta_{2}}( g([1]))=1,$ then the total irrationality of $\theta_{2}$ forces $g([1])$ to be $[1].$

Now assume there is a $*$-isomorphism $f$ from $A_{\theta_{1}}$ to $A_{\theta_{2}}.$ This gives $\Tr_{\theta_{2}} \circ f=\Tr_{\theta_{1}},$ since $A_{\theta_{1}}$ has a unique trace and $f(1)=1.$ So we have an order isomorphism, which we call by  $f$ again, from $\K_0(A_{\theta_{1}})$ to $\K_0(A_{\theta_{2}}).$ Choosing the basis $\{[1],~[\mathcal{E}^{\theta_1}_{I'}]~|~I' \in \mathrm{Minor}(n)\setminus\emptyset\}$  of $\K_0(A_{\theta_1})$ using Theorem~\ref{thm:main_K_gen_proj_nt}, we have 
$$f([\mathcal{E}^{\theta_1}_{I'}])= \sum_{I''\in \mathrm{Minor}(n)} C^{I'}_{I''}[\mathcal{E}^{\theta_2}_{I''}], \quad \text{for}~ I'\in \mathrm{Minor}(n),$$ where $C^{I'}_{I''}\in \Z.$
We have that $A_{\theta_{1}}\rtimes \Z_2$ and $A_{\theta_{2}}\rtimes \Z_2$ are both $AF$ algebras. Now to show that $A_{\theta_{1}}\rtimes \Z_2$ and $A_{\theta_{2}}\rtimes \Z_2$ are isomorphic, it is enough (just like before) to find an isomorphism $f': \K_{0}\left(A_{\theta_{1}}\rtimes \Z_2\right) \rightarrow \K_{0}\left(A_{\theta_{2}}\rtimes \Z_2\right)$ such that $\Tr^{\Z_2
}_{\theta_{2}} \circ f'=\Tr^{\Z_2
}_{\theta_{1}}$ and $f'([1])=[1].$ Now it is enough to define the map  on a set of generators of $\K_0(A_{\theta_{1}}\rtimes \Z_2)$ which is given by $\{[1], [\mathcal{E}],~ \mathcal{E} \in \mathrm{Proj}_n\},$ where 
$$ \mathrm{Proj}_n= \underset{I'\in \mathrm{Minor}(n)}{\cup}\left\{\mathcal{E}_{I',J}^{\theta_1}~\suchthat ~J\subseteq (I')^c,|J|\le 2 \right\}$$ using Theorem~\ref{thm:main_K_gen_proj}.
We define the map $f'$ as follows. $f'([1])=[1],$ 
for $I'\neq \emptyset$
 $$f'([\mathcal{E}_{I',J}^{\theta_1}]) = \left(\sum_{I''\in \mathrm{Minor}(n)\setminus \{I',\emptyset\}} C^{I'}_{I''}[\mathcal{E}^{\theta_2}_{I'',\emptyset}]\right)+[\mathcal{E}^{\theta_2}_{I',J}]+(C^{I'}_{I'}-1)[\mathcal{E}^{\theta_2}_{I',\emptyset}]+C^{I'}_{\emptyset}[1]-C^{I'}_{\emptyset}[\mathcal{E}^{\theta_2}_{\emptyset,\emptyset}],$$  where $C^{I'}_{I''}$ is as before, and for $I'= \emptyset$

$$f'([\mathcal{E}_{\emptyset,J}^{\theta_1}]) = [\mathcal{E}^{\theta_2}_{\emptyset,J}].$$  Using $\Tr_{\theta_{2}} \circ f=\Tr_{\theta_{1}},$ we get the required tracial property of $f'.$ Indeed, $$\Tr^{\Z_2
}_{\theta_{2}} \circ f' ([1])=1=\Tr^{\Z_2
}_{\theta_{1}}([1]),$$ and for $I'= \emptyset$
$$\Tr^{\Z_2
}_{\theta_{2}} \circ f' ([\mathcal{E}_{\emptyset,J}^{\theta_1}])=\frac{1}{2}=\Tr^{\Z_2
}_{\theta_{1}}([\mathcal{E}_{\emptyset,J}^{\theta_1}]),$$  and finally for $I'\neq \emptyset,$

$$\Tr^{\Z_2
}_{\theta_{2}} \circ f' ([\mathcal{E}_{I',J}^{\theta_1}])=\frac{\Tr_{\theta_2}(\sum_{I''\in \mathrm{Minor}(n)} C^{I'}_{I''}[\mathcal{E}^{\theta_2}_{I''}])}{2}=\frac{\Tr_{\theta_{2}} \circ f([\mathcal{E}^{\theta_1}_{I'}])}{2}=\frac{\Tr_{\theta_{1}}([\mathcal{E}^{\theta_1}_{I'}])}{2}=\Tr^{\Z_2
}_{\theta_{1}}([\mathcal{E}_{I',J}^{\theta_1}]).$$

\end{proof}

\appendix

\section{Rieffel-type projections and $\Z_2$-invariance}
\label{sec:riep}
\subsection{The Rieffel projection in $n$-dimensional tori}\label{sec:bottp} 
In this section we will give a description of the map $\psi$, and the construction of the Rieffel projection $e$ used in Theorem~\ref{even n projection}. 

Let us first write down the Morita equivalence construction of Section~\ref{sec:rie} explicitly for $p=1,$ i.e. when $\mathcal{M}$ is the group $\R\times \Z^q,$ $q=n-2.$
As before, write $$\theta=\left(\begin{matrix}
                 \theta_{1,1} &  \theta_{1,2} \\
               \theta_{2,1}& \theta_{2,2}
                 \end{matrix}\right)= \left(\begin{matrix}
               \theta_{1,1} & \theta_{1,2} \\
                -\theta_{1,2}^t &\theta_{2,2}
                 \end{matrix}\right)\in\mathcal{T}_n,$$
                 where
                 \begin{eqnarray}
  \theta_{1,1}=\left(\begin{matrix}
              0 &  \theta_{12} \\
               \theta_{21}& 0
                 \end{matrix}\right)=\left(\begin{matrix}
                 0 &  \theta_{12} \\
              - \theta_{12}& 0
                 \end{matrix}\right)\in\mathcal{T}_2\nonumber
\end{eqnarray} is an invertible $2\times2$ matrix. Then consider the matrix
$T=\left(\begin{matrix}
                  T_{11} & 0 \\
                 0 & {\rm id}_q\\
                 T_{31}&T_{32}
                \end{matrix}
\right),$
where $T_{11}=\left(\begin{matrix}
                  \theta_{12} & 0 \\
                 0 & 1
                 \end{matrix}
\right),$
$T_{31}=\theta_{2,1}$ and $T_{32}=\frac{\theta_{2,2}}{2}.$
Also consider $S=\left(\begin{matrix}
                  S_0 &  -S_0 T_{31}^t \\
                 0& {\rm id}_q\\
                 0& T_{32}^t
                 \end{matrix}
\right),$
where
$S_0=\left(\begin{matrix}
                  0 & 1 \\
                 -\theta_{12}^{-1} & 0
                 \end{matrix}
\right).$
Then the equations \ref{eq:module1},~\ref{eq:module2},~\ref{eq:module3},~\ref{eq:module4}, make $\mathcal{S}(\R \times \Z^q)$  an $A^\infty_{\sigma(\theta)}-A^\infty_{\theta}$
Morita equivalence bimodule, where
$\sigma(\theta)=\left(\begin{matrix}
                  \theta_{1,1}^{-1} & -\theta_{1,1}^{-1}\theta_{1,2} \\
                  \theta_{2,1}\theta_{1,1}^{-1} & \theta_{2,2}-\theta_{2,1}\theta_{1,1}^{-1}\theta_{1,2}
                \end{matrix}
\right).$ Let us denote by $U_1, U_2, \ldots, U_n$ the canonical generators of $A_{\theta}.$ For $A_{\sigma(\theta)},$ we choose the generators $V_1, V_2, \ldots, V_n$ such that  $ \delta_{x_i} \in C^*(\Z^n, \omega_{\sigma(\theta)})$ is identified with $V_i,$ for all $i,$ where $x_i = (0, \ldots, -1, \ldots, 0),$ -1 is at the $i^{th}$ position.  

According to \cite[Proposition 2.1]{Rie81}, if we can find an $f\in \mathcal{S}(\R \times \Z^q)$ such that
$_{A^\infty_{\sigma(\theta)}}\langle f,f\rangle=1,$ then $e=\langle f,f\rangle_{A^\infty_{\theta}}$ is a projection of trace $\theta_{12}$ in $A^\infty_{\theta}\subset A_{\theta},$ and we have an isomorphism $\psi: A_{\sigma(\theta)} \rightarrow eA_{\theta}e,$ given by $\psi(a)= \langle f,af\rangle _{A_{\theta}}.$

Choose a smooth even function $\phi$ on $\R$ as in the definition of $f_{\theta_{12}}$ in \cite[Section 2.2]{Wal21}, assuming $\theta_{12}\in (1/2,1).$ By using a standard regularization argument as in \cite[Lemma 2.1]{Boc97}, we can assume that $\phi$ is smooth and $\sqrt{\phi}$ is also smooth. We restrict $\theta_{12}$ to take values in $(\frac{1}{2},1)$ just to make sure that the projection $e$ that we are going to construct is $\Z_2$-invariant. In general, one can find such $\phi$ with the required properties for a general $\theta_{12}\in (0,1).$ Note that $\phi$ is a compactly supported function (with support in $[-\frac{1}{2},\frac{1}{2}]$) satisfying $\phi(x_0+\theta_{12})=1-\phi(x_0)$ for $-\frac{1}{2} \leq x_0 \leq \frac{1}{2}-\theta_{12}$, $\phi=1$ on $\frac{1}{2}-\theta_{12} \leq x_0 \leq \theta_{12}-\frac{1}{2}$, and $\phi(-x_0)=\phi(x_0)$ for $-\frac{1}{2} \leq x \leq \frac{1}{2}.$

Now define a function $f\in \mathcal{S}(\R \times \Z^q),$ given by $f(x_0,l)=c\sqrt{\phi}(x_0),$ $c=\frac{1}{\sqrt{K\theta_{12}}}$ ($K$ is as in Equation~\ref{eq:module4}), when $\Z^q\ni l=0$ and $f(x,l)=0$ otherwise. Let us first show that $_{A^\infty_{\sigma(\theta)}}\langle f,f\rangle=1.$ To this end, from Equation~\ref{eq:module4}, we have the following:

$$_{A^\infty_{\sigma(\theta)}}\langle f,f\rangle(m)=Ke^{2\pi i\langle S(m),J'S(m)\slash2\rangle}\int_{\R \times \Z^q}\langle x,S''(m)\rangle f(x+S'(m))f(x)dx,$$ for $m=(m_1,m_2,\ldots,m_n) \in \Z^n,$ noting that $f$ is real valued. Using the formula of $S'$ and $f,$ it is clear that the above expression is zero for any non-zero values of $(m_3,m_4,\ldots,m_n)\in \Z^q.$ Also when $m_2\neq 0,$ the above expression for $m=(m_1,m_2,0,\ldots,0)\in \Z^n$ is zero since $x+S'(m)= (x_0+m_2,x_1,\ldots,x_q),$ where $x=(x_0,x_1,\ldots,x_q),$ and $\phi$ is supported in an interval of length one and vanishing at the end points. Finally for an $m= (m_1,0,\ldots,0)$ the expression becomes
\begin{eqnarray*}
&&Kc^2\int_{\R}e^{-\frac{2\pi ix_0 m_1} {\theta_{12}}}\phi(x_0)dx_0\\
&=&c^2K\theta_{12}\int_{\R}e^{-2\pi ix_0 m_1}\phi(x_0\theta_{12})dx_0\\
&=&\int_{\R}e^{-2\pi ix_0 m_1}\widetilde{\phi}(x_0)dx_0,
\end{eqnarray*}
where $\widetilde{\phi}(x_0)=\phi(x_0\theta_{12}).$ Now

$$\int_{\R}e^{-2\pi ix_0 m_1}\widetilde{\phi}(x_0)dx_0=\widehat{\tilde{\phi}}(m_1)=\widehat{\Phi}(m_1),$$
where $~\widehat{}~$ denotes the Fourier transform and $\Phi$ is the periodic function defined by $\Phi(x_0)= \sum\limits_{n\in \Z} \widetilde{\phi}(x_0+n),$ $x_0\in \R.$ But $\sum\limits_{n\in \Z} \widetilde{\phi}(x_0+n)=\sum\limits_{n\in \Z} \phi(\theta_{12}x_0+\theta_{12}n)=1,$ using the defining properties of $\phi,$ for all $x_0\in \R.$ Hence we get $_{A^\infty_{\sigma(\theta)}}\langle f,f\rangle=1.$

Now we want an explicit expression for the projection
 $$\langle f,f\rangle_{A^\infty_{\theta}}(m)=e^{2\pi i\langle -T(m),J'T(m)\slash2\rangle}\int_{\R \times \Z^q}\langle x,-T''(m)\rangle f(x+T'(m))f(x)dx.$$ Let us write $m=(m_1,m_2,\ldots,m_n), x=(x_0,x_1,\ldots,x_q)$ as before. An easy observation shows that the above expression is zero exactly when $m_3=m_4=\cdots =m_n=0.$ This implies that $\langle f,f\rangle_{A^\infty_{\theta}}\in A_{\theta_{12}}\subseteq A_{\theta},$ and a direct computation yields 
 
\begin{equation}\label{eq:rieffel_projection_explicit}
	\langle f,f\rangle_{A^\infty_{\theta}}(m_1,m_2,0,0,\ldots,0)=e^{-\pi i\theta_{12}m_1m_2}\int_{\R}e^{-2\pi ix_om_2} \sqrt{\phi}(x_0+\theta_{12}m_1)\sqrt{\phi}(x_0)dx_0.
\end{equation}

\subsection{Rieffel-type projections in $A_{\theta}\rtimes \Z_2$}\label{sec:bottpincrossed} 
In this section we describe how to construct projections inside $A_{\theta}\rtimes \Z_2$ using the Rieffel projection constructed above, 
and some other technical results used in the proof of Proposition~\ref{prop:imageofi_rieffel}.

Let $f \in \mathcal{S}\left(\mathbb{R} \times \mathbb{Z}^q\right)$ be as above. Since, by construction, $f$ is an even function, it follows from Equation~\ref{eq:proj_mod_inner_holds} that $f$ and the projection $e=\langle f, f\rangle_{A_\theta^{\infty}}$ are $\mathbb{Z}_2$-invariant. Hence the flip-action is an well-defined action on $eA_\theta e.$ Now let us show that the map $\psi: A_{\sigma(\theta)} \rightarrow eA_{\theta}e,$ given by $\psi(a)= \langle f,af\rangle _{A_{\theta}}$ is flip equivariant. To this end, let us use Equation~\ref{eq:proj_mod_inner_holds} and Equation~\ref{eq:proj_mod_T_holds} (for $A_{\sigma(\theta)}$) to check that $\psi(\beta(a))=\beta(\psi(a)).$ This is easy as $\beta(\psi(a))= \beta(\langle f,af\rangle _{A_{\theta}})=\langle fT_g,(af)T_g\rangle _{A_{\theta}}=\langle fT_g,\beta(a)(fT_g)\rangle _{A_{\theta}}=\langle f,\beta(a)f\rangle _{A_{\theta}}=\psi(\beta(a)).$
Hence we get an isomorphism $\psi: A_{\sigma(\theta)}\rtimes \Z_2 \rightarrow eA_{\theta}e\rtimes \Z_2,$ which we denoted by $\psi$ again. Then we have the following commutative diagram:

	\begin{align}\label{diag:i*_commutes}
\xymatrixrowsep{5pc}
\xymatrixcolsep{5pc}
\xymatrix
{
A_{\sigma_\theta} \ar[r]^{i_2} \ar[d]^{\psi} &
A_{\sigma_\theta}\rtimes\Z_2 \ar[d]^{\psi} \\
e A_{\theta}e \ar[r]^{i_2} &
eA_{\theta}e\rtimes \Z_2
}
\end{align}
where $i_2$ is the natural inclusion map.

Next let us understand the image of the projection $\frac{1}{2}(1+V_kW)\in A_{\sigma_\theta}\rtimes\Z_2,$ for $k=3,4,\cdots,n,$ under the map $\psi.$ Of course, we have $\psi(1)=e \in eA_{\theta}e\rtimes \Z_2$. It is then enough to compute $\psi(V_k).$ 

Now from Equation~(\ref{eq:module3}), 
  $$(V_kf)(x)=e^{2\pi i\langle -S(l),J'S(l)\slash2\rangle}\langle x,-S''(l)\rangle f(x+S'(l)),$$ where $l=(0,0, \ldots, -1,\ldots, 0),$ -1 at the $k^{th}$ position. Now $(V_kf)(x)$ is only non-zero when $x_{k-2}=1,$ and $x_i=0,$ for $i\neq 0, k-2.$ Then the value is 
  $$e^{\pi i\frac{\theta_{1k}\theta_{2k}}{\theta_{12}}}e^{2\pi ix_0\frac{-\theta_{1k}}{\theta_{12}}} f(x_0-\theta_{2k},0,0,\ldots,0),$$

  Now we want an explicit expression for 
 $$\langle f,V_kf\rangle_{A^\infty_{\theta}}(m)=e^{2\pi i\langle -T(m),J'T(m)\slash2\rangle}\int_{\R \times \Z^q}\langle x,-T''(m)\rangle (V_kf)(x+T'(m))f(x)dx.$$ 
Now $$(V_kf)(x+T'(m))=(V_kf)(x_0+\theta_{12}m_1,x_1+m_3,x_2+m_4, \ldots, x_q+m_n).$$ Looking at the integral, it is only non-zero when $x_1=x_2=\cdots=x_q=0.$ So for $x=(x_0,0,0,\ldots,0)$
 
 $$(V_kf)(x+T'(m))=(V_kf)(x_0+\theta_{12}m_1,m_3,m_4,\ldots, m_n).$$ But from the previous observation, $V_kf$ is only non-zero when $m_k=1$ and $m_i=0 ~(i \neq 1,k),$ and then the value is 
 $$ e^{\pi i\frac{\theta_{1k}\theta_{2k}}{\theta_{12}}}e^{2\pi i(x_0+\theta_{12}m_1)\frac{-\theta_{1k}}{\theta_{12}}} f(x_0+\theta_{12}m_1-\theta_{2k},0,0).$$
 Finally for $m=(m_1,m_2,0,\ldots, 1,\ldots, 0)$ (1 at the $k^{th}$ position), we have 
 
 
 
 
  $$\langle f,V_kf\rangle_{A^\infty_{\theta}}(m)=e^{2\pi i\langle -T(m),J'T(m)\slash2\rangle}\int_{\R \times \Z^q}\langle x,-T''(m)\rangle (V_kf)(x+T'(m))f(x)dx$$ 
  
  $$= e^{\pi i(-\theta_{12}m_1m_2+\theta_{1k}m_1+\theta_{2k}m_2)}\int_{\R}e^{-2\pi ix_0m_2} e^{\pi i\frac{\theta_{1k}\theta_{2k}}{\theta_{12}}}e^{-2\pi i(x_0+\theta_{12}m_1)\frac{\theta_{1k}}{\theta_{12}}} \sqrt{\phi}(x_0+\theta_{12}m_1-\theta_{2k})\sqrt{\phi}(x_0)dx_0.$$
  
  In particular, the above computation shows that the projection $\psi(\frac{1}{2}(1+V_kW))\in A_{\theta_{12}}\rtimes\Z_2 \subseteq A_{\theta}\rtimes\Z_2,$ where $\Z_2$ in $A_{\theta_{12}}\rtimes\Z_2$ is generated by the self-adjoint unitary $U_kW=:W',$ acting on $A_{\theta_{12}}$ by the flip action (using Lemma~\ref{lemma:flip=flip}). In the above formula of $\langle f,V_kf\rangle_{A^\infty_{\theta}}$ replacing $\theta_{1k}$ and $\theta_{2k}$ by $t\theta_{1k}$ and $t\theta_{2k},$ respectively, for $t\in [0,1],$ we get a homotopy of projections in $A_{\theta_{12}}\rtimes\Z_2$ between $\psi(\frac{1}{2}(1+V_kW))$ (at $t=1$) and $\frac{e}{2}(1+W')$ (at $t=0$). (Note that we have used the fact that for $t=0$, the above expression of $\langle f,V_kf\rangle_{A^\infty_{\theta}}$ matches with the same of $e=\langle f,f\rangle_{A^\infty_{\theta}}$ in Equation~\ref{eq:rieffel_projection_explicit}.)

 \section{strongly totally irrational matrices}\label{sec:strongly_irr}
 
 This section is essentially Appendix I of \cite{CH21}, but with some modifications at the end. For completeness we repeat the whole construction.
 
 Let $s=\{s_i\}_i$ be a sequence of integers such that $s_i>\sum\limits_{j=1}^{i-1}s_j,$ for all $i,$ with $s_1=1.$ We call such a sequence a super-increasing sequence. For $\alpha \in (0,1)$ define the $n \times n$ antisymmetric matrix $\Theta(n)$ by induction: $\Theta(2):=\left(\begin{array}{cc}0 & \alpha^{s_1} \\ -\alpha^{s_1} & 0\end{array}\right);$ $\Theta(n)_{ij} = \Theta(n-1)_{ij},$ for $1<i<j<n,$ and $\Theta(n)_{in}:= \alpha^{s_{p+i}},$ for $i=1,\cdots, n-1,$ where $p=\frac{(n-1)(n-2)}{2}.$ 
Hence
$$\Theta(3)=\left(\begin{array}{ccc}0 & \alpha^{s_1} & \alpha^{s_2}\\ -\alpha^{s_1} & 0 & \alpha^{s_3} \\ -\alpha^{s_2} & -\alpha^{s_3} & 0 \end{array}\right), \quad\Theta(4)=\left(\begin{array}{cccc}0 & \alpha^{s_1} & \alpha^{s_2} & \alpha^{s_4}\\ -\alpha^{s_1} & 0 & \alpha^{s_3} & \alpha^{s_5} \\ -\alpha^{s_2} & -\alpha^{s_3} & 0 & \alpha^{s_6} \\ -\alpha^{s_4} & -\alpha^{s_5}  & -\alpha^{s_6}& 0 \end{array}\right).$$

 \begin{remark}\label{rem:submatrices_super}
 	
 Note that all sub-matrices $\Theta(n)_I$ (as in Definition~\ref{2p minor}) of $\Theta(n)$ is like $\Theta(m),$ for some $m \le n,$ but for a different super increasing sequence $s'=\{s'_i\}_i,$ which is a subsequence of $s.$
   \end{remark} \noindent We then have $\pf(\Theta(2))=\alpha^{s_1},$ and
$$\pf(\Theta(4))=\alpha^{s_1+s_6}-\alpha^{s_2+s_5}+\alpha^{s_4+s_3}.$$
Note that, using the super-increasing property of $s,$ we have that

$$s_1+s_6>s_2+s_5>s_4+s_3.$$

Denote by $s_{i,j}$, the exponent of $\alpha$ in the $ij^{th}$ entry of $\Theta(n).$ Let $s[n]:=s_{(n-1),n}=s_{\frac{n(n-1)}{2}}.$
Let us first recall the following recursive definition of pfaffian from \cite[Page 116]{FP98}. Let $\mathrm{pf}^{i j}(A)$ denote the pfaffian of the $(n-2) \times (n-2)$ skew-symmetric matrix, which we call $A^{ij},$ obtained from $A=(a_{jk})$ by removing the $i^{t h}$, $j^{t h}$ row and the $i^{t h}$, $j^{t h}$ column. Hence $\mathrm{pf}^{i j}(A)=\mathrm{pf}(A^{ij}).$ Let $n$ be even. Then for a fixed integer $j, 1 \leq j \leq n$, one has the following recursive definition of pfaffian

$$
\operatorname{pf}(A)=\sum_{i<j}(-1)^{i+j-1} a_{i j} \operatorname{pf}^{i j}(A)+\sum_{i>j}(-1)^{i+j} a_{i j} \operatorname{pf}^{i j}(A).
$$
In particular, when $j=1,$ we have

\begin{equation}\label{eq:pfaffian_recursive_1}
\operatorname{pf}(A)=\sum_{i>1}(-1)^{i+1} a_{i 1} \operatorname{pf}^{i 1}(A)=\sum_{i>1}(-1)^{i} a_{1i} \operatorname{pf}^{1 i}(A),
\end{equation}
and for $j=n,$ we have

\begin{equation}\label{eq:pfaffian_recursive}
\operatorname{pf}(A)=\sum_{i=1}^{n-1}(-1)^{i+1} a_{i n} \operatorname{pf}^{i n}(A).	
\end{equation}

	\begin{lemma}\label{lem:pfaffian_incresing} For an even $n$ we have
	$$\mathrm{pf}(\Theta(n))=\alpha^{M(n)_1}-\alpha^{M(n)_2}+\alpha^{M(n)_3}-\alpha^{M(n)_4}+\cdots +\alpha^{M(n)_{R(n)}},$$ for a strictly decreasing sequence of numbers  $M(n)_1,M(n)_2,\cdots, M(n)_{R(n)},$ where $R(n)=(n-1)!!$\footnote{double factorial}.
	\end{lemma}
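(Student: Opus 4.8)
The plan is to argue by induction on the even integer $n$, expanding $\mathrm{pf}(\Theta(n))$ along its last column via Equation~\ref{eq:pfaffian_recursive} and using the super-increasing property of $s=\{s_i\}_i$ in the following standard form: distinct finite subsets of $\{s_i\}$ have distinct sums, and of two such subset-sums the larger is the one whose index-set has the larger maximum in the symmetric difference. I will need the induction hypothesis for every even $m<n$ and every super-increasing sequence, since the submatrices arising below correspond to subsequences of $s$.

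First I would record the exponent bookkeeping. Unwinding the recursive definition of $\Theta(n)$ shows that its $(i,j)$-entry for $i<j$ is $\alpha^{s_{E(i,j)}}$ with $E(i,j):=\binom{j-1}{2}+i$; here $E$ is injective on edges and is increasing for the colex order on pairs (larger endpoint first). Every term of $\mathrm{pf}(\Theta(n))$ then corresponds to a perfect matching $M$ of $\{1,\dots,n\}$ and equals $\pm\alpha^{w(M)}$, where $w(M)=\sum_{\{i,j\}\in M}s_{E(i,j)}$ is a sum of $n/2$ distinct members of $\{s_i\}$. Since such subset-sums are pairwise distinct, there is no cancellation, so $\mathrm{pf}(\Theta(n))$ has exactly $(n-1)!!=R(n)$ nonzero terms, and reordering them by decreasing exponent we may write $\mathrm{pf}(\Theta(n))=\sum_{k}\epsilon_k\alpha^{M(n)_k}$ with $M(n)_1>\dots>M(n)_{R(n)}$ and $\epsilon_k\in\{\pm1\}$. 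What remains is to prove $\epsilon_k=(-1)^{k+1}$.

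For the inductive step (the base case $n=2$ being $\mathrm{pf}(\Theta(2))=\alpha^{s_1}$), expanding along column $n$ gives
\[
\mathrm{pf}(\Theta(n))=\sum_{i=1}^{n-1}(-1)^{i+1}\alpha^{s_{E(i,n)}}\,\mathrm{pf}(\Theta(n)^{in}),
\]
where $\Theta(n)^{in}$, the submatrix indexed by $\{1,\dots,n-1\}\setminus\{i\}$, has even size $n-2$ and, by Remark~\ref{rem:submatrices_super}, is of the form $\Theta(n-2)$ for a super-increasing subsequence of $s$. Applying the induction hypothesis to each summand, the $i$-th block contributes $R(n-2)$ terms with strictly decreasing exponents whose signs alternate and both begin and end with $(-1)^{i+1}$ (the latter because $R(n-2)$ is odd). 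The crucial claim, which I expect to be the main obstacle, is that these $n-1$ blocks do not interleave: if $i>j$ then every exponent occurring in block $i$ is strictly larger than every exponent occurring in block $j$. This is exactly where the super-increasing hypothesis does the work: all edges inside $\{1,\dots,n-1\}$ have $E$-value at most $E(n-2,n-1)=\binom{n-2}{2}+(n-2)<\binom{n-1}{2}+1=E(1,n)$, so the index $E(i,n)$ strictly exceeds every index that can occur in an exponent of $\mathrm{pf}(\Theta(n)^{in})$. Writing an exponent of block $i$ as a sum over an index-set containing $E(i,n)$ and otherwise supported below $E(n-2,n-1)$, the symmetric difference of two such index-sets from blocks $i>j$ has largest element $E(i,n)$, which lies in the block-$i$ set; by the ordering property the block-$i$ exponent is then the larger. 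Hence listing the blocks in the order $i=n-1,n-2,\dots,1$ puts all $R(n)=(n-1)R(n-2)=(n-1)!!$ exponents in strictly decreasing order; and since passing from block $i$ to block $i-1$ flips the sign from $(-1)^{i+1}$ to $(-1)^{i}$, the global sign sequence alternates, its first term (from block $i=n-1$) having sign $(-1)^{n}=+1$ because $n$ is even. Therefore $\epsilon_k=(-1)^{k+1}$, which completes the induction.

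Apart from the non-interleaving of the blocks, every ingredient — the exponent formula for $\Theta(n)_{ij}$, the distinctness of the $w(M)$, the count $R(n)=(n-1)!!$, and the tally of signs across block boundaries — is routine once the subset-sum lemma for super-increasing sequences is in hand.
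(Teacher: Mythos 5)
Your proposal is correct and follows essentially the same route as the paper: induction on $n$, expansion along the last column via Equation~\ref{eq:pfaffian_recursive}, and the key observation that the index $E(i,n)=\binom{n-1}{2}+i$ strictly dominates every index that can appear in an exponent of $\operatorname{pf}^{in}(\Theta)$, so the $n-1$ blocks do not interleave and the alternation of signs propagates through block boundaries because $R(n-2)=(n-3)!!$ is odd. The paper compresses the non-interleaving claim into the single remark that ``the exponents of $\alpha$ in the expressions of $\operatorname{pf}^{in}(\Theta)$ contain no term of the form $s_{*,n}$,'' while you spell it out via the subset-sum comparison for super-increasing sequences; this is the same argument made explicit, not a genuinely different one.
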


	\begin{proof}
		We prove this by induction on $n.$ As we have seen that the statement is true for $n=2,4$ for all super-increasing sequences. Now assume that the statement is true for $n-2,$ for all super-increasing sequences. Then we must prove that the statement is true for $n,$ for all super-increasing sequences. Fix a super-increasing sequence $s=\{s_i\}_i.$ From Equation~\ref{eq:pfaffian_recursive}, we have
		
		\begin{align*}
			\operatorname{pf}(\Theta(n))&=\sum_{i=1}^{n-1}(-1)^{i+1} \theta_{i n} \operatorname{pf}^{i n}(\Theta)\\
			&= \theta_{(n-1) n} \operatorname{pf}^{(n-1) n}(\Theta)-\theta_{(n-2) n} \operatorname{pf}^{(n-2) n}(\Theta)+\theta_{(n-3) n} \operatorname{pf}^{(n-3) n}(\Theta)-\cdots\\&\quad -\theta_{2 n} \operatorname{pf}^{2 n}(\Theta)+\theta_{1 n} \operatorname{pf}^{1 n}(\Theta)\\
			&= \alpha^{s_{(n-1), n}} \operatorname{pf}^{(n-1) n}(\Theta)-\alpha^{s_{(n-2), n}} \operatorname{pf}^{(n-2) n}(\Theta)+\alpha^{s_{(n-3), n}} \operatorname{pf}^{(n-3) n}(\Theta)-\cdots\\&\textcolor{red}\quad-\alpha^{s_{2, n}} \operatorname{pf}^{2 n}(\Theta)+\alpha^{s_{1, n}} \operatorname{pf}^{1 n}(\Theta).
		\end{align*}
		Now by the induction hypothesis we have that all $\operatorname{pf}^{i n}(\Theta), i=1,2, \ldots ,n-1,$ are of the form as in the statement with $R(n-2)=(n-3)!!.$ Now if we expand all $\operatorname{pf}^{i n}(\Theta), i=1,2, \ldots,n-1,$ keeping the form, we see that the expression $$\alpha^{s_{(n-1), n}} \operatorname{pf}^{(n-1) n}(\Theta)-\alpha^{s_{(n-2), n}} \operatorname{pf}^{(n-2) n}(\Theta)+\alpha^{s_{(n-3), n}} \operatorname{pf}^{(n-3) n}(\Theta)-\cdots\\-\alpha^{s_{2, n}} \operatorname{pf}^{2 n}(\Theta)+\alpha^{s_{1, n}} \operatorname{pf}^{1 n}(\Theta)$$ is already of the required form, using the super-increasing property of $s,$ and noting that the  exponents of $\alpha$ in the expressions of $\operatorname{pf}^{i n}(\Theta), i=1,2, \ldots ,n-1,$ contain no term of the form $s_{*,n}.$
		Note that the total number of terms (after expanding) in the above expression is $(n-1)(n-3)!!=(n-1)!!.$
	\end{proof}

\begin{remark}\label{rem:M_n>}
	It is also clear from above, again using the super-increasing property of $s,$ that $M(n)_{R(n)}>M(n-2)_1.$
\end{remark}

\begin{lemma}\label{lem:pfaffian_in_0_1}

	$$0<\mathrm{pf}(\Theta(n))<\mathrm{pf}(\Theta(n-2))<1,$$ for an even $n.$ \end{lemma}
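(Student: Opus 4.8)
The plan is to prove the double inequality $0 < \pf(\Theta(n)) < \pf(\Theta(n-2)) < 1$ for even $n$ by induction on $n$, leaning on the structural description of $\pf(\Theta(n))$ obtained in Lemma~\ref{lem:pfaffian_incresing}. Recall from that lemma that
$$\pf(\Theta(n)) = \alpha^{M(n)_1} - \alpha^{M(n)_2} + \alpha^{M(n)_3} - \cdots + \alpha^{M(n)_{R(n)}},$$
where $M(n)_1 < M(n)_2 < \cdots$ — wait, strictly decreasing — so $M(n)_1 > M(n)_2 > \cdots > M(n)_{R(n)} \geq 1$, and $R(n) = (n-1)!!$ is odd (being a product of odd numbers). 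The base case $n=2$ is immediate: $\pf(\Theta(2)) = \alpha^{s_1} = \alpha \in (0,1)$, and there is no ``$\Theta(0)$'' term to worry about, so the statement reduces to $0 < \alpha < 1$. For $n = 4$ one can also verify directly: $\pf(\Theta(4)) = \alpha^{s_1+s_6} - \alpha^{s_2+s_5} + \alpha^{s_3+s_4}$ with $s_1 + s_6 > s_2 + s_5 > s_3 + s_4$, so the positivity and the upper bound will follow from the general alternating-sum argument below.

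The key analytic lemma I would isolate is the following elementary fact about alternating sums of decreasing powers: if $0 < \alpha < 1$ and $m_1 > m_2 > \cdots > m_k \geq 1$ are positive integers with $k$ odd, then
$$0 < \alpha^{m_k} \leq \alpha^{m_1} - \alpha^{m_2} + \cdots + \alpha^{m_k} \leq \alpha^{m_1} < 1.$$
Indeed, grouping the terms from the right as $(\alpha^{m_1} - \alpha^{m_2}) + (\alpha^{m_3} - \alpha^{m_4}) + \cdots + \alpha^{m_k}$ shows each parenthesised block is positive (since $\alpha^{m_{2j-1}} > \alpha^{m_{2j}}$) and the leftover single term $\alpha^{m_k}$ is positive, giving the lower bound; grouping from the left as $\alpha^{m_1} - (\alpha^{m_2} - \alpha^{m_3}) - \cdots - (\alpha^{m_{k-1}} - \alpha^{m_k})$ shows we subtract nonnegative quantities from $\alpha^{m_1} < 1$, giving the upper bound. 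Applying this with $k = R(n)$ (odd) and the exponents $M(n)_1 > \cdots > M(n)_{R(n)}$ gives at once $0 < \pf(\Theta(n)) \leq \alpha^{M(n)_1} < 1$.

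It remains to compare $\pf(\Theta(n))$ with $\pf(\Theta(n-2))$. Here I would use Remark~\ref{rem:M_n>}, which states $M(n)_{R(n)} > M(n-2)_1$; that is, \emph{every} exponent appearing in $\pf(\Theta(n))$ is strictly larger than \emph{every} exponent appearing in $\pf(\Theta(n-2))$. Combined with the bound just proved, $\pf(\Theta(n)) \leq \alpha^{M(n)_1}$, and with the lower bound applied one dimension down, $\pf(\Theta(n-2)) \geq \alpha^{M(n-2)_{R(n-2)}}$, the chain of inequalities
$$\pf(\Theta(n)) \leq \alpha^{M(n)_1} < \alpha^{M(n)_{R(n)}} \cdot \alpha^{-(M(n)_{R(n)} - M(n)_1)}$$
— hmm, that is getting clumsy; cleaner is: since $M(n)_1 \geq M(n)_{R(n)} > M(n-2)_1 \geq M(n-2)_{R(n-2)}$ and $0 < \alpha < 1$, we get $\alpha^{M(n)_1} < \alpha^{M(n-2)_1} \leq \alpha^{M(n-2)_{R(n-2)}}$, hence
$$\pf(\Theta(n)) \leq \alpha^{M(n)_1} < \alpha^{M(n-2)_{R(n-2)}} \leq \pf(\Theta(n-2)),$$
where the outer inequalities are the bounds from the alternating-sum lemma. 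Finally $\pf(\Theta(n-2)) \leq \alpha^{M(n-2)_1} < 1$ completes the string $0 < \pf(\Theta(n)) < \pf(\Theta(n-2)) < 1$.

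The main obstacle is not really difficulty but bookkeeping: one must be careful that $R(n) = (n-1)!!$ is genuinely odd so that the alternating sum begins and ends with a ``$+$'', and one must correctly invoke Remark~\ref{rem:M_n>} to get the uniform separation between the exponent sets of consecutive dimensions. Both of these are already recorded in the preceding lemmas and remark, so the proof is essentially an assembly of those facts plus the one-line elementary estimate on alternating sums of decreasing powers of $\alpha \in (0,1)$. If desired, the elementary estimate can be stated and proved inline rather than as a separate lemma, since its proof is just the two grouping observations above.
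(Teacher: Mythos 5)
There is a genuine error in the elementary estimate you isolate, and it silently reverses several inequalities. Because $\alpha \in (0,1)$, the map $x \mapsto \alpha^{x}$ is \emph{decreasing}; hence from $m_{2j-1} > m_{2j}$ one gets $\alpha^{m_{2j-1}} < \alpha^{m_{2j}}$, not $\alpha^{m_{2j-1}} > \alpha^{m_{2j}}$ as you assert. Consequently your grouping $(\alpha^{m_1}-\alpha^{m_2})+(\alpha^{m_3}-\alpha^{m_4})+\cdots+\alpha^{m_k}$ is a sum of \emph{negative} blocks plus $\alpha^{m_k}$, and the grouping $\alpha^{m_1}-(\alpha^{m_2}-\alpha^{m_3})-\cdots$ subtracts \emph{negative} quantities and hence increases the total. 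The correct two-sided bound (with $k$ odd and $m_1>m_2>\cdots>m_k\geq 1$) is
\[
0 < \alpha^{m_1} \leq \alpha^{m_1}-\alpha^{m_2}+\cdots+\alpha^{m_k} \leq \alpha^{m_k} < 1,
\]
that is, the lower bound is the \emph{first} term (the one with the largest exponent, hence the smallest power) and the upper bound is the \emph{last} term; you have the two swapped. In particular your chain $\pf(\Theta(n)) \leq \alpha^{M(n)_1} < \alpha^{M(n-2)_{R(n-2)}} \leq \pf(\Theta(n-2))$ begins with a false inequality: $\alpha^{M(n)_1}$ is a \emph{lower} bound for $\pf(\Theta(n))$, which is precisely what the paper's own proof records when it writes $\pf(\Theta(n))>\alpha^{M(n)_1}$.

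The repair is essentially one line, and once made your strategy works. With the corrected bounds you have $\pf(\Theta(n)) \leq \alpha^{M(n)_{R(n)}}$ and $\pf(\Theta(n-2)) \geq \alpha^{M(n-2)_1}$, and Remark~\ref{rem:M_n>} gives exactly $M(n)_{R(n)} > M(n-2)_1$, hence (again using that $\alpha^x$ is decreasing) $\alpha^{M(n)_{R(n)}} < \alpha^{M(n-2)_1}$. The chain
\[
\pf(\Theta(n)) \leq \alpha^{M(n)_{R(n)}} < \alpha^{M(n-2)_1} \leq \pf(\Theta(n-2)) \leq \alpha^{M(n-2)_{R(n-2)}} < 1
\]
then delivers the whole statement and even disposes of the upper bound $\pf(\Theta(n-2))<1$ directly, with no auxiliary induction. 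Once corrected, your route (bound each pfaffian above and below separately, then compare across dimensions via the exponent separation of Remark~\ref{rem:M_n>}) is a legitimate variant of the paper's argument, which instead writes the difference $\pf(\Theta(n))-\pf(\Theta(n-2))$ as a single alternating sum over the concatenated, still strictly decreasing, exponent sequence and pairs consecutive terms to see that each pair is negative. Same ingredients, different bookkeeping.
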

  \begin{proof}
  Since $$\mathrm{pf}(\Theta(n))=\alpha^{M(n)_1}-\alpha^{M(n)_2}+\alpha^{M(n)_3}-\alpha^{M(n)_4}+\cdots +\alpha^{M(n)_{R(n)}},$$ we have that

  $$\mathrm{pf}(\Theta(n))>\alpha^{M(n)_1}>0,$$ using $-\alpha^{M(n)_{2i}}+\alpha^{M(n)_{2i+1}}>0$ (since $\alpha \in (0,1)$). To show $\mathrm{pf}(\Theta(n))<\mathrm{pf}(\Theta(n-2)),$ we look at the expression $\mathrm{pf}(\Theta(n))-\mathrm{pf}(\Theta(n-2)),$ which is

  $$\alpha^{M(n)_1}-\alpha^{M(n)_2}+\alpha^{M(n)_3}-\cdots +\alpha^{M(n)_{R(n)}}-\alpha^{M(n-2)_1}+\alpha^{M(n-2)_2}-\alpha^{M(n-2)_3}+\cdots -\alpha^{M(n-2)_{R(n-2)}}.$$
  Note that $M(n)_1,M(n)_2,\cdots, M(n)_{R(n)},M(n-2)_1,M(n-2)_2,\cdots, M(n-2)_{R(n-2)}$ is still a strictly decreasing finite sequence due to the above remark. Now using $\alpha^{M(n)_i}-\alpha^{M(n)_{i+1}}<0,$ $\alpha^{M(n)_{R(n)}}-\alpha^{M(n-2)_{1}}<0,$ and $\alpha^{M(n-2)_{2i}}-\alpha^{M(n-2)_{2i+1}}<0,$ we get the above expression less than zero.

  The last inequality in the statement of Lemma~\ref{lem:pfaffian_in_0_1} follows from an argument using induction on $n.$
   \end{proof}

Now let us choose an $\alpha\in (0,1)$ such that $\alpha^{2s[n]}>\frac{1}{2}.$ Then from the above, $M(n)_1<2s[n].$ But $\alpha^{2s[n]}<\alpha^{M(n)_1}.$ So $\frac{1}{2}<\alpha^{M(n)_1}.$ So we have, for such $\alpha$, 
\begin{equation}\label{eq:pf_in_half_1}
	\frac{1}{2}<\mathrm{pf}(\Theta(n))<\mathrm{pf}(\Theta(n-2))<1,\end{equation} for an even $n.$ 

\begin{corollary} With the notations in Section~\ref{sec:rie}, for an $\alpha\in (0,1)$ such that $\alpha^{2s[n]}>\frac{1}{2},$ we have 
\begin{equation}
\frac{1}{2}<\operatorname{pf}\left(F^{j}\left(\Theta(n)_I\right)_{11}\right)<1,
\end{equation}
  for all $I\in \mathrm{Minor}(n)$  with  $2 \leq|I| =2m,$ and for all  $j=0,1, \ldots, m-1.$  	\end{corollary}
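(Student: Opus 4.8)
The plan is to reduce the claim about $F^j(\Theta(n)_I)_{11}$ to the pfaffian inequality \eqref{eq:pf_in_half_1} already established for matrices of the form $\Theta(m)$. The key structural observation is Remark~\ref{rem:submatrices_super}: every submatrix $\Theta(n)_I$ with $|I| = 2m$ is again of the form $\Theta(2m)$, but built from a \emph{subsequence} $s' = \{s'_i\}_i$ of the original super-increasing sequence $s$ — and a subsequence of a super-increasing sequence is again super-increasing (with $s'_1 \geq 1$; after rescaling one may assume $s'_1 = 1$, or simply note that all the inequalities used only require the super-increasing property, not the normalization). Moreover $s'[2m] = s'_{m(2m-1)} \leq s[n]$, so the choice $\alpha^{2s[n]} > \tfrac12$ forces $\alpha^{2s'[2m]} > \tfrac12$ as well. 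Hence \eqref{eq:pf_in_half_1} applies verbatim to $\Theta(2m) \cong \Theta(n)_I$, giving $\tfrac12 < \pf(\Theta(n)_I) < 1$, and the same for all its even-length principal pfaffian minors.

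Next I would identify $\pf(F^j(\Theta(n)_I)_{11})$ with a ratio of pfaffian minors of $\Theta(n)_I$ via Lemma~\ref{h1}. Writing $A := \Theta(n)_I \in \mathcal{T}_{2m}$, Equation~\eqref{g1} (with the indexing there) gives
\[
\pf\big(F^j(A)_{1,1}\big) = \frac{\pf^A_{(1,2,\dots,2j+2)}}{\pf^A_{(1,2,\dots,2j)}}
\]
for $j = 1, \dots, m-1$, and for $j = 0$ it is simply $\pf(A_{1,1}) = \alpha^{s'_1} = \alpha < 1$, which lies in $(\tfrac12,1)$ by the standing hypothesis on $\alpha$ (indeed $\alpha^{2s[n]} > \tfrac12$ with $s[n] \geq 1$ forces $\alpha > \tfrac12$). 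For this to be legitimate one needs $\pf^A_{(1,2,\dots,2s)} \neq 0$ for $s = 1,\dots,m-1$, which holds because each such leading minor is itself of the form $\pf(\Theta(2s))$ for a super-increasing subsequence, hence strictly positive by Lemma~\ref{lem:pfaffian_in_0_1}. So $F^j(A)$ is well-defined.

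It remains to bound the ratio. Both numerator $\pf^A_{(1,2,\dots,2j+2)}$ and denominator $\pf^A_{(1,2,\dots,2j)}$ are pfaffians of leading principal submatrices of $A$, each of the form $\Theta(2j+2)$ and $\Theta(2j)$ respectively, for nested super-increasing subsequences of $s$. By \eqref{eq:pf_in_half_1} applied to these submatrices — whose relevant exponents $s[\cdot]$ are all $\leq s[n]$, so the hypothesis $\alpha^{2s[n]} > \tfrac12$ still does its job — we get $\tfrac12 < \pf^A_{(1,2,\dots,2j+2)} < \pf^A_{(1,2,\dots,2j)} < 1$. The second inequality is exactly the statement ``$\pf(\Theta(n)) < \pf(\Theta(n-2))$'' of Lemma~\ref{lem:pfaffian_in_0_1}, now read as ``adding two more coordinates strictly decreases the pfaffian''. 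Dividing, $0 < \pf(F^j(A)_{1,1}) < 1$ is immediate, and for the lower bound $\pf(F^j(A)_{1,1}) > \tfrac12$ one uses that the numerator exceeds $\tfrac12$ while the denominator is $< 1$, so the quotient exceeds $\tfrac12$.

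\textbf{Main obstacle.} The only genuinely delicate point is bookkeeping: one must check that the leading minors $\pf^A_{(1,2,\dots,2s)}$ of the \emph{permuted/extracted} matrix $A = \Theta(n)_I$ really are of the form $\Theta(2s)$ for an honest super-increasing subsequence of $s$, and that the exponent-comparison in the chain $\tfrac12 < \pf(\Theta(2j+2)) < \pf(\Theta(2j)) < 1$ survives the passage to the subsequence — i.e., that ``$s[\,\cdot\,] \leq s[n]$'' and the super-increasing property are preserved under taking both principal submatrices and the pfaffian-minor submatrices indexed by $I$. This is essentially the content of Remark~\ref{rem:submatrices_super} together with Remark~\ref{rem:M_n>}, so once those are invoked carefully the argument is routine; the proof in the paper can therefore be quite short, citing \eqref{eq:pf_in_half_1}, Lemma~\ref{h1}, and Remark~\ref{rem:submatrices_super}.
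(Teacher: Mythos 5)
Your proposal follows the paper's own route exactly: use Remark~\ref{rem:submatrices_super} to identify $\Theta(n)_I$ with some $\Theta(2m)$ for a super-increasing subsequence $s'$ of $s$, observe that $s'[2m]\le s[n]$ so the hypothesis $\alpha^{2s[n]}>\tfrac12$ carries over, invoke the chain $\tfrac12<\operatorname{pf}(\Theta(n)_I)<\operatorname{pf}(\Theta(n)_{I''})<1$ from Equation~\ref{eq:pf_in_half_1}, and then feed Lemma~\ref{h1}'s ratio formula for $\operatorname{pf}(F^j(\cdot)_{1,1})$ into that chain.

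One small slip in the $j=0$ case: you assert $\operatorname{pf}(A_{1,1})=\alpha^{s'_1}=\alpha$, but $s'_1=s_{i_1,i_2}$ need not equal $1$ (e.g.\ for $I=(3,4)$ and $n=4$ it is $s_6$), and your parenthetical ``$\alpha>\tfrac12$'' only covers the $s'_1=1$ case. The conclusion is still true because $s'_1\le s[n]$ gives $\alpha^{s'_1}\ge\alpha^{s[n]}>\alpha^{2s[n]}>\tfrac12$; alternatively you can absorb $j=0$ into the ratio formula with $\operatorname{pf}^A_{\emptyset}=1$, which is what the paper's single-chain phrasing implicitly does. Also note that ``after rescaling one may assume $s'_1=1$'' is not available here, since rescaling the exponent sequence changes the actual matrix entries $\alpha^{s'_i}$; it is better (and sufficient) to observe, as you do as a fallback, that none of the lemmas actually use the normalisation $s_1=1$.
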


\begin{proof}
	For any $I=(i_1,i_2,\ldots,i_{2m})$, since $s^I[n]:=s_{i_{2m-1},i_{2m}}\le s_{(n-1),n}=s[n],$ we still have $\alpha^{2s^I[n]}>\frac{1}{2}.$ Hence from Equation~\ref{eq:pf_in_half_1} (and using Remark~\ref{rem:submatrices_super}), 
	$$\frac{1}{2}<\mathrm{pf}\left(\Theta(n)_I\right)<\mathrm{pf}\left(\Theta(n)_{I''}\right)<1,$$ where $I''$ is obtained from $I$ by deleting the last two numbers. Now the corollary easily follows with the explicit expression (using Lemma \ref{h1}) of $\operatorname{pf}\left(F^{j}\left(\Theta(n)_I\right)_{11}\right)$  in hand.
	\end{proof}

Choose the super-increasing sequence $\{s_i=2^{i-1}\}_i.$ When $\alpha$ is a transcendental number,  it is well known that the numbers
$\alpha,\alpha^2,\dots,\alpha^{2^i},\dots$  as well as any
products of these numbers are linear independent over $\mathbb{Q}$. So we have that $\Theta(n)$ is totally
irrational.
Then by using the above corollary, we get the following corollary.
\begin{corollary}\label{cor:class_of_strongly}
   Let $s$ be the super-increasing sequence $\{s_i=2^{i-1}\}_i$ and let $\alpha\in (0,1)$  be a transcendental number such that $\alpha^{2s[n]}>\frac{1}{2}.$  Let $\Theta(n)$ be the $n\times n$
  antisymmetric matrix involving $\alpha$ and $s.$ Then we have that
  $\Theta(n)$ is a strongly totally irrational matrix for $n\geq2.$
\end{corollary}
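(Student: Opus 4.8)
The goal is to show that, for the super-increasing sequence $s_i = 2^{i-1}$ and a transcendental $\alpha \in (0,1)$ with $\alpha^{2s[n]} > \frac{1}{2}$, the matrix $\Theta(n)$ is strongly totally irrational in the sense of Definition~\ref{a3}. Recall that being strongly totally irrational means two things: (i) $\Theta(n)$ is totally irrational, i.e.\ the pfaffian minors $\{\pf(\Theta(n)_I) : I \in \mathrm{Minor}(n)\}$ are rationally independent; and (ii) for every $I \in \mathrm{Minor}(n)$ with $|I| = 2m \geq 2$ and every $j = 0, 1, \dots, m-1$, one has $\pf(F^j(\Theta(n)_I)_{1,1}) \in (\frac{1}{2}, 1)$. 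The plan is to handle these two requirements separately, using the machinery already assembled in Appendix~\ref{sec:strongly_irr}.

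For (ii), I would simply invoke the immediately preceding corollary: for any $\alpha \in (0,1)$ with $\alpha^{2s[n]} > \frac{1}{2}$ (no transcendence needed here), Lemma~\ref{lem:pfaffian_in_0_1} together with the refined bound $M(n)_1 < 2s[n]$ gives $\frac{1}{2} < \pf(\Theta(n)_I) < \pf(\Theta(n)_{I''}) < 1$ for all pfaffian-minor index sequences $I$, and then the explicit formula for $F^j$ from Lemma~\ref{h1} converts this into $\frac{1}{2} < \pf(F^j(\Theta(n)_I)_{1,1}) < 1$. Since our chosen $\alpha$ satisfies the hypothesis by assumption, condition (ii) follows verbatim. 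So the only remaining work is condition (i).

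For (i), the point is that with $s_i = 2^{i-1}$, Lemma~\ref{lem:pfaffian_incresing} (and its proof) shows that each $\pf(\Theta(n)_I)$ is an integer combination — in fact an alternating $\pm 1$ combination — of distinct monomials $\alpha^{k}$ with $k$ a sum of two distinct elements of the sequence $\{2^{i-1}\}$, i.e.\ $k$ of the form $2^{a} + 2^{b}$ with $a \neq b$ (together with $1 = \alpha^0$ for $I = \emptyset$). By Remark~\ref{rem:submatrices_super}, each submatrix $\Theta(n)_I$ is itself of the form $\Theta(m)$ for a sub-sequence of $s$, so the same structural description applies to all minors simultaneously. The transcendence of $\alpha$ implies that the family $\{1\} \cup \{\alpha^k : k \geq 1\}$ is linearly independent over $\mathbb{Q}$; hence a $\mathbb{Q}$-linear relation among the $\pf(\Theta(n)_I)$ would, after collecting coefficients of each $\alpha^k$, force all those coefficients to vanish. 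Thus it suffices to check that no two distinct minors $I \neq I'$ "share" their top monomial in a way that would allow cancellation — more precisely, that distinct minors cannot be $\mathbb{Q}$-linearly dependent. The cleanest way to see this: order $\mathrm{Minor}(n)$ and show by the super-increasing/binary structure that the \emph{leading} exponent $M_I$ (the smallest exponent appearing, equivalently the largest monomial since $\alpha<1$) is distinct for distinct $I$, or more robustly, that the set of exponents appearing across all minors, when one tracks which minor contributes which monomial, admits a triangular structure. Concretely, I would argue that the top term of $\pf(\Theta(n)_I)$ for $I = (i_1 < \dots < i_{2m})$ is $\alpha^{s_{i_1 i_2} + s_{i_3 i_4} + \cdots + s_{i_{2m-1} i_{2m}}}$ (the pairing of consecutive indices, which gives the smallest exponent sum by the super-increasing property), and that the map $I \mapsto$ (this exponent) is injective because the binary expansion of the exponent recovers the set of matrix-entry positions used, hence recovers $I$. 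This injectivity, plus the fact that this top exponent is strictly smaller than every exponent appearing in any minor of strictly larger "weight", yields a unitriangular change of basis between $\{\pf(\Theta(n)_I)\}$ and a subset of $\{\alpha^k\}$, and rational independence follows.

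\textbf{Main obstacle.} The delicate point is the injectivity/triangularity claim in the last paragraph: one must verify that the multiset of exponents $\{s_{i_1 i_2} + s_{i_3 i_4} + \cdots\}$ arising from the leading term of each pfaffian minor are pairwise distinct and, moreover, that leading terms of different minors cannot accidentally coincide with non-leading terms of other minors in a way that breaks triangularity. With the choice $s_i = 2^{i-1}$, the exponent $s_{ab}$ of the $(a,b)$ entry is itself a power of two (namely $2^{p+\text{something}}$ by the inductive construction of $\Theta(n)$), so a sum of such exponents has a sparse binary expansion from which the summands — hence the index pairs, hence $I$ — can be uniquely read off; this is exactly the classical "super-increasing knapsack" uniqueness argument. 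Making this bookkeeping precise across all of $\mathrm{Minor}(n)$ at once (rather than one fixed $n$ and one fixed minor) is the only real content; everything else is assembly of results already proved in the appendix. I expect roughly a page, most of it setting up notation for the exponent bookkeeping and then citing Lemma~\ref{lem:pfaffian_incresing}, Remark~\ref{rem:submatrices_super}, and the preceding corollary.
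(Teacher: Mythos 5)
Your treatment of condition (ii) --- invoking the corollary just preceding the statement --- is exactly what the paper does. For condition (i), your central mechanism is the right one and matches the paper's (implicit) reasoning: with $s_i = 2^{i-1}$ the exponent $s_{a,b}$ of each entry $\Theta(n)_{ab}$ is a distinct power of two, so the binary expansion of any monomial exponent appearing in a pfaffian minor recovers the perfect matching used, which in turn determines $I$. But two of the specific assertions you make to organize this are false. First, the smallest exponent of $\pf(\Theta(n)_I)$ is \emph{not} achieved by the consecutive pairing: already for $n=4$ the appendix's display gives $\pf(\Theta(4)) = \alpha^{s_1+s_6} - \alpha^{s_2+s_5} + \alpha^{s_3+s_4}$, and $s_{1,2}+s_{3,4} = s_1 + s_6$ is the \emph{largest} exponent, while the smallest arises from the nested pairing $s_{1,4}+s_{2,3} = s_4 + s_3$. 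Second, the triangularity claim --- that the leading exponent of a weight-$2m$ minor is strictly smaller than every exponent appearing in any minor of larger weight --- is false: again for $n = 4$, $\pf(\Theta(4)_{(3,4)}) = \alpha^{s_6} = \alpha^{32}$, whereas the smallest exponent of $\pf(\Theta(4))$ is $s_3 + s_4 = 12 < 32$. There is no weight-graded triangular structure of the kind you describe.

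These two slips are harmless because the binary-expansion observation you already have proves something stronger than leading-term injectivity: \emph{every} monomial of $\pf(\Theta(n)_I)$ (not merely an extremal one) has its $I$ readable off the binary digits of its exponent, so the monomial supports of the polynomials $\pf(\Theta(n)_I)$, $I \in \mathrm{Minor}(n)$, are pairwise disjoint (with $\pf(\Theta(n)_\emptyset)=1$ occupying the constant monomial). Since $\alpha$ is transcendental, distinct powers of $\alpha$ are $\mathbb{Q}$-linearly independent, and disjoint supports then give rational independence of the $\pf(\Theta(n)_I)$ at once, with no need to order $\mathrm{Minor}(n)$ or exhibit a unitriangular change of basis. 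This disjointness is exactly what the paper's one-line remark --- that $\alpha^{2^i}$ and all products thereof are linearly independent over $\mathbb{Q}$ --- is encoding. So your overall route is the same as the paper's; just delete the triangularity step and state the disjoint-supports argument directly.
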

\noindent The above corollary gives a large class of examples of strongly totally irrational matrices.

\section{Construction of $C([0,1])\rtimes_{ \Omega_A} \Z^n$}
\label{sec:path}

Let $n$ be an even number. For $I\in \mathrm{Min}(n),$ define $\sigma(I):= i_1+i_2+\dots+i_{2r}$ for $I=(i_1,i_2,\dots,i_{2r})$ and $\sigma(I):= 0$ for $I=\emptyset.$ Also for $I\in \mathrm{Min}(n),$ let $I^{\mathrm{co}}$ be the element in $\mathrm{Min}(n),$ $I^{\mathrm{co}}=(j_1,j_2,\dots,j_{n-2r}),$ such that $\{i_1,i_2,\dots,i_{2r},j_1,j_2,\dots,j_{n-2r}\}=\{1,2,\dots,n\}$ and $\{i_1,i_2,\dots,i_{2r}\}\cap\{j_1,j_2,\dots,j_{n-2r}\}=\emptyset.$ Then we have the \textit{pfaffian summation formula} (\cite[Lemma 4.2]{Ste90})
\begin{equation}\label{eq:sumofpaff}
	\operatorname{pf}(A+B)=\sum_{|I|\le n}(-1)^{\sigma(I)-|I| / 2} \operatorname{pf}(A_{I}) \operatorname{pf}(B_{I^{\mathrm{co}}}),
\end{equation}
 for $A, B \in \mathcal{T}_n.$

Recall that for a matrix $A \in \mathcal{T}_n$ a translate of $A$ is any matrix $A^{\text {tr }}$ such that $A-A^{\text {tr }}$ has only integer entries. In this section we want to prove the following proposition. 

\begin{proposition}\label{prp:translate}
Assume that all the pfaffian minors of $A =(a_{jk})\in \mathcal{T}_n$ are positive, where $n$ is any number. Then there exists a translate of $A,$ $A^{\mathrm{tr}},$ such that all the pfaffian minors of the matrix $(1-t)A^{\mathrm{tr}}+tZ$  are positive for all $t\in [0,1],$ where $Z$ is as in Equation~\ref{eq:def_of_Z}. 
\end{proposition}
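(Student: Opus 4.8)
The plan is to reduce the claim to a finite combinatorial statement about pfaffian minors and then to choose the translate entrywise so that the ``leading'' monomials dominate. First I would record the basic observation that for a fixed index set $I \in \mathrm{Minor}(n)$, the map $t \mapsto \pf(((1-t)A^{\mathrm{tr}} + tZ)_I)$ is a polynomial in $t$, and by the pfaffian summation formula~\eqref{eq:sumofpaff} applied with the pair of matrices $(1-t)A^{\mathrm{tr}}_I$ and $tZ_I$ (both in $\mathcal{T}_{|I|}$), it can be written as a sum over sub-index-sets $I' \subseteq I$ of terms $(-1)^{\bullet}(1-t)^{|I'|/2}t^{(|I|-|I'|)/2}\pf(A^{\mathrm{tr}}_{I'})\pf(Z_{(I\setminus I')})$. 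Since all pfaffian minors of $Z$ are positive (indeed $\pf(Z_J) = 1$ for every $J$, as $Z$ is the all-ones upper-triangular skew matrix, so every $2p$-pfaffian minor equals the pfaffian of the standard symplectic-type block which is $1$), and since $(1-t), t \ge 0$ on $[0,1]$, the only possibly-negative contributions come from the signs $(-1)^{\sigma(I')-|I'|/2}$. The endpoint $t=1$ gives $\pf(Z_I) > 0$ and the endpoint $t=0$ gives $\pf(A^{\mathrm{tr}}_I)$, which we will arrange to be positive; the content is to control the interior.

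The key step is the choice of translate. Following the strategy behind \cite[Proposition 4.6]{Cha20} and the super-increasing constructions of Appendix~\ref{sec:strongly_irr}, I would pick integers $N_{jk}$ (for $j<k$) growing fast enough — e.g.\ along a super-increasing scale indexed so that the exponent attached to the $(j,k)$ entry dominates all sums of smaller exponents — and set $A^{\mathrm{tr}} = A + N$ where $N = (N_{jk}) \in \mathcal{T}_n \cap M_n(\Z)$. Then for each $I$ the pfaffian $\pf(A^{\mathrm{tr}}_I)$ is, up to the fixed bounded perturbation coming from the original entries $a_{jk}$, a signed sum of products of the $N_{jk}$, and the super-increasing property forces a single monomial — the one using the lexicographically largest entries — to dominate in absolute value, hence $\pf(A^{\mathrm{tr}}_I) > 0$. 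Crucially, the same domination must be made uniform in $t$: for $(1-t)A^{\mathrm{tr}}_I + tZ_I$, expanding via~\eqref{eq:sumofpaff} and grouping, the term with $I' = I$ contributes $(1-t)^{|I|/2}\pf(A^{\mathrm{tr}}_I)$ with positive sign after the domination argument, and all other terms carry a factor $t^{\ge 1}$ times a pfaffian minor of $Z$ (bounded by $1$) times a pfaffian minor of $A^{\mathrm{tr}}$; by making the $N_{jk}$ for the top entries large while keeping them comparable within each ``level'', one shows the $I'=I$ term together with the all-$Z$ term $t^{|I|/2}\pf(Z_I)$ dominates the rest for every $t \in [0,1]$. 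This is a Lojasiewicz-type estimate on a polynomial of bounded degree with controlled coefficients, so a finite induction on $|I|$ (the base case $|I|=2$ being $\pf = ((1-t)(a_{jk}+N_{jk}) + t) > 0$ trivially once $a_{jk}+N_{jk} \ge 1$) closes the argument.

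I would organize the induction on the length $|I| = 2p$: assuming all pfaffian minors of length $< 2p$ of $(1-t)A^{\mathrm{tr}} + tZ$ are positive on $[0,1]$ for a suitable translate, use the recursive pfaffian expansion~\eqref{eq:pfaffian_recursive} along the last row of the $2p\times 2p$ block to write $\pf(((1-t)A^{\mathrm{tr}}+tZ)_I)$ as an alternating sum $\sum_{i}(-1)^{i+1}((1-t)(a_{i,i_{2p}}+N_{i,i_{2p}}) + t)\,\pf^{\bullet}$ of smaller (positive, by induction) pfaffians; then the super-increasing choice of $N_{i_{2p-1},i_{2p}}$ relative to $N_{i,i_{2p}}$ for $i < i_{2p-1}$ makes the last summand dominate uniformly in $t$. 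The main obstacle is precisely making this domination \emph{uniform over $t \in [0,1]$ simultaneously for all} $2^{n-1}-1$ pfaffian minors with a \emph{single} translate: one must choose the integer matrix $N$ once and for all, with a single super-increasing scale, so that every minor's leading monomial beats the combined error from all other monomials at every $t$. I expect this bookkeeping — tracking which exponent is ``largest'' inside each sub-block and verifying the inequalities survive the convex combination with $Z$ — to be the crux; once the scale is fixed, the positivity on the whole segment is a routine consequence of $(1-t)^a t^b \ge 0$ and the triangle inequality.
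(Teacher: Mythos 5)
Your approach is genuinely different from the paper's. The paper runs an adaptive induction: it adds an integer to one entry $a_{l_1 l_2}$ at a time, in a carefully chosen order, using the observation that the coefficient of $a_{l_1 l_2}$ in the Bernstein coefficient $c_{|J|,r}$ of $t^{|J|/2-r}(1-t)^r$ is itself a lower-order Bernstein coefficient, positive by induction. You instead propose a single large translate (super-increasing, or ``comparable within levels'') and aim for a dominance estimate. That is a legitimate alternative strategy, but the specific dominance claim you make is the gap.

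Concretely, the claim that the two extreme terms $(1-t)^{|I|/2}\pf(A^{\mathrm{tr}}_I)$ and $t^{|I|/2}\pf(Z_I)$ dominate the intermediate terms uniformly on $[0,1]$ is false and is also not the right target. Take $|I|=4$ and $A^{\mathrm{tr}}=A+MZ$ with $M$ large; writing $u:=(1-t)M$, the relevant pfaffian expands to leading order as $u^2+2ut+t^2=(u+t)^2$. At $u=t$ (i.e.\ $t=M/(M+1)$) the extreme terms contribute $2u^2$, exactly equal to the middle term $2ut=2u^2$: there is no domination, and positivity holds only because the middle coefficient came out $+2M$ rather than $-2M$. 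Nothing in your ``Lojasiewicz-type estimate'' paragraph controls the \emph{signs} of the intermediate coefficients, and you yourself flag this as the crux without carrying it out. The super-increasing construction does not help here unless one additionally computes the sign of the dominant monomial in each $c_{|J|,r}$, a check that is absent from the sketch.

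What is both achievable and sufficient is the weaker statement that every coefficient $c_{|J|,r}$ in $\pf((1-t)A^{\mathrm{tr}}_J+tZ_J)=\sum_{r}c_{|J|,r}\,t^{|J|/2-r}(1-t)^r$ is nonnegative, which immediately gives positivity on $[0,1]$ since the Bernstein monomials are nonnegative there. The paper establishes this by its algorithm. Staying closer to your spirit, the simplest translate that works is $A^{\mathrm{tr}}=A+MZ$ for a single large integer $M$. Indeed $\pf(Z_K)=1$ for every $K$, so applying the summation formula to $\pf(Z_J)=\pf\bigl((1-t)Z_J+tZ_J\bigr)=1=(t+(1-t))^{|J|/2}$ and comparing coefficients of the Bernstein basis gives
$$\sum_{|I'|=2r}(-1)^{\sigma(I')-r}=\binom{|J|/2}{r}.$$
Consequently $c_{|J|,r}(A+MZ)=\binom{|J|/2}{r}M^r+O(M^{r-1})$, which is positive for $M$ large, uniformly over the finitely many pairs $(J,r)$. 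This is a complete argument, simpler than the paper's, and makes precise the dominance your sketch was reaching for; but as written your proposal does not supply it.
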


Before proving the above proposition, let us try to understand why the above proposition should hold for $n=2, 3, 4.$ For $n=2$ and $3,$ since all the pfaffian minors of $A$ are positive, all pfaffian minors of the matrix $(1-t)A+tZ$  are also clearly positive for all $t\in [0,1]$. Hence we can just take $A^{\mathrm{tr}}=A.$ Now for $n=4,$ if we compute the pfaffian of $(1-t)A+tZ$ using the summation formula above, we get  

\begin{equation}\label{eq:sumofpaff_t_4}
	\operatorname{pf}((1-t)A+tZ)=t^2+(a_{12}+a_{34}-a_{13}-a_{24}+a_{14}+a_{23})t(1-t)+(1-t)^2\pf(A).
\end{equation}
Now we can add a large positive integer $n$ to $a_{12}$ to make $a_{12}+a_{34}-a_{13}-a_{24}+a_{14}+a_{23}$ greater than zero. Now our required $A^{\mathrm{tr}}$ is the matrix obtained by adding $\left(\begin{array}{cccc}0 & n & 0 & 0 \\ -n & 0 & 0 & 0 \\ 0 & 0 & 0 & 0 \\ 0 & 0 & 0 & 0\end{array}\right)$ to $A.$

Let us now consider the matrix $(1-t)A+tZ,$ for $A \in \mathcal{T}_n$ and $n$ is an even number.  Using the pfaffian summation formula we have

\begin{equation}\label{eq:sumofpaff_t}
	\operatorname{pf}((1-t)A+tZ)=\sum_{|I|=2r\le n}(-1)^{\sigma(I)-r} \mathrm{pf}(A_{I}) t^{n/2-r}(1-t)^r.
\end{equation}
 For all $r$ such that $2r\le n,$ set $c_{n,r}:=\sum_{|I|=2r}(-1)^{\sigma(I)-r} \mathrm{pf}(A_{I}).$   Then we claim that the coefficient of $a_{12}$ in $c_{n,r}$ is the coefficient of $t^{n/2-r}(1-t)^r$ in the expression of $\operatorname{pf}^{12}((1-t)A+tZ)$ (refer to the paragraphs which come after Remark~\ref{rem:submatrices_super} for the notation $\operatorname{pf}^{12}$). To show this, first note that 

$$\operatorname{pf}^{12}((1-t)A+tZ)=\sum_{|I'|=2r'}(-1)^{\sigma(I')-r'} \mathrm{pf}(A^{12}_{I'}) t^{(n-2)/2-r'}(1-t)^{r'}.$$ If we put $(n-2)/2-r'=n/2-r,$ we get that $r'=r-1.$ Hence the coefficient of $t^{n/2-r}(1-t)^r$ in the expression of $\operatorname{pf}^{12}((1-t)A+tZ)$ is $\sum_{|I'|=2r-2}(-1)^{\sigma(I')-r+1} \mathrm{pf}(A^{12}_{I'}).$ 
Now in the expression of $c_{n,r},$ only for the $I$'s which are of the form  $(1,2, i_3,i_4,\dots,i_{2r}),$ for some $i_3,i_4,\dots,i_{2r}$ so that $2<i_3<i_4<\dots<i_{2r}\le n,$ $\mathrm{pf}(A_{I})$ will contain a term involving $a_{12}.$ The coefficient of $a_{12}$ in such $\mathrm{pf}(A_{I})$ is $\mathrm{pf}(A_{I''}),$ for $I''=(i_3,i_4,\dots,i_{2r}),$ using the recursive definition (Equation~\ref{eq:pfaffian_recursive_1}) of the pfaffian. But this $I''$ will correspond to $I'=(i_3-2,i_4-2,\dots,i_{2r}-2)$ in $A^{12}.$ Now, it is clear that $(-1)^{\sigma(I)-r} = (-1)^{\sigma(I')-r+1}.$ Since there are $2r-2$ many choices of $I''$ possible, our claim follows. 

For two fixed numbers $l_1, l_2,$ with $l_1\le l_2,$ let us denote by $(l_1,l_2,*,*,\dots,*)$ any such $J\in \mathrm{Min}(n),$ whose first two components are $l_1$ and $l_2,$ i.e. $J=(l_1,l_2, l_3,l_4,\dots,l_{2r}),$ for some $r.$

\begin{proof}[Proof of Proposition~\ref{prp:translate}]
	
Our aim is to show that there exists a translate of $A,$ $A^{\mathrm{tr}},$ such that all pfaffian minors of the matrix  $\mathrm{pf}(((1-t)A^{\mathrm{tr}}+tZ)_J)>0,$ for all $J\in  \mathrm{Minor}(n).$ In fact we will show that there exists a $A^{\mathrm{tr}}$ such that each individual coefficient $c_{|J|,r},~ 2r\le |J|,$ is greater that zero in the expression of $\mathrm{pf}(((1-t)A^{\mathrm{tr}}+tZ)_J)$ for all $J\in  \mathrm{Minor}(n),$ coming from Equation~\ref{eq:sumofpaff_t}.
We do this by induction on the length of $J$. Now clearly the statement is true for all $J$ such that $|J|=2,$ since all the entries of $A$ are positive. Assume that the statement is true for any $J$, such that $|J|\le 2m-2$. In this case we get a translate $A^{\mathrm{tr}},$ which we call by $A$ again, with required properties. We will now show that the statement is true for any $J$, such that $|J|\le 2m.$ Now any such $J$ so that $|J|=2m$ will be of the form $(l_1,l_2,\dots,l_{2m-2},l_{2m}),$ where $1\le l_1 \le l_2\le n-2m+2.$ We perform the following algorithm to get our desired $A^{\mathrm{tr}}.$

 Set $l_1=n-2m+1,$ $l_2= n-2m+2.$ 

\noindent \textbf{Step 1.} Consider any $J= (l_1,l_2,*,*,\dots,*)$ so that $|J|=2m.$ Use the formula Equation~\ref{eq:sumofpaff_t} for $\operatorname{pf}((1-t)A_J+tZ_J)$ and the observation made just before this proof to note that in the coefficients of $t^{m-r}(1-t)^r, r=1,2,\dots,m,$ the coefficients of $a_{l_1l_2}$ are positive using the induction hypothesis. By adding a number to $a_{l_1l_2}$ we can make sure that the coefficients of $t^{m-r}(1-t)^r, r=1,2,\dots,m,$ are positive. We can do this for each  $J= (l_1,l_2,*,*,\dots,*),$ and hence can add a sufficiently large number to $a_{l_1l_2}$ such that the coefficients of $t^{m-r}(1-t)^r, r=1,2,\dots,m,$ are positive in all $\operatorname{pf}((1-t)A_J+tZ_J),$ for all $J= (l_1,l_2,*,*,\dots,*).$ Thus by altering $a_{l_1l_2}$ as above, we get a translate of $A$, which we call by $A$ again. If $l_1>1,$ then set $l_1:= l_1-1, l_2:=l_2,$ and if $l_1=1,$ then set $l_1:= l_2-2, l_2:=l_2-1.$ 
Now if $l_1=1,$ $l_2= 2,$ go to Step 3. Otherwise, go to Step 2.

\noindent \textbf{Step 2.}  As in Step 1 add a sufficiently large number to $a_{l_1l_2},$ such that the coefficients
$t^{m-1-r}(1-t)^r,~ r=1,2,\dots,m-1,$ are positive in all $\operatorname{pf}((1-t)A_J+tZ_J),$ for all $J= (l_1,l_2,*,*,\dots,*)$ such that 
 $|J|=2m-2.$ Then repeat this  for $J= (l_1,l_2,*,*,\dots,*),$ such that $|J|= 2m-4$ and continue the process until we reach such $J$'s so that $|J|=4.$ After the above alterations, we get a translate of $A$, which we call by $A$ again. Go to Step 1.

\noindent \textbf{Step 3.}  Again, as before, add a sufficiently large number to $a_{12},$ such that
$t^{m-r}(1-t)^r,~ r=1,2,\dots,m,$ are positive in all $\operatorname{pf}((1-t)A_J+tZ_J),$ for all $J= (1,2,*,*,\dots,*)$ such that 
 $|J|=2m.$    
 
 After the above step we end up with a translate $A^{\mathrm{tr}}$ of $A.$ Now we claim that 
$\operatorname{pf}((1-t)A^{\mathrm{tr}}_J+tZ_J)$ is positive for all $|J|\le2m.$ Let $J=(l_1,l_2,*,*,\dots,*)$ be fixed such that $|J|\le2m.$ If $l_2>n-2m+2,$ $|J|\le2m-2.$ In this case, no entries of $A_J$ have been altered in the above algorithm, and hence  $\operatorname{pf}((1-t)A^{\mathrm{tr}}_J+tZ_J)=\operatorname{pf}((1-t)A_J+tZ_J)$ is positive by the induction hypothesis. If $l_2\le n-2m+2,$ after one alteration of $a_{l_1l_2}$ (let us call the translate $A'$ so that we have $\operatorname{pf}((1-t)A'_J+tZ_J)>0,)$ no alterations have been occurred for the entries of $A'_J$ except possible alterations of  $a_{l_1l_2}$ which still keep $\operatorname{pf}((1-t)A'_J+tZ_J)>0.$ Hence $\operatorname{pf}((1-t)A^{\mathrm{tr}}_J+tZ_J)>0.$             
       
\end{proof} 
 
\section*{Acknowledgements}

This research was  supported by DST, Government of India under the \emph{DST-INSPIRE
Faculty Scheme} with Faculty Reg. No. IFA19-MA139.


\begin{bibdiv}
\begin{biblist}

\bib{Boc97}{article}{
   author={Boca, Florin P.},
   title={The structure of higher-dimensional noncommutative tori and metric
   Diophantine approximation},
   journal={J. Reine Angew. Math.},
   volume={492},
   date={1997},
   pages={179--219},
   issn={0075-4102},
   review={\MR{1488068}},
   doi={10.1515/crll.1997.492.179},
}
\bib{BCHL18}{article}{
   author={B\"{o}nicke, Christian},
   author={Chakraborty, Sayan},
   author={He, Zhuofeng},
   author={Liao, Hung-Chang},
   title={Isomorphism and Morita equivalence classes for crossed products of
   irrational rotation algebras by cyclic subgroups of $SL_2(\Z)$},
   journal={J. Funct. Anal.},
   volume={275},
   date={2018},
   number={11},
   pages={3208--3243},
   issn={0022-1236},
   review={\MR{3861734}},
   doi={10.1016/j.jfa.2018.08.008},
}

\bib{BEEK91}{article}{
   author={Bratteli, Ola},
   author={Elliott, George A.},
   author={Evans, David E.},
   author={Kishimoto, Akitaka},
   title={Noncommutative spheres. I},
   journal={Internat. J. Math.},
   volume={2},
   date={1991},
   number={2},
   pages={139--166},
   issn={0129-167X},
   review={\MR{1094701}},
   doi={10.1142/S0129167X91000090},
}

\bib{Cha20}{article}{
   author={Chakraborty, Sayan},
   title={Some remarks on $\K_0$ of noncommutative tori},
   journal={Math. Scand.},
   volume={126},
   date={2020},
   number={2},
   pages={387--400},
   issn={0025-5521},
   review={\MR{4102570}},
   doi={10.7146/math.scand.a-119699},
}
\bib{Cha21}{article}{
   author={Chakraborty, Sayan},
   title={Tracing projective modules over noncommutative orbifolds},
   journal={\textbf{to appear in} Journal of Noncommutative Geometry},
    Eprint = {arXiv:2102.07691},
   date={2021},
   }
   \bib{CFW22}{article}{
   author={Chakraborty, Sayan},
   author={Farsi, Carla},
   author={Watling, Neil},
   title={Appendix to: C. Farsi and N. Watling, Symmetrized noncommutative tori. Math. Ann. 296 (1993), no. 4, 739-741},
   journal={\textbf{to appear in} Mathematische Annalen},
    date={2022},
   }
   \bib{CH21}{article}{
   author={Chakraborty, Sayan},
   author={Hua, Jiajie},
   title={Higher dimensional Bott classes and the stability of rotation relations},
   journal={\textbf{to appear in} Indiana University Mathematics Journal},
   Eprint = {arXiv:2109.00739},
   date={2021},
   }
\bib{CL19}{article}{
   author={Chakraborty, Sayan},
   author={Luef, Franz},
   title={Metaplectic transformations and finite group actions on
   noncommutative tori},
   journal={J. Operator Theory},
   volume={82},
   date={2019},
   number={1},
   pages={147--172},
   issn={0379-4024},
   review={\MR{3979942}},
}
\bib{DL13}{article}{
   author={Davis, James F.},
   author={L\"{u}ck, Wolfgang},
   title={The topological K-theory of certain crystallographic groups},
   journal={J. Noncommut. Geom.},
   volume={7},
   date={2013},
   number={2},
   pages={373--431},
   issn={1661-6952},
   review={\MR{3054301}},
   doi={10.4171/JNCG/121},
}

\bib{ELPW10}{article}{
   author={Echterhoff, Siegfried},
   author={L\"{u}ck, Wolfgang},
   author={Phillips, N. Christopher},
   author={Walters, Samuel},
   title={The structure of crossed products of irrational rotation algebras
   by finite subgroups of $SL_2(\Z)$},
   journal={J. Reine Angew. Math.},
   volume={639},
   date={2010},
   pages={173--221},
   issn={0075-4102},
   review={\MR{2608195}},
   doi={10.1515/CRELLE.2010.015},
}
\bib{Ell84}{article}{
   author={Elliott, G. A.},
   title={On the K-theory of the C$^*$-algebra generated by a
   projective representation of a torsion-free discrete abelian group},
   conference={
      title={Operator algebras and group representations, Vol. I},
      address={Neptun},
      date={1980},
   },
   book={
      series={Monogr. Stud. Math.},
      volume={17},
      publisher={Pitman, Boston, MA},
   },
   date={1984},
   pages={157--184},
   review={\MR{731772}},
}

\bib{FP98}{book}{
   author={Fulton, William},
   author={Pragacz, Piotr},
   title={Schubert varieties and degeneracy loci},
   series={Lecture Notes in Mathematics},
   volume={1689},
   note={Appendix J by the authors in collaboration with I.
   Ciocan-Fontanine},
   publisher={Springer-Verlag, Berlin},
   date={1998},
   pages={xii+148},
   isbn={3-540-64538-1},
   review={\MR{1639468}},
   doi={10.1007/BFb0096380},
}

\bib{FW93}{article}{
   author={Farsi, Carla},
   author={Watling, Neil},
   title={Symmetrized noncommutative tori},
   journal={Math. Ann.},
   volume={296},
   date={1993},
   number={4},
   pages={739--741},
   issn={0025-5831},
   review={\MR{1233495}},
   doi={10.1007/BF01445133},
}

\bib{HG04}{article}{
   author={Higson, Nigel},
   author={Guentner, Erik},
   title={Group C$^*$-algebras and K-theory},
   conference={
      title={Noncommutative geometry},
   },
   book={
      series={Lecture Notes in Math.},
      volume={1831},
      publisher={Springer, Berlin},
   },
   date={2004},
   pages={137--251},
      review={\MR{2058474}},
       doi={10.1007/978-3-540-39702-1\textunderscore3},
  }
  
  \bib{HK01}{article}{
   author={Higson, Nigel},
   author={Kasparov, Gennadi},
   title={$E$-theory and $KK$-theory for groups which act properly and
   isometrically on Hilbert space},
   journal={Invent. Math.},
   volume={144},
   date={2001},
   number={1},
   pages={23--74},
   issn={0020-9910},
   review={\MR{1821144}},
   doi={10.1007/s002220000118},
}

  \bib{JL15}{article}{
   author={Jeong, Ja A.},
   author={Lee, Jae Hyup},
   title={Finite groups acting on higher dimensional noncommutative tori},
   journal={J. Funct. Anal.},
   volume={268},
   date={2015},
   number={2},
   pages={473--499},
   issn={0022-1236},
   review={\MR{3283161}},
   doi={10.1016/j.jfa.2014.10.010},
}
\bib{KS00}{article}{
   author={Konechny, Anatoly},
   author={Schwarz, Albert},
   title={Moduli spaces of maximally supersymmetric solutions on
   non-commutative tori and non-commutative orbifolds},
   journal={J. High Energy Phys.},
   date={2000},
   number={9},
   pages={Paper 5, 24},
   issn={1126-6708},
   review={\MR{1789106}},
   doi={10.1088/1126-6708/2000/09/005},
}

\bib{KS02}{article}{
   author={Konechny, Anatoly},
   author={Schwarz, Albert},
   title={Introduction to M(atrix) theory and noncommutative geometry},
   journal={Phys. Rep.},
   volume={360},
   date={2002},
   number={5-6},
   pages={353--465},
   issn={0370-1573},
   review={\MR{1892926}},
   doi={10.1016/S0370-1573(01)00096-5},
}

\bib{Kum90}{article}{
   author={Kumjian, A.},
   title={On the $K$-theory of the symmetrized noncommutative torus},
   journal={C. R. Math. Rep. Acad. Sci. Canada},
   volume={12},
   date={1990},
   number={2-3},
   pages={87--89},
   issn={0706-1994},
   review={\MR{1062082}},
}

\bib{Li04}{article}{
   author={Li, Hanfeng},
   title={Strong Morita equivalence of higher-dimensional noncommutative
   tori},
   journal={J. Reine Angew. Math.},
   volume={576},
   date={2004},
   pages={167--180},
   issn={0075-4102},
   review={\MR{2099203}},
   doi={10.1515/crll.2004.087},
}
\bib{Lin04}{article}{
   author={Lin, Huaxin},
   title={Classification of simple $C^\ast$-algebras of tracial topological
   rank zero},
   journal={Duke Math. J.},
   volume={125},
   date={2004},
   number={1},
   pages={91--119},
   issn={0012-7094},
   review={\MR{2097358}},
   doi={10.1215/S0012-7094-04-12514-X},
}

\bib{Nat85}{article}{
   author={Natsume, Toshikazu},
   title={On $K_\ast(C^\ast({\rm SL}_2({\bf Z})))$. Appendix to ``$K$-theory
   for certain group $C^\ast$-algebras'' [Acta Math. {\bf 151} (1983), no.
   3-4, 209--230; MR0723010 (86f:46076)] by E. C. Lance},
   journal={J. Operator Theory},
   volume={13},
   date={1985},
   number={1},
   pages={103--118},
   issn={0379-4024},
   review={\MR{768305}},
}

\bib{Phi06}{article}{
   author={Phillips, N. Christopher},
   title={Every simple higher dimensional noncommutative torus is an AT algebra},
   journal={arXiv:math/0609783},
   volume={24},
   date={2006},
  }
  
  \bib{Phi11}{article}{
   author={Phillips, N. Christopher},
   title={The tracial Rokhlin property for actions of finite groups on
   $C^\ast$-algebras},
   journal={Amer. J. Math.},
   volume={133},
   date={2011},
   number={3},
   pages={581--636},
   issn={0002-9327},
   review={\MR{2808327}},
   doi={10.1353/ajm.2011.0016},
}
\bib{Ren80}{book}{
   author={Renault, Jean},
   title={A groupoid approach to $C^{\ast} $-algebras},
   series={Lecture Notes in Mathematics},
   volume={793},
   publisher={Springer, Berlin},
   date={1980},
   pages={ii+160},
   isbn={3-540-09977-8},
   review={\MR{584266}},
}

\bib{Rie81}{article}{
   author={Rieffel, Marc A.},
   title={$C^{\ast} $-algebras associated with irrational rotations},
   journal={Pacific J. Math.},
   volume={93},
   date={1981},
   number={2},
   pages={415--429},
   issn={0030-8730},
   review={\MR{623572}},
}

\bib{Rie88}{article}{
   author={Rieffel, Marc A.},
   title={Projective modules over higher-dimensional noncommutative tori},
   journal={Canad. J. Math.},
   volume={40},
   date={1988},
   number={2},
   pages={257--338},
   issn={0008-414X},
   review={\MR{941652}},
   doi={10.4153/CJM-1988-012-9},
}
\bib{RS99}{article}{
   author={Rieffel, Marc A.},
   author={Schwarz, Albert},
   title={Morita equivalence of multidimensional noncommutative tori},
   journal={Internat. J. Math.},
   volume={10},
   date={1999},
   number={2},
   pages={289--299},
   issn={0129-167X},
   review={\MR{1687145}},
   doi={10.1142/S0129167X99000100},
}
\bib{Ros79}{article}{
   author={Rosenberg, Jonathan},
   title={Appendix to: ``Crossed products of UHF algebras by product type actions'' [Duke Math. J. {\textbf 46} (1979), no. 1, 1--23; MR0523598] by O. Bratteli},
   journal={Duke Math. J.},
   volume={46},
   date={1979},
   number={1},
   pages={25--26},
   issn={0012-7094},
   review={\MR{523599}},
}

\bib{Ste90}{article}{
   author={Stembridge, John R.},
   title={Nonintersecting paths, Pfaffians, and plane partitions},
   journal={Adv. Math.},
   volume={83},
   date={1990},
   number={1},
   pages={96--131},
   issn={0001-8708},
   review={\MR{1069389}},
   doi={10.1016/0001-8708(90)90070-4},
}

\bib{Wal95}{article}{
   author={Walters, Samuel G.},
   title={Projective modules over the non-commutative sphere},
   journal={J. London Math. Soc. (2)},
   volume={51},
   date={1995},
   number={3},
   pages={589--602},
   issn={0024-6107},
   review={\MR{1332894}},
   doi={10.1112/jlms/51.3.589},
}

\bib{Wal21}{article}{
   author={Walters, Samuel G.},
   title={K-Theory of Approximately Central Projections in the Flip Orbifold},
   Eprint = {arXiv:2101.01345},
  date={2021},
  }

\end{biblist}
\end{bibdiv}
\end{document}